\theoremstyle{plain}
\newtheorem{theorem}{Theorem}[section]
\newtheorem{proposition}[theorem]{Proposition}
\newtheorem{lemma}[theorem]{Lemma}
\newtheorem{corollary}[theorem]{Corollary}
\theoremstyle{definition}
\newtheorem{definition}[theorem]{Definition}
\newtheorem{remark}[theorem]{Remark}
\newtheorem{example}[theorem]{Example}
\newtheorem{assumption}[theorem]{Assumption}
\newtheorem*{conditionB}{Condition~(B)}
\newtheorem*{conditionJ}{Condition~(J)}
\theoremstyle{remark}
\renewenvironment{thebibliography}[1]{%
\begin{oldthebibliography}{#1}%
\setlength{\baselineskip}{.9em}
\linespread{1}
\small
\setlength{\parskip}{0.3ex}%
\setlength{\itemsep}{.5em}%
}%
{%
\end{oldthebibliography}%
}
\newcommand{\D}{\mathbb{D}}
\newcommand{\E}{\mathbb{E}}
\newcommand{\F}{\mathbb{F}}
\newcommand{\G}{\mathbb{G}}
\newcommand{\N}{\mathbb{N}}
\renewcommand{\P}{\mathbb{P}}
\newcommand{\Q}{\mathbb{Q}}
\newcommand{\R}{\mathbb{R}}
\renewcommand{\S}{\mathbb{S}}
\newcommand{\X}{\mathbb{X}}
\newcommand{\cB}{\mathcal{B}}
\newcommand{\cC}{\mathcal{C}}
\newcommand{\cF}{\mathcal{F}}
\newcommand{\cK}{\mathcal{K}}
\newcommand{\cL}{\mathcal{L}}
\newcommand{\cM}{\mathcal{M}}
\newcommand{\cP}{\mathcal{P}}
\newcommand{\cV}{\mathcal{V}}
\newcommand{\fD}{\mathfrak{D}}
\newcommand{\fJ}{\mathfrak{J}}
\newcommand{\fL}{\mathfrak{L}}
\newcommand{\fP}{\mathfrak{P}}
\newcommand{\fM}{\mathfrak{M}}
\DeclareMathOperator{\proj}{proj}
\DeclareMathOperator{\supp}{supp}
\DeclareMathOperator{\Var}{Var}
\newcommand{\ov}{\overline}
\numberwithin{equation}{section}
\begin{document}

\title{\vspace{-0em}
Compactness Criterion for Semimartingale Laws and 
Semimartingale Optimal Transport
\date{\today}
\author{
  Chong Liu%
  \thanks{
  Department of Mathematics, ETH Zurich, \texttt{chong.liu@math.ethz.ch}.
 % Financial support by Swiss National Science Foundation Grant **** is gratefully acknowledged.
  }
  \and
  Ariel Neufeld%
   \thanks{
   Department of Mathematics, ETH Zurich, \texttt{ariel.neufeld@math.ethz.ch}.
   Financial support by the Swiss National Foundation grant SNF 200021$\_$153555 is gratefully acknowledged.
  % \newline \indent 
  % \ \ The authors thank Sara Svaluto-Ferro  for fruitful discussions.
   }
 }
}
\maketitle \vspace{-1.2em}

\begin{abstract}
We provide a compactness criterion for the set  of laws $\fP^{ac}_{sem}(\Theta)$ on the Skorokhod space for which the canonical process $X$ is a semimartingale having absolutely continuous characteristics with differential characteristics taking values in some given set $\Theta$ of L\'evy triplets. Whereas boundedness of $\Theta$ implies tightness of $\fP^{ac}_{sem}(\Theta)$, closedness fails in general, even when choosing $\Theta$ to be additionally closed and convex, 
as  a sequence of purely discontinuous martingales  may converge to a diffusion. To that end, we provide a necessary and sufficient condition that prevents the purely discontinuous martingale part in the canonical representation of $X$ to create a diffusion part in the limit. As a result, we obtain a sufficient criterion % on $\Theta$ 
for $\fP^{ac}_{sem}(\Theta)$ to be compact, which turns out to be also a necessary one if the geometry of $\Theta$ is similar to a box on the product space. 

As an application, we consider a semimartingale optimal transport problem, where the transport plans are elements of $\fP^{ac}_{sem}(\Theta)$. We prove the existence of an optimal transport law $\widehat{\P}$ and obtain a duality result extending the classical Kantorovich duality to this setup.
\end{abstract}

\vspace{.9em}

{\small
\noindent \emph{Keywords} Limit Theorem; Weak Compactness; Semimartingale Optimal Transport 

\noindent \emph{AMS 2010 Subject Classification}
60F05; 60G44;  93E20  
%91B28; %Finance, portfolios, investment
%93E20; %Optimal stochastic control
%60G51 %Processes with independent increments; Lévy processes
}
% % % % % % % % % % % % % % % % % % % %5
%%%%%%%%%%%%%%%%%%%%%%%%%%%%%%%%%%%%%%%%%%%%%%%%%%5
\section{Introduction}\label{sec:Intro}
The goal of this paper is to provide a compactness criterion  for semimartingale laws. Given a set $\Theta$ of L\'evy triplets, we denote by $\fP^{ac}_{sem}(\Theta)$ the set of all probability measures $\P$ on the Skorokhod space for which the canonical process $X=(X_t)_{0\leq t \leq T}$ is a semimartingale with differential characteristics taking values in $\Theta$; this means that the semimartingale characteristics $(B^\P,C^\P,\nu^\P)$ are of the form $(b^\P_t\,dt, c^\P_t\,dt, F^\P_t\,dt)$ and the processes $(b^\P,c^\P,F^\P)$ evolve in $\Theta$. For simplicity in the introduction, let us only consider laws with starting point $X_0=0$. We restrict our attention to sets of L\'evy triplets $\Theta\subseteq \R^d\times \S^d_+ \times \cL$ satisfying the following boundedness condition
\begin{equation}\label{eq:cond-B-Intro}
\sup_{(b,c,F)\in \Theta} \Big\{|b|+|c| + \int_{\R^d} |x|^2 \wedge |x|\, F(dx)\Big\}<\infty,
\end{equation}
where $\cL$ denotes the set of L\'evy measures.
%which implies relative compactness of $\Theta$ with respect to the product topology on $\R^d\times \S^d_+ \times \cL$, denoting $\cL$ the set of L\'evy measures. 
It turns out that the condition \eqref{eq:cond-B-Intro} on $\Theta$ implies tightness of $\fP^{ac}_{sem}(\Theta)$ and that under any limit law $\P_0$ of a sequence $(\P_n) \subseteq \fP^{ac}_{sem}(\Theta)$ the canonical process remains a semimartingale with absolutely continuous characteristics, see Corollary~\ref{co:tightness-P} and Proposition~\ref{prop:limit-P-P-enlarge}.

Finding tightness conditions on semimartingale laws is a well-studied problem. In 
\cite{LepeltierMarchal.76}, 
% a sufficient condititon for 
tightness of solutions of a martingale problem is proven. In fact, \eqref{eq:cond-B-Intro} can be seen as the analogue of the boundedness conditions on the triplet in the integro-differential operator of the martingale problem in \cite{LepeltierMarchal.76}. In \cite{Rebolledo.79}, conditions on the predictable quadratic covaration  of a sequence of locally square-integrable martingales $(M_n)$ is studied, which guarantee tightness of the laws of $(M_n)$. Tightness for semimartingale laws  was studied in \cite{MeyerZheng.84} on the space of c\`adl\`ag paths, but endowed with a weaker topology than the usual Skorokhod $J_1$-topology, which makes it easier to obtain tightness. They show that a sequence of quasimartingales laws with uniformly bounded conditional variation is tight, and any such limit is a quasimartingale law. We refer to \cite{JacodMeminMetivier.83,Billingsley.99,JacodShiryaev.03} for general tightness results for %stochastic 
processes.

For continuous semimartingales, i.e. when $F\equiv0$, it was shown in \cite{TanTouzi.11} using techniques developed in \cite{Zheng.85} that if in addition to \eqref{eq:cond-B-Intro} $\Theta\subseteq \R^d \times \S^d_+$ is closed and convex, then $\fP^{ac}_{sem}(\Theta)$ is compact. In other words, in the continuous case, closedness and boundedness of $\Theta$ provides compactness of the corresponding set of probability measures
$\fP^{ac}_{sem}(\Theta)$. 

However, on the Skorokhod space, it is straightforward to see that closedness of $\fP^{ac}_{sem}(\Theta)$ may fail, even when choosing $\Theta$, in addition to \eqref{eq:cond-B-Intro}, to be closed (and convex). We are not aware of any closedness criterion of that type. To understand the difficulty, consider  in
dimension one the set 
\begin{equation*}
\Theta:=\Big\{(b,c,F) \in \R \times [0,\infty)\times \cL\, \Big| \, b=0,\, c=0, \, \supp(F)\subseteq \{|x|\leq 1\}\Big\}.
\end{equation*}
We see that $\Theta$ satisfies \eqref{eq:cond-B-Intro}, is convex, and it is straightforward to argue that $\Theta$ is closed. However, $\Theta$ contains the sequence of L\'evy triplets $\big((0,0, n\,\delta_{\frac{1}{\sqrt{n}}})\big)_{n \in \N}$. Consider the  sequence of laws $(\P_n) \subseteq \fP^{ac}_{sem}(\Theta)$, where each element $\P_n$ is defined as the unique law such that $X$ becomes a L\'evy process with triplet $(0,0, n\delta_{\frac{1}{\sqrt{n}}})$. Under each $\P_n$, $X$ %has the same law as 
is the scaled compensated Poisson process $\frac{1}{\sqrt{n}}(N^{n}_t -nt)$ with intensity $n$, which is known to converge weakly to Brownian motion. However, the Wiener measure $\P_0$ is not an element of $\fP^{ac}_{sem}(\Theta)$, as the corresponding triplet $(0,1,0)$ is not an element of $\Theta$. So closedness of $\fP^{ac}_{sem}(\Theta)$ fails.

The key difficulty of getting closedness of $\fP^{ac}_{sem}(\Theta)$ is that a sequence of purely discontinuous martingales may converge to a diffusion, and this (additional) diffusion may destroy the closedness of $\fP^{ac}_{sem}(\Theta)$, even when choosing $\Theta$ to be closed. Therefore,
% with the goal to obtain a compactness criterion for $\fP^{ac}_{sem}(\Theta)$,
we need to characterize the condition on $\Theta$ that prevents  the purely discontinuous (local) martingale part of $X$ to create an additional diffusion part in the limit. 

We show in Theorem~\ref{thm:purely-disc} that any limit  of a sequence $(\P_n)$ of purely discontinuous martingale laws being in $\fP^{ac}_{sem}(\Theta)$ is a martingale law; the necessary and sufficient condition for the limit $\P_0$ to be a purely discontinuous martingale law turns out to be
\begin{equation*}
\lim\limits_{\delta\downarrow0}\limsup_{n \to \infty} \E^{\P_n}\Big[\int_0^T\int_{\{|x|\leq \delta\}} |x|^2\, F^{\P_n}_t(dx)\,dt\Big] =0.
\end{equation*}

Now, the intuition is the following. If we can exclude the sequence of purely discontinuous martingale parts to create an additional diffusion in the limit, then the boundedness condition \eqref{eq:cond-B-Intro} together with closedness of $\Theta$ should lead to compactness of $\fP^{ac}_{sem}(\Theta)$. The condition
\begin{equation}\label{eq:cond-J-Intro}
\lim\limits_{\delta\downarrow0}\sup_{(b,c,F)\in \Theta}\int_{\{|x|\leq\delta\}} |x|^2 \, F(dx)=0
\end{equation}
is the weakest condition on $\Theta$ one can impose that prevents limits of purely discontinuous martingales to possess a diffusion part. 

We show in Theorem~\ref{thm:compactness} that  if $\Theta$ is closed, convex and satisfies \eqref{eq:cond-B-Intro}, then \eqref{eq:cond-J-Intro}  is a sufficient condition that %compactness (and convexity) of $\Theta$ implies compactness of 
$\fP^{ac}_{sem}(\Theta)$ is compact. Moreover, if the geometry of $\Theta \subseteq \R^d\times \S^d_+ \times \cL$ is similar to the one of a box on the product space (see Definition~\ref{def:box}), then  \eqref{eq:cond-J-Intro} is also a necessary condition. 

As an application of the compactness criterion for $\fP^{ac}_{sem}(\Theta)$, we study a semimartingale optimal transport problem. In classical optimal transport theory, the Monge-Katorovich transport problem for given distributions $\mu_0, \mu_1$ on $\R^d$ and cost function $c:\R^d \times \R^d \to [0,\infty)$ consists of minimizing the functional
\begin{equation*} %\label{eq:Monge-Kantorovich}
\P \mapsto \E^\P\big[c(X_0,X_1)\big]
\end{equation*}
over all distributions $\P$ on $\R^d\times \R^d$ with initial marginal $\P\circ X_0^{-1}=\mu_0$ and final marginal $\P\circ X_1^{-1}=\mu_1$. A duality result was established in \cite{Kantorovich.42,Kellerer.84} under some suitable conditions on the cost function $c$. We refer to \cite{Villani.09} as reference for classical optimal transport. Recently, martingale optimal transport has been a very active field of research (\cite{BeiglbockHenryLaborderePenkner.11,GalichonHenryLabordereTouzi.11,BeiglbockJuillet.12,HenryLabordereTouzi.16,CampiLaachirMartini.14,BeiglbockNutzTouzi.16,GuoTanTouzi.15}), also in connection to Skorokohod Embedding Problems (\cite{Hobson.98,Hobson.11,Obloj.04,HenryLabordereOblojSpoidaTouzi.16,BeiglbockHenryLabordereTouzi.15}) and to robust pricing in mathematical finance 
%introduced by 
(\cite{BeiglbockHenryLaborderePenkner.11,GalichonHenryLabordereTouzi.11,BeiglbockJuillet.12,FahimHuang.16,DolinskySoner.12,DolinskySoner.15,Stebegg.14}), to name but a few. 

Semimartingale optimal transport was introduced in \cite{MikamiThieullen.06} as an extension of the classical Monge-Kantorovich optimal transport problem, where for a given cost function $c^\P(X)$ (which may vary in $\P$) and given distributions $\mu_0, \mu_1$ on $\R^d$, one minimizes 
\begin{equation*}
\P \mapsto \E^\P[c^\P(X)]
\end{equation*} 
over all semimartingale laws $\P$ with initial marginal $\P\circ X_0^{-1}=\mu_0$ and final marginal $\P\circ X_1^{-1}=\mu_1$. We see that the semimartingale optimal transport problem recover the classical Monge-Kantorovich optimal transport problem when choosing $c^\P(X)\equiv c(X_0,X_1)$.

%For a given cost functional $\ell$ and distributions $\mu_0, \mu_1$ on $\R^d$,
We study the following semimartingale optimal transport problem. For any distributions $\mu_0$, $\mu_1$ on $\R^d$ we denote by $\fP_{\Theta}(\mu_0, \mu_1)$ the set of all probability measures $\P \in \fP^{ac}_{sem}(\Theta)$ which  have initial marginal $\P\circ X_0^{-1}=\mu_0$ and final marginal $\P\circ X_1^{-1}=\mu_1$. Then, given a cost function $L$, distributions $\mu_0, \mu_1$ on $\R^d$, and a set $\Theta\subseteq \R^d\times \S^d_+\times \cL$,  we want to minimize the functional
\begin{equation*}
\P \mapsto \E^\P\Big[\int_0^1 L(t,X,b^\P_t,c^\P_t,F^\P_t)\,dt\Big]
\end{equation*}
over all probability measures $\P \in \fP_{\Theta}(\mu_0,\mu_1)$.
%for some given $\Theta\subseteq \R^d\times \S^d_+\times \cL$.
% which have initial marginal $\P\circ X_0^{-1}=\mu_0$ and final marginal $\P\circ X_1^{-1}=\mu_1$. 
We prove the existence of a minimizer $\widehat{\P}\in \fP_{\Theta}(\mu_0, \mu_1)$ and a  duality theorem which is an extension of the classical Kantorovich duality to this setup (see \cite{Kantorovich.42,Kellerer.84}). When choosing $\Theta:=\R^d \times \{I_{d\times d}\}\times \{0\}$ and $\Theta\subseteq \R^d \times \S^d_+\times \{0\}$, we recover the semimartingale optimal transport and the corresponding strong duality result in \cite{MikamiThieullen.06} and \cite{TanTouzi.11}, respectively. We also refer to \cite{CruzeiroLassalle.15,Mikami.15} for related work on semimartingale optimal transport for continuous semimartingales.
%, where they proved the existence of a minimizer $\hat{P}\in \fP^{ac}_{sem}(\Theta)$ and a  duality theorem which can be seen as an extension of the classical Kantorovich duality (see \cite{Kantorovich.42,Kellerer.84}).
Following the arguments of \cite{MikamiThieullen.06,TanTouzi.11}, the existence of a minimizer $\widehat{\P}$ and the duality theorem is obtained in Theorem~\ref{thm:main} using classical convex duality results, which require that the value function
\begin{equation*}
(\mu_0,\mu_1)\mapsto V(\mu_0,\mu_1):= \inf_{\P \in \fP_{\Theta}(\mu_0, \mu_1)}E^\P\Big[\int_0^1 L(t,X,b^\P_t,c^\P_t,F^\P_t)\,dt\Big]
\end{equation*} 
is convex and lower semicontinuity. %Whereas convexity of $\Theta$ implies convexity of $\fP^{ac}_{sem}(\Theta)$ leading to the convexity  of the value function, 
The lower semicontinuity of the value function heavily depends on the compactness property $\fP^{ac}_{sem}(\Theta)$, which we now can ensure from the compactness criterion by imposing $\Theta$ to satisfy \eqref{eq:cond-B-Intro} and \eqref{eq:cond-J-Intro}.

The remainder of this paper is organized as follows. In Section~2, we introduce the notions and present the main results. In Section~3, we lift the laws $\fP^{ac}_{sem}(\Theta)$ to an enlarged space $\ov \Omega$, where we prove our main results about the characterization when purely discontinuous martingales can create a diffusion in the limit and the compactness criterion for the semimartingale laws. The enlarged space allows us to consider the joint distribution of the summands in the canonical representation of $X$. The main tool for the proof is the Skorokhod representation theorem, which enables us to translate weak convergence to pointwise convergence. In Section~4, we prove the above results on the original Skorokhod space using the corresponding results derived on the enlarged space. To that end, an analysis of semimartingale characteristics with respect to a filtration smaller than the original one on the enlarged space is needed. In Section~5, we prove the existence of a minimizer and the duality result of the semimartingale optimal transport problem.
% problem by following \cite{MikamiThieullen.06,TanTouzi.11} and applying the derived compactness result for $\fP^{ac}_{sem}(\Theta)$. 

%This extends the work of \cite{MikamiThieullen.06,TanTouzi.11}, to In \cite{MikamiThieullen.06}, one studied the problem of minimizing the functional
%\begin{equation*}
% \E^\P\Big[\int_0^1 \ell(s, X_s, \beta_s) \,ds\Big]
%\end{equation*}
%over all semimartingales $X$ of the form
%\begin{equation*}
%X = X_0 + \int_0^\cdot \beta_s\,ds + W
%\end{equation*}
%for some Brownian motion $W$, which have initial distribution $\P\circ X_0^{-1}=\mu_0$ and final distribution $\P\circ X_1^{-1}=\mu_1$ . As extension of \cite{MikamiThieullen.06},  motivated by robust pricing problems of path dependent derivative securities, \cite{TanTouzi.11} considered the following semimartingale optimal transport problem minimizing
%\begin{equation*}
% \E^\P\Big[\int_0^1 L(s,X,b^\P_s,c^\P_s) \,ds\Big]
%\end{equation*}
%over all laws $\P$ on the continuous path space for which the canonical process $X$ has $\mu_0$ as initial distribution and $\mu_1$ as final distribution, and  satisfies
%\begin{equation*}
%X= X_0 + M^\P + \int_0^\cdot b_s^\P \, ds, \quad \ \ \langle M^\P \rangle = \int_0^\cdot c^\P_s\,ds, \quad \ \ (b^\P_t,c^\P_t) \in \Theta \ \, \P\times dt\mbox{-a.s.}
%\end{equation*}
%i.e. $X$ is a continuous semimartingale with differential characteristics taking values in some given set $\Theta\subseteq \R^d\times \S^d_+$.
% % % % % % % % % % % % % % %
\section{Setup and Main Results}\label{sec:main-thms}
Fix $d \in \N$, $T<\infty$ and let $\Omega:=\D([0,T],\R^d)$ be the space of all c\`adl\`ag paths $\omega=(\omega_t)_{0\leq t \leq T}$ endowed with the usual Skorokhod $J_1$-topology, where we equip $\Omega$ with the corresponding Borel $\sigma$-field $\cF$. Denote by $X=(X_t)_{0\leq t \leq T}$ the canonical process $X_t(\omega)=\omega(t)$ and by $\F=(\cF_t)_{0\leq t \leq T}$ the (raw) filtration generated by $X$. Moreover, we denote by $\fM_1(\Omega)$ the space of all probability measures on $(\Omega,\cF)$, which is a Polish space by the usual topology of weak convergence, see e.g. \cite[Chapter~7]{BertsekasShreve.78}.

For a given probability measure $\P$ and a filtration $\G$ on $(\Omega,\cF)$, we call a $\G$-adapted process $Y$ with c\`adl\`ag paths a $\P$-$\G$-semimartingale if there exist right-continuous, $\G$-adapted processes $M$ and $A$ with $M_0=A_0=0$ such that $M$ is a $\P$-$\G$-local martingale, $A$ has paths of finite varation $\P$-a.s. such that $Y=Y_0 + M + A \ \P$-a.s.

Fix a continuous truncation function $h:\R^d \to \R^d$; that is a bounded continuous function such that $h(x)=x$ in a neighborhood of zero. Let $\P$ be a probability measure on $(\Omega, \cF)$ under which the canonical process $X$ is a $\P$-$\F$-semimartingale. Denote by $(B^\P, C^\P, \nu^\P)$ its $\P$-$\F$-semimartingale charateristics, this means that $(B^\P, C^\P, \nu^\P)$ is a triplet of processes such that $\P$-a.s., $B^\P$  is the predictable finite variation part in the canonical decomposition of $X-\sum_{0\leq s \leq \cdot} (\Delta X_s-h(\Delta X_s))$ under $\P$, $C^\P$ is the quadratic covariation  of the continuous local martingale part of $X$ under $\P$ and $\nu^\P$ is the $\P$-compensator of the measure $\mu^X$ associated to the jumps of $X$. We refer to \cite{JacodShiryaev.03} as our main reference for standard notions related to general semimartingale theory. Under a given probability measure $\P$, $X$ is a $\P$-$\F$-semimartingale if and only if it is one with respect to the right continuous filtration $\F_+$ or the usual augmentation $\F^\P_+$, and the semimartingale characteristics with these filtrations are the same, see \cite[Proposition~2.2]{NeufeldNutz.13a}.
We denote by $\fP_{sem}$ the set of all probability measures on $(\Omega,\cF)$ such that $X$ is a  $\P$-$\F$-semimartingale.

Given a semimartingale law $\P$, we say that $X$ has absolutely continuous characteristics under $\P$-$\F$ if $(dB^\P,dC^\P,d\nu^\P)= (b^\P_t\,dt, c^\P_t\,dt, F^\P_t\,dt)$. The differential characteristics $(b^\P,c^\P,F^\P)$ take values in $\R^d\times \S^d_+ \times \cL$, where $\S^d_+$ denotes the set of all  symmetric, nonnegative definite $d\times d$ matrices and 
\begin{equation*}
\cL:=\Big\{F \mbox{ measure on } \R^d \, \Big| \, \int_{\R^d} |x|^2 \wedge 1 \, F(dx)<\infty \mbox{ and } F(\{0\})=0 \Big\}
\end{equation*}
denotes the set of L\'evy measures. 
We write
\begin{equation*}
\fP^{ac}_{sem}:=\big\{ \P \in \fP_{sem}\, \big| \, \mbox{$X$ has absolutely continuous characteristics} \big\}.
\end{equation*}

%Let $\fP(\Omega)$ be the set of all probability measures on $(\Omega,\cF)$ and 
%Denote by $\fP^{ac}_{sem}$ the set of all probability measures  on $(\Omega,\cF)$ such that the canonical process $X$ is a $\P$-$\F$-semimartingale with absolutely continuous characteristics. Then,
Given a set $\Theta\subseteq \R^d \times \S^d_+ \times \cL$, we denote by
\begin{align*}
\fP^{ac}_{sem}(\Theta):= \big\{ \P \in \fP^{ac}_{sem} \, \big| \, (b^\P,c^\P, F^\P) \in \Theta \ \,  \P\times dt\mbox{-a.e.} \big\}
\end{align*}  
the set of all semimartingale laws in $\fP^{ac}_{sem}$ with differential characteristics taking value in $\Theta \ \P\times dt$-a.s. Given a set $\Gamma_0\subseteq \fM_1(\R^d)$ of distributions on $\R^d$, we write
\begin{equation*}
\fP^{ac}_{sem}(\Theta)(\Gamma_0):=\big\{ \P \in \fP^{ac}_{sem}(\Theta) \, \big| \, \P \circ X_0^{-1} \in \Gamma_0 \big\}
\end{equation*}
for those elements in $\fP^{ac}_{sem}(\Theta)$ with corresponding initial distribution lying in $\Gamma_0$. 
%and define  $\fP_\sigma^{ac}(\Theta)$, $\fP_{loc,m}^{ac}(\Theta)$, $\fP_{m}^{ac}(\Theta)$ in the same way.
%
%

To provide our compactness criterion for semimartingale laws, we first need to introduce a topology on $\Theta$.
To that end we endow $\cL$ with  the topology of weak convergence  induced by the bounded continuous functions on $\R^d$ vanishing in a neighborhood of the origin. More precisely, given $(F_n)_{n \in \N} \subseteq \cL$ and $F \in \cL$, we say that $(F_n)_{n\in \N}$ converges to $F$ if $\lim_{n \to \infty} \int_{\R^d} g(x)\,F_n(dx)=\int_{\R^d} g(x)\,F(dx)$ for all bounded continuous functions $g$ on $\R^d$ vanishing in a neighborhood of the origin.
Having defined a topology on $\cL$, we can equip $\R^d\times \S^d_+ \times \cL$ with the corresponding product topology.
% % % % % % % % % % % % % % % % % % % % % % % %

Consider the following condition on the values of the differential characteristics.
\begin{conditionB} A set  $\Theta\subseteq \R^d \times \S^d_+ \times \cL$ satisfies Condition (B) if
	\begin{equation}\label{eq:bound-characteristics}
	\cK:=\sup_{(b,c,F)\in \Theta} \Big\{|b|+|c| + \int_{\R^d} |x|^2 \wedge |x|\, F(dx)\Big\}<\infty.
	\end{equation}
\end{conditionB}
%
%
%\begin{remark}\label{rem:Cond-B-tight}
%If  $\Theta\subseteq \R^d \times \S^d_+ \times \cL$ satisfies condition (B), then it is relative compact with respect to the product topology. For a relative compactness criterion on $\cL$, we  refer to \cite[Section~2]{NeufeldNutz.15}.
%\end{remark}
%
%
%We will see that Condition~(B) ensures tightness of the set $\fP^{ac}_{sem}(\Theta)(\Gamma_0)$ whenever $\Gamma_0$ is tight, see Corollary~\ref{co:tightness-P}. 
%If  $\Theta\subseteq \R^d \times \S^d_+ \times \cL$ satisfies condition (B), then it is relative compact with respect to the product topology. For a relative compactness criterion on $\cL$, we  refer to \cite[Section~2]{NeufeldNutz.15}. Moreover, 
Note that Condition~(B) guarantees that both the continuous and purely discontinuous local martingale part of the canonical representation of $X$ under any $\P \in \fP^{ac}_{sem}(\Theta)$ are true martingales, and every element $\P \in \fP^{ac}_{sem}(\Theta)$ being a $\sigma$-martingale law satisfying $\E^\P[|X_0|]<\infty$ is a true martingale law, see the proof of \cite[Lemma~5.2]{NeufeldNutz.13b}.

%Compared to the continuous case, i.e. when $\Theta\subseteq \R^d\times \S^d_+ \times \{0\}$, it is not sufficient to choose $\Gamma_0$ compact and $\Theta$ to be closed and convex to ensure that  $\fP^{ac}_{sem}(\Theta)(\Gamma_0)$ is compact, see Theorem~\ref{thm:compactness}[EXAMPLE]. 
%The difference in the discontinuous case is that in the limit, the purely discontinuous (local) martingale part may create an additional diffusion which destroys the closedness. Therefore, we need to characterize the condition on $\Theta$ that prevents the purely discontinuous (local) martingale part of the limit law to create an additional diffusion. 
%
%A related question is how to characterize when does a sequence of  purely discontinuous martingale laws converge to a purely discontinuous martingale law. 
Now, let us state our first result, which provides a necessary and sufficient condition for limits of purely discontinuous martingale measures to be again a purely discontinuous martingale measure. To that end, denote
\begin{align*}
\fP^{ac}_{m}&:=\big\{ \P \in \fP^{ac}_{sem}\,|\, X \mbox{ is a $\P$-$\F$-martingale}\big\},\\
\fP^{ac}_{m,d}&:=\big\{ \P \in \fP^{ac}_{m}\,|\, X \mbox{ is a  $\P$-$\F$-purely discontinuous martingale}\big\}\\& \, \, =\big\{ \P \in \fP^{ac}_{m}\,|\, C^\P=0\big\},\\
\fP^{ac}_{m,d}(\Theta)&:= \big\{ \P \in \fP^{ac}_{m,d}\,|\,(b^\P,c^\P, F^\P) \in \Theta \ \, \P\times dt\mbox{-a.e.} \big\}.
\end{align*}
%and define the sets $\fP^{ac}_{m,d}$, $\fP^{ac}_{m}(\Theta)$, $\fP^{ac}_{m,d}(\Theta),\dots$ in accord with the previous notion. We provide a necessary and sufficient condition for limits of purely discontinuous martingale measures to be again a purely discontinuous martingale measure. 
\begin{theorem}\label{thm:purely-disc}
	Let $\Theta\subseteq \R^d \times \S^d_+ \times \cL$ satisfy Condition (B) and $(\P_n)_{n \in \N}\subseteq \fP^{ac}_{m,d}(\Theta)$ be a sequence of purely discontinuous martingale measures which converges weakly to some $\P_0 \in \fM_1(\Omega)$ satisfying $\E^{\P_0}[|X_0|]<\infty$. Then the following hold true.
	\begin{enumerate}
		\item[1)] We have $\P_0 \in \fP^{ac}_{m}$.
		\item[2)] We have the following necessary and sufficient criterion for the limit law to be a purely discontinuous martingale measure:
		\begin{equation}\label{eq:criterion-purely-disc}
		%\mbox{$X$ is a $\P_0$-$\F$-purely disc. mart.} 
		\P_0 \in \fP^{ac}_{m,d}\Longleftrightarrow \lim\limits_{\delta\downarrow0}\limsup_{n \to \infty} \E^{\P_n}\Big[\int_0^T\int_{\{|x|\leq \delta\}} |x|^2\, F^{\P_n}_t(dx)\,dt\Big] =0.
		\end{equation} 
	\end{enumerate} 
\end{theorem}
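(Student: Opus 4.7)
For Part~1, I would invoke the tightness/absolute-continuity results already established in the paper (Corollary~\ref{co:tightness-P} and Proposition~\ref{prop:limit-P-P-enlarge}) to get $\P_0\in\fP^{ac}_{sem}$ with absolutely continuous characteristics, and then upgrade to the martingale property by passing the identity $\E^{\P_n}[\Phi(X)(X_t-X_s)]=0$ (for bounded continuous $\cF_s$-measurable $\Phi$) to the limit. The key input is uniform integrability of $\{X_t-X_s : n\in\N\}$, obtained by decomposing the purely discontinuous martingale $X-X_0$ into a small-jump piece $h*(\mu^X-\nu^{\P_n})$ (uniformly $L^2$-bounded via $\int (|x|^2\wedge 1)\,F(dx)\leq \cK$) and a large-jump piece $(x-h)*(\mu^X-\nu^{\P_n})$ (uniformly $L^1$-bounded via $\int_{\{|x|>1\}}|x|\,F(dx)\leq \cK$); together with $\E^{\P_0}[|X_0|]<\infty$ this yields the martingale identity under $\P_0$.

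For Part~2, the strategy is to read $\lim_{\delta\downarrow 0}\limsup_n \E^{\P_n}[\int_0^T\int_{\{|x|\leq\delta\}}|x|^2 F^{\P_n}_t(dx)\,dt]$ as the hidden continuous-martingale mass created in the limit. Under each $\P_n$ I would decompose $X_t-X_0 = M^{n,\delta}_t + N^{n,\delta}_t$ into the compensated small-jump martingale $M^{n,\delta}=(x\mathbf{1}_{\{|x|\leq\delta\}})*(\mu^X-\nu^{\P_n})$ and the compensated large-jump martingale $N^{n,\delta}=(x\mathbf{1}_{\{|x|>\delta\}})*(\mu^X-\nu^{\P_n})$, exploiting the isometry $\E^{\P_n}[|M^{n,\delta}_T|^2]=\E^{\P_n}[\int_0^T\int_{\{|x|\leq\delta\}}|x|^2 F^{\P_n}_t(dx)\,dt]$. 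For $(\Leftarrow)$, the assumption and Doob's inequality render $M^{n,\delta}$ uniformly negligible in $L^2$ as $\delta\downarrow 0$, so $N^{n,\delta}$ approximates $X-X_0$ uniformly in $n$; after lifting to the enlarged space $\ov\Omega$ of Section~3 and applying the Skorokhod representation theorem, the weak limit $N^{0,\delta}$ is a purely discontinuous $\P_0$-martingale (its jumps have size $>\delta$ by construction), and letting $\delta\downarrow 0$ forces $C^{\P_0}=0$.

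For the converse $(\Rightarrow)$ I would argue by contradiction: if the criterion fails, then along subsequences $n_k\to\infty$ and $\delta_k\downarrow 0$ the small-jump quadratic mass is bounded below by some $\eps>0$. Using the identity $\E^{\P}[[X,X]_T]=\E^{\P}[\tr C^{\P}_T]+\E^{\P}[\int_0^T\!\!\int|x|^2 F^{\P}_s(dx)\,ds]$ under both $\P_n$ (with $C^{\P_n}=0$) and $\P_0$, I would split the $|x|^2 F(dx)\,ds$ term at a continuity threshold $\delta$ and show, through the enlarged-space lift, that the large-jump piece $\int_0^T\!\!\int_{\{|x|>\delta\}}(|x|^2\wedge K)\,F^{\P_n}_s(dx)\,ds$ converges to its $\P_0$-analogue (with $K\to\infty$ afterwards, controlled by the $\int(|x|^2\wedge |x|)F\leq\cK$ bound). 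The unaccounted small-jump mass then forces $\E^{\P_0}[\tr C^{\P_0}_T]\geq \eps$, contradicting $\P_0\in\fP^{ac}_{m,d}$.

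\textbf{Main obstacle.} The continuous part of the quadratic variation is \emph{not} a continuous functional on the Skorokhod $J_1$ space, so one cannot simply read off $C^{\P_0}$ from a weak limit of the $\P_n$-characteristics on $\Omega$. Both directions of Part~2 therefore rely crucially on the enlarged space $\ov\Omega$ of Section~3, which decouples the continuous-martingale and jump-measure coordinates so that the decomposition $X=X_0+M^{n,\delta}+N^{n,\delta}$ can be tracked in the limit almost surely via the Skorokhod representation theorem.
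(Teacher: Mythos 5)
Your overall architecture is the paper's: for Part~1, establish $\P_0\in\fP^{ac}_{sem}$ (Proposition~\ref{prop:limit-P-P-enlarge}) and then pass the martingale identity to the limit using uniform integrability; for Part~2, split into compensated small and large jumps and track the hidden diffusion on the enlarged space via the Skorokhod representation, which is exactly the content of Proposition~\ref{prop:purely-disc-enl} and Lemmas~\ref{le:quad-var-conv-Md-enlarge}--\ref{le:limit-gamma-n-delta-in-n}. The first genuine gap is in your uniform integrability step. You deduce uniform integrability of $\{X_t-X_s\}$ over $n$ from a uniform $L^1$ bound on the compensated large-jump part $(x-h)\ast(\mu^X-\nu^{\P_n})$; but uniform $L^1$-boundedness does not imply uniform integrability, and Condition~(B) controls the jumps beyond the truncation level only through $\int_{\{|x|>1\}}|x|\,F(dx)\le\cK$, i.e.\ only in $L^1$. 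Concretely, compensated compound Poisson martingale laws with jump size $n$ and jump intensity $1/n$ satisfy Condition~(B) with a uniform constant, yet for them $\E^{\P_n}\big[|X_T|\,\mathbbm{1}_{\{|X_T|>a\}}\big]$ does not tend to $0$ uniformly in $n$ as $a\to\infty$; so no purely $L^1$ argument can close this step. This is precisely the point where the paper does not argue from the $L^1$ bound but invokes \cite[Lemma~5.2]{NeufeldNutz.13b} for the uniform integrability of $(X_t-X_0\mid\P_n)$, while the truncated-jump quantities are handled by genuine $L^2$ bounds as in Lemma~\ref{le:M-enlarge}.

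The second gap is in your $(\Rightarrow)$ direction of Part~2. You use the identity $\E^{\P}\big[[X,X]_T\big]=\E^{\P}[\tr C^{\P}_T]+\E^{\P}\big[\int_0^T\int_{\R^d}|x|^2\,F^{\P}_s(dx)\,ds\big]$ and propose to remove the truncation level $K$ afterwards, ``controlled by $\int(|x|^2\wedge|x|)\,F(dx)\le\cK$''. Under Condition~(B) the quantity $\int_{\{|x|>1\}}|x|^2\,F(dx)$ may be infinite ($|x|^2$ is not dominated by $|x|^2\wedge|x|$ on $\{|x|>1\}$), so both sides of your identity can be infinite and the $K\to\infty$ step is not controlled by the (B) bound. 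The paper avoids the untruncated quadratic variation altogether: the whole analysis in Subsection~\ref{subsec:proof-purely-disc-and-closedness-enlarge} is carried out for $[\,\ov M^d\,]$ with $\ov M^d=h\ast(\mu^{\ov X}-\nu)$, whose jumps $h^2(\Delta\ov X)$ are bounded, so the uniform second-moment estimates of Lemma~\ref{le:limit-gamma-n-delta-in-n} and the dominated-convergence identification of $y^{0,cont}$ with $\langle\ov M^{d,c}\rangle$ are available; since the continuous martingale parts of $X$ and of $h\ast(\mu^X-\nu)$ coincide, nothing is lost by truncating. Your $(\Leftarrow)$ sketch captures the paper's mechanism (small-jump martingales uniformly negligible in $L^2$, big-jump functionals $J_1$-continuous), but note also that identifying $C^{\P_0}$, a characteristic relative to the filtration generated by $X$ on $\Omega$, with the quantity computed under $\ov\P_0$-$\ov\F$ on $\ov\Omega$ requires the filtration-reduction argument (Lemma~\ref{le:semimart-char-smaller-filtr} and the chain of equivalences in Step~2 of the paper's proof, together with Remark~\ref{rem:from-P-to-P-enlarge} to restate the criterion in terms of $F^{\P_n}$), which your plan leaves implicit.
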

\begin{corollary}\label{co:closedness-M-d-law}
	Let $\Theta\subseteq \R^d \times \S^d_+ \times \cL$ satisfy Condition (B) and $(\P_n)_{n \in \N}\subseteq \fP^{ac}_{sem}(\Theta)$. Assume that the sequence of laws $(\P_n \circ (M^{d,\P_n})^{-1})_{n \in \N}$ of the corresponding purely discontinuous martingale part of the canonical process $X$ under $\P_n$ %-$\F$ 
	converges to some law $\P_0 \in \fM_1(\Omega)$. Then 1) and 2) of Theorem~\ref{thm:purely-disc} hold true, too. 
\end{corollary}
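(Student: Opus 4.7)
The plan is to reduce the corollary to Theorem~\ref{thm:purely-disc} by transporting each $\P_n$ through the map $\omega \mapsto M^{d,\P_n}(\omega)$. Set $\Q_n := \P_n \circ (M^{d,\P_n})^{-1} \in \fM_1(\Omega)$ (after selecting a measurable c\`adl\`ag version of the pushforward). Under $\Q_n$ the canonical process $X$ has the same distribution as $M^{d,\P_n}$ under $\P_n$, and since $M^{d,\P_n} = h*(\mu^X - \nu^{\P_n})$ is a $\P_n$-martingale with vanishing continuous local martingale part and $M^{d,\P_n}_0 = 0$, it follows that $\Q_n \in \fP^{ac}_{m,d}$ and $\Q_n\circ X_0^{-1} = \delta_0$. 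In particular $\P_0 \circ X_0^{-1}=\delta_0$ by weak convergence, so $\E^{\P_0}[|X_0|] = 0 < \infty$.

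To invoke Theorem~\ref{thm:purely-disc} I need $\Q_n \in \fP^{ac}_{m,d}(\Theta^*)$ for some $\Theta^* \subseteq \R^d\times\S^d_+\times\cL$ satisfying Condition~(B). A routine characteristics computation (see e.g.\ \cite[II.1.33]{JacodShiryaev.03}) shows that the differential characteristics of $X$ under $\Q_n$ are $(b^{\Q_n}_t, 0, \tilde F^{\P_n}_t)$, where $\tilde F^{\P_n}_t := F^{\P_n}_t\circ h^{-1}$ is the pushforward L\'evy measure and $b^{\Q_n}_t = \int(h(y)-y)\,\tilde F^{\P_n}_t(dy)$. Since $h$ is bounded and agrees with the identity on some ball $\{|x|\leq\delta_1\}$, the estimates $|h(y)-y|\leq C_h(|y|^2\wedge|y|)$ and $\int(|h(x)|^2\wedge|h(x)|)\,F^{\P_n}_t(dx)\leq C'_h\,\cK$ are immediate from Condition~(B) on $\Theta$. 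Hence the set $\Theta^*$ collecting all such characteristic values satisfies Condition~(B) with some $\cK' = \cK'(\cK, h) < \infty$, and $\Q_n \in \fP^{ac}_{m,d}(\Theta^*)$.

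Theorem~\ref{thm:purely-disc} applied to $(\Q_n)$ now yields $\P_0 \in \fP^{ac}_m$ together with the equivalence of $\P_0 \in \fP^{ac}_{m,d}$ with
\[
\lim_{\delta\downarrow 0}\limsup_n \E^{\Q_n}\!\Big[\int_0^T\!\int_{\{|y|\leq\delta\}}|y|^2\, F^{\Q_n}_t(dy)\,dt\Big] = 0.
\]
By the pushforward identity the expectation equals $\E^{\P_n}[\int_0^T\!\int_{\{|h(x)|\leq\delta\}}|h(x)|^2\, F^{\P_n}_t(dx)\,dt]$. Splitting the inner integral at $|x|=\delta_1$: for $\delta\leq\delta_1$ the contribution from $\{|x|\leq\delta_1\}$ equals $\E^{\P_n}[\int_0^T\!\int_{\{|x|\leq\delta\}}|x|^2 F^{\P_n}_t(dx)\,dt]$ (since $h(x)=x$ there), while the contribution from $\{|x|>\delta_1\}$ is dominated by $\delta^2 T\cdot\cK/\min(\delta_1^2,\delta_1)$ and vanishes as $\delta\downarrow 0$. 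This converts the criterion into the one stated in 2). The main technical hurdle lies in the characteristic bookkeeping of the second step --- deriving the explicit formula for the characteristics of $M^{d,\P_n}$ under $\P_n$ and verifying Condition~(B) for $\Theta^*$ --- after which the corollary reduces cleanly to Theorem~\ref{thm:purely-disc} together with the small-$\delta$ perturbation above.
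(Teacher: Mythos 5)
Your overall route coincides with the paper's: push each $\P_n$ forward through $M^{d,\P_n}$, check that the image laws lie in $\fP^{ac}_{m,d}(\Theta^*)$ for some $\Theta^*$ satisfying Condition~(B), apply Theorem~\ref{thm:purely-disc}, and translate the small-jump criterion back to $F^{\P_n}$ using that $h$ is the identity near the origin. Your final $\delta$-splitting is correct (indeed more careful than strictly needed, since $F^{\P_n}\circ h^{-1}$ may charge a neighbourhood of $0$ with images of big jumps, and your error term $\delta^2 T\,\cK/\min(\delta_1^2,\delta_1)$ controls exactly that), and the remark that $\Q_n\circ X_0^{-1}=\delta_0$ neatly disposes of the hypothesis $\E^{\P_0}[|X_0|]<\infty$.

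The gap sits precisely in the step you dismiss as routine bookkeeping. The triplet $(b^{\Q_n},0,\widetilde F^{\P_n})$ you compute is the characteristics of $M^{d,\P_n}$ under $\P_n$ \emph{relative to the original filtration} $\F$. Under $\Q_n$, however, the canonical process carries only its own raw filtration, i.e.\ the image of the subfiltration $\F^{M}\subseteq\F$ generated by $M^{d,\P_n}$, and the processes $b^{\P_n},F^{\P_n}$ (hence $b^{\Q_n},\widetilde F^{\P_n}$) need not be $\F^{M}$-adapted — they may depend on parts of $\omega$ that are invisible to $M^{d,\P_n}$ — so they cannot literally be the differential characteristics under $\Q_n$. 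What is true is that the $\F^{M}$-characteristics are again absolutely continuous with densities given by the optional projections of $(b^{\Q_n},0,F^{\P_n}\circ h^{-1})$; this is exactly Lemma~\ref{le:semimart-char-smaller-filtr}, and it is the only non-trivial content of the paper's proof of this corollary. With the projections inserted, two further points are needed and are available: first, the projected triplets still satisfy the Condition~(B) bound, so one should take $\Theta^*$ to be, say, all $(b,0,F)$ with $|b|+\int_{\R^d}(|x|^2\wedge|x|)\,F(dx)\le\cK'$, which is closed and convex; your ``set collecting all such characteristic values'' may fail to contain the projected (conditionally averaged) triplets. Second, the quantity $\E^{\Q_n}\big[\int_0^T\int_{\{|y|\le\delta\}}|y|^2\,F^{\Q_n}_t(dy)\,dt\big]$ is unchanged when the projection replaces $F^{\P_n}\circ h^{-1}$, since optional projections preserve such expectations (Remark~\ref{rem:semimart-char-smaller-filtr}); this is what actually justifies your pushforward identity. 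These repairs are standard, but as written your argument skips them, and they are precisely where the paper invokes its appendix lemma.
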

\begin{remark}\label{rem:thm:purely-disc-infinite-time}
	The condition for a martingale to be 
	%having absolutely continuous characteristics and for martingale measures
	a purely discontinuous one
	%or not are local conditions.
	is of local nature. Therefore, the above theorem and corollary also hold true when considering (semi-) martingale laws defined on the time interval $[0,\infty)$, up to the slight modification that the right-hand side of \eqref{eq:criterion-purely-disc} must then hold for all $T \in [0,\infty)$.
\end{remark}
%
%
% % % % % % % % % % % % % % % % % % % % % % % % % % % % %
We can use Theorem~\ref{thm:purely-disc} to obtain a necessary and sufficient criterion on $\Theta$ for $\fP^{ac}_{sem}(\Theta)(\Gamma_0)$ to be compact. To that end, we introduce the following condition.
\begin{conditionJ} A set  $\Theta\subseteq \R^d \times \S^d_+ \times \cL$ satisfies Condition (J) if
	\begin{equation*}
	\lim\limits_{\delta\downarrow0}\sup_{(b,c,F)\in \Theta}\int_{\{|x|\leq\delta\}} |x|^2 \, F(dx)=0.
	\end{equation*}
\end{conditionJ}
%We see that Condition (J) is the weakest condition on $\Theta$ one can impose to prevent that limits of purely discontinuous martingales create diffusion parts. Now the intuition is the following. If we can exclude the limit of the purely discontinuous martingale part to create some diffusion, then compactness on $\Theta$ should lead to compactness of $\fP^{ac}_{sem}(\Theta)(\Gamma_0)$. For the reverse implication, it turns out that we need $\Theta\subseteq \R^d \times \S^d_+ \times \cL$ to behave like a box. We write
Moreover, denote
\begin{align*}
\proj_c(\Theta)&:=\big\{c \in \S^d_+ \, \big| \, \mbox{ there exists $(b,F)\in \R^d\times \cL$ such that }\, (b,c,F)\in \Theta\big\},\\
\proj_F(\Theta)&:=\big\{F \in \cL \, \big| \, \mbox{ there exists $(b,c)\in \R^d\times \S^d_+$ such that }\, (b,c,F)\in \Theta\big\}.
\end{align*}
% 
%
%and consider the following definition.
\begin{definition}\label{def:box}
	Given $\Theta\subseteq \R^d \times \S^d_+ \times \cL$, we say that $\proj_c(\Theta)$ does not depend on $\proj_F(\Theta)$ if for any fixed $F \in \proj_F(\Theta)$ 
	\begin{equation*}
	\big\{c \in \proj_c(\Theta)\, \big|\,\mbox{ there exists $b \in \R^d$ such that }\, (b,c,F) \in \Theta \big\}=\proj_c(\Theta).
	\end{equation*}
\end{definition}
%
%
%\begin{remark}\label{rem:def:box}
The condition on $\Theta$ introduced in Definition~\ref{def:box} can be seen as a generalization of a box. Indeed, for any subsets $D_b\subseteq \R^d, D_c \subseteq \S^d_+$, $D_ F\subseteq \cL$, the box
$\Theta_{Box}:=D_b\times D_c \times D_F$ satisfies the above condition.
%\end{remark}
%
%
Let us state the compactness criterion for $\fP^{ac}_{sem}(\Theta)(\Gamma_0)$.
\begin{theorem}\label{thm:compactness}
	Let $\Theta\subseteq \R^d \times \S^d \times \cL$ be closed, convex and satisfy Condition (B). Then the following hold true:
	\begin{enumerate}
		\item[1)] $\Theta$ satisfying Condition (J) implies closedness of  $\fP^{ac}_{sem}(\Theta)$.
		\item [2)] On the other hand, if additionally $\Theta$ is satisfying the condition in Definition~\ref{def:box}, then closedness of $\fP^{ac}_{sem}(\Theta)$ implies that $\Theta$ satisfies Condition (J).
	\end{enumerate}
	In addition, consider a set of distributions $\Gamma_0 \subseteq \fM_1(\R^d)$ being compact. Then:
	\begin{enumerate}
		\item[3)] $\Theta$ satisfying Condition (J) implies compactness of  $\fP^{ac}_{sem}(\Theta)(\Gamma_0)$.
		\item [4)] On the other hand, if additionally $\Theta$ is satisfying the condition in Definition~\ref{def:box}, then compactness of $\fP^{ac}_{sem}(\Theta)(\Gamma_0)$ implies that $\Theta$ satisfies Condition (J).
	\end{enumerate}
\end{theorem}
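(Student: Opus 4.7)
The plan is to leverage the already-available infrastructure: Corollary~\ref{co:tightness-P} and Proposition~\ref{prop:limit-P-P-enlarge} (tightness, and limits remaining in $\fP^{ac}_{sem}$ under Condition~(B)), the enlarged space $\ov\Omega$ introduced in Section~3, and, crucially, Theorem~\ref{thm:purely-disc} with its Corollary~\ref{co:closedness-M-d-law}. For the sufficiency in~1), take $(\P_n)\subseteq \fP^{ac}_{sem}(\Theta)$ converging weakly to some $\P_0$; the cited results already yield $\P_0\in \fP^{ac}_{sem}$, so the triplet $(b^{\P_0},c^{\P_0},F^{\P_0})$ exists and the task is to place it in $\Theta$ $\P_0\otimes dt$-a.e. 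I would lift the canonical decomposition $X=X_0+B^{\P_n}+M^{c,\P_n}+M^{d,\P_n}$ to $\ov\Omega$, pass to a subsequence, and use Skorokhod's representation theorem to make all four summands converge almost surely. Condition~(J), being uniform over $\Theta$, implies the criterion \eqref{eq:criterion-purely-disc} along $(M^{d,\P_n})$, so Corollary~\ref{co:closedness-M-d-law} forces the limit of the purely discontinuous martingale parts to remain purely discontinuous: no diffusion leaks from the small jumps. Standard semimartingale limit theory then identifies $(b^{\P_0},c^{\P_0},F^{\P_0})$ as a weak $L^1(\P_0\otimes dt)$-limit of $(b^{\P_n},c^{\P_n},F^{\P_n})$; Mazur's lemma produces convex combinations of the characteristics, still in $\Theta$ by convexity, converging $\P_0\otimes dt$-a.e.\ along a further subsequence, and closedness of $\Theta$ deposits the limit in $\Theta$, settling~1). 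For~3), Condition~(B) together with compactness of $\Gamma_0$ gives tightness via Corollary~\ref{co:tightness-P}, while closedness of $\{\P:\P\circ X_0^{-1}\in \Gamma_0\}$ follows from continuity of the initial-marginal map together with compactness of $\Gamma_0$; combined with~1) this yields compactness.

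For the necessity in~2) and~4), assume Condition~(J) fails, so there exist $\eps>0$, $\delta_k\downarrow 0$, and $(b_k,c_k,F_k)\in \Theta$ with $\int_{\{|x|\le \delta_k\}}|x|^2\,F_k(dx)\ge \eps$. Condition~(B) renders $\proj_c(\Theta)\subseteq \S^d_+$ closed, convex and bounded, hence compact, so a maximizer $c^{\star}\in\argmax_{c\in \proj_c(\Theta)}\tr(c)$ exists. The box-like hypothesis of Definition~\ref{def:box} provides $b'_k\in \R^d$ with $(b'_k,c^{\star},F_k)\in\Theta$. Let $\P_k$ be the L\'evy law under which $X_0\equiv 0$ and $X$ has triplet $(b'_k,c^{\star},F_k)$ (which lies in $\fP^{ac}_{sem}(\Theta)(\Gamma_0)$ once $\delta_0\in \Gamma_0$, a harmless reduction for~4)). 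Along a subsequence $\P_k\to \P_0$ weakly, and applying Theorem~\ref{thm:purely-disc}/Corollary~\ref{co:closedness-M-d-law} to the purely discontinuous martingale part of $X$ under $\P_k$, together with the failure of \eqref{eq:criterion-purely-disc}, shows that the limit acquires a genuine continuous-martingale part with covariance $c'\in \S^d_+\setminus\{0\}$. Consequently $c^{\P_0}=c^{\star}+c'$ has strictly larger trace than $c^{\star}$, so by maximality $c^{\P_0}\notin \proj_c(\Theta)$ and $\P_0\notin \fP^{ac}_{sem}(\Theta)$, contradicting the assumed closedness (respectively compactness).

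The main obstacle is the first part of 1): turning weak convergence on $\Omega$ into $\P_0\otimes dt$-a.e.\ convergence of the characteristic triplets while ruling out a diffusion leaking out of the purely discontinuous martingale parts. This is exactly the bridge Theorem~\ref{thm:purely-disc} is designed to provide, and Condition~(J) is the uniform version of its pointwise criterion. In the converse direction, the subtle point is that the box-like hypothesis is genuinely needed: without it, the extra covariance $c'$ obtained in the limit might combine with the limiting jump measure to land back in $\Theta$, and the proposed counterexample would collapse --- so Definition~\ref{def:box} is what ensures trace maximality can be exploited to escape $\proj_c(\Theta)$.
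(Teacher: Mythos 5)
Your overall architecture coincides with the paper's: lift to the enlarged space, extract a weakly convergent subsequence, use the Skorokhod representation, invoke Theorem~\ref{thm:purely-disc} (through Condition~(J)) to rule out a diffusion created by the small jumps, and use convexity and closedness of $\Theta$ to keep the limiting triplet in $\Theta$; for the converse, build L\'evy laws with a maximal second characteristic, use Definition~\ref{def:box} and the failure of Condition~(J) to create extra diffusion in the limit, and conclude that the limit escapes $\proj_c(\Theta)$ (your trace-maximality plays the role of the paper's norm-maximality). Two steps, however, are genuinely under-justified. First, the assertion that $(b^{\P_0},c^{\P_0},F^{\P_0})$ is a ``weak $L^1(\P_0\otimes dt)$-limit of $(b^{\P_n},c^{\P_n},F^{\P_n})$'' does not typecheck as stated: the $\P_n$-characteristics are defined only $\P_n\otimes dt$-a.e., the measures $\P_n$ and $\P_0$ are in general mutually singular, and the third component is measure-valued, so no Banach-space setting for Mazur's lemma has been specified. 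The paper's substitute is concrete: on the Skorokhod-representation space it works with the integrated characteristics $\ov B,\ov C,\ov V^i$ (uniformly Lipschitz and converging pointwise), embeds $F$ via $\varphi(F)=(\int g_i\,dF)_{i\in\N}$, observes that the difference quotients $m(\ov B_{t+\frac1m}-\ov B_t,\,\ov C_{t+\frac1m}-\ov C_t,\,(\ov V^i_{t+\frac1m}-\ov V^i_t)_i)$ lie in $\mathrm{cl}(\ov\varphi(\Theta))$ by convexity, passes to the limit first in $n$ and then in $m$, and returns to $\Theta$ via \eqref{eq:char-in-Theta}. Your Mazur route could likely be carried out coordinate-wise after the same embedding, but as written it is a gap.

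Second, you never pass back from the enlarged space to $\Omega$. The limit processes $\ov b,\ov c,\ov v$ are adapted to the large filtration $\ov\F$, whereas $(b^{\P_0},c^{\P_0},F^{\P_0})$ are the characteristics of $X$ under $\P_0=\ov\P_0\circ\ov X^{-1}$ with respect to $\F$; the paper bridges this with Lemma~\ref{le:push-forward} and Lemma~\ref{le:semimart-char-smaller-filtr}, where the $\F$-characteristics arise as optional projections of the $\ov\F$-ones and convexity plus closedness of $\Theta$ (a Jensen argument) are needed once more to keep them in $\Theta$. The same issue is hidden in your converse when you write $c^{\P_0}=c^{\star}+c'$: one must note that the second characteristic of $\ov X$ is the same under $\ov\F$ and $\ov\F^{\ov X}$ (since $[\ov X]$ is $\ov\F^{\ov X}$-adapted) before transferring the conclusion to $\P_0$. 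Finally, your appeal to an exact trace maximizer presupposes that $\proj_c(\Theta)$ is closed; this is true but requires the sequential compactness of the Condition-(B)-bounded $F$-sections (the paper is equally terse about the attainment of its maximum), and in 4) one should start the counterexample L\'evy laws from an arbitrary $\mu_0\in\Gamma_0$ rather than assume $\delta_0\in\Gamma_0$. With these repairs your plan reproduces the paper's proof.
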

%Assume that the set $\{\P\circ X_0^{-1}\,|\, \P \in \fP^{ac}_{sem}(\Theta)\}$ is tight. Then the set $\fP^{ac}_{sem}(\Theta)\subseteq \fM_1(\Omega)$ is compact with respect to the topology of weak convergence. 
%
%
\begin{remark}\label{rem:thm:compactness-vgl-closedness}
	Under the above conditions, tightness of $\Gamma_0 \subseteq \fM_1(\R^d)$ implies tightness of $\fP^{ac}_{sem}(\Theta)(\Gamma_0)$, see Corollary~\ref{co:tightness-P}. Moreover, closedness of $\Gamma_0$ ensures that any limit law $\P_0$ of a sequence $(\P_n) \in \fP^{ac}_{sem}(\Theta)(\Gamma_0)$ satisfies $\P_0 \circ X_0^{-1}\in \Gamma_0$. Therefore, part 3) and 4) are simple consequences of part 1) and 2).
\end{remark}
\begin{remark}\label{rem:compactness:infinite-time}
	By the same arguments, the results of Theorem~\ref{thm:compactness} also hold true when considering semimartingale laws defined on the time interval $[0,\infty)$.  
\end{remark}
%%%%%%%%%%%%%%%%%%%%%%%%%%%%%%%%%%%5
%%%%%%%%%%%%%%%%%%%%%%%%%%%%%%%%%%%%%%%%
%
%
Next, we provide a second compactness criterion for $\fP_{sem}^{ac}(\Theta)(\Gamma_0)$
%A different way to obtain characterize compactness of $\fP_{sem}^{ac}(\Theta)$ is to deduce a criterion 
expressed by the infinitesimal generator of the L\'evy laws in $\fP_{sem}^{ac}(\Theta)$. To that end, for any set of L\'evy triplets $\Theta$ set
\begin{equation*}
\fP_L(\Theta):=\big\{ \P \in \fP^{ac}_{sem}(\Theta)\,|\, X \mbox{ is a $\P$-$\F$-L\'evy process}\big\}.
\end{equation*}
For any $(b,c,F) \in \Theta$ we consider the infinitesimal generator $\mathfrak{L}^{(b,c,F)}$, which satisfies for any smooth enough function $f$ that  
\begin{equation*}
\label{def:infit-gener}
\begin{split}
(\mathfrak{L}^{(b,c,F)}f)(x):= &\sum_{i = 1}^d b^i \frac{\partial f}{\partial x^i}(x) + \frac{1}{2}\sum_{i,j=1}^d c^{ij}\frac{\partial^2 f}{\partial x^i \partial x^j}(x) \\
&+ \int_{\R^d} \Big(f(x + y) -f(x) - \sum_{i =1}^d\frac{\partial f}{\partial x^i}(x)h^i(y)\Big) \, F(dy),
\end{split}
\end{equation*}
and introduce the L\'evy exponent function $\psi^{(b,c,F)} \in C(\R^d, \mathbb C)$  given by 
\begin{equation*}
\label{def:exponent}
\psi^{(b,c,F)}(x) = \mathrm{i}x\cdot b - \frac{1}{2}x \cdot c \cdot x + \int_{\R^d}(\mathrm{e}^{\mathrm{i}x \cdot y} - 1 - \mathrm{i}x\cdot h(y) )\, F(dy).
\end{equation*}
Furthermore, consider a function $u\colon \R^d \times \S^d_+ \times \cL \to \R^d \times \S^d_+ \times \cL$ defined by
\begin{equation}
\label{eq:Def-func-u}
(b,c,F)\mapsto (b,\widetilde{c}, F), \quad \mbox{where} \quad  \widetilde{c}^{ij}=c^{ij}+ \int_{\R^d} h^i(x) h^j(x)\, F(dx).
\end{equation}
Denote by $C^2_b(\R^d)$ the set of bounded continuous functions with bounded continuous derivatives up to the second order.
Then we obtain the following second compactness criterion for semimartingale laws $\fP^{ac}_{sem}(\Theta)(\Gamma_0)$.
\begin{theorem}
	\label{thm:compactness2}
	Let $\Theta \subset \R^d \times \S^d_+ \times \cL$ be closed, convex and satisfy condition (B).
	% and let $\Gamma_0 \subseteq \fM_1(\R^d)$ be a compact set of distributions on $\R^d$ with $\delta_{\{0\}} \in \Gamma_0$. 
	Then the following are equivalent:
	\begin{enumerate}
		\item[1)] $u(\Theta)\subseteq \R^d \times \S^d_+ \times \cL$ is closed. 
		\item[2)] For any compact set $\Gamma_0 \subseteq \fM_1(\R^d)$ of distributions on $\R^d$, the set of laws   $\fP^{ac}_{sem}(\Theta)(\Gamma_0)$ is compact. 
		\item[3)] $\fP_{L}(\Theta)$ is compact. 
		\item[4)] The set $\{\fL^{(b,c,F)}: (b,c,F) \in \Theta\}$ is sequentially compact with respect to $C^2_b(\R^d)$-test functions for the topology of pointwise convergence. More precisely, for every sequence $((b^n,c^n,F^n))_{n \in \N} \subseteq \Theta$ there exists a subsequence $((b^{n_k},c^{n_k},F^{n_k}))_{k\in  \N}$ and  $(b^0,c^0,F^0) \in \Theta$ such that for all $f \in C^2_b(\R^d)$ the subsequence of functions $((\fL^{(b^{n_k},c^{n_k},F^{n_k})}f))_{k \in \N}$ converges pointwise to the function $(\fL^{(b^0,c^0,F^0)}f)$. 
		% in the sense that every sequence $(L^{(b^n,c^n,F^n)})_{n \ge 1}$ has a subsequence $(L^{(b^{n_k},c^{n_k},F^{n_k})})_{k \ge 1}$ such that $(L^{(b^{n_k},c^{n_k},F^{n_k})}f)(x)$ converging to the limit $(L^{(b^0,c^0,F^0)}f)(x)$ for all $f \in C^2_b(\R^d)$ and all $x \in \R^d$ with $(b^0,c^0,F^0) \in \Theta$, where $L^{(b,c,F)}$ denotes the infinitesimal generator associated to the truncation function $h$ for the L\'evy process with the L\'evy triplet $(b,c,F)$. 
		%
		\item[5)] The set of functions $\{\psi^{(b,c,F)}: (b,c,F) \in \Theta\}$ is sequentially compact %in $C(\R^d,\C)$ 
		for the  topology of pointwise convergence.  More precisely, for every sequence $((b^n,c^n,F^n))_{n \in \N} \subseteq \Theta$ there exists a subsequence $((b^{n_k},c^{n_k},F^{n_k}))_{k\in  \N}$ and  $(b^0,c^0,F^0) \in \Theta$ such that the subsequence of functions $(\psi^{(b^{n_k},c^{n_k},F^{n_k})})_{k \in \N}$ converges pointwise to the function $\psi^{(b^{0},c^{0},F^{0})}$.
		%
		%  where $\psi_{(b,c,F)}$ denotes the L\'evy exponent function associated to the truncation function $h$ for the L\'evy process with the L\'evy triplet $(b,c,F)$.
	\end{enumerate}
\end{theorem}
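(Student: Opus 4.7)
The plan is to establish the equivalences via the cycle $1)\Leftrightarrow 5)\Leftrightarrow 4)$ together with $1)\Rightarrow 2)\Rightarrow 3)\Rightarrow 5)$, so that the main equivalence sits between the static conditions on $\Theta$ (items $1),4),5)$) and the dynamic conditions on laws (items $2),3)$). I will begin with $4)\Leftrightarrow 5)$: the real and imaginary parts of $y\mapsto e^{\mathrm{i}x\cdot y}$ are in $C^2_b(\R^d)$ for each fixed $x\in\R^d$, and a direct computation gives $(\fL^{(b,c,F)}e^{\mathrm{i}x\cdot\cdot})(0)=\psi^{(b,c,F)}(x)$, which makes $4)\Rightarrow 5)$ immediate. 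The converse follows from the conclusion established below that pointwise convergence of $\psi^n$ is equivalent to convergence of $u(b^n,c^n,F^n)$ in the product topology, which in turn yields pointwise convergence of $\fL^n f$ on $C^2_b$ by a truncation-in-$|y|$ argument.

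The central equivalence $1)\Leftrightarrow 5)$ rests on the modified L\'evy--Khintchine representation
\begin{equation*}
\psi^{(b,c,F)}(x)=\mathrm{i}x\cdot b-\tfrac12 x\cdot \widetilde c \cdot x + \int_{\R^d}\bigl(e^{\mathrm{i}x\cdot y}-1-\mathrm{i}x\cdot h(y)+\tfrac12 (x\cdot h(y))^2\bigr)\, F(dy),
\end{equation*}
whose integrand is $O(|y|^3)$ near the origin and bounded elsewhere. Using Condition (B), a cutoff-at-$|y|=\delta$ argument shows that pointwise convergence of $\psi^n(x)$ in $x$ is equivalent to convergence of $u(b^n,c^n,F^n)$ in $\R^d\times\S^d_+\times\cL$; moreover, $(u(b^n,c^n,F^n))_n$ is relatively compact under Condition (B) by a diagonal Helly/Prokhorov extraction applied to the finite measures $F^n\!\restriction\!\{|y|\ge 1/k\}$ (whose total mass is uniformly bounded by $\cK$). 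Thus both $1)$ and $5)$ express that every subsequential product-topology limit of $u(b^n,c^n,F^n)$ equals $u(b^0,c^0,F^0)$ for some $(b^0,c^0,F^0)\in\Theta$, and since $u$ is affine with explicit inverse $c=\widetilde c-\int h^i h^j\, F(dy)$, the two conditions are equivalent.

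The implications $2)\Rightarrow 3)\Rightarrow 5)$ are comparatively short. Every L\'evy law starts at $0$, so $\fP_L(\Theta)\subseteq \fP^{ac}_{sem}(\Theta)(\{\delta_0\})$, and the latter is compact by $2)$; the L\'evy property (independent, stationary increments) is preserved under weak limits via products of characteristic functions, and a weak limit in $\fP^{ac}_{sem}(\Theta)$ has a constant triplet necessarily lying in $\Theta$, so $\fP_L(\Theta)$ is closed, whence $3)$. For $3)\Rightarrow 5)$, given $((b^n,c^n,F^n))\subseteq \Theta$, the associated L\'evy laws $\P_n\in\fP_L(\Theta)$ have a weakly convergent subsequence with limit a L\'evy law in $\fP_L(\Theta)$ of some triplet $(b^0,c^0,F^0)\in\Theta$, and weak convergence of L\'evy laws is equivalent to pointwise convergence of their characteristic functions $e^{t\psi}$ and hence of $\psi^{n_k}\to\psi^{(b^0,c^0,F^0)}$.

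The main obstacle will be $1)\Rightarrow 2)$, which lifts the static closedness of $u(\Theta)$ to a statement about the random, time-varying differential characteristics of a weakly convergent sequence of semimartingale laws. Tightness of $\fP^{ac}_{sem}(\Theta)(\Gamma_0)$ and the facts that every weak limit $\P_0$ lies in $\fP^{ac}_{sem}$ with $\P_0\circ X_0^{-1}\in\Gamma_0$ follow from Corollary~\ref{co:tightness-P} and Proposition~\ref{prop:limit-P-P-enlarge} combined with compactness of $\Gamma_0$. What remains is to show $(b^{\P_0}_t,c^{\P_0}_t,F^{\P_0}_t)\in\Theta$ $\P_0\times dt$-a.e.; for this I plan to re-use the enlarged-space/Skorokhod-representation strategy of Sections~3--4 underlying Theorems~\ref{thm:purely-disc} and~\ref{thm:compactness} to show that the effective characteristics $u(b^{\P_n},c^{\P_n},F^{\P_n})$ converge in an appropriate $\P\times dt$-sense to $u(b^{\P_0},c^{\P_0},F^{\P_0})$ — the key point being that by construction $u$ absorbs precisely the small-jump contribution that could otherwise create an additional diffusion part in the limit. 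Convexity of $\Theta$, hence of $u(\Theta)$ (since $u$ is affine), then permits a Mazur-lemma passage to the limit, and closedness of $u(\Theta)$ together with the injectivity of $u$ finally yields $(b^{\P_0}_t,c^{\P_0}_t,F^{\P_0}_t)\in\Theta$ $\P_0\times dt$-a.e.
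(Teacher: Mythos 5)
Your proposal is correct in substance but organizes the equivalences differently from the paper, which proves the single cycle $1)\Rightarrow2)\Rightarrow3)\Rightarrow4)\Rightarrow5)\Rightarrow1)$, so that the static equivalences among $1)$, $4)$, $5)$ are only obtained there through the probabilistic detour via compactness of the law sets. You instead prove $1)\Leftrightarrow5)\Leftrightarrow4)$ by a purely analytic argument (the exponent written in terms of the modified triplet $u(b,c,F)$, a cutoff at $|y|=\delta$ exploiting Condition~(B), Helly-type relative compactness, and uniqueness of limits), and then only need $1)\Rightarrow2)\Rightarrow3)\Rightarrow5)$ to attach the probabilistic statements; this buys a self-contained treatment of the ``static'' conditions, at the price of some logical redundancy. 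Your $2)\Rightarrow3)$ and $3)\Rightarrow5)$ are equivalent repackagings of the paper's $2)\Rightarrow3)$ and $3)\Rightarrow4)$ (which simply cite \cite[VII.2.6 and VII.2.9]{JacodShiryaev.03}), and your $4)\Rightarrow5)$ is literally the paper's. For the hard implication $1)\Rightarrow2)$ you end up with the paper's own strategy: on the enlarged space the coordinate $\ov{\widetilde C}$ (the modified second characteristic) converges, and the proof of Corollary~\ref{co:closedness-P-Theta-enlarge} is repeated with $(\ov B,\ov{\widetilde C},\ov V^i)$ in place of $(\ov B,\ov C,\ov V^i)$; the passage to the differential characteristics goes through the time-averages $m(\cdot_{t+\frac1m}-\cdot_t)$ lying in the closed convex set $\mbox{cl}(\ov\varphi(u(\Theta)))$ — this averaging, not Mazur's lemma, is where convexity enters, so your ``Mazur-lemma passage'' is a misnomer for the right idea, and the rest of your plan (closedness of $u(\Theta)$, recovering $c$ from $\widetilde c$ and $F$, then Lemma~\ref{le:push-forward} and Lemma~\ref{le:identify-limit-P-0}) matches the paper. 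Two small imprecisions worth fixing: the masses $F^n(\{|y|\ge 1/k\})$ are bounded by $k^2\cK$, not by $\cK$ (harmless for the diagonal extraction, and the uniform bound $F^n(\{|y|\ge a\})\le \cK/a$ rules out loss of mass at infinity); and the claim that the cutoff argument alone yields the equivalence of pointwise convergence of $\psi^n$ with convergence of $u(b^n,c^n,F^n)$ hides the harder direction, which additionally needs Lévy's continuity theorem together with uniqueness of the (modified) Lévy--Khintchine representation, or simply \cite[Theorem~VII.2.9]{JacodShiryaev.03} as in the paper's $5)\Rightarrow1)$.
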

\begin{remark}\label{rem:compactness2:infinite-time}
	Since Conditions~1), 4) and 5) are local conditions, Theorem~\ref{thm:compactness2} remains valid  for semimartingale laws  defined on the time interval $[0,\infty)$.  
\end{remark}
\begin{remark}
	\label{rem:compactness2}
	For most sets of L\'evy triplets $\Theta$, if one wants to check whether one of the equivalent conditions in  Theorem~\ref{thm:compactness2} to $\fP_{sem}^{ac}(\Theta)(\Gamma_0)$ being compact holds true, one needs to verify 
	%	We point out that in most cases, one 
	%Whereas the compactness criterion in Theorem~\ref{thm:compactness} is rather direct to check (namely one needs to verify if Condition~(J) is satisfied), the compactness criterion in Theorem~\ref{thm:compactness2} is in general hard to be verified. Indeed, for most set of L\'evy triplets $\Theta$, to see  that one of the equivalent 
	%conditions to $\fP_{sem}^{ac}(\Gamma_0)$ being compact hold true, 
	%one still needs to verify 
	that any sequence of (sums of compensated) small jumps cannot create an additional diffusion in the limit.  This immediately links to  the results obtained in Theorem~\ref{thm:purely-disc} and Theorem~\ref{thm:compactness}. In particular,  Theorem~\ref{thm:compactness2} cannot replace Theorem~\ref{thm:purely-disc} and Theorem~\ref{thm:compactness}, which are much deeper results as they explain how one can prevent purely discontinuous martingales to create an additional diffusion in the limit which then  leads to a compactness criterion expressed through Condition~(J).  %would be hard to apply in practice without the knowledge gained in Theorem~\ref{thm:purely-disc} and Theorem~\ref{thm:compactness}. 
	However, if roughly speaking, $\Theta$ is described by a boundedness condition on  the modified second differential characteristic (cf., e.g., Example~\ref{ex:cond-J}), then Condition~1) in Theorem~\ref{thm:compactness2} can be verified directly. The intuition why this holds  is the following. 
	
	A semimartingale is described by its drift, its diffusion, its sum of big jumps and its sum of compensated small jumps. Notice that drifts converge to a drift, diffusions converge to a diffusion, and the sum of big jumps converges to a sum of big jumps; we refer to  Subsection~\ref{subsec:semimart-enlarge} for a precise statement. However, a sequence of  sums of compensated small jumps converges to a sum of compensated small jumps and a possible additional diffusion. This is why if no additional diffusion may be created in the limit, then closedness of $\Theta$ implies closedness of $\fP_{sem}^{ac}(\Theta)(\Gamma_0)$, and hence also compactness of $\fP_{sem}^{ac}(\Theta)(\Gamma_0)$, due to $\Theta$ satisfying Condition~(B). But notice that the modified second characteristic is defined as the sum of the (quadratic variation of the) diffusion and the (compensator of the sum of the squared) small jumps. Therefore, a sequence of modified second characteristics converges to the modified second characteristic of the limit process. Thus, if $\Theta$ is described by a constraint only on the modified second differential characteristic, without any constraints on the second differential characteristic, then Condition~1) in Theorem~\ref{thm:compactness2} can be verified directly due to the definition of the map $u$ involved in Condition~1).
\end{remark}
%%%%%%%%%%%%%%%%%%%%%%%%%%%%%%%%%%%%
%
The proofs of Theorem~\ref{thm:purely-disc}, Theorem~\ref{thm:compactness}, and Theorem~\ref{thm:compactness2} are given in Section~\ref{sec:prf-thm-semi}.
Now, we present an example showing that in Theorem~\ref{thm:compactness}, Condition~(J) is not necessary for $\fP^{ac}_{sem}(\Theta)(\Gamma_0)$ %$\fP^{ac}_{sem}(\Theta)(\delta_0)\equiv\fP^{ac}_{sem}(\Theta)(\{\delta_0\})$ 
to be compact.
\begin{example}\label{ex:cond-J}
	%We provide an example to show that in Theorem~\ref{thm:compactness}, Condition~(J) is not necessary for $\fP^{ac}_{sem}(\Theta)(\delta_0)\equiv\fP^{ac}_{sem}(\Theta)(\{\delta_0\})$ to be compact.
	Let the dimension be $d=1$. For any $F \in \cL$ denote by $\supp(F)$ the support of the  measure 
	$F$ and define
	$\Theta \subseteq \R\times [0,\infty)\times \cL$ by
	\begin{equation*}
	\Theta:=\Big\{(b,c,F) \, \Big| \, \supp(F)\subseteq\{|x|\leq 1\},\ b=0, \ c+ \int_{\R} |x|^2\, F(dx)=1\Big\},
	\end{equation*}
	where the (differential) characteristics are defined with respect to a continuous truncation function $h$ satisfying $h(x)=x$ on $\{|x|\leq 1\}$. Observe that $\Theta$ does not satisfy the condition in Definition~\ref{def:box}.
	It turns out that $\Theta$ is closed, convex and satisfies Condition~(B), but Condition~(J) fails. However, %choosing $\Gamma_0=\{\delta_0\}$ and 
	the set $\fP^{ac}_{sem}(\Theta)(\{\delta_0\})$ is compact.  Compactness of $\fP^{ac}_{sem}(\Theta)(\{\delta_0\})$ is shown by verifying that Condition~1) in Theorem~\ref{thm:compactness2} holds true. We provide the proof at the end of Section~\ref{sec:prf-thm-semi}.
\end{example}
\begin{remark}\label{rem:filtration}
	As mentioned above, we know from \cite[Proposition~2.2]{NeufeldNutz.13a} that the precise choice of the filtration in the
	definition of the set $\fP_{sem}$ and its subsets introduced above is not crucial in the sense that the results above also hold true with respect to any filtration $\F\subseteq\G\subseteq \F^\P_+$.
\end{remark}

\begin{remark}\label{rem:empty-martingale}
	Even though $\fP^{ac}_{sem}(\Theta)\neq \emptyset$, we might have that
	$\fP^{ac}_{m}(\Theta)=\emptyset$ or $\fP^{ac}_{m,d}(\Theta)=\emptyset$. However, one can easily impose conditions (additional to the one in Theorem~\ref{thm:purely-disc}) on $\Theta$ such that both sets above are nonempty. In fact, if in  addition to Condition~(B) we assume that
	\begin{equation*}
	\Theta \subseteq \Big\{(b,c,F) \in \R^d \times \S^d_+\times \cL \, \Big| \, b + \int_{\R^d} [x-h(x)]\,F(dx)=0 \Big\}=:\Theta_{\cM},
	\end{equation*}
	then we have $\{\P \in \fP^{ac}_{sem}(\Theta)\,|\, \E^\P[|X_0|]<\infty\}=\fP^{ac}_{m}(\Theta)$. Moreover, if we  impose that
	\begin{equation*}
	\Theta \subseteq \big\{(b,c,F) \in \Theta_{\cM} \, \big| \, c=0\big\}=:\Theta_{\cM^d},
	\end{equation*}
	then we even obtain $\{\P \in\fP^{ac}_{sem}(\Theta)\,|\, \E^\P[|X_0|]<\infty\}=\fP^{ac}_{m,d}(\Theta)$.
\end{remark}
% % % % % % % % % % % % % % % % % % % % %
% % % % % % % % % % % % % % % % % % % % % %
%
%
As an application of Theorem~\ref{thm:compactness}, we consider the following semimartingale optimal transportation problem. First, let $T=1$ and fix any set $\Theta\subseteq \R^d \times \S^d \times \cL$ being closed, convex and satisfying Condition (B) and Condition (J). 
To shorten the notation, we write $\fP_\Theta\equiv \fP^{ac}_{sem}(\Theta)$. Given two arbitrary probability measures $\mu_0$ and $\mu_1$ in $\fM_1(\R^d)$, we denote 
\begin{align}
\fP_\Theta(\mu_0)&:=\big\{ \P \in \fP_\Theta \, \big| \,  \P \circ X_0^{-1}=\mu_0\}, %\label{eq:def:P-0}
\nonumber\\
\fP_\Theta(\mu_0,\mu_1)&:=\big\{ \P \in \fP_\Theta(\mu_0) \, \big| \,  \P \circ X_1^{-1}=\mu_1\}. \nonumber %\label{eq:def:P-1}.
\end{align} 
The semimartingale $X$ under $\P\in \fP_\Theta(\mu_0,\mu_1)$ can be viewed as a medium of mass transportation from the initial distribution $\mu_0$ to the target distribution $\mu_1$. We couple $\P$ with a transportation cost
\begin{equation*} %\label{eq:trans-cost-J}
\fJ(\P):= \E^\P \Big[\int_0^1 L(t,X,b^\P_t,c^\P_t,F^\P_t)\,dt\Big],
\end{equation*}
where
%$\eta^\P\equiv(b^\P,c^\P,F^\P)$ and 
$L:[0,1]\times \Omega \times \Theta \to [0,\infty)$ is a given cost function. Then, we are interested in the following optimal transport problem
\begin{equation}\label{eq:primal}
V(\mu_0,\mu_1):= \inf_{\P \in \fP_\Theta(\mu_0,\mu_1)} \fJ(\P),
\end{equation}
using the convention $\inf \emptyset=\infty$. The  goal is to prove a duality result for the minimizing problem \eqref{eq:primal}, which can be seen as an extension of the classical Kantorovich duality in  optimal transportation. To that end, we need some conditions on %the cost function 
$L$.
\begin{assumption}\label{ass:cost-function-L}
	The cost function $L:(t,\omega,\theta)\in[0,1]\times \Omega \times \Theta \mapsto L(t,\omega,\theta) \in [0,\infty)$ satisfies:
	\begin{enumerate}
		\item[1)] $L$ is nonnegative, continuous in $(t,\omega,\theta)$ and convex in $\theta$.
		\item[2)] $L$ is uniformly continuous in $t$ in the sense that
		\begin{equation*}
		\Delta_t L(\epsilon) := \sup_{0\leq s,t \leq 1, |t-s|<\epsilon, \omega \in \Omega, \theta \in \Theta}\frac{|L(s,\omega,\theta) - L(t,\omega,\theta)|}{1 + L(t,\omega,\theta)} \longrightarrow 0 \  \mbox{as } \varepsilon \to 0.
		\end{equation*}
		%converges to $0$ as $\epsilon \rightarrow 0$.
	\end{enumerate}
\end{assumption}
\begin{remark}\label{rem:ass-L}
	In the case $\Theta=(U\times \{0\})$ where $U\subseteq \R^d\times \S^d_+$ and here $0$ denotes the zero-measure $\in \cL$, Condition 1) and 2) coincide with \cite[Assumption~3.1 and Assumption~3.2]{TanTouzi.11}.
	%Assumption~3.1, and Assumption~3.2 in \cite{TanTouzi.11}. 
	In fact, when $U=\{I_{d\times d}\}\times \R^d$, Assumption~2) coincide with \cite[Assumption~A.1]{MikamiThieullen.06}.
\end{remark}
We define the dual formulation of \eqref{eq:primal} by 
\begin{equation*} %\label{eq:dual}
\mathcal{V}(\mu_0,\mu_1) := \sup_{\lambda_1 \in C_b(\R^d)}\Big\{ \int_{\R^d} \lambda^{\lambda_1}_0(x)\, \mu_0(dx) -  \int_{\R^d} \lambda_1(x)\, \mu_1(dx)\Big\},
\end{equation*}
where 
\begin{equation*}
\lambda^{\lambda_1}_0(x) := \inf_{\P \in \fP_\Theta(\delta_{x})} \E^\P\Big[\int_0^1 L(t,X,b^\P_t,c^\P_t,F^\P_t)\,dt + \lambda_1(X_1)\Big].
\end{equation*}
The integral $\int_{\R^d} \lambda^{\lambda_1}_0(x)\, \mu_0(dx)$ is well-defined as the function $\lambda^{\lambda_1}_0$ %takes value in $\R \cup \{\infty\}$,
is bounded from below (might taking value $\infty$) and is measurable, cf.  Lemma~\ref{le:meas.selec}.
Then, our main  duality result is the following.
\begin{theorem}\label{thm:main}
	Let $\Theta\subseteq \R^d \times \S^d_+ \times \cL$ being closed, convex, satisfying Condition (B) and Condition (J). Moreover, let the cost function $L$ satisfy Assumption~\ref{ass:cost-function-L}. Then
	\begin{equation*}
	V(\mu_0,\mu_1) = \mathcal{V}(\mu_0,\mu_1), \text{ for all }\, \mu_0,\mu_1 \in \fM_1(\R^d),
	\end{equation*}
	and the infimum is achieved by some $\widehat \P \in \fP_\Theta(\mu_0,\mu_1)$ for the primal problem $V(\mu_0,\mu_1)$ defined in \eqref{eq:primal}, whenever it is finite.
\end{theorem}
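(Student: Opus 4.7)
The plan is to follow the classical convex-duality strategy of \cite{MikamiThieullen.06, TanTouzi.11}, adapted to the jump setting via Theorem~\ref{thm:compactness}. The argument decomposes into: (i) attainment of the primal infimum via compactness of $\fP_\Theta(\mu_0,\mu_1)$ and lower semicontinuity of $\fJ$; (ii) convexity and lower semicontinuity of $\mu_1 \mapsto V(\mu_0,\mu_1)$ on $\fM_1(\R^d)$; and (iii) Fenchel--Moreau identification of $V(\mu_0,\cdot)$ with $\cV(\mu_0,\cdot)$.

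For (i), I would first observe that $\fP_\Theta(\mu_0,\mu_1)$ is a closed subset of $\fP_\Theta(\{\mu_0\})$, which is compact by Theorem~\ref{thm:compactness}(3) since $\{\mu_0\}$ is compact. The closedness is immediate: the evaluation maps $X_0$ and $X_1$ are continuous on $\D([0,1],\R^d)$ at the respective endpoints of $[0,1]$ in the $J_1$-topology, so the marginal constraints pass to weak limits. For the lower semicontinuity of $\fJ$ on $\fP_\Theta$, I would invoke the enlarged-space machinery of Sections~3--4: a weakly convergent sequence $\P_n \to \P_0$ lifts to weakly convergent laws on $\ov\Omega$ carrying convergent characteristics, after which convexity of $L(t,\omega,\cdot)$ together with its continuity in $(t,\omega)$ yields $\fJ(\P_0) \le \liminf_n \fJ(\P_n)$ via a Fatou/Jensen-type argument. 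Combining both, the infimum in \eqref{eq:primal} is attained whenever finite.

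The easy inequality $V \ge \cV$ is a direct disintegration computation: for any $\lambda_1 \in C_b(\R^d)$ and $\P \in \fP_\Theta(\mu_0,\mu_1)$, write $\P = \int \P^x\,\mu_0(dx)$ with $\P^x \in \fP_\Theta(\delta_x)$ for $\mu_0$-a.e.\ $x$, so that
\begin{equation*}
\fJ(\P) + \int \lambda_1\,d\mu_1 \;=\; \int \E^{\P^x}\Bigl[\int_0^1 L\,dt + \lambda_1(X_1)\Bigr]\mu_0(dx) \;\ge\; \int \lambda_0^{\lambda_1}(x)\,\mu_0(dx);
\end{equation*}
taking $\inf_\P$ and $\sup_{\lambda_1}$ yields the inequality. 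For the reverse direction, I would extend $V(\mu_0,\cdot)$ by $+\infty$ off $\fM_1(\R^d)$ to a convex function on the space of finite signed Borel measures, paired with $C_b(\R^d)$ in the standard weak duality. Convexity follows from mixing: for $\P^{(i)} \in \fP_\Theta(\mu_0, \mu_1^{(i)})$, the convex combination $\alpha \P^{(1)} + (1-\alpha)\P^{(2)}$ lies in $\fP_\Theta(\mu_0, \alpha\mu_1^{(1)}+(1-\alpha)\mu_1^{(2)})$ thanks to convexity of $\Theta$, and its cost is bounded by the corresponding convex combination of costs thanks to convexity of $L$ in $\theta$. Lower semicontinuity in the weak topology follows from (i) applied to $\varepsilon$-minimizers. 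Computing the conjugate via disintegration and the measurable-selection result Lemma~\ref{le:meas.selec} yields $V^*(\mu_0,\lambda) = -\int \lambda_0^{-\lambda}(x)\,\mu_0(dx)$ for $\lambda \in C_b(\R^d)$, and Fenchel--Moreau then gives $V(\mu_0,\mu_1) = V^{**}(\mu_0,\mu_1) = \cV(\mu_0,\mu_1)$.

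The main obstacle is the lower semicontinuity of $\fJ$ in step (i): this is precisely where the full strength of the compactness criterion of Section~2 enters, since the enlarged-space convergence of characteristics depends crucially on Conditions~(B) and~(J). A secondary technical point is verifying that the mixing and disintegration constructions used in step (ii) preserve membership in $\fP_\Theta$ with the correct differential characteristics; this should follow from the compatibility of semimartingale characteristics with regular conditional probabilities and with convex mixtures of laws.
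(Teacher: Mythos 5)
Your proposal is correct and takes essentially the same route as the paper: attainment and lower semicontinuity via the enlarged-space lifting (Lemma~\ref{le:from-P-to-P-enlarge-and-viceversa-J}, Lemma~\ref{le:lsc-value-fct}, resting on Theorem~\ref{thm:compactness}), convexity of $V(\mu_0,\cdot)$, and then classical Fenchel--Moreau duality on $\fM_{f,s}(\R^d)$ paired with $C_b(\R^d)$ with the conjugate computed through the measurable-selection Lemma~\ref{le:meas.selec}; your separate weak-duality inequality is redundant since the biconjugation already yields equality, and your mixing argument for convexity is the same argument the paper imports from Tan--Touzi. The only points you leave implicit are minor and fixable with tools already in the paper: that a convex mixture of laws stays in $\fP_\Theta$ with cost dominated via Jensen requires the filtration-shrinking/optional-projection Lemma~\ref{le:semimart-char-smaller-filtr} (convexity of $\Theta$ alone is not the full justification), and the degenerate case $V(\mu_0,\cdot)\equiv\infty$ should be treated separately before invoking Fenchel--Moreau, as the paper does.
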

\begin{remark}\label{rem:empty-transport}
	In general, $\fP_\Theta(\mu_0,\mu_1)$ might be empty.
	However, for example when $\mu_0=\delta_{x}$ for some $x \in \R^d$ and $\mu_1$ is some suitable infinitely divisible distribution (suitable with respect to $\Theta$), then  $\fP_\Theta(\mu_0,\mu_1)$ is clearly nonempty containing at least the corresponding L\'evy law. 
	Moreover, $\fP_\Theta(\mu_0,\mu_1)$ being nonempty is closely related to the Skorokhod Embedding Problem for L\'evy processes with nontrivial initial law.
\end{remark}
% % % % % % % % % % % % % % % % % % % % % % % % % % % % % % % %
\section{A Version of Theorem~\ref{thm:purely-disc}~\&~\ref{thm:compactness} on an Enlarged Space}\label{sec:enlarge}
%\section{An enlarged space and tightness of $\ov \fP^{ac}_{sem}(\Theta)$}\label{sec:enlarg+tight}
\subsection{An Enlarged Space}\label{subsec:enlarg}
In this subsection, we introduce an enlarged space. The strategy of the proofs of our main results  is to derive them first in the enlarged space and then conclude for the original space.

%%%%%%%%%%%%%%%%%%%%%%%%%%%%%%%%%%%%%%%%%%%%%%%%%%%%
Let $\mathcal{C}^+(\R^d):=\{g_i \mid i \in \N\}$ be a countable family of bounded continuous functions on $\R^d$ vanishing in a neighborhood of the origin with the following properties:\\
$\bullet$ it is a law-determining class for L\'evy measures, i.e.\ for any two L\'evy measures $F, F^\prime \in \cL$, if one has $\int_{\R^d} g(x) \, F(dx) = \int_{\R^d} g(x)\,F^\prime(dx)$ for all $g \in \cC^+(\R^d)$, then $F = F^\prime$.\\
$\bullet$ it is a convergence-determining class for the weak convergence induced by the bounded continuous functions on $\R^d$ vanishing in a neighborhood of the origin, i.e.\ 
given any sequence $(F_n)_{n \in \N}  \subseteq \cL$ and $F \in \cL$, if $\lim_{n \to \infty} \int_{\R^d} g(x)\,F_n(dx)=\int_{\R^d} g(x)\,F(dx)$, for all $g \in \cC^+(\R^d)$, then $\lim_{n \to \infty} \int_{\R^d} g(x)\,F_n(dx)=\int_{\R^d} g(x)\,F(dx)$ for all bounded continuous functions on $\R^d$ vanishing in a neighborhood of the origin. \\
$\bullet$ For each $m \in \N$, let $g_{2m}$ be a continuous function on $\R^d$  satisfying for  all $x \in \R^d$ that $0\leq g_{2m}(x)\leq |x|^2 \wedge 1 $ and that
%$(g_{2m})_{m \in \N}$ is uniformly bounded from below and  for each $m \in \N$, $g_{2m}$ satisfies
$$
g_{2m}(x) = \begin{cases}
0       & \quad \text{if } |x| \leq \frac{1}{2m};\\
|x|^2 \wedge 1  & \quad \text{if } |x| > \frac{1}{m}.\\
\end{cases}
$$
We refer to \cite[II.2.20]{JacodShiryaev.03} together with \cite[VII.2.7\&2.8]{JacodShiryaev.03} for the existence of such a class $\mathcal{C}^+(\R^d)$.

For any $n \in \N$, write $\Omega_n=\D([0,T],\R^n)$ endowed with the Skorokhod topology, and $\Omega^c_n:=C([0,T],\R^n)$ for the space of continuous function with the usual uniform topology. Recall $\Omega=\Omega_d=\D([0,T],\R^d)$ our original space. We introduce the enlarged space
\begin{equation*} %\label{eq:enlarged space}
\overline{\Omega} := \Omega \times \Omega^c_d \times \Omega^c_d \times \Omega \times \Omega \times \Omega^c_{d^2}\times \Omega_{d^2} \times \Omega^c_{d^2} \times (\Omega_1 \times \Omega^c_1)^\N,
\end{equation*}
endowed with the product topology, becoming a Polish space. 
%We write
%\begin{equation}\label{eq:element in enlarged space}
%\overline{\omega} =\big(\overline{\omega}^X, \overline{\omega}^B, \overline{\omega}^{M^c}, \overline{\omega}^{M^d},\overline{\omega}^{J}, \overline{\omega}^{C}, (\overline{\omega}^{U^i}, \overline{\omega}^{V^i})_{i \in \N}\big)
%\end{equation}
%for any element $\overline{\omega} \in \overline{\Omega}$. Moreover, 
We write
\begin{equation*}\label{eq:canocical-process-enlarged}
\ov \X:=\big( \ov X,\ov B,\ov M^c,\ov M^d,\ov J, \ov C, \ov{[M]}, \ov{ \widetilde{C}},(\ov U^i,\ov V^i)_{i\in \N}\big)
\end{equation*}
for the canonical process $\ov \X_t(\overline{\omega})=\overline{\omega}(t)$,\, $\ov \omega \in \ov \Omega$. We endow $\ov \Omega$ with its Borel $\sigma$-field $\ov \cF$ and denote by $\ov \F =(\ov \F_t)_{0\leq t \leq T}$ the (raw) filtration generated by $\ov\X$. 
%

%Given a set $\Theta \subseteq \R^d\times \S^d_+\times \cL$, we 
Denote by $\ov \fP^{ac}_{sem}$ the set of all probability measures on $(\ov \Omega, \ov \F)$ such that $\ov X$ is a $\ov \P$-$\ov \F$-semimartingale with absolutely continuous characteristics having canonical representation
\begin{equation}\label{eq:can-dec-X-enlarged}
\ov X-\ov X_0 = \ov M^{c} + \ov M^{d} + \ov B + \ov J \quad \ov  \P\mbox{-a.s., satisfying}
\end{equation}

\vspace*{0.15cm}
\noindent
$\bullet$ $\ov M^{c}_0 =\ov M^{d}_0 = \ov B_0=0 \ \,\ov \P$-a.s.\\
$\bullet$ $\ov B$ has $\ov \P$-a.s. finite variation paths.\\
$\bullet$ $\ov M^c$ is a continuous $\ov \P$-$\ov \F$-local martingale with quadratic covariation $\ov C$.\\
%with $\ov M^{c}_0 =\ov M^{d}_0 + \ov B_0=0 \ \ov \P$-a.s., where $\ov X^c$ is a continuous $\ov \P$-$\ov \F$ local martingale with quadratic covariation $\ov C$ being $\ov \P$-a.s. absolutely continuous, 
$\bullet$ $\ov M^d$ is a purely-discontinuous $\ov \P$-$\ov \F$-local martingale. \hspace*{\fill} $(\ast)$\\
%satisfying $\ov M^d =\int \int_{\R^d} h(x) (\mu^{\ov X}(dx,ds)-\nu^{\ov \P}(dx)\,ds) \ \, \ov\P$-a.s. (writing $h(x)\ast (\mu^{\ov X}-\nu^{\ov \P})$ for brevity), where $\nu^{\ov \P}(dz,ds)$ denotes the  $\ov \P$-$\ov \F$-compensator of the measure $\mu^{\ov X}$ associated to the jumps of $\ov X$.\\
$\bullet$ $\ov J= \sum_{0\leq s \leq \cdot} [\Delta \ov X_s -h(\Delta \ov X_s)] \ \ov \P$-a.s..\\
$\bullet$ $\ov{ \widetilde{C}}$ is the  modified second characteristic of $\ov X$ under $\ov\P$-$\ov \F$. (for definition, see e.g. \cite[p.79]{JacodShiryaev.03})\\
$\bullet$ $\ov{[M]}$ is the quadratic covariation of the local martingale $\ov M:= \ov M^c + \ov M^d$.\\
$\bullet$ $\ov \P$-a.s., each $\ov U^i$ coincides with  $\int_0^\cdot \int_{\R^d} g_i(x)\, \mu^{\ov X}(dx,dt)$ (writing $g_i(x)\ast \mu^{\ov X}$ for brevity) and each $\ov V^i$ coincides with  $g_i(x)\ast \nu^{\ov \P}$, where $\nu^{\ov \P}(dx,dt)$ denotes the  $\ov \P$-$\ov \F$-compensator of the measure $\mu^{\ov X}(dx,dt)$ associated to the jumps of $\ov X$, and $g_i$ denotes the $i$-th element in $\cC^+(\R^d)$. 

We also define a slightly bigger class $\ov \fP^{ac,w}_{sem}$ of probability measures which contains all laws $\ov \P$ on $(\ov \Omega, \ov \cF)$ satisfying the same conditions as above to be in $\ov \fP^{ac}_{sem}$, up to  $(\ast)$, where we instead impose the weaker condition on $\ov M^d$ to be  a $\ov \P$-$\ov \F$-local martingale (but not necessarily a purely discontinuous one). Moreover, for any $\ov \P \in \ov\fP^{ac}_{sem}$ denote by $(\ov b^{\ov\P},\ov c^{\ov\P}, \ov F^{\ov\P})$ the differential characteristics of $\ov X$ with respect to $\ov \P$-$\ov \F$.
Then, given  $\Theta \subseteq \R^d\times \S^d_+\times \cL$ and  $\Gamma_0\subseteq \fM_1(\R^d)$, define $\ov \fP^{ac}_{sem}(\Theta)$ in accord with the definition of the corresponding sets on the original space $\Omega$ by 
\begin{align*}
%\ov \fP^{ac}_{sem}&:=\big\{ \ov \P \in \ov\fP_{sem}\, \big| \, \mbox{$\ov X$ has absolutely continuous characteristics} \big\},\\
\ov \fP^{ac}_{sem}(\Theta)&:= \big\{ \ov \P \in \ov\fP^{ac}_{sem} \, \big| \, (\ov b^{\ov\P},\ov c^{\ov\P}, \ov F^{\ov\P}) \in \Theta \ \, \ov\P\times dt\mbox{-a.e.} \big\},\\
\ov \fP^{ac}_{sem}(\Theta)(\Gamma_0)&:=\big\{ \ov \P \in \ov\fP^{ac}_{sem}(\Theta) \, \big| \, \ov \P \circ \ov X_0^{-1} \in \Gamma_0 \big\}.
%,\\
%\ov \fP^{ac,w}_{sem}(\Theta)&:= \big\{ \ov \P \in \ov\fP^{ac,w}_{sem} \, \big| \, (\ov b^{\ov\P},\ov c^{\ov\P}, \ov F^{\ov\P}) \in \Theta \ \, \ov\P\times dt\mbox{-a.e } \big\}.
\end{align*}

Next, we introduce a function $\ov \varphi: \R^d\times \S^d_+\times \cL \to \R^d\times \S^d_+\times\R^\N$, which turns out to be useful for proving %in Corollary~\ref{co:closedness-P-Theta-enlarge},
that the differential characteristics under a limit law $\ov \P_0$ of a sequence $(\ov \P_n)\subseteq \ov \fP^{ac}_{sem}(\Theta)$ are taking values in $\Theta$. 
% % % % %
Let $\{g_i \mid i \in \N\} = \cC^+(\R^d)$ and define first an additive, positive homogeneous function $\varphi : \cL \rightarrow \R^\N$  via
\begin{equation}\label{eq:map}
\varphi(F) := (\int_{\R^d} g_i(x)\, F(dx))_{i \in \N}.
\end{equation}
We deduce from $\cC^+(\R^d)$ being a law-determining class on $\cL$ that $\varphi$ is injective.
%We can then define another metric $\widetilde d_{\cL}$ on $\cL$ via
%\begin{equation*} %\label{eq:metric}
%\widetilde d_{\cL}(F, F^\prime) := d_{\R^\N}(\varphi(|x|^2 \wedge 1.F), \varphi(|x|^2 \wedge 1.F^\prime)), \quad F,F^\prime \in \cL,
%\end{equation*}
%where $d_{\R^\N}$ is a metric on the product space $\R^\N$ that characterizes the product topology. Note that thanks to the property of $\cC^+(\R^d)$,
% the metric $\widetilde d_{\cL}$ defined as above is equivalent to the usual metric $d_{\cL}$ on $\cL$ introduced in Section~\ref{sec:main-thms}.
%It is also clear that the mapping $\varphi$ is a homeomorphism onto its image $\varphi(\cL)$ with respect to the topology induced by $d_{\cL}$ (or equivalently by $\widetilde d_{\cL}$) on $\cL$ and the product topology on $\R^\N$.
Now, define $\ov \varphi: \R^d\times\S^d_+ \times \cL \to \R^d\times\S^d_+ \times\R^\N$ by
\begin{equation}\label{eq:def-ov-varphi}
(b,c,F) \mapsto (b,c, \varphi (F))
\end{equation} Clearly, $\ov \varphi$ is additive, positive homogeneous and also a bijection onto its image. Mostly, we will use the function $\ov \varphi$ in the following way: Define the processes
\begin{align*}
\ov b_t:=\limsup_{n\to \infty}n(\ov B_t-\ov B_{(t-\frac{1}{n})\vee 0}), \ \ \, 
\ov c_t:=\limsup_{n\to \infty}n(\ov C_t-\ov C_{(t-\frac{1}{n})\vee 0}), \ \ \, t\in [0,T],
\end{align*}
as well as the sequence of processes $\ov v_t:=(\ov v^1_t, \ov v^2_t,\dots)$ by setting for each $i \in \N$
\begin{equation*}
\ov v^i_t:=\limsup_{n \to \infty} n \big(\ov V^i_t -\ov V^i_{(t-\frac{1}{n})\vee0}\big), \quad t\in [0,T].
\end{equation*}
Whenever $\ov \P \in \ov \fP^{ac}_{sem}$, we have
\begin{equation}\label{eq:char-in-Theta-help}
\big(\ov b_t^{\ov \P}, \ov c_t^{\ov \P}, (\int_{\R^d} g_i(x)\,\ov F_t^{\ov\P}(dx))_{i \in \N} \big)= \big(\ov b_t, \ov c_t, (\ov v^i_t)_{i \in \N}\big) \quad \ov \P\times dt\mbox{-a.s.}
\end{equation}
%Define the sequence of processes $v_t:=(v^1_t,v^2_t,\dots)$ by setting for each $i \in \N$
%\begin{equation*}
%\ov v^i_t:=\limsup_{n \to \infty} n \big(\ov V^i_t -\ov V^i_{(t-\frac{1}{n})\vee0}\big), \quad t\in [0,T].
%\end{equation*}
Therefore, for any given $\Theta \subseteq\R^d\times \S^d_+\times \cL$ and $\ov \P\in \ov\fP^{ac}_{sem}$ we have 
\begin{equation*}\label{eq:char-in-Theta-weak}
(\ov b^{\ov \P}, \ov c^{\ov \P}, \ov F^{\ov \P}) \in \Theta \ \ \ov \P\times dt\mbox{-a.s.} \ \ \Longleftrightarrow \ \ (\ov b, \ov c, \ov v) \in \ov \varphi(\Theta)  \ \ \ov \P\times dt\mbox{-a.s..}
\end{equation*}
In addition, note that 
%Note that $\ov \varphi$ is continuous, and  
as $\cC^+(\R^d)$ is a convergence-determining class for the weak convergence induced by the bounded continuous functions on $\R^d$ vanishing in a neighborhood of the origin, we see from \eqref{eq:char-in-Theta-help} that 
for any given $\Theta \subseteq\R^d\times \S^d_+\times \cL$ which is closed and $\ov \P\in \ov\fP^{ac}_{sem}$ we have 
\begin{equation}\label{eq:char-in-Theta}
(\ov b^{\ov \P}, \ov c^{\ov \P}, \ov F^{\ov \P}) \in \Theta \ \ \ov \P\times dt\mbox{-a.s.} \ \ \Longleftrightarrow \ \ (\ov b, \ov c, \ov v) \in \mbox{cl}(\ov \varphi(\Theta))  \ \ \ov \P\times dt\mbox{-a.s.,}
\end{equation}
where $\mbox{cl}(\ov \varphi(\Theta))$ denotes the closure of $\ov \varphi(\Theta) \subseteq \R^d\times\S^d_+ \times\R^\N$.
%$\ov \varphi$ is continuous, and in addition, 
%closedness of  $\Theta\subseteq \R^d\times\S^d_+ \times\cL$ implies closedness of  $\ov \varphi(\Theta)\subseteq \R^d\times\S^d_+ \times\R^\N$.
%
% A technical point is that we need to ensure that $\varphi(\Theta)$ is compact whenever $\Theta$ is compact. To see that $\ov \varphi$ is continuous, introduce another metric  $\widetilde d_{\cL}$ on $\cL$ via
%\begin{equation*} %\label{eq:metric}
%\widetilde d_{\cL}(F, F^\prime) := d_{\R^\N}(\varphi(|x|^2 \wedge 1.F), \varphi(|x|^2 \wedge 1.F^\prime)), \quad F,F^\prime \in \cL,
%\end{equation*}
%where $d_{\R^\N}$ is a metric on $\R^\N$ that characterizes the product topology. Then $\ov \varphi$ becomes a homeomorphism onto its image when endowing  $\cL$ with the topology induced by $\widetilde d_{\cL}$, which is coarser than the one induced by the original metric $d_{\cL}$ introduced in Section~\ref{sec:main-thms} (and which we work with). So $\ov \varphi$ is continuous, hence compactness of $\Theta$ implies compactness of $\ov \varphi(\Theta)$. 
% % % %
%
% % % % % % % % % % % % % % % % % % % % % % % % % % % % % %

The main goal of Section~\ref{sec:enlarge} is to formulate and prove the following proposition, which is a version of Theorem~\ref{thm:purely-disc} on the enlarged space $\ov \Omega$.
%introduced in Section~\ref{subsec:enlarg}, (where we keep the notation from that section). 
Recall that under each $\ov \P \in \ov\fP^{ac}_{sem}$, $\ov M^d$ is a purely discontinuous local martingale.
\begin{proposition}\label{prop:purely-disc-enl}
	Let $\Theta \subseteq \R^d\times \S^d_+\times \cL$ satisfy Condition~(B) and let $(\ov \P_n)_{n \in \N}\subseteq \ov \fP^{ac}_{sem}(\Theta)$ be a sequence converging weakly to some law $\ov \P_0\in \fM_1(\ov \Omega)$. Then, the following hold true:
	\begin{enumerate}
		\item[1)] $\ov M^d$ is a $\ov \P_0$-$\ov \F$-martingale. 
		\item[2)] We have the following necessary and sufficient criterion for $\ov M^d$ being a purely discontinuous martingale under $\ov \P_0$-$\ov \F$:
		\begin{align*}\label{eq:criterion-purely-disc-enl}
		& \ \ov M^d \mbox{is a purely discontinuous $\ov \P_0$-$\ov \F$-martingale} \\
		\Longleftrightarrow & \  \lim\limits_{\delta\downarrow0}\limsup_{n \to \infty} \E^{\ov \P_n}\Big[\int_0^T\int_{\{|x|\leq \delta\}} |x|^2\, \ov F^{\ov \P_n}_t(dx)\,dt\Big] =0.
		\end{align*} 
	\end{enumerate}
\end{proposition}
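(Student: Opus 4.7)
The plan is to reduce weak convergence to almost-sure convergence via the Skorokhod representation theorem, and then exploit the fact that the enlarged space $\ov\Omega$ carries the entire canonical decomposition \eqref{eq:can-dec-X-enlarged} as separate coordinate processes. Since $\ov\Omega$ is Polish and $\ov\P_n\to\ov\P_0$ weakly, one may transfer to a common probability space versions $\tilde\X^n\sim\ov\P_n$ with $\tilde\X^n\to\tilde\X^0\sim\ov\P_0$ almost surely in $\ov\Omega$. In particular, the continuous-path coordinates $\ov M^c,\ov C,\ov{\widetilde C}$ and each $\ov V^i$ converge uniformly on $[0,T]$, while the càdlàg coordinates $\ov X,\ov M^d,\ov J,\ov{[M]}$ and each $\ov U^i$ converge in the $J_1$-topology.

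For Part~1), Doob's inequality and Condition~(B) combine to yield the uniform estimate
\[
\E^{\ov\P_n}\bigl[\sup_{t\le T}|\ov M^d_t|^2\bigr] \;\le\; 4\,\E^{\ov\P_n}\bigl[[\ov M^d]_T\bigr] \;=\; 4\,\E^{\ov\P_n}\!\!\left[\int_0^T\!\!\int_{\R^d}|h(x)|^2\,\ov F^{\ov\P_n}_t(dx)\,dt\right] \;\le\; 4\cK\|h\|_\infty\, T,
\]
so that $\{\ov M^d_{t}\}$ is uniformly integrable under $(\ov\P_n)$ for each $t\in[0,T]$. Combining this with the almost-sure $J_1$-convergence of $\ov M^d$ allows passing the martingale identity $\E^{\ov\P_n}[\ov M^d_t\,\Phi]=\E^{\ov\P_n}[\ov M^d_s\,\Phi]$ through the limit for every bounded continuous $\ov\F_s$-measurable test functional $\Phi$; this establishes the martingale property under $\ov\P_0$ first on a dense set of $J_1$-continuity times and then for all $s\le t\le T$ by right-continuity.

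For Part~2), decompose $\ov M^d=M^{d,c}_{0}+M^{d,d}_{0}$ under $\ov\P_0$ into continuous and purely discontinuous martingale parts. The pathwise identity $\Delta\ov M^d=h(\Delta\ov X)$, valid under each $\ov\P_n$, survives the $J_1$-limit, so $M^{d,d}_{0}$ inherits the jumps $h(\Delta\ov X)$ and hence has predictable bracket $\int_0^\cdot\!\!\int|h(x)|^2\,\ov F^{\ov\P_0}_t(dx)\,dt$ under $\ov\P_0$. Thus $\ov M^d$ is purely discontinuous under $\ov\P_0$ if and only if $\langle M^{d,c}_{0}\rangle^{\ov\P_0}_T=0$. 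On the other hand, under each $\ov\P_n$ one has
\[
\ov{\widetilde C}^{\ov\P_n}_t-\ov C^{\ov\P_n}_t \;=\; \int_0^t\!\!\int|h(x)|^2\,\ov F^{\ov\P_n}_s(dx)\,ds,
\]
and the a.s. uniform convergence of both coordinates identifies the limit as $\langle M^{d,c}_0\rangle^{\ov\P_0}_t+\int_0^t\!\!\int|h|^2\,\ov F^{\ov\P_0}_s(dx)\,ds$. Splitting the right-hand integral at level $\delta>0$ into big- and small-jump contributions, using that $\cC^+(\R^d)$ is convergence-determining away from the origin (so the big-jump part converges to its $\ov\P_0$-analogue via the coordinates $\ov V^i$), and letting $\delta\downarrow0$, one identifies
\[
\langle M^{d,c}_{0}\rangle^{\ov\P_0}_T \;=\; \lim_{\delta\downarrow0}\limsup_{n\to\infty}\E^{\ov\P_n}\!\!\left[\int_0^T\!\!\int_{\{|x|\le\delta\}}|x|^2\,\ov F^{\ov\P_n}_t(dx)\,dt\right],
\]
which yields the claimed equivalence after taking expectations and invoking the uniform integrability from Condition~(B).

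The main obstacle lies in the double-limit argument for Part~2). The map $F\mapsto\int_{\{|x|\le\delta\}}|x|^2\,F(dx)$ is \emph{not} continuous in the topology adopted on $\cL$, so the small-jump integral cannot be passed directly through $n\to\infty$. One handles this via the approximating functions $g_{2m}\in\cC^+(\R^d)$ satisfying $g_{2m}(x)\nearrow|x|^2\wedge1$, combined with the uniform bound from Condition~(B), to sandwich the tail contributions and justify the interchange of $\lim_\delta$ and $\lim_n$; this identifies the "escaping" small-jump mass precisely with the excess continuous-martingale bracket $\langle M^{d,c}_{0}\rangle^{\ov\P_0}$.
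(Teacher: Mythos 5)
Your Part 1) is sound and is essentially the paper's own argument (Skorokhod representation, the uniform $L^2$ bound on $\ov M^d$ from Condition~(B), and passage of the martingale identity through a dense set of continuity times plus a monotone class/right-continuity step); this is Lemma~\ref{le:M-enlarge}. For Part 2), however, your central identification contains a genuine gap. You claim that the a.s.\ limit of $\ov{\widetilde C}-\ov C$ under $\ov\P_0$ is $\langle \ov M^{d,c}\rangle^{\ov\P_0}+\int_0^\cdot\int|h(x)|^2\,\ov F^{\ov\P_0}_s(dx)\,ds$, where $\ov M^{d,c}$ denotes the continuous martingale part of $\ov M^d$ under $\ov\P_0$. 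But by definition the modified second characteristic under $\ov\P_0$ is $C^{\ov\P_0}+\int\int h h^{\top}\,\ov F^{\ov\P_0}\,ds$, and the continuous local martingale part of $\ov X$ under $\ov\P_0$ is $\ov M^c+\ov M^{d,c}$, so that in fact $\ov{\widetilde C}-\ov C=\langle \ov M^{d,c}\rangle+2\langle \ov M^c,\ov M^{d,c}\rangle+\int\int|h|^2\,\ov F^{\ov\P_0}\,ds$. You silently drop the cross bracket $\langle \ov M^c,\ov M^{d,c}\rangle$, which is not a priori zero: it vanishes precisely when the orthogonality $[\ov M^c,\ov M^d]=0$, valid under each $\ov\P_n$, survives the weak limit, and establishing this requires an additional joint-convergence argument (for instance \cite[Cor.~VI.6.29]{JacodShiryaev.03} applied to the $\R^{2d}$-valued martingale $(\ov M^c,\ov M^d)$, using the bounded jumps, or at least a uniform moment bound giving $\E^{\ov\P_0}[\ov M^c_T\ov M^d_T]=0$). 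Since the whole point of the proposition is to control which quadratic variation can appear in the limit, this step cannot simply be asserted; as written, your claimed identity, and hence the displayed formula for $\langle \ov M^{d,c}\rangle^{\ov\P_0}_T$, is unproven.

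Note that the paper's route avoids this issue entirely: instead of $\ov{\widetilde C}-\ov C$, it tracks $[\ov M^d]$ itself, proving joint convergence of $(\ov X,\ov M^d,[\ov M^d])$ (Lemma~\ref{le:quad-var-conv-Md-enlarge}) and then splitting the jump part pathwise with the continuous cutoffs $\psi_\delta$ into $\beta^{n,\delta}$ and $\gamma^{n,\delta}$; the interchange of the limits in $n$ and $\delta$ — which you only sketch via the $g_{2m}$ in your last paragraph — is justified there through an explicit $L^2$ bound on $\gamma^{n,\delta}_T$ obtained from the compensator and the product rule (Lemma~\ref{le:limit-gamma-n-delta-in-n}) together with dominated convergence. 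If you wish to keep your $\ov{\widetilde C}-\ov C$ route, you must first prove the persistence of the orthogonality of $\ov M^c$ and $\ov M^d$ in the limit and then supply the uniform integrability details for the small/big jump split; with those additions the argument would close, but both pieces are currently missing.
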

%
%
% % % % % % % % % % % % % % % % % % % % %
As a consequence of Proposition~\ref{prop:purely-disc-enl}, we obtain a closedness criterion for $\ov \fP^{ac}_{sem}(\Theta)$
% % % % % %
\begin{corollary}\label{co:closedness-P-Theta-enlarge}
	Let $\Theta \subseteq \R^d\times \S^d_+\times \cL$ satisfy Condition~(B) and Condition~(J). Consider a sequence $(\ov \P_n)\subseteq \ov \fP^{ac}_{sem}(\Theta)$ converging weakly to some $\ov \P_0 \in \fM_1(\ov \Omega)$. The following hold:
	\begin{enumerate}
		\item[1)] We have $\ov \P_0 \in \ov \fP^{ac}_{sem}$.
		\item[2)] If in addition, $\Theta$ is closed and convex, then $\ov \P_0 \in \ov \fP^{ac}_{sem}(\Theta)$. In particular, $\ov\fP^{ac}_{sem}(\Theta)$ is closed.
	\end{enumerate} 
	%$\ov \fP^{ac}_{sem}(\Theta)$ is closed.
\end{corollary}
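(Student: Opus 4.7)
The plan is to combine Proposition~\ref{prop:purely-disc-enl} with a Skorokhod-representation argument to transfer the full canonical decomposition of $\ov X$ from the $\ov \P_n$ to $\ov \P_0$, and then to use convexity plus Lebesgue differentiation to place the limiting differential characteristic back inside $\Theta$. For part~(1), Condition~(J) applied uniformly in $\Theta$ yields
\[
\E^{\ov \P_n}\Big[\int_0^T\int_{\{|x|\le \delta\}} |x|^2\, \ov F^{\ov \P_n}_t(dx)\,dt\Big] \le T\sup_{(b,c,F)\in\Theta} \int_{\{|x|\le\delta\}} |x|^2\,F(dx) \xrightarrow[\delta\downarrow0]{} 0,
\]
so both parts of Proposition~\ref{prop:purely-disc-enl} apply and $\ov M^d$ is a purely discontinuous $\ov\P_0$-$\ov\F$-martingale. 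For the remaining items in the definition of $\ov\fP^{ac}_{sem}$, I would realise $\ov \P_n\to\ov\P_0$ as a.s.\ convergence via Skorokhod representation: the continuous coordinates $\ov B,\ov M^c,\ov C,\ov{\widetilde C},\ov V^i$ converge uniformly on $[0,T]$, while the càdlàg coordinates $\ov X,\ov M^d,\ov J,\ov U^i,\ov{[M]}$ converge in $J_1$. The bounds from Condition~(B) pass to the limit by Fatou, yielding $\Var(\ov B)\le \cK T$ and $\E^{\ov\P_0}[\mathrm{tr}(\ov C_T)]\le \cK T$; in particular $\ov B$ has finite-variation paths and $\ov M^c$ is a continuous $L^2$-martingale with quadratic variation $\ov C$ under $\ov\P_0$. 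The identity $\ov X-\ov X_0 = \ov M^c+\ov M^d+\ov B+\ov J$, together with $\ov J=\sum_{s\le\cdot}[\Delta\ov X_s-h(\Delta \ov X_s)]$ and $\ov V^i = g_i\ast \nu^{\ov\P_0}$, passes to the limit by combining uniform convergence of $\ov M^c,\ov B$ with pointwise convergence of the càdlàg coordinates at continuity points of the limit; right-continuity then extends it to all $t\in[0,T]$. Absolute continuity of the characteristics of the limit follows from the absolute continuity of the uniform limits $\ov B,\ov C,\ov V^i$.

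Given part~(1), part~(2) is a clean closedness-of-convex-sets argument. Since $\ov\varphi$ is additive and positive homogeneous, $\ov\varphi(\Theta)$ is convex whenever $\Theta$ is. For every $0\le s<t\le T$ and every $n$,
\[
\frac{1}{t-s}\Big(\ov B_t-\ov B_s,\;\ov C_t-\ov C_s,\;(\ov V^i_t-\ov V^i_s)_{i\in\N}\Big)
= \frac{1}{t-s}\int_s^t \ov\varphi\big(\ov b^{\ov\P_n}_u,\ov c^{\ov\P_n}_u,\ov F^{\ov\P_n}_u\big)\,du,
\]
which lies $\ov\P_n$-a.s.\ in $\mathrm{cl}(\ov\varphi(\Theta))$. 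Because $\ov B,\ov C,\ov V^i$ are continuous coordinates, the Skorokhod representation gives pointwise convergence of these increments, so the same inclusion persists $\ov\P_0$-a.s. Applying Lebesgue differentiation along $t=s+1/n$ to the absolutely continuous maps $t\mapsto \ov B_t(\ov\omega),\ov C_t(\ov\omega),\ov V^i_t(\ov\omega)$ yields $(\ov b_s,\ov c_s,\ov v_s)(\ov\omega)\in\mathrm{cl}(\ov\varphi(\Theta))$ for $\ov\P_0\times dt$-a.e.\ $(\ov\omega,s)$, and the equivalence \eqref{eq:char-in-Theta} then gives $(\ov b^{\ov\P_0},\ov c^{\ov\P_0},\ov F^{\ov\P_0})\in\Theta$ $\ov\P_0\times dt$-a.s., i.e.\ $\ov\P_0\in\ov\fP^{ac}_{sem}(\Theta)$. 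Closedness of $\ov\fP^{ac}_{sem}(\Theta)$ is an immediate consequence.

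The hard part will be part~(1): Skorokhod $J_1$-convergence does not interact cleanly with addition of general càdlàg paths, so carefully passing the canonical decomposition and the compensator identifications $\ov V^i=g_i\ast \nu^{\ov\P_0}$ to the limit requires some care. It is the continuity of $\ov M^c,\ov B$ (uniform limits) and the rigid pathwise nature of $\ov J$ that make the argument go through; once part~(1) is secured, the averaging-plus-differentiation argument in part~(2) is essentially routine given the machinery of \eqref{eq:char-in-Theta}.
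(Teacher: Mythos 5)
Your overall route coincides with the paper's: Condition~(J) gives the uniform bound that feeds the right-hand side of Proposition~\ref{prop:purely-disc-enl}, so $\ov M^d$ is purely discontinuous under $\ov\P_0$; and your part~2 --- convexity and closedness of $\mbox{cl}(\ov\varphi(\Theta))$, membership of the normalized increments of $(\ov B,\ov C,(\ov V^i)_{i\in\N})$ in that set, pointwise convergence of these continuous coordinates under the Skorokhod representation, then Lebesgue differentiation and the equivalence \eqref{eq:char-in-Theta} --- is exactly the argument the paper uses. Your replacement of the paper's $S$-topology step for passing the additive decomposition to the limit (evaluating at common continuity points of the limit paths and extending by right-continuity) is a legitimate, somewhat more elementary variant.

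The genuine gap is in part~1. Every item in the definition of $\ov\fP^{ac}_{sem}$ that is relative to the law $\ov\P_0$ --- that $\ov M^c$ and $\ov M^d$ are $\ov\P_0$-$\ov\F$-martingales, that $\ov C$ is the quadratic covariation of $\ov M^c$ and $\ov{[M]}$ that of $\ov M^c+\ov M^d$ under $\ov\P_0$, that $\ov V^i=g_i\ast\nu^{\ov\P_0}$, that $\ov{\widetilde C}$ is the modified second characteristic, and that $\nu^{\ov\P_0}$ is absolutely continuous --- is asserted in your sketch to ``pass to the limit'' or to follow ``by Fatou'', but pathwise convergence of the coordinates does not transfer such distribution-level properties: a weak limit of martingales need not be a martingale, and a pathwise limit of compensators under $\ov\P_n$ is not automatically a compensator under $\ov\P_0$. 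Closing these points requires the uniform integrability supplied by Condition~(B) together with a finite-dimensional argument along times avoiding fixed discontinuities (Lemma~\ref{le:M-enlarge}), the identification of $\ov C$ and $\ov{[M]}$ as quadratic covariations under $\ov\P_0$ via the convergence results for quadratic variations in \cite{JacodShiryaev.03} (Lemma~\ref{le:quad-var-second-modified-enlarge}), the martingale property of $\ov U^i-\ov V^i$ for the compensator identity, and the monotone approximation by the special functions $g_{2m}\in\cC^+(\R^d)$ for the absolute continuity of $\nu^{\ov\P_0}$ (Lemma~\ref{le:compensator-enlarge}). All of this is precisely the content of Proposition~\ref{prop:X-semim-cont-char}, which the paper's proof simply invokes before applying Proposition~\ref{prop:purely-disc-enl}; if you cite that proposition your proof closes and agrees with the paper's, but as written your re-derivation of it omits the steps where the real work lies.
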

%
% % % % % % 
%BRAUCHE ICH DAS?
%\begin{remark}\label{rem:co-closedness-enlarge}
%In fact, we will see later that due to $\Theta \subseteq \R^d\times \S^d_+\times \cL$ satisfying Condition~(B), the set $\ov \fP^{ac}_{sem}(\Theta)(\Gamma_0)$ is tight, whenever $\Gamma_0$ is tight.
% see Subsection~\ref{subsec:tightness-enlarge}[Mu-0 TIGHT]. Hence under the conditions stated in Corollary~\ref{co:closedness-P-Theta-enlarge}, $\ov \fP^{ac}_{sem}(\Theta)$ is compact.
%\end{remark}
% % % % % % % % % % % % % % % % % % % %
%For the rest of this Section, we fix an arbitrary sequence $(\ov \P_n)_{n\in \N}$ satisfying the conditions in Proposition~\ref{prop:purely-disc-enl} and denote by $\ov \P_0$ the limit law. 
One of the key technique to prove Proposition~\ref{prop:purely-disc-enl} and Corollary~\ref{co:closedness-P-Theta-enlarge} is the Skorokhod representation theorem \cite{Skorohod.56} (see also \cite{Jakubowski.97}), which states the following.
\begin{theorem}[Skorokhod representation theorem]
	\label{thm:skorohod-repr-thm}
	There exists a sequence of $\ov\Omega$-valued random variables $(z^n)_{n \in\N_0}$ defined on $([0,1],\cB([0,1]),\lambda)$, where $\lambda$ denotes the Lebesgue measure, such that for each $n \in \N_0$
	, $\ov \P_n=\lambda\circ (z^n)^{-1}$ and $(z^n)$ converges to $z^0$ with respect to the product topology on $\ov \Omega.$
	%\begin{equation*}
	%\ov \P_n=\lambda\circ (z^n)^{-1},
	%\end{equation*}
	%and
	%\begin{equation*}
	%\mbox{$(z_n)$ converges to $z_0$ with respect to the product topology on $\ov \Omega$}
	%\end{equation*}
\end{theorem}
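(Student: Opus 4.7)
The statement is a direct specialization of the classical Skorokhod representation theorem to the probability measures $(\ov\P_n)_{n \in \N_0}$ on the space $\ov\Omega$ that appear in the hypotheses of Proposition~\ref{prop:purely-disc-enl} and Corollary~\ref{co:closedness-P-Theta-enlarge}. The plan is to verify the two ingredients needed to invoke that classical result: (i) the ambient space $\ov\Omega$ is Polish, and (ii) the sequence $(\ov\P_n)_{n \in \N}$ converges weakly to $\ov\P_0$.

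For (i), each factor of $\ov\Omega$ is Polish: the Skorokhod space $\Omega_n = \D([0,T],\R^n)$ endowed with the $J_1$-topology is Polish (a classical fact, see e.g.\ \cite{JacodShiryaev.03}), and $\Omega_n^c = C([0,T],\R^n)$ is Polish under the uniform topology. Since a countable product of Polish spaces equipped with the product topology is again Polish, the space $\ov\Omega$ is Polish. Point (ii) is part of the standing hypothesis in the propositions for which this representation will be used (the theorem is quoted precisely for this purpose).

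With (i) and (ii) in hand, the classical Skorokhod representation theorem of \cite{Skorohod.56} applies on the Polish space $\ov\Omega$ and yields measurable maps $z^n\colon ([0,1], \cB([0,1]), \lambda) \to \ov\Omega$ with $\lambda \circ (z^n)^{-1} = \ov\P_n$ for every $n \in \N_0$ and $z^n \to z^0$ almost surely with respect to the product topology. For flexibility, one may alternatively invoke the version of \cite{Jakubowski.97}, which applies to a wider class of (not necessarily metrizable) topological spaces and gives the same conclusion. There is no substantive obstacle in the proof; the only point warranting attention is to confirm that the product topology used on $\ov\Omega$ is the one under which each factor is Polish, so that the classical form of the theorem applies verbatim.
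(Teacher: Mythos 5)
Your proposal is correct and matches the paper's treatment: the paper offers no proof of this statement, simply invoking the classical representation theorem of \cite{Skorohod.56} (see also \cite{Jakubowski.97}) for the weakly convergent sequence $(\ov\P_n)$ on $\ov\Omega$, which it has already noted is Polish as a countable product of Polish spaces. Your two verification steps (Polishness of $\ov\Omega$ and the standing weak-convergence hypothesis) are exactly what is needed, so the argument is essentially the same as the paper's.
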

Denote by $z^{n,\ov X}:=\ov X\circ z^n \in \Omega$, and define in the same way $z^{n,\ov B},\dots,z^{n,\ov C}$ as well as $z^{n,\ov U^i}, z^{n,\ov V^i}$ for each $i \in \N$. The convergence of $(z^n)$ to $z^0$ on $\ov \Omega$ implies the convergence of each sequence $(z^{n,\ov X}), \dots,  (z^{n,\ov V^i})$ to  $z^{0,\ov X}, \dots,  z^{0,\ov V^i}$, respectively, on the corresponding space.

The proof of Proposition~\ref{prop:purely-disc-enl} and Corollary~\ref{co:closedness-P-Theta-enlarge} are divided into several lemmas, provided in the following subsections. For standard notation appearing in the theory of weak convergence of processes, we refer to \cite{JacodShiryaev.03}.

% % % % % % % % % % % % % % % % % % % % % % % % % % % % % %5
%
%
% % % % % % % % % % % % % % % % % % % % % % % % % % %
%\section{A Version of Theorem~\ref{thm:compactness} on the enlarged space}\label{sec:purely-disc-enl}
%
%
\subsection{Semimartingale property of $\ov X$ under the limit law $\ov \P_0$}\label{subsec:semimart-enlarge}
The main objective of this subsection is to prove that  any limit law $\ov \P_0$ of a sequence $(\ov \P_n)_{n \in \N}\subseteq \ov \fP^{ac}_{sem}(\Theta)$ with $\Theta$ satisfying Condition~(B) is an element of $\ov\fP^{ac,w}_{sem}$. In particular, $\ov X$ has no fixed time of discontinuity. 
\begin{proposition}\label{prop:X-semim-cont-char}
	Let $\Theta \subseteq \R^d\times \S^d_+\times \cL$ satisfy Condition~(B) and 
	let $(\ov \P_n)_{n \in \N}\subseteq \ov \fP^{ac}_{sem}(\Theta)$ be a sequence converging weakly to some law $\ov \P_0$. Then, $\ov \P_0 \in \ov\fP^{ac,w}_{sem}$. In addition, we have
	
	\vspace*{0.15cm}
	\noindent
	(i) $\E^{\ov \P_0}\Big[\int_0^T \big[|\ov b^{\ov \P_0}_s|  + |\ov c^{\ov \P_0}_s| +\int_{\R^d} |h(x)|^2\,\ov F^{\ov \P_0}_s(dx)\big]\, ds\Big]<\infty$;\\
	(ii) Both $\ov M^c$ and $\ov M^d$ are $\ov \P_0$-$\ov \F$-martingales.
\end{proposition}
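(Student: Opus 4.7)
My plan is to apply the Skorokhod representation theorem (Theorem~\ref{thm:skorohod-repr-thm}) so that the weak convergence $\ov\P_n\to\ov\P_0$ becomes $\lambda$-a.s.\ pointwise convergence $z^n\to z^0$ on $\ov\Omega$, and then identify each piece of the structure under $\ov\P_0$ factor-by-factor. Since convergence in the product topology on $\ov\Omega$ amounts to convergence of each coordinate in its own topology, $z^{n,\ov M^c}$ converges \emph{uniformly} to $z^{0,\ov M^c}$ (as they live in $\Omega^c_d$), which makes $\ov M^c$ continuous $\ov\P_0$-a.s. Evaluating the pathwise identity $\ov X-\ov X_0=\ov M^c+\ov M^d+\ov B+\ov J$ along $(z^n)$ at common continuity points of the limit paths and extending by right-continuity propagates the decomposition to $\ov\P_0$.

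Condition~(B) then forces the finite-variation and absolute-continuity structure: under each $\ov\P_n$ one has the pathwise Lipschitz bounds $|\ov B_t-\ov B_s|\le K|t-s|$, $|\ov C_t-\ov C_s|\le K|t-s|$ and $|\ov V^i_t-\ov V^i_s|\le\|g_i\|_\infty K|t-s|$, which are preserved under pointwise limits and therefore hold under $\ov\P_0$. This gives absolute continuity of $\ov B$, $\ov C$ and each $\ov V^i$ with densities bounded $(\ov\P_0\otimes dt)$-a.e.

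The heart of the argument is part~(ii), from which the identifications $\ov C=\langle\ov M^c\rangle^{\ov\P_0}$ and $\ov V^i=g_i\ast\nu^{\ov\P_0}$, and thus the membership $\ov\P_0\in\ov\fP^{ac,w}_{sem}$, will follow by uniqueness of the predictable compensator. Condition~(B) yields uniform $L^2$-bounds
\begin{equation*}
\sup_n\E^{\ov\P_n}[\langle\ov M^c\rangle_T]\le KT, \q
\sup_n\E^{\ov\P_n}[\langle\ov M^d\rangle_T]=\sup_n\E^{\ov\P_n}\!\int_0^T\!\!\int|h(x)|^2\,\ov F^{\ov\P_n}_s(dx)\,ds\le CKT,
\end{equation*}
using the truncation inequality $|h(x)|^2\le C(|x|^2\wedge|x|)$; Doob's inequality then gives uniform integrability of $(\ov M^c_t)_n$ and $(\ov M^d_t)_n$. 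For $s<t$ and bounded continuous $\ov\F_s$-measurable $\phi$, the identities $\E^{\ov\P_n}[(\ov M^c_t-\ov M^c_s)\phi]=\E^{\ov\P_n}[(\ov M^d_t-\ov M^d_s)\phi]=0$ transfer to $\ov\P_0$ by Vitali's theorem combined with pointwise convergence along $(z^n)$, after which a monotone class argument extends them to all bounded $\ov\F_s$-measurable $\phi$. Running the same scheme with the $\ov\P_n$-martingales $\ov U^i-\ov V^i$ and $(\ov M^c)^2-\ov C$ then delivers the characteristic identifications, and (i) follows from Fatou applied to the uniform bound $\E^{\ov\P_n}\bigl[\int_0^T(|\ov b^{\ov\P_n}_s|+|\ov c^{\ov\P_n}_s|+\int|h(x)|^2\ov F^{\ov\P_n}_s(dx))\,ds\bigr]\le CKT$.

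I expect the main obstacle to be the handling of $\ov M^d$ (and of $\ov J$, $\ov U^i$), which live in the Skorokhod space where pointwise convergence of time-marginals only occurs at continuity points of the limit path. To verify the conditional identity at arbitrary $s<t$ I would first establish that $\ov X$ has no fixed time of discontinuity under $\ov\P_0$ via the uniform modulus estimate $\E^{\ov\P_n}|\ov X_t-\ov X_{t-\eps}|\le C\sqrt{\eps}$, which itself follows from Condition~(B) by applying the Burkholder--Davis--Gundy inequality to $\ov M^c,\ov M^d$ and using the $L^1$ bounds on $\ov B$ and $\ov J$. Passing this estimate to the limit along continuity points of $t\mapsto\ov\P_0\circ\ov X_t^{-1}$ rules out fixed discontinuities; the martingale identities can then be verified on a dense set of times and extended by right-continuity, and the remainder of the proposition is routine.
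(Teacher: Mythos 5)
Your plan follows the same backbone as the paper's proof: Skorokhod representation to turn weak convergence into $\lambda$-a.s.\ convergence, the pathwise Lipschitz bounds coming from Condition~(B) preserved under pointwise limits (absolute continuity of $\ov B$, $\ov C$, $\ov V^i$), uniform integrability from the $L^2$-bounds to transfer the martingale identities of $\ov M^c$, $\ov M^d$ (and of $\ov U^i-\ov V^i$) to $\ov\P_0$ via a monotone class argument, and Fatou for (i). Two deviations are harmless but worth noting. First, the paper transfers the additive decomposition using Jakubowski's $S$-topology (where addition is sequentially continuous) rather than evaluating at common continuity points; your route works as well. Second, your BDG modulus estimate ruling out fixed times of discontinuity of $\ov X$ is correct but unnecessary: the paper only uses that $\fJ:=\{t:\ov\P_0[\Delta\ov\X_t\neq0]>0\}$ is countable, takes test functions of the form $\prod_j f_j(\ov\X_{s_j})$ with $s_j$ in the dense complement $\fD$, and extends by density and right-continuity. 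Be careful with your variant of the test class: a bounded \emph{continuous} $\ov\cF_s$-measurable $\phi$ on $\ov\Omega$ need not exist in sufficient supply to generate $\ov\cF_s$ (the evaluation maps are not continuous on the Skorokhod space), so the cylinder functions at times in $\fD$ are the right choice.

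There is, however, a genuine gap in the part you declare routine. Membership in $\ov\fP^{ac,w}_{sem}$ also requires identifying $\ov{[M]}$ as the quadratic covariation of $\ov M^c+\ov M^d$ and $\ov{\widetilde C}$ as the modified second characteristic under $\ov\P_0$, and establishing absolute continuity of the \emph{second and third characteristics of $\ov X$ under $\ov\P_0$} — and the second characteristic is \emph{not} $\ov C$: since $\ov M^d$ may acquire a continuous martingale part in the limit, $\ov C^{\ov\P_0}=\ov C+\langle(\ov M^d)^c\rangle$, so your control of $\ov C=\langle\ov M^c\rangle^{\ov\P_0}$ alone does not settle it. Your compensator-uniqueness device works for $\ov C$ and for $\ov V^i=g_i\ast\nu^{\ov\P_0}$ (provided you also carry the identifications $\ov J=\sum_{s\le\cdot}[\Delta\ov X_s-h(\Delta\ov X_s)]$ and $\ov U^i=g_i\ast\mu^{\ov X}$ over to $\ov\P_0$, via the continuity of the jump functionals $I^f$, \cite[VI.2.8]{JacodShiryaev.03}), but it cannot identify $\ov{[M]}$, which is not predictable, so Doob--Meyer uniqueness does not apply. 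The paper handles this block by the joint convergence of a martingale with its quadratic variation (\cite[VI.6.22, VI.6.29]{JacodShiryaev.03}), shows $\ov{[M]}-\ov{\widetilde C}$ is a $\ov\P_0$-martingale by the same uniform-integrability scheme (using $\int_{\R^d}|h(x)|^4\,F(dx)\le K\cK$), and obtains absolute continuity of $\nu^{\ov\P_0}$ by monotone convergence along the special functions $g_{2m}$, from which absolute continuity of $\ov C^{\ov\P_0}$ follows via $\ov{\widetilde C}$. These steps need to be supplied; they do not follow from the tools listed in your outline.
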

This is the first step towards the proof of Proposition~\ref{prop:purely-disc-enl} and Corollary~\ref{co:closedness-P-Theta-enlarge}. The proof of Proposition~\ref{prop:X-semim-cont-char}
is divided into several lemmas. We will frequently use the fact that for any continuous function $f:\R^d \to \R^m$ vanishing on a neighborhood of 0, the map
\begin{equation}\label{eq:jumps-away-0-cont}
I^f:\D([0,T];\R^d) \to \D([0,T];\R^m), \quad \alpha \mapsto I^f(\alpha):=\sum_{0\leq s \leq \cdot} f(\Delta \alpha_s)
\end{equation}
is continuous, see \cite[Corollary~VI.2.8, p.340]{JacodShiryaev.03}. 
% % % % % % % % % % % % % % % % % % % % %
%
%
\begin{lemma}\label{le:canonical-decomp-limit-enlarge}
	%Let $\Theta \subseteq \R^d\times \S^d_+\times \cL$ satisfy Condition~(B) and 
	Let $(\ov \P_n)_{n \in \N}\subseteq \ov \fP^{ac}_{sem}$ be a sequence converging weakly to some law $\ov \P_0\in \fM_1(\ov \Omega)$. Then we have:\\
	(i) $\ov J= \sum_{0\leq s \leq \cdot} [\Delta \ov X_s-h(\Delta \ov X_s)] \ \, \ov \P_0$-a.s.;\\
	(ii) For each $i \in \N$, we have $\ov U^i= \sum_{0\leq s \leq \cdot} g_i(\Delta X_s) \ \, \ov \P_0$-a.s.;\\
	(iii) $\ov X= \ov X_0 + \ov B + \ov M^c + \ov M^d + \ov J \ \, \ov \P_0$-a.s. 
\end{lemma}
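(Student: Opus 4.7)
The plan is to transfer each identity from the sequence $(\ov \P_n)$ to the limit $\ov \P_0$ via the Skorokhod representation theorem (Theorem~\ref{thm:skorohod-repr-thm}), which realizes all the $\ov \P_n$ as pushforwards of $[0,1]$-valued random variables $z^n$ converging $\lambda$-a.s.\ to $z^0$ in the product topology on $\ov \Omega$. Thus for $\lambda$-a.e.\ $\omega \in [0,1]$, each component $z^{n,\ov X}(\omega), z^{n,\ov B}(\omega), z^{n,\ov M^c}(\omega), z^{n,\ov M^d}(\omega), z^{n,\ov J}(\omega),\dots$ converges to the corresponding limit in its appropriate topology: Skorokhod $J_1$ on the c\`adl\`ag factors $\Omega$, and uniform on the continuous factors $\Omega^c_d$.

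For part~(i), I would set $f(x):=x-h(x)$; since $h(x)=x$ near the origin, $f$ is continuous and vanishes in a neighborhood of $0$, so by \eqref{eq:jumps-away-0-cont} the induced map $I^f$ on $\D([0,T],\R^d)$ is continuous. For each $n$, the definition of $\ov \fP^{ac}_{sem}$ forces $z^{n,\ov J}(\omega)=I^f(z^{n,\ov X}(\omega))$ for $\lambda$-a.e.\ $\omega$. Passing $n\to\infty$ on both sides, using continuity of $I^f$ on the left and the Skorokhod convergence $z^{n,\ov J}(\omega)\to z^{0,\ov J}(\omega)$ on the right, uniqueness of limits yields $z^{0,\ov J}(\omega)=I^f(z^{0,\ov X}(\omega))$ $\lambda$-a.s., which is the desired identity under $\ov \P_0$. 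Part~(ii) is the same argument with $f$ replaced by $g_i \in \mathcal{C}^+(\R^d)$, which by construction is continuous and vanishes in a neighborhood of the origin.

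For part~(iii), the obstacle is that addition of c\`adl\`ag paths is not continuous in the Skorokhod topology, so one cannot simply pass to the limit in the decomposition termwise. Instead I would argue pointwise at common continuity times of the limit paths. Fix an $\omega$ at which both the Skorokhod-representation convergence and the identity $z^{n,\ov X}=z^{n,\ov X}_0+z^{n,\ov B}+z^{n,\ov M^c}+z^{n,\ov M^d}+z^{n,\ov J}$ hold for every $n$ (a full $\lambda$-measure event). The three c\`adl\`ag limits $z^{0,\ov X}(\omega), z^{0,\ov M^d}(\omega), z^{0,\ov J}(\omega)$ each have at most countably many discontinuities, so the set $T_\omega\subseteq[0,T]$ of their common continuity points is co-countable. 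At any $t\in T_\omega$, Skorokhod convergence to a function continuous at $t$ yields $z^{n,\ov X}_t(\omega)\to z^{0,\ov X}_t(\omega)$, and similarly for $\ov M^d$ and $\ov J$, while $z^{n,\ov B}(\omega)\to z^{0,\ov B}(\omega)$ and $z^{n,\ov M^c}(\omega)\to z^{0,\ov M^c}(\omega)$ uniformly since these factors live in $\Omega^c_d$. Hence the identity passes to the limit at every $t\in T_\omega$, and then right-continuity of both sides extends it to all $t\in[0,T]$.

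The main obstacle is precisely item~(iii): Skorokhod convergence is not preserved under addition, so a direct termwise limit argument fails. The workaround is to avoid adding c\`adl\`ag paths and instead rely on pointwise convergence at the dense set of continuity points of the limit, followed by right-continuity. Parts~(i) and~(ii) are comparatively routine once the continuity property \eqref{eq:jumps-away-0-cont} is in place.
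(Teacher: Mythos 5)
Your parts (i) and (ii) follow the paper's proof verbatim: Skorokhod representation plus continuity of the map $I^{f}$ from \eqref{eq:jumps-away-0-cont} with $f(x)=x-h(x)$, respectively $f=g_i$. For part (iii) you take a genuinely different, and correct, route. The paper handles the failure of addition to be $J_1$-continuous by passing to Jakubowski's $S$-topology, in which addition is sequentially continuous \cite[Theorem~2.13]{Jakubowski.97b}: since $z^{n,\ov X}-z^{n,\ov X}_0-z^{n,\ov B}-z^{n,\ov M^c}-z^{n,\ov M^d}-z^{n,\ov J}=0$ $\lambda$-a.s.\ for each $n$, the $S$-limit of this sum vanishes as well. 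You instead argue pointwise on a full-measure set of $\omega$: at every common continuity point $t$ of the c\`adl\`ag limits $z^{0,\ov X}(\omega)$, $z^{0,\ov M^d}(\omega)$, $z^{0,\ov J}(\omega)$ (a co-countable, hence dense, subset of $[0,T]$) the $J_1$-convergence gives convergence of the values (cf.\ \cite[Proposition~VI.2.1]{JacodShiryaev.03}), the factors in $\Omega^c_d$ converge uniformly, so the decomposition passes to the limit at such $t$, and right-continuity of both sides extends it. This is entirely elementary and self-contained, whereas the paper's argument is a one-line citation that imports the nontrivial $S$-topology machinery; both are valid. One small point to tidy: right-continuity only propagates the identity from the dense set to $[0,T)$, since $T$ cannot be approached from the right; the terminal time is nevertheless covered because $J_1$-convergence on $[0,T]$ (time changes fixing the endpoints) gives $z^{n,\cdot}_T\to z^{0,\cdot}_T$ directly, so the identity at $t=T$ (and likewise the convergence of the initial values $z^{n,\ov X}_0$ you use) holds without any continuity assumption.
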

\begin{proof}
	By the Skorokhod representation theorem, both sequences $(z^{n,\ov X})$ and $(z^{n,\ov J})$ converge pointwise to $z^{0,\ov X}$ and $z^{0,\ov J}$, respectively, on the Skorokhod space.  Now, as $x-h(x)$
	is a continuous function vanishing in a neighborhood of zero, the function $I^{x-h(x)}:\Omega \to \Omega$ defined in \eqref{eq:jumps-away-0-cont} is continuous, hence
	\begin{equation*}
	\lim\limits_{n \to \infty} I^{x-h(x)}(z^{n,\ov X})=I^{x-h(x)}(z^{0,\ov X}).
	\end{equation*}
	On the other hand, as $\ov J= \sum_{0\leq s \leq \cdot} [\Delta \ov X_s-h(\Delta \ov X_s)] \ \, \ov \P_n$-a.s. for each $n$, we deduce from the relation $\overline{\P}_n = \lambda \circ ({z^n})^{-1}$  that
	\begin{equation*}
	\lambda[z^{n,\ov J} = I^{x-h(x)}(z^{n,\ov X})] = 1.
	\end{equation*}
	Therefore, we conclude $z^{0,\ov J}= I^{x-h(x)}(z^{0,\ov X}) \ \lambda$-a.s., which in turn implies (i).
	
	Next,  as each $g_i(x)$ is a continuous function vanishing in a neighborhood of the origin, the same argument as in (i), but with  $g_i(x)$ and $\ov U^i$ yields (ii).
	
	To prove (iii), we notice the existence of a different topology on the Skorokhod space $\Omega$, the so-called $S$-topology, which is weaker than the usual Skorokhod $J_1$-topology, but has the property that the addition $(\alpha,\beta) \in \Omega \times \Omega \mapsto \alpha + \beta \in \Omega$ is sequentially continuous, see \cite[Theorem~2.13]{Jakubowski.97b}. Therefore, we conclude that
	\begin{align*}
	& \ \lim_{n \rightarrow \infty} \big(z^{n,\ov X} - z^{n,\ov X}_0 - z^{n,\ov B} - z^{n,\ov M^c} - z^{n,\ov  M^d} - z^{n,\ov J}\big)\\
	= & \  z^{0,\ov X} - z^{0,\ov X}_0 - z^{0,\ov B} - z^{0,\ov M^c} - z^{0,\ov M^d} - z^{0,\ov J}
	\end{align*}
	pointwise in the $S$-topology. On the other hand,
	$\ov X= \ov X_0 + \ov B + \ov M^c + \ov M^d + \ov J \ \, \ov \P_n$-a.s., hence $z^{n,\ov X} - z^{n,\ov X}_0 - z^{n,\ov B} - z^{n,\ov M^c} - z^{n,\ov  M^d} - z^{n,\ov J}=0 \ \lambda$-a.s. for each $n$. Therefore, we conclude that also $z^{0,\ov X} - z^{0,\ov X}_0 - z^{0,\ov B} - z^{0,\ov M^c} - z^{0,\ov M^d} - z^{0,\ov J}=0 \ \lambda$-a.s., which indeed gives us (iii).
\end{proof}
%
%
% % % % % % % % % % % % % % % % % % %
%
%
For any constant $K>0$, we denote by $\mbox{Lip}_K$ the set of all Lipschitz-continuous functions on $[0,T]$ with Lipschitz constant $K$. 
Recall that Condition~(B) provides the finiteness of 
\begin{equation*}
\cK:=\sup_{(b,c,F)\in \Theta} \Big\{|b|+|c| + \int_{\R^d} |x|^2 \wedge |x|\, F(dx)\Big\}<\infty.
\end{equation*}
%which is finite by definition of Condition~(B). 
%
%
\begin{lemma}\label{le:B-enlarge}
	Let $\Theta \subseteq \R^d\times \S^d_+\times \cL$ satisfy Condition~(B) and 
	let $(\ov \P_n)_{n \in \N}\subseteq \ov \fP^{ac}_{sem}(\Theta)$ be a sequence converging weakly to some law $\ov \P_0\in \fM_1(\ov \Omega)$. Then $\ov B$ is absolutely continuous $\ov \P_0$-a.s. satisfying $\E^{\ov \P_0}[\int_0^T |\ov b_s| \, ds]<\infty$. In particular, it is $\ov \F$-predictable of $\ov \P_0$-integrable variation. Moreover, the same holds true for $\ov C$ and $\ov{\widetilde{C}}$.
\end{lemma}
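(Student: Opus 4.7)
The plan is to exploit Condition (B) in two steps: first to observe that each of the processes $\ov B$, $\ov C$, $\ov{\widetilde C}$ is Lipschitz (with a deterministic constant) under every $\ov \P_n$, and then to transfer this Lipschitz property to the limit $\ov \P_0$ via the Skorokhod representation theorem.

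First, I would fix the deterministic constants. Since $(\ov b^{\ov\P_n},\ov c^{\ov\P_n},\ov F^{\ov\P_n})\in\Theta$ holds $\ov\P_n\times dt$-a.e., Condition (B) gives $|\ov b^{\ov\P_n}_s|\le\cK$ and $|\ov c^{\ov\P_n}_s|\le\cK$ $\ov\P_n\times dt$-a.e. For the modified second characteristic note that, because $h$ is bounded and equal to $x$ near $0$, there is a constant $C_h$ with $|h(x)|^2\le C_h(|x|^2\wedge |x|)$ on a neighbourhood of $0$ and $|h(x)|^2\le \|h\|_\infty^2$ elsewhere; combined with the fact that Condition (B) yields $\int_{|x|>1}F(dx)\le \cK$ uniformly on $\Theta$, this gives a constant $\cK'<\infty$ such that $\widetilde c^{ij}:=c^{ij}+\int h^i(x)h^j(x)\,F(dx)$ satisfies $|\widetilde c|\le\cK'$ uniformly on $\Theta$. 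Because $\ov B,\ov C,\ov{\widetilde C}$ are $\ov\P_n$-a.s. equal to the time integrals of $\ov b^{\ov\P_n}$, $\ov c^{\ov\P_n}$, $\widetilde{\ov c}^{\ov\P_n}$, each is Lipschitz continuous with a fixed Lipschitz constant (either $\cK$ or $\cK'$) for $\ov\P_n$-a.e. path.

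Next I would pass to the limit. By Theorem~\ref{thm:skorohod-repr-thm}, we have $z^{n,\ov B}\to z^{0,\ov B}$, $z^{n,\ov C}\to z^{0,\ov C}$, $z^{n,\ov{\widetilde C}}\to z^{0,\ov{\widetilde C}}$ pointwise in the respective continuous-path spaces (where convergence is uniform). Since the set $\mbox{Lip}_K$ of $K$-Lipschitz paths is closed in $C([0,T];\R^m)$ under the uniform topology, and each approximating sequence lies in the appropriate Lipschitz class $\lambda$-a.s., the limits do too. Therefore $\ov B,\ov C,\ov{\widetilde C}$ are $\ov\P_0$-a.s. Lipschitz continuous (with constants $\cK,\cK,\cK'$, respectively). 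A Lipschitz path is absolutely continuous with a bounded Radon–Nikodym derivative, which by the $\limsup$ definition equals $\ov b,\ov c$ (and the analogous $\widetilde{\ov c}$) Lebesgue-a.e.\ and is bounded by $\cK$ (respectively $\cK'$). Integrating in $t$ and taking $\ov\P_0$-expectation yields
\begin{equation*}
\E^{\ov\P_0}\Bigl[\int_0^T|\ov b_s|\,ds\Bigr]\le \cK T<\infty,
\end{equation*}
and similarly for $\ov c$ and $\widetilde{\ov c}$, establishing the claimed integrable variation.

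Finally, $\ov B,\ov C,\ov{\widetilde C}$ are $\ov\F$-adapted by construction and $\ov\P_0$-a.s.\ continuous (being Lipschitz), hence $\ov\F$-predictable. There is no serious obstacle, but the one place to handle carefully is the uniform bound on $\int |h(x)|^2 F(dx)$ coming from Condition (B): the ingredient $\int|x|^2\wedge|x|\,F(dx)\le\cK$ is what lets one split the integral into a near-zero piece controlled by $|x|^2$ and a far-from-zero piece controlled by $|x|$ via $\int_{|x|>1}F(dx)\le\cK$, and this is what makes the Lipschitz bound for $\ov{\widetilde C}$ uniform over $\Theta$.
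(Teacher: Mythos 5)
Your proposal is correct and follows essentially the same route as the paper: Condition~(B) gives a deterministic Lipschitz bound on $\ov B$, $\ov C$, $\ov{\widetilde C}$ under each $\ov\P_n$, the Skorokhod representation plus closedness of $\mathrm{Lip}_K$ under uniform convergence transfers this to $\ov\P_0$, and Lipschitz continuity yields absolute continuity, the integrability bound $\cK T$, and predictability by continuity. The only cosmetic difference is that you bound the modified second characteristic directly via $|h(x)|^2\le C_h(|x|^2\wedge|x|)$, whereas the paper invokes the identity $\ov{\widetilde C}^{ij}=\ov C^{ij}+\int_0^\cdot\int h^i(x)h^j(x)\,\ov F^{\ov\P_n}_t(dx)\,dt$ and repeats the same argument, which amounts to the same estimate.
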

\begin{proof}
	As $\Theta \subseteq \R^d\times \S^d_+\times \cL$ satisfy Condition~(B), we have $\ov \P_n\mbox{-a.s.}$ that for any $0\leq s\leq t\leq T$ 
	\begin{equation*}
	|\ov B_t-\ov B_s| \leq \int_s^t |\ov b_r|\,dr \leq \cK |t-s|,
	\end{equation*}
	hence $\ov B \in \mbox{Lip}_\cK \ \ov \P_n$-a.s..We can conclude by 
	\begin{align*}
	1&= \overline{\P}_n\big[|\ov B_t - \ov B_s| \leq \cK|t - s|, \forall \text{ }t,s \in [0,T] \cap \Q\big]\\
	&= \lambda\big[|z^{n,\ov B}_t - z^{n,\ov B}_s| \leq \cK|t - s|, \forall \text{ }t,s \in [0,T] \cap \Q \big]
	\end{align*}
	that $z^{n,\ov B} \in \text{Lip}_\cK$, $\lambda$-a.s. for each $n$. As $z^{n,\ov B}$ converges to $z^{0,\ov B}$ pointwise in $\Omega^c_d$, we have $z^{0,\ov B} \in \mbox{Lip}_\cK$  $\lambda$-a.s., which means that $\ov B \in \mbox{Lip}_\cK \ \ov \P_0$-a.s.. Hence, by Rademacher's theorem, $ \ov B$ has $\ov \P_0$-a.s. absolutely continuous trajectories, and $\E^{\ov \P_0}[\int_0^T |\ov b_s| \, ds]\leq \cK T$. The finite variation property of $\ov B$ follows. Note that by continuity, $\ov B$ is $\overline{\F}$-predictable. 
	
	Next, the same arguments as above yield also the $\ov \P_0$-a.s. absolute continuity of $\ov C$ with the same integrability property. Moreover, as by \cite[II.2.18, p.79]{JacodShiryaev.03}, each component satisfies
	\begin{equation*}
	\ov{\widetilde{C}}^{ij}= \ov C^{ij} + \int_0^\cdot \int_{\R^d} h^i(x)h^j(x)\,\ov F_t^{\ov \P_n}(dx)\,dt \ \ \ov \P_n\mbox{-a.s. for each $n$,}
	\end{equation*}the same also hold for $\ov{\widetilde{C}}$.
\end{proof}
%
%
% % % % % % % % % % % % % % % % % % % % %
% BRUUCH ICH NUEMME!
%\begin{remark}
%\label{rem:C-abs-cont-enl}
%The same arguments as in Lemma~\ref{le:B-enlarge} yield also the $\ov \P_0$-a.s. absolute continuity of $\ov C$ with the same integrability property. Moreover, as by \cite[II.2.18, p.79]{JacodShiryaev.03}
%\begin{equation*}
%\ov{\widetilde{C}}^{ij}= \ov C^{ij} + \int_0^\cdot \int_{\R^d} h^i(x)h^j(x)\,\ov F_t^{\ov \P_n}(dx)\,dt \ \ \ov \P_n\mbox{-a.s. for each $n$,}
%\end{equation*}the same also holds for $\ov{\widetilde{C}}$.
%\end{remark}
% % % % % % % % % % % % % % % % % % % % % % % % % % %
%
%
\begin{lemma}\label{le:M-enlarge}
	Let $\Theta \subseteq \R^d\times \S^d_+\times \cL$ satisfy Condition~(B) and 
	let $(\ov \P_n)_{n \in \N}\subseteq \ov \fP^{ac}_{sem}(\Theta)$ be a sequence converging weakly to some law $\ov \P_0\in \fM_1(\ov \Omega)$. Then:\\
	(i) both $\ov M^c$ and $\ov M^d$ are $\ov \P_0$-$\ov \F$-martingales;\\
	(ii) For each $i \in \N$, we have $\ov V^i=g_i(x) \ast \nu^{\ov \P_0} \ \ \ov \P_0$-a.s..
\end{lemma}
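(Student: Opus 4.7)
The natural strategy is to propagate the martingale property from each $\ov \P_n$ to the limit $\ov \P_0$ via weak convergence, and then identify $\ov V^i$ by uniqueness of the $\ov \P_0$-$\ov \F$-compensator. Concretely, for (i) I would fix $0\leq s<t\leq T$ and a bounded continuous function $\Psi:\ov\Omega \to \R$ depending on $\ov \X$ only through coordinates evaluated at finitely many times $r_1,\dots,r_k \in [0,s]$, and aim to verify
\begin{equation*}
\E^{\ov \P_0}\big[\Psi\cdot (\ov M^c_t-\ov M^c_s)\big]=0\q\mbox{and}\q\E^{\ov \P_0}\big[\Psi\cdot (\ov M^d_t-\ov M^d_s)\big]=0.
\end{equation*}
These identities hold under every $\ov \P_n$ because $\ov M^c$ and $\ov M^d$ are $\ov \P_n$-$\ov \F$-martingales, so the task reduces to passing to the limit.

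To justify that passage I would first derive uniform $L^2$-estimates using Condition~(B). For $0\leq s\leq t\leq T$,
\begin{equation*}
\E^{\ov \P_n}\big[|\ov M^c_t-\ov M^c_s|^2\big]=\E^{\ov \P_n}\big[\tr(\ov C_t-\ov C_s)\big]\leq \cK(t-s),
\end{equation*}
and, since $|h(x)|^2\leq C\,(|x|^2\wedge 1)\leq 2C\,(|x|^2\wedge |x|)$ for some $C>0$,
\begin{equation*}
\E^{\ov \P_n}\big[|\ov M^d_t-\ov M^d_s|^2\big]=\E^{\ov \P_n}\Big[\int_s^t\!\!\int_{\R^d}|h(x)|^2\,\ov F^{\ov \P_n}_r(dx)\,dr\Big]\leq 2C\cK(t-s).
\end{equation*}
These bounds upgrade $\ov M^c$ and $\ov M^d$ to square-integrable true $\ov \P_n$-martingales and, being uniform in $n$, yield uniform integrability of the families $\{\Psi\cdot(\ov M^c_t-\ov M^c_s)\}_n$ and $\{\Psi\cdot(\ov M^d_t-\ov M^d_s)\}_n$. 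The subtlety is that on the Skorokhod components of $\ov\Omega$ the evaluation maps are only $\ov \P_0$-a.s.\ continuous at times where $\ov \X$ is continuous under $\ov \P_0$; since $\ov \X$ is c\`adl\`ag, the set $D\subset[0,T]$ of its fixed times of discontinuity is at most countable, so restricting $s,t,r_1,\dots,r_k$ to $[0,T]\setminus D$ makes each test integrand continuous at $\ov \P_0$-a.e.\ point of $\ov\Omega$. Combining weak convergence with the uniform $L^2$-bound via Vitali then gives
\begin{equation*}
\E^{\ov \P_0}\big[\Psi\cdot(\ov M^c_t-\ov M^c_s)\big]=\lim_{n\to\infty}\E^{\ov \P_n}\big[\Psi\cdot(\ov M^c_t-\ov M^c_s)\big]=0,
\end{equation*}
and analogously for $\ov M^d$. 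A monotone-class argument together with right-continuity of $\ov M^c$ and $\ov M^d$ extends this to every bounded $\ov \F_s$-measurable $\Psi$ and to all $s<t$, producing (i).

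For (ii), I would identify $\ov V^i$ as the $\ov \P_0$-$\ov \F$-compensator of $\ov U^i$, which equals $g_i(x)\ast\mu^{\ov X}$ $\ov \P_0$-a.s.\ by Lemma~\ref{le:canonical-decomp-limit-enlarge}(ii). Because $g_i$ vanishes on a neighborhood of the origin, Condition~(B) yields $\sup_{(b,c,F)\in\Theta}\int g_i(x)F(dx)<\infty$ and $\sup_{(b,c,F)\in\Theta}\int g_i^2(x)F(dx)<\infty$. The first bound makes $\ov V^i$ $\ov \P_n$-a.s.\ Lipschitz with a uniform constant, so the Skorokhod representation argument from Lemma~\ref{le:B-enlarge} yields that $\ov V^i$ is $\ov \P_0$-a.s.\ absolutely continuous and hence $\ov \F$-predictable. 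The second bound uniformly controls the $L^2$-increments of the $\ov \P_n$-martingale $\ov U^i-\ov V^i$, so the limit procedure from part (i) shows that $\ov U^i-\ov V^i$ is a $\ov \P_0$-$\ov \F$-martingale. Uniqueness of the compensator then forces $\ov V^i=g_i(x)\ast\nu^{\ov \P_0}$, $\ov \P_0$-a.s.

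The main obstacle I anticipate is the careful bookkeeping needed to certify continuity of the test functionals at $\ov \P_0$-generic points under the mixed topology on $\ov\Omega$ (Skorokhod $J_1$ on the c\`adl\`ag components versus uniform on the continuous ones), so that weak convergence actually yields convergence of the relevant expectations. Once that is arranged, the uniform $L^2$-bounds from Condition~(B) render the Vitali/uniform-integrability step and the monotone-class extension essentially routine.
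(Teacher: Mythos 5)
Your proposal is correct and follows essentially the same route as the paper's proof: uniform $L^2$-bounds from Condition~(B) give uniform integrability, weak convergence is exploited only at times outside the (countable) set of fixed discontinuities of $\ov \X$ under $\ov \P_0$ (the paper does this via the Skorokhod representation, you via a.s.\ continuity of the evaluation maps, which amounts to the same limit argument), and a monotone class plus density/right-continuity step yields the martingale property, with (ii) reduced to the Lipschitz/absolute-continuity argument of Lemma~\ref{le:B-enlarge} and the martingale property of $\ov U^i-\ov V^i$ followed by uniqueness of the compensator. No gaps worth flagging.
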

\begin{proof}
	To see that $\ov M^d$ is $\ov \P_0$-$\ov \F$-martingale  we first claim that for each $t\in [0,T]$, the sequence $(\ov M^d_t\,|\,\ov \P_n)$ is uniformly integrable in the sense that
	\begin{equation*}
	\lim\limits_{a\to \infty} \sup_{n \in \N} \E^{\ov \P_n}\big[\ov M^d_t\, \mathbbm{1}_{\{|\ov M^d_t|\geq a\}}\big]=0. 
	\end{equation*}
	To see this, observe that due to $\Theta$ satisfying Condition~(B),
	\begin{equation*}
	\E^{\ov \P_n}\big[ |\ov M^d_t|^2\big]
	=\E^{\ov \P_n}\big[|[\ov M^d]_t|\big]
	\leq\E^{\ov \P_n}\Big[\int_0^t \int_{\R^d} |h(x)|^2 \, \ov F^{\ov \P_n}_s(dx)\,ds\Big] \leq K \cK T
	%\leq K \E^{\ov \P_n}\Big[\int_0^t \int_{\R^d} |x|^2 \wedge |x| \, \ov F^{\ov \P_n}_s(dx)\,ds\Big]
	\end{equation*}
	uniformly for all $n$, hence the result follows from the de la Vall\'ee-Poussin theorem.
	Then in terms of the Skorokhod representation, this means that for all $t \in [0,T]$, the sequence of random variables $(z^{n,\ov M^d}_t)_{n \in \N}$ defined on $([0,1], \cB([0,1]),\lambda)$ is uniformly integrable.
	
	Next, note that the canonical process $\ov \X$ on $\ov \Omega$ 
	with state space $S:=\R^{5d+3d^2}\times (\R^2)^{\N}$ 
	is right-continuous and the set $\mathfrak{J}:=\{t \in [0,T] \mid \overline{\P}_0 [ \Delta \ov \X_t \neq 0] >0 \}$ is at most countable. Thus, the set $\mathfrak{D}:= [0,T] \setminus \mathfrak{J}$ is  dense in $[0,T]$, and for all $t \in \fD$  the sequence of laws $(\ov \P_n \circ \ov \X^{-1}_t)_{n \in \N}$ converges weakly to  $\ov\P_0 \circ \ov \X_t^{-1}$. In terms of the Skorokhod representation, the latter means that for all $t \in \fD$, $\lim_{n \rightarrow \infty}z^n_t = z^0_t$ in $S$ $\lambda$-a.s.  
	Thus, for any $s,t \in \fD$ with $s < t$, for any finite partition $0 \leq s_1 < ... < s_m\leq s$ taking values in $\fD$, and for any $f_j \in C_b(S)$, $j =1,...,m$,
	%, where $C_b(S)$ denotes the space of all bounded and continuous functions, 
	we have  by the uniform integrability of the sequence $(z^{n,\ov M^d}_t - z^{n,\ov M^d}_s)_{n \in \N}$ that
	\begin{align*}
	0&=\lim_{n \rightarrow \infty} \E^{\overline{\P}_n}\Big[\prod_{j = 1}^m f_j(\ov \X_{s_j})(\ov M^d_t - \ov M^d_s)\Big]\\
	&= \lim_{n \rightarrow \infty} \E^{\lambda}\Big[\prod_{j = 1}^m f_j(z^{n}_{s_j})(z^{n,\ov M^d}_t - z^{n,\ov M^d}_s)\Big] \\
	&=\E^\lambda\Big[\prod_{j = 1}^m f_j(z^{0}_{s_j})(z^{0,\ov M^d}_t - z^{0,\ov M^d}_s)\Big] \\
	&=\E^{\overline{\P}_0}\Big[\prod_{j = 1}^m f_j(\ov \X_{s_j})(\ov M^d_t - \ov M^d_s)\Big].
	\end{align*}
	%where the third equality follows from the uniform integrability of the sequence $(z^{n,X^c}_t - z^{n,X^c}_s)_{n \in \N}$, the $\lambda$-almost sure convergence of $z^{n,X^c}_t - z^{n,X^c}_s$ to $z^{0,X^c}_t - z^{0,X^c}_s$ and the dominated convergence theorem. Now, 
	As the filtration $\overline{\F}$ is generated by the canonical process $(\ov \X_t)_{t \in [0,T]}$, we can deduce from the monotone class theorem that $(\ov M^d_t)_{t \in \fD}$ is an $\overline{\F}$-martingale under $\overline{\P}_0$, and as $\fD$ is dense in $[0,T]$, the whole process $(\ov M^d_t)_{t \in [0,T]}$ is also an $\overline{\F}$-martingale under $\overline{\P}_0$. Finally, we observe that the same argument also works for $\ov M^c$, hence (i) holds.
	
	Now, fix any $i \in \N$. By continuity, $\ov V^i$ is $\F$-predictable. Moreover, as $g_i$ is a bounded continuous function vanishing in a neighborhood of the origin, there exists $0<\delta_i\leq 1$ such that for each $n$
	\begin{equation*}
	\ov V^i= \int_0^\cdot \int_{\R^d} g_i(x) \, \ov F^{\ov \P_n}_s(dx)\,ds
	= \int_0^\cdot \int_{\{|x|\geq \delta_i\}} g_i(x) \, \ov F^{\ov \P_n}_s(dx)\,ds
	\ \ \ov \P_n\mbox{-a.s.}.
	\end{equation*}
	Thus, we can argue as in Lemma~\ref{le:B-enlarge} to conclude that for $c_i:=\sup_x |g_i(x)|$ and $\widetilde{\cK}_i:=\nicefrac{c_i \cK}{\delta_i^2}$, we have $\ov V^{i} \in \mbox{Lip}_{\widetilde{\cK}_i} \ \ov \P_n$-a.s. for all $n$ and then also $\ov V^{i} \in \mbox{Lip}_{\widetilde{\cK}_i} \ \ov \P_0$-a.s.. Therefore, by applying Lemma~\ref{le:canonical-decomp-limit-enlarge} (ii), it remains to show that $\ov W^i:=\ov U^i- \ov V^i$ is a $\ov \P_0$-$\ov \F$-(local)-martingale. But this follows directly when applying the same arguments as in (i), but with respect to $\ov W^i$.
\end{proof}
\begin{remark}\label{rem:M-enlarge-P-0-NOT-PURE-DISCONT} Whereas $\ov M^c$ is a continuous $\overline{\F}$-martingale under $\overline{\P}_0$, it is not necessarily true that $\ov M^d$ is a purely discontinuous $\ov \P_0$-$\ov \F$-martingale. We recall that our goal in this section is to provide a necessary and sufficient condition for $\ov M^d$  to be a purely discontinuous martingale under $\ov \P_0$, see Proposition~\ref{prop:purely-disc-enl}.
\end{remark}
%
%
% % % % % % % % % % % % % % % % % % % % % % % % % % % %
%
%
%Recall the countable family $ \cC^+(\R^d):=\{g_i \mid i \in \N\}$ introduced at the beginning of Subsection~\ref{subsec:enlarg}.
%
%
% % % % % % % % % % % % % % % % % % % % % % % % % %
%
%
\begin{lemma}
	\label{le:compensator-enlarge}
	Let $\Theta \subseteq \R^d\times \S^d_+\times \cL$ satisfy Condition~(B) and 
	let $(\ov \P_n)_{n \in \N}\subseteq \ov \fP^{ac}_{sem}(\Theta)$ be a sequence converging weakly to some law $\ov \P_0\in \fM_1(\ov \Omega)$. Then the $\ov \P_0$-$\ov \F$-compensator $\nu^{\ov \P_0}(dx,dt)$ of the measure $\mu^{\ov X}(dx,dt)$ associated to the jumps of $\ov X$ is absolutely continuous in the sense that $\nu^{\ov \P_0}(dx,dt)$ satisfies a disintegration $\nu^{\ov \P_0}(dx,dt)=\ov F^{\ov \P_0}_t
	(dx)\,dt \ \, \ov \P_0$-a.s..
\end{lemma}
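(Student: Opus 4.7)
The starting point is Lemma~\ref{le:M-enlarge}(ii) together with the Lipschitz bound from its proof: for every $i \in \N$, the process $\ov V^i = g_i(x) \ast \nu^{\ov \P_0}$ lies in $\mathrm{Lip}_{\widetilde{\cK}_i}$ $\ov \P_0$-a.s., and is therefore absolutely continuous in $t$ with essentially bounded density $\ov v^i_t$ (identifiable with the limsup from \eqref{eq:char-in-Theta-help}). Intersecting the corresponding full-measure events over the countable family $\cC^+(\R^d) = \{g_i \mid i \in \N\}$ yields a single $\ov \P_0$-full-measure event on which $(g_i \ast \nu^{\ov \P_0})_t$ is absolutely continuous in $t$ for every $i$ simultaneously.

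Next, for each $\delta > 0$ the truncated random measure $\mathbf{1}_{\{|x|>\delta\}}(x)\,\nu^{\ov \P_0}(dt,dx)$ is locally finite, being the predictable compensator of $\mathbf{1}_{\{|x|>\delta\}}(x)\ast \mu^{\ov X}$, which counts the jumps of $\ov X$ of modulus exceeding $\delta$ and is therefore finite on every compact time interval. Since every Borel set $A \subseteq \{|x| > \delta\}$ can be approximated by bounded continuous functions vanishing in a neighborhood of the origin, and $\cC^+(\R^d)$ is convergence-determining for $\cL$, a standard monotone-class argument applied to the $g_i$ supported in $\{|x|>\delta\}$ shows that this locally finite measure admits a time-absolutely-continuous disintegration $\mathbf{1}_{\{|x|>\delta\}}\,\nu^{\ov \P_0}(\ov \omega; dt, dx) = \ov F^{\ov \P_0,\delta}_t(\ov \omega; dx)\,dt$. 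Letting $\delta \downarrow 0$ and using $\nu^{\ov \P_0}(\cdot \times \{0\}) = 0$ delivers the required form $\nu^{\ov \P_0}(\ov \omega; dt, dx) = \ov F^{\ov \P_0}_t(\ov \omega; dx)\, dt$. That $\ov F^{\ov \P_0}_t$ genuinely takes values in $\cL$ is then automatic: non-negativity and $\ov F^{\ov \P_0}_t(\{0\}) = 0$ are inherited from $\nu^{\ov \P_0}$, while the L\'evy-integrability $\int (|x|^2 \wedge 1)\,\ov F^{\ov \P_0}_t(dx) < \infty$ follows from the uniform $L^1$-bound on $\ov{\widetilde{C}}_T$ transferred from the $\ov \P_n$ to $\ov \P_0$ in Lemma~\ref{le:B-enlarge}, together with the decomposition of $\ov{\widetilde{C}}$ into continuous quadratic variation and the $|h|^2$-integral against $\nu^{\ov \P_0}$.

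The main obstacle is the reconstruction step that promotes absolute continuity of each single projection $g_i \ast \nu^{\ov \P_0}$ to absolute continuity of the whole predictable random measure $\nu^{\ov \P_0}$. The truncation $\mathbf{1}_{\{|x|>\delta\}}$ is what makes the argument tractable by reducing everything to a locally finite random measure on $\{|x|>\delta\}$, where the convergence-determining property of $\cC^+(\R^d)$ is strong enough to pin down the disintegration via monotone-class arguments; the limit $\delta \downarrow 0$ is then routine. The subsidiary subtlety---verifying that the resulting $dt$-density is really a L\'evy measure---is ultimately an integrability statement inherited from Condition~(B) via the Skorokhod representation, and parallels the treatment of $\ov B$, $\ov C$, and $\ov{\widetilde C}$ in Lemma~\ref{le:B-enlarge}.
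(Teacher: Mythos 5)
Your overall route differs from the paper's: the paper reduces the claim, via a careful inspection of the proof of \cite[Proposition~II.2.9]{JacodShiryaev.03}, to the absolute continuity of the single increasing process $\ov A:=(1\wedge|x|^2)\ast\nu^{\ov\P_0}$, and then shows $\ov A\in\Lip_{\cK}$ $\ov\P_0$-a.s.\ as a limit of the uniformly Lipschitz processes $\ov V^{2m}$. Your plan (absolute continuity of every projection $g_i\ast\nu^{\ov\P_0}$, reconstruction of a disintegration on $\{|x|>\delta\}$, then $\delta\downarrow0$) can be made to work, but the step you delegate to ``a standard monotone-class argument'' plus the convergence-determining property of $\cC^+(\R^d)$ is exactly where the key idea is missing, and as written it does not go through. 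A monotone-class/approximation argument cannot transfer absolute continuity: if $f_k\to\mathbbm{1}_A$ boundedly pointwise, then $\int f_k\,d\nu^{\ov\P_0}|_{[0,t]}\to\nu^{\ov\P_0}([0,t]\times A)$ for each fixed $t$, but a bounded pointwise limit of absolutely continuous increasing functions need not be absolutely continuous; moreover the approximating continuous functions are in general not members of the countable family $\cC^+(\R^d)$, and convergence-determining is a statement about weak convergence of sequences in $\cL$, not about the time-regularity of a fixed random measure. So nothing in your second paragraph actually forces the time marginal of $\nu^{\ov\P_0}$ on $\{|x|>\delta\}$ to be absolutely continuous.

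What closes the gap is a quantitative domination, and it is precisely the ingredient built into $\cC^+(\R^d)$ for this purpose: for $\frac1m\leq\delta$ one has $g_{2m}\geq(\delta^2\wedge1)\,\mathbbm{1}_{\{|x|>\delta\}}$, hence on the $\ov\P_0$-full set where $\ov V^{2m}=g_{2m}\ast\nu^{\ov\P_0}$ and $\ov V^{2m}\in\Lip_{\cK}$ (Lemma~\ref{le:M-enlarge}(ii) and the argument of Lemma~\ref{le:B-enlarge}) one gets, for all $s<t$ and every Borel $A\subseteq\{|x|>\delta\}$, that $\nu^{\ov\P_0}((s,t]\times A)\leq(\delta^2\wedge1)^{-1}\big(\ov V^{2m}_t-\ov V^{2m}_s\big)\leq(\delta^2\wedge1)^{-1}\cK\,(t-s)$. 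This uniform Lipschitz bound is what legitimizes the disintegration on $\{|x|>\delta\}$ and the limit $\delta\downarrow0$; equivalently, once you have it you may argue as the paper does, namely show that $\ov A=(1\wedge|x|^2)\ast\nu^{\ov\P_0}=\lim_m \ov V^{2m}$ is Lipschitz and invoke \cite[Proposition~II.2.9]{JacodShiryaev.03}, which in addition supplies the measurability/predictability of the kernel $\ov F^{\ov\P_0}_t(dx)$ that a purely $\omega$-by-$\omega$ disintegration leaves unaddressed. Finally, the L\'evy-integrability of $\ov F^{\ov\P_0}_t$ needs no detour through $\ov{\widetilde C}$: $(1\wedge|x|^2)\ast\nu^{\ov\P_0}_T<\infty$ a.s.\ holds for the compensator of the jump measure of any semimartingale, and $\E^{\ov\P_0}[\ov A_T]\leq\cK T$ follows from Fatou's lemma as in the paper.
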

\begin{proof}
	A careful inspection of the proof of \cite[Proposition~II.2.9, p.77]{JacodShiryaev.03} shows that the absolute continuity of $\nu^{\overline{\P}_0}$ is equivalent to the absolute continuity of the process $\ov A:= (1 \wedge |x|^2) * \nu^{\overline{\P}_0}$. 
	
	Recall the countable family $ \cC^+(\R^d):=\{g_i \mid i \in \N\}$ introduced at the beginning of Subsection~\ref{subsec:enlarg}. In particular, for each $m \in \N$,  $g_{2m}$ is a continuous function on $\R^d$  satisfying for  all $x \in \R^d$ that $0\leq g_{2m}(x)\leq |x|^2 \wedge 1 $ and that
	$$
	g_{2m}(x) = \begin{cases}
	0       & \quad \text{if } |x| \leq \frac{1}{2m};\\
	|x|^2 \wedge 1  & \quad \text{if } |x| > \frac{1}{m}.\\
	\end{cases}
	$$
	%
	% For each $m \in \N$ we choose a continuous function $\phi_m$ such that $\phi_m \in [0,1]$ and
	%$$
	%\phi_m(x) = \begin{cases}
	%0       & \quad \text{if } |x| \leq \frac{1}{2m};\\
	%1  & \quad \text{if } |x| > \frac{1}{m}.\\
	%\end{cases}
	%$$
	%Clearly, $\phi_m$ is bounded, continuous and vanishing in a neighborhood of the origin. Hence, without loss of generality, we can assume that for each $m \in \N$, $g_{2m} = \phi_m$. 
	Then by definition, we have for each $n$ that
	\begin{equation*}
	\ov V^{2m} = \int_0^\cdot \int_{\R^d} g_{2m}(x)\, \ov F^{\ov \P_n}_s(dx)\,ds \quad \ov \P_n\mbox{-a.s.}
	\end{equation*}
	Thus for each $m$, by the same argument as in Lemma~\ref{le:M-enlarge} (ii) we obtain that 
	%$\ov V^{2m} \in \mbox{Lip}_\cK \ \ov \P_n$-a.s. for all $n$ and then also 
	$\ov V^{2m} \in \mbox{Lip}_\cK \ \ov \P_0$-a.s. Therefore,  Fatou's lemma and Lemma~\ref{le:M-enlarge} (ii) yields
	\begin{equation*}
	\E^{\ov \P_0}[\ov A_T] 
	\leq 
	\liminf_{m \to \infty} 
	\E^{\ov \P_0}[\ov V^{2m}_T]\leq \cK T,
	\end{equation*}
	in particular, $\ov A_t<\infty \ \ov \P_0$-a.s. for all $t\in [0,T]$. Thus, by dominated convergence, $\ov A_t=\lim\limits_{m \to \infty} \ov V^{2m}_t \ \ov \P_0$-a.s. for each $t \in [0,T]$, hence also 
	\begin{equation*}
	\ov A= \sup_{m \in \N} \ov V^{2m} \ \ \ov \P_0\mbox{-a.s.}
	\end{equation*}
	As $(\ov V^{2m})_{m \in \N} \subseteq \mbox{Lip}_\cK$, we obtain that $\ov A \in \mbox{Lip}_\cK$, which implies its absolute continuity $\ov \P_0$-a.s..
\end{proof}
%
%
% % % % % % % % % % % % % % % % %
\begin{lemma}\label{le:quad-var-second-modified-enlarge}
	Let $\Theta \subseteq \R^d\times \S^d_+\times \cL$ satisfy Condition~(B) and 
	let $(\ov \P_n)_{n \in \N}\subseteq \ov \fP^{ac}_{sem}(\Theta)$ be a sequence converging weakly to some law $\ov \P_0\in \fM_1(\ov \Omega)$. Then, the following hold true:\\
	(i) $\ov C$ is the quadratic covariation of $\ov M^c$ under $\ov \P_0$-$\ov \F$.\\
	(ii) $\ov{[M]}$ is the quadratic covariation of $\ov M:=\ov M^c + \ov M^d$ under $\ov \P_0$-$\ov \F$.\\ 
	(iii) $\ov{\widetilde{C}}$ is the modified second characteristic of $\ov X$ under  $\ov \P_0$-$\ov \F$.
\end{lemma}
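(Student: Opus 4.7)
The plan is to follow the template developed in Lemmas~\ref{le:B-enlarge}--\ref{le:compensator-enlarge}: for each of the three identifications I want to pass the corresponding $\ov\P_n$-martingale identity to $\ov\P_0$ via the Skorokhod representation theorem, the dense set $\fD$ of continuity times, and uniform integrability, exactly as was done for the martingale property of $\ov M^d$ in the proof of Lemma~\ref{le:M-enlarge}~(i). By uniqueness of the quadratic (resp.\ predictable quadratic) covariation, the resulting martingale property under $\ov\P_0$ will automatically identify $\ov C$ with $[\ov M^c]_{\ov\P_0}$, $\ov{[M]}$ with $[\ov M]_{\ov\P_0}$, and $\ov{\widetilde C}$ with the modified second characteristic $\widetilde C^{\,\ov\P_0}$ of $\ov X$.

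For (i), I would combine the $\ov \P_n$-martingale property of the component processes $(\ov M^c)^i (\ov M^c)^j - \ov C^{ij}$ with the pathwise bound $|\ov C_T| \le \cK T$ from Lemma~\ref{le:B-enlarge}. The Burkholder--Davis--Gundy inequality then gives uniform $L^p$ bounds on $\sup_{t\le T}|\ov M^c_t|$ for every $p \in [2,\infty)$, so that each $(\ov M^c)^i (\ov M^c)^j - \ov C^{ij}$ is bounded in some $L^q$ with $q>1$ uniformly in $n$. This uniform integrability lets the Skorokhod-representation argument of Lemma~\ref{le:M-enlarge}~(i) carry through verbatim, yielding the martingale property of $(\ov M^c)^i (\ov M^c)^j - \ov C^{ij}$ under $\ov\P_0$.

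For (ii) and (iii), the key auxiliary step is a uniform $L^2$ estimate for $\ov{[M]}^{ii}$ under $\ov \P_n$. Under $\ov\P_n$ one has the pathwise identity $\ov{[M]}^{ij} = \ov C^{ij} + \sum_{s\le\cdot} h^i(\Delta \ov X_s)\,h^j(\Delta \ov X_s)$, whose predictable compensator equals $\int_0^\cdot\!\int_{\R^d} h^i(x) h^j(x)\,\ov F^{\ov\P_n}_s(dx)\,ds$. Exploiting the elementary bound $|h(x)|^2 \le C(|x|^2\wedge 1) \le C(|x|^2\wedge |x|)$ together with Condition~(B), both this compensator and the predictable compensator of the squared increments of the associated compensated pure-jump martingale are uniformly bounded, so the martingale isometry yields $\E^{\ov\P_n}[(\ov{[M]}^{ii}_T)^2] \le C$ uniformly in $n$. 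Another Burkholder--Davis--Gundy estimate applied to $\ov M = \ov M^c + \ov M^d$ then gives $\E^{\ov\P_n}[|\ov M_T|^4] \le C$ uniformly; combined with the deterministic bound $|\ov{\widetilde C}_T|\le \cK T$ from Lemma~\ref{le:B-enlarge}, this shows that both $\ov M^i \ov M^j - \ov{[M]}^{ij}$ and $\ov M^i \ov M^j - \ov{\widetilde C}^{ij}$ are bounded in $L^2$ uniformly in $n$. Their $\ov\P_n$-martingale properties now transfer to $\ov\P_0$ by the same Skorokhod-representation and monotone-class argument used in Lemma~\ref{le:M-enlarge}~(i), and the uniqueness of the (predictable) quadratic covariation of $\ov M$ under $\ov\P_0$ concludes (ii) and (iii).

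I expect the main obstacle to be precisely the uniform $L^2$ estimate for $\ov{[M]}^{ii}$ under $\ov\P_n$, since Condition~(B) controls $\int(|x|^2\wedge|x|)\,F(dx)$ rather than $\int|x|^2\,F(dx)$; one has to carefully chain $|h|^2\le C(|x|^2\wedge 1)\le C(|x|^2\wedge|x|)$ through both the first- and second-order compensator estimates on $\sum h^i h^j(\Delta \ov X)$. Once this estimate is in place, the passages to the limit in (i)--(iii) are routine adaptations of the argument already used in the proof of Lemma~\ref{le:M-enlarge}, and no additional localization is needed.
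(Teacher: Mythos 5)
Your strategy for (i) and (iii) is sound, and in fact differs from the paper only in part: for (i) you transfer the $\ov\P_n$-martingale property of $(\ov M^c)^i(\ov M^c)^j-\ov C^{ij}$ to $\ov\P_0$ and then use that $\ov C$ is continuous (hence predictable), adapted, of finite variation and null at zero, so the difference with $\langle \ov M^c\rangle^{\ov\P_0}$ is a continuous finite-variation local martingale and vanishes; the paper instead gets (i) and (ii) from the weak-convergence stability results \cite[Corollary~VI.6.29 and Theorem~VI.6.22]{JacodShiryaev.03} applied to $(\ov M^c,\ov C)$ resp.\ $(\ov M,\ov{[M]})$, deducing the a.s.\ identity from an equality in law involving the stochastic integrals $\int \ov M^c_- d\ov M^c$. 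For (iii) your argument (uniform $L^2$ bounds via the chain $|h(x)|^2\le K(|x|^2\wedge1)\le K(|x|^2\wedge|x|)$, BDG, and the product-rule/compensator estimate, followed by the Skorokhod-representation transfer as in Lemma~\ref{le:M-enlarge}) is essentially the paper's, except that the paper applies it to $\ov{[M]}-\ov{\widetilde C}$ using (ii), while you apply it directly to $\ov M^i\ov M^j-\ov{\widetilde C}^{ij}$; since $\ov{\widetilde C}$ is continuous, hence predictable, uniqueness of the predictable compensator does identify it with the modified second characteristic.

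The genuine gap is in (ii). Showing that $\ov M^i\ov M^j-\ov{[M]}^{ij}$ is a $\ov\P_0$-martingale does \emph{not} identify $\ov{[M]}$ with the quadratic covariation $[\ov M]$ under $\ov\P_0$: the process $A$ for which $\ov M^i\ov M^j-A^{ij}$ is a (local) martingale is unique only within the class of \emph{predictable} finite-variation processes, and $\ov{[M]}$ is not predictable under $\ov\P_0$ (it jumps together with $\ov X$, generically at totally inaccessible times). Your own step (iii) makes the non-uniqueness explicit: the continuous process $\ov{\widetilde C}$ satisfies exactly the same martingale relation, yet $\ov{\widetilde C}\neq[\ov M]$ whenever jumps are present, so ``uniqueness of the (predictable) quadratic covariation'' cannot single out $[\ov M]$. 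To repair (ii) you must in addition identify the jumps in the limit, i.e.\ show $\Delta\ov{[M]}^{ij}=\Delta\ov M^i\,\Delta\ov M^j=h^i(\Delta\ov X)h^j(\Delta\ov X)$ $\ov\P_0$-a.s.\ (this can be done along the Skorokhod representation using \cite[Propositions~VI.2.1 and VI.2.2]{JacodShiryaev.03}, exactly as the paper does for $[\ov M^d]$ in Lemma~\ref{le:quad-var-conv-Md-enlarge}), and also record that $\ov{[M]}$ keeps finite-variation paths under $\ov\P_0$ (a pathwise property preserved along the representation). With these two facts, $\ov{[M]}^{ij}-[\ov M^i,\ov M^j]$ is a continuous finite-variation local martingale null at zero, hence zero; alternatively, one can bypass the issue entirely by arguing as the paper does via \cite[Corollary~VI.6.29, Theorem~VI.6.22]{JacodShiryaev.03}.
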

\begin{proof}
	To obtain (i), denote by $\langle \ov M^c\rangle^{\ov\P_0}$ the quadratic variation of $\ov M^c$ under $\ov \P_0$-$\ov \F$. As $\ov C$ is the quadratic variation of $\ov M^c$ under each $\ov \P_n$, we deduce from \cite[Corollary~VI.6.29, p.385]{JacodShiryaev.03} that $(\ov M^c, \ov C) =(\ov M^c,\langle \ov M^c\rangle^{\ov\P_0})$ in law under $\ov \P_0$. This implies that also $(\ov M^c,\ov M^c, \ov C) =(\ov M^c,\ov M^c,\langle \ov M^c\rangle^{\ov\P_0})$ in law under $\ov \P_0$. By applying \cite[Theorem VI.6.22, p.383]{JacodShiryaev.03},
	% FOR P-UT, SEE PROOF  \cite[Corollary~VI.6.29, p.385]{JacodShiryaev.03}: EVERY LOCAL MARTINGALE WITH BOUNDED JUMPS....OK
	we see that $(\ov M^c,\ov M^c, \ov C, \int \ov M^c\,d\ov M^c)=(\ov M^c,\ov M^c, \langle \ov M^c\rangle^{\ov\P_0}, \int \ov M^c\,d\ov M^c)$ in law under $\ov \P_0$. As a consequence, we obtain componentwise for any $1\leq i,j\leq d$ that
	\begin{align*}
	& \ \ov C^{ij} -\ov M^{c,i} \ov M^{c,j} -\int \ov M^{c,i}\,d\ov M^{c,j} - \int \ov M^{c,j}\,d\ov M^{c,i}\\
	= & \
	\langle \ov M^c\rangle^{\ov\P_0, ij} - \ov M^{c,i} \ov M^{c,j} -\int \ov M^{c,i}\,d\ov M^{c,j} - \int \ov M^{c,j}\,d\ov M^{c,i} \\
	= & \ 0
	\end{align*}
	in law under $\ov \P_0$, hence by definition of the quadratic variation,  $\ov C$ coincide with $\langle \ov M^c\rangle^{\ov\P_0}$.
	
	The proof of (ii) is identical to the one for (i), hence it remains to prove (iii).
	
	To obtain (iii), we know from Lemma~\ref{le:B-enlarge} that $\ov{\widetilde{C}}$ is of integrable variation under $\ov \P_0$. Moreover, we know from (ii) that $\ov{[M]}$ is the quadratic variation of $M:=\ov M^c + \ov M^d$ under $\ov \P_0$. Hence by definition of the  modified second characteristic, it remains to show that $\ov{[M]}-\ov{\widetilde{C}}$ is a $\ov\P_0$-$\ov \F$-martingale, which we obtain by applying the same arguments as in Lemma~\ref{le:M-enlarge} once we derived that the sequence $(\ov{[M]}_t-\ov{\widetilde{C}}_t\,|\,\ov \P_n)$ is uniformly integrable for each $t\in [0,T]$. To see this, 
	%denote by $[\ov{[M]}-\ov{\widetilde{C}}]$ the quadratic variation of $\ov{[M]}-\ov{\widetilde{C}}$ and 
	observe that due to Condition~(B) 
	\begin{align*}
	\E^{\ov \P_n}\big[|\ov{[M]}_t-\ov{\widetilde{C}}_t|^2\big]
	=\E^{\ov \P_n}\big[|[\ov{[M]}-\ov{\widetilde{C}}]_t|\big]
	&\leq\E^{\ov \P_n}\big[\sum_{0\leq s\leq t} |\Delta \ov{[M]}_s|^2\big]\\
	&=\E^{\ov \P_n}\big[\int_0^t \int_{\R^d} |h(x)|^4\,\ov F^{\ov \P_n}_s(dx)\,ds\big]\\
	&\leq K \cK t
	\end{align*}
	uniformly for all $n$, which implies the desired uniform integrability.
\end{proof}
% % % % % % % % % % % % % % % % % % %
\begin{proof}[Proof of Proposition~\ref{prop:X-semim-cont-char}] This is simply 
	the summary of Lemma~\ref{le:canonical-decomp-limit-enlarge}--Lemma~\ref{le:quad-var-second-modified-enlarge},
	%Lemma~\ref{le:compensator-enlarge},
	noticing that the absolute continuity of the second characteristic $\ov C^{\ov \P_0}$ (which is not necessarily equal to $\ov C$!) of $\ov X$ under $\ov\P_0$-$\ov\F$ follows from the absolute continuity of the  modified second characteristic $\ov{\widetilde{C}}$ and the third characteristic.
\end{proof}
%
%  
%% % % % % % % % % % % % % % % % % % % % % % % % % % % % %
\subsection{Proof of Proposition~\ref{prop:purely-disc-enl} and Corollary~\ref{co:closedness-P-Theta-enlarge}}\label{subsec:proof-purely-disc-and-closedness-enlarge}
The goal of this subsection is to prove Proposition~\ref{prop:purely-disc-enl} and Corollary~\ref{co:closedness-P-Theta-enlarge}. We divide the proof into several lemmas. To keep the notation short, we provide the proof of Proposition~\ref{prop:purely-disc-enl} in the one-dimensional case, i.e. $\Omega=\D([0,T],\R)$. The extension to the multi-dimensional case is straightforward, which we explain right after the proof in Remark~\ref{rem:1-d-to-Multi-d}. 
\begin{lemma}\label{le:quad-var-conv-Md-enlarge}
	Let $\Theta \subseteq \R^d\times \S^d_+\times \cL$ satisfy Condition~(B) and 
	let $(\ov \P_n)_{n \in \N}\subseteq \ov \fP^{ac}_{sem}(\Theta)$ be a sequence converging weakly to some law $\ov \P_0\in \fM_1(\ov \Omega)$. Then, 
	the sequence $\ov \P_n\circ (\ov X,\ov M^d, [\ov M^d])^{-1}$ converges weakly to 
	$\ov \P_0\circ (\ov X,\ov M^d, [ \ov M^d])^{-1}$
\end{lemma}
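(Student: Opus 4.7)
The plan is to reduce $[\ov M^d]$ to a functional of the canonical process $\ov \X$ that is already tracked on $\ov\Omega$, and then invoke the continuous mapping theorem together with the given weak convergence $\ov\P_n\to\ov\P_0$. The key observation is that under $\ov\P_n$ the continuous local martingale $\ov M^c$ and the purely discontinuous local martingale $\ov M^d$ are orthogonal, so $[\ov M^c,\ov M^d]^{\ov\P_n}=0$ $\ov\P_n$-a.s. Combining this with Lemma~\ref{le:quad-var-second-modified-enlarge}, which identifies $\ov C$ and $\ov{[M]}$ as the quadratic variations of $\ov M^c$ and $\ov M:=\ov M^c+\ov M^d$ under $\ov\P_n$, bilinearity of the bracket yields
\begin{equation*}
[\ov M^d]^{\ov\P_n}=\ov{[M]}-\ov C\quad \ov\P_n\mbox{-a.s.}
\end{equation*}

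Next, I would argue that the map $\phi:\ov\omega\mapsto(\ov X,\ov M^d,\ov{[M]}-\ov C)$ is Skorokhod-continuous on $\ov\Omega$: since $\ov C$ lives in $\Omega^c_{d^2}$ with the uniform topology while $\ov{[M]}$ lives in $\Omega_{d^2}$ with $J_1$, subtracting a uniformly convergent continuous path from a $J_1$-convergent càdlàg path yields $J_1$-convergence. The continuous mapping theorem then gives
\begin{equation*}
\ov\P_n\circ(\ov X,\ov M^d,\ov{[M]}-\ov C)^{-1}\longrightarrow\ov\P_0\circ(\ov X,\ov M^d,\ov{[M]}-\ov C)^{-1},
\end{equation*}
and by the previous step the left-hand side is $\ov\P_n\circ(\ov X,\ov M^d,[\ov M^d])^{-1}$. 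It thus remains to identify $\ov{[M]}-\ov C$ with $[\ov M^d]^{\ov\P_0}$ under $\ov\P_0$. Applying Lemma~\ref{le:quad-var-second-modified-enlarge} again, now under $\ov\P_0$, bilinearity gives $\ov{[M]}-\ov C=[\ov M^d]^{\ov\P_0}+2[\ov M^c,\ov M^d]^{\ov\P_0}$, so the task reduces to showing $[\ov M^c,\ov M^d]^{\ov\P_0}=0$ $\ov\P_0$-a.s.

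For this final identification I would prove that $\ov M^c\ov M^d$ is a $\ov\P_0$-martingale by copying the uniform-integrability-plus-Skorokhod-representation argument from Lemma~\ref{le:M-enlarge}; then integration by parts produces $\ov M^c\ov M^d-[\ov M^c,\ov M^d]^{\ov\P_0}$ as a $\ov\P_0$-local martingale, so that $[\ov M^c,\ov M^d]^{\ov\P_0}$ is itself a $\ov\P_0$-local martingale. Being continuous (as $\ov M^c$ is) and of finite variation (by Kunita-Watanabe) with value $0$ at time $0$, it must vanish identically. The uniform integrability of $(\ov M^c_t\ov M^d_t\mid\ov\P_n)$ follows from the Cauchy-Schwarz bound $\E^{\ov\P_n}[(\ov M^c_t\ov M^d_t)^2]\leq \E^{\ov\P_n}[(\ov M^c_t)^4]^{1/2}\E^{\ov\P_n}[(\ov M^d_t)^4]^{1/2}$, using Burkholder-Davis-Gundy together with the deterministic estimate $|\ov C_t|\leq\cK t$ for $\ov M^c$ and the compensator identity $[\ov M^d]_t=\sum_{s\leq t}|h(\Delta\ov X_s)|^2$ combined with Condition~(B) for $\ov M^d$. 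The main obstacle is precisely this cross-bracket step, since under $\ov\P_0$ the process $\ov M^d$ need not be purely discontinuous--that is the very phenomenon Proposition~\ref{prop:purely-disc-enl} is designed to detect--so one cannot simply quote orthogonality in the limit and must instead produce it from a martingale property of the product.
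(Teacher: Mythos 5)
Your argument is correct, but it takes a genuinely different route from the paper's. The paper applies \cite[Corollary~VI.6.29]{JacodShiryaev.03} to $\ov M^d$ under $\ov \P_n$ to get weak convergence of the pairs $(\ov M^d,[\ov M^d])$ -- the bracket of the limit appearing automatically -- and then appends $\ov X$ by a jump-matching argument (using $\Delta \ov M^d=h(\Delta\ov X)$ and $\Delta[\ov M^d]=h^2(\Delta \ov X)$ under every $\ov\P_n$ and, via Proposition~\ref{prop:X-semim-cont-char}, under $\ov\P_0$) together with \cite[Proposition~VI.2.2(b)]{JacodShiryaev.03}, obtaining convergence of the triple as a $\D([0,T],\R^{2d+d^2})$-valued process. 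You instead exploit the enlarged space: writing $[\ov M^d]=\ov{[M]}-\ov C$ under $\ov\P_n$ (orthogonality of the continuous and purely discontinuous parts; note this is part of the definition of $\ov\fP^{ac}_{sem}$, so Lemma~\ref{le:quad-var-second-modified-enlarge} is only needed for $\ov\P_0$) reduces the convergence to the continuous mapping theorem, since $(\alpha,\beta)\mapsto \alpha-\beta$ is continuous from $\Omega_{d^2}\times\Omega^c_{d^2}$ to $\Omega_{d^2}$. The price is paid at the identification step: you must show $[\ov M^c,\ov M^d]=0$ under $\ov\P_0$, which indeed cannot be quoted from orthogonality since $\ov M^d$ need not be purely discontinuous in the limit; your argument for it -- $\ov M^c\ov M^d$ is a true $\ov\P_n$-martingale by orthogonality and Condition~(B), it passes to a $\ov\P_0$-martingale by the uniform-integrability/Skorokhod scheme of Lemma~\ref{le:M-enlarge} (the fourth-moment bounds via BDG and the second-moment estimate for $\sum_{s\le\cdot}h^2(\Delta\ov X_s)$ as in Lemma~\ref{le:limit-gamma-n-delta-in-n} give the required uniform integrability), so $[\ov M^c,\ov M^d]$ is a continuous finite-variation local martingale null at zero, hence zero -- is sound, and its use of Lemma~\ref{le:quad-var-second-modified-enlarge} is legitimate since that lemma precedes this one and does not depend on it. The only nuance worth noting is that your continuous-mapping argument yields weak convergence of the triple on the product space $\Omega\times\Omega\times\Omega_{d^2}$ rather than the joint Skorokhod convergence the paper's proof produces; this weaker form is still sufficient for the way the lemma is used afterwards, namely to produce a coupling under which each coordinate converges a.s.\ and the a.s.\ identities such as $y^n=\sum_{0\le s\le\cdot}h^2(\Delta z^{n,\ov X}_s)$ hold. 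What each approach buys: the paper's proof is shorter given the limit-theorem machinery and aligns jump times, while yours makes the convergence itself trivial by design of $\ov\Omega$ and isolates the real analytic content in the statement that no cross-bracket between $\ov M^c$ and $\ov M^d$ can be created in the limit.
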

\begin{proof}
	First, recall that by Proposition~\ref{prop:X-semim-cont-char} we know that $\ov M^d$ is a $\ov \P_0$-$\ov\F$-martingale, hence $[\ov M^d]$ is well-defined also under the limit law $\ov \P_0$. Moreover, by \cite[Corollary~VI.6.29, p.385]{JacodShiryaev.03}, the sequence $\ov \P_n \circ (\ov M^d, [\ov M^d])^{-1}$ converges weakly to the law $\ov\P_0 \circ (\ov M^d, [\ov M^d])^{-1}$. From the Skorokhod representation theorem, we obtain  $\D([0,1],\R^2)$-valued random variables $(( z^{n,\ov M^d},y^n))_{n \in \N}$, $( z^{0,\ov M^d},y^0)$ on $([0,1], \cB([0,1]), \lambda)$ such that $(z^{n,\ov M^d},y^n)$ converges to $(z^{0,\ov M^d},y^0)$ in $\D([0,1],\R^2)$ pointwisely and for each $n \geq 0$
	\begin{equation*}
	\lambda \circ (z^{n,\ov M^d},y^n)^{-1} = \overline{\P}_n \circ (\ov M^d,[\ov M^d])^{-1}.
	\end{equation*}

	Now, as
	% by the Skorokhod, representation theorem, we have 
	$(z^{n,\ov X})$
	%and $z^{n,\ov M^d}$ 
	converges pointwise to $z^{0,\ov X}$,
	%and $z^{0,\ov M^d}$, respectively, in $\D([0,T],\R)$. By \cite[Proposition~VI.2.1, p.337]{JacodShiryaev.03}, 
	there is for each
	$t$ a sequence $(t_n)$ converging to $t$ such that $\Delta z^{n,\ov X}_{t_n}$ converges to $\Delta z^{0,\ov X}_{t}$, see \cite[Proposition~VI.2.1, p.337]{JacodShiryaev.03}. By continuity of the truncation function $h(\cdot)$, 
	also
	$(h(\Delta z^{n,\ov X}_{t_n}), h^2(\Delta z^{n,\ov X}_{t_n}))$  converges to $(h(\Delta z^{0,\ov X}_{t}), h^2(\Delta z^{0,\ov X}_{t}))$. Moreover, note that $(h(\Delta \ov X_{t_n}), h^2(\Delta \ov X_{t_n}))=(\Delta \ov M^d_{t_n}, \Delta [\ov M^d]_{t_n}) \ \ov \P_n$-a.s., therefore we have that $(h(\Delta z^{n,\ov X}_{t_n}), h^2(\Delta z^{n,\ov X}_{t_n}))= (\Delta z^{n,\ov M^d}_{t_n}, \Delta y^n_{t_n}) \ \lambda$-a.s. for each $n$, and from Proposition~\ref{prop:X-semim-cont-char} we know that $(h(\Delta \ov X_{t}),h^2(\Delta \ov X_{t})) = (\Delta \ov M^d_{t}, \Delta [\ov M^d]_{t})\ \ov \P_0$-a.s., hence $(h(\Delta z^{0,\ov X}_{t}),h^2(\Delta z^{0,\ov X}_{t}))= (\Delta z^{0,\ov M^d}_{t}, \Delta y^0_t)$. Therefore,  we obtain that $(\Delta z^{n,\ov M^d}_{t_n}, \Delta y^n_{t_n})$ converges to $(\Delta z^{0,\ov M^d}_{t}, \Delta y^0_t) \ \lambda$-a.s.
	We can then conclude by applying \cite[Proposition~VI.2.2(b), p.338]{JacodShiryaev.03}   that
	\begin{equation*}
	(z^{n,\ov X}, z^{n,\ov M^d},y^n)  \mbox{ converges to } (z^{0,\ov X},z^{0,\ov M^d}, y^0) \mbox{ in  } \D([0,T], \R^3) \ \ \lambda\mbox{-a.s.,}
	\end{equation*}
	% $(z^{n,\ov X}, z^{n,\ov M^d},y^n)$ converges to $(z^{0,\ov X},z^{0,\ov M^d}, y^0)$ in $\D([0,T], \R^3)$ $\lambda$-a.s., 
	which implies the desired result. 
\end{proof}
%
%
% % % % % % % % % % % % % % % % %
From the previous Lemma, there exists a sequence of $\D([0,T],\R)$-valued random variables $y^n$ converging pointwise to $y^0$, satisfying
\begin{equation*}
\ov \P_n \circ [\ov M^d]^{-1}=\lambda \circ (y^n)^{-1} \mbox{ for each } n \in \N_0. 
\end{equation*}

For the rest of this section, we fix for any $\delta>0$ 
a continuous function $\psi_\delta: \R \to [0,1]$ such that  $(\psi_\delta)_{\delta}$ is a nonincreasing sequence of functions when $\delta$ tends to zero satisfying
\begin{equation*}
\psi_\delta(x) = \begin{cases}
1       & \quad \text{if } |x| \leq \delta/2;\\
0  & \quad \text{if } |x| > \delta.\\
\end{cases}
\end{equation*}
Moreover, for each $n\geq 0$, we introduce
%divide the (jump part) of the quadratic variation of $\ov M^d$ into its big and small jumps by defining 
\begin{align*}
\beta^{n,\delta} &:= \sum_{s \leq \cdot}\big(1 - \psi_\delta(\Delta z^{n,\ov X}_s)\big)\,h^2(\Delta z^{n,\ov X}_s),\\
\gamma^{n,\delta} &:= \sum_{s \leq \cdot}\psi_\delta(\Delta z^{n, \ov X}_s)\,h^2(\Delta z^{n,\ov X}_s).
\end{align*}
Assume that the conditions of Proposition~\ref{prop:purely-disc-enl} hold true. Under each $\ov \P_n$, $\ov M^d$ is the purely discontinuous  martingale part in the canonical representation of the $\ov \P_n$-$\ov \F$-semimartingale $\ov X$, satisfying $\Delta \ov M^d_t=h(\Delta \ov X_t) \ \ov \P_n$-a.s.. Hence, using Lemma~\ref{le:quad-var-conv-Md-enlarge} we have for each $n\geq 1$ 
% $\ov \P_n \circ [M^d]^{-1}=\lambda \circ (y^n)^-1$ for each $n \in \N_0$ 
\begin{equation}\label{eq:quadratic-var-decomp}
y^n= \sum_{0\leq s\leq \cdot} h^2(\Delta z^{n,\ov X}_s)= \beta^{n,\delta} + \gamma^{n,\delta} \quad \lambda\mbox{-a.s.}
\end{equation}
However, $\ov M^d$ is not necessarily a purely discontinuous martingale under $\ov \P_0$. In fact, as by Proposition~\ref{prop:X-semim-cont-char}\, $\Delta \ov M^d=h(\Delta \ov X) \ \ov \P_0$-a.s., $\ov M^d$ is a purely discontinuous martingale if and only if  equation \eqref{eq:quadratic-var-decomp} also holds true  for $n=0$.
%
%
% % % % % % % % % % % % %
\begin{lemma}\label{le:limit-beta-n-delta-in-n}
	For any $\delta>0$, we have:\\
	(i) $\beta^{n,\delta}$ converges pointwise to $\beta^{0,\delta}$ in $\D([0,T],\R)$;\\
	(ii) $y^n-\beta^{n,\delta}$  converges pointwise to $y^0-\beta^{0,\delta}$ in $\D([0,T],\R)$.
\end{lemma}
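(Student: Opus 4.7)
For part (i), I would apply the continuous-mapping property \eqref{eq:jumps-away-0-cont}. The integrand $x \mapsto (1-\psi_\delta(x))\,h^2(x)$ is continuous and vanishes on $\{|x|\leq \delta/2\}$, hence in a neighborhood of the origin. Thus the functional $I^{(1-\psi_\delta)h^2}: \D([0,T],\R) \to \D([0,T],\R)$ from \cite[Corollary VI.2.8]{JacodShiryaev.03} is continuous. Since by the Skorokhod representation theorem $z^{n,\ov X} \to z^{0,\ov X}$ pointwise in $\D([0,T],\R)$, applying $I^{(1-\psi_\delta)h^2}$ yields $\beta^{n,\delta} = I^{(1-\psi_\delta)h^2}(z^{n,\ov X}) \to I^{(1-\psi_\delta)h^2}(z^{0,\ov X}) = \beta^{0,\delta}$ pointwise in $\D([0,T],\R)$, $\lambda$-a.s.

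For part (ii), the natural idea is to invoke the joint convergence $(z^{n,\ov X}, y^n) \to (z^{0,\ov X}, y^0)$ in $\D([0,T],\R^2)$, which is a consequence of Lemma~\ref{le:quad-var-conv-Md-enlarge}, and apply a continuous mapping to the pair. Specifically, I would introduce the map $\Phi^\delta: \D([0,T],\R^2) \to \D([0,T],\R)$, $(a,b) \mapsto b - I^{(1-\psi_\delta)h^2}(a)$, and show it is continuous; then $\Phi^\delta(z^{n,\ov X}, y^n) = y^n - \beta^{n,\delta}$ converges to $\Phi^\delta(z^{0,\ov X}, y^0) = y^0 - \beta^{0,\delta}$ in $\D([0,T],\R)$. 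Continuity of $\Phi^\delta$ is the main point: given $(a^n, b^n) \to (a,b)$ in $\D([0,T],\R^2)$ via a common time change $\lambda_n \to \text{id}$ such that $a^n\circ\lambda_n \to a$ and $b^n\circ\lambda_n \to b$ uniformly, one uses that $I^{(1-\psi_\delta)h^2}$ commutes with time change, i.e.\ $I^{(1-\psi_\delta)h^2}(a^n)\circ \lambda_n = I^{(1-\psi_\delta)h^2}(a^n\circ \lambda_n)$, and that uniform convergence is preserved under $I^{(1-\psi_\delta)h^2}$ because the uniform bound $\sup_t|\Delta(a^n\circ\lambda_n)(t) - \Delta a(t)| \leq 2\|a^n\circ\lambda_n - a\|_\infty$ forces, for $n$ large, all jumps of $a^n\circ\lambda_n$ of size $\geq \delta/2$ to occur at the (finitely many) jump times of $a$ of size $\geq \delta/2 - \epsilon$, with magnitudes converging pointwise.

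The main obstacle is precisely the continuity of $\Phi^\delta$: subtraction is not continuous in the Skorokhod topology in general, so one cannot simply combine (i) with the convergence of $y^n$. The argument works here because we have \emph{joint} Skorokhod convergence of $(z^{n,\ov X}, y^n)$, which forces a single time change to align the jumps of both coordinates simultaneously; after composition with this time change, the estimates above upgrade the convergence of $I^{(1-\psi_\delta)h^2}(a^n\circ\lambda_n)$ from mere Skorokhod convergence to uniform convergence, which then combines additively with the uniform convergence of $b^n\circ\lambda_n$. Once $\Phi^\delta$ is shown to be continuous, part (ii) follows immediately from Lemma~\ref{le:quad-var-conv-Md-enlarge}.
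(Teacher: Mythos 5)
Your proof of (i) is exactly the paper's: apply the continuous map $I^{(1-\psi_\delta)h^2}$ from \eqref{eq:jumps-away-0-cont} to the pointwise convergence $z^{n,\ov X}\to z^{0,\ov X}$. For (ii) your argument is correct but organized differently from the paper's. The paper uses (i), re-runs the jump-matching argument of Lemma~\ref{le:quad-var-conv-Md-enlarge} to produce, for each $t$, times $t_n\to t$ with $\Delta y^n_{t_n}\to\Delta y^0_t$ and $\Delta\beta^{n,\delta}_{t_n}\to\Delta\beta^{0,\delta}_t$, and then cites \cite[Proposition~VI.2.2(a), p.338]{JacodShiryaev.03} to conclude that the difference converges. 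You instead start from the joint convergence of the pair $(z^{n,\ov X},y^n)$ in $\D([0,T],\R^2)$ and verify by hand that $(a,b)\mapsto b-I^{(1-\psi_\delta)h^2}(a)$ is sequentially continuous for the Skorokhod topology, via a common time change, the bound $\sup_t|\Delta(a^n\circ\lambda_n)_t-\Delta a_t|\le 2\|a^n\circ\lambda_n-a\|_\infty$, and the fact that only the finitely many jumps of $a$ of size above $\delta/2-\epsilon$ can contribute; this is a correct, self-contained substitute for the citation, resting on the same mechanism (one time change aligning the jumps of both coordinates so that subtraction behaves additively). One small point to make explicit: the joint $\lambda$-a.s.\ convergence of $(z^{n,\ov X},y^n)$ that you invoke is contained in the \emph{proof} of Lemma~\ref{le:quad-var-conv-Md-enlarge} (which yields convergence of the triple $(z^{n,\ov X},z^{n,\ov M^d},y^n)$ in $\D([0,T],\R^3)$), not in its statement, which only records weak convergence of laws; this is harmless, since the $y^n$ are fixed precisely through that construction and the paper's own proof of (ii) likewise appeals to "arguing as in" that lemma, but you should say where the joint convergence comes from rather than attribute it to the lemma's statement.
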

\begin{proof}
	Notice that $(1-\psi_\delta(x))h^2(x)$ is a continuous function vanishing in a neighborhood of the origin, hence by \eqref{eq:jumps-away-0-cont}, the convergence of $(z^{n,\ov X})$ to $z^{0,\ov X}$ implies (i).
	
	To see (ii), we can use (i) and argue as in Lemma~\ref{le:quad-var-conv-Md-enlarge} to see that for each $t$ there exists a sequence $(t_n)$ converging to $t$ such that $(\Delta y^n_{t_n})$ converges to $\Delta y^0_t$ and $(\Delta \beta^{n,\delta}_{t_n})$ converges to $\Delta \beta^{0,\delta}_t$. Hence (ii) now follows directly from \cite[Proposition~VI.2.2(a), p.338]{JacodShiryaev.03}.
\end{proof}
From now on, we fix a sequence $(\delta_m)\subseteq (0,\infty)$ which converges to 0.
\begin{lemma}\label{le:limit-beta-0-delta-in-delta}
	The following hold true.
	
	\vspace*{0.15cm}
	\noindent
	(i) $\lim\limits_{m \to \infty}\beta^{0,\delta_m}=\sum_{0\leq s \leq \cdot} h^2(\Delta z^{0,\ov X}_s)$ in $\D([0,T],\R)$.\\
	(ii)  $y^{0,cont}:=\lim\limits_{m \to \infty}y^0-\beta^{0,\delta_m}=y^0-\sum_{0\leq s \leq \cdot} h^2(\Delta z^{0,\ov X}_s)$ in $\D([0,T],\R)$.
\end{lemma}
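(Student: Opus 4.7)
The plan is to establish both parts by proving the strictly stronger statement of uniform convergence on $[0,T]$, which then automatically implies convergence in the Skorokhod $J_1$-topology on $\D([0,T],\R)$.

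The first step is to verify that the putative limit in (i) is $\lambda$-a.s.\ a well-defined c\`adl\`ag function, i.e.\ that $\sum_{0 \leq s \leq T} h^2(\Delta z^{0,\ov X}_s) < \infty$ $\lambda$-a.s. This follows from Proposition~\ref{prop:X-semim-cont-char}: under $\ov \P_0$, $\ov M^d$ is an $\ov \F$-martingale with $\Delta \ov M^d = h(\Delta \ov X)$, so $\sum_{s \leq T} h^2(\Delta \ov X_s) \leq [\ov M^d]_T < \infty$ $\ov \P_0$-a.s. Translated through the Skorokhod representation, together with Lemma~\ref{le:quad-var-conv-Md-enlarge}, this reads $\sum_{s \leq T} h^2(\Delta z^{0,\ov X}_s) \leq y^0_T < \infty$ $\lambda$-a.s.

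For part (i), I introduce the pointwise, nonnegative error
\begin{equation*}
D^m_t \; := \; \sum_{0 \leq s \leq t} h^2(\Delta z^{0,\ov X}_s) - \beta^{0,\delta_m}_t \; = \; \sum_{0 \leq s \leq t} \psi_{\delta_m}(\Delta z^{0,\ov X}_s)\, h^2(\Delta z^{0,\ov X}_s).
\end{equation*}
Since $D^m_\cdot$ is nondecreasing in $t$, one has $\sup_{t \in [0,T]} D^m_t = D^m_T$. Over the (at most) countable set of jump times of $z^{0,\ov X}$, the summand satisfies $\psi_{\delta_m}(\Delta z^{0,\ov X}_s)\, h^2(\Delta z^{0,\ov X}_s) \to 0$ pointwise as $m \to \infty$ (because $\psi_{\delta_m}(x) \to 0$ for every $x \neq 0$) and is dominated by the summable family $h^2(\Delta z^{0,\ov X}_s)$ from the first step. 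Dominated convergence on the counting measure over the jump times yields $D^m_T \to 0$ $\lambda$-a.s. Hence $\beta^{0,\delta_m}$ converges to $\sum_{0 \leq s \leq \cdot} h^2(\Delta z^{0,\ov X}_s)$ uniformly on $[0,T]$, which in particular gives convergence in $\D([0,T],\R)$.

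Part (ii) then falls out by a single subtraction: the uniform estimate $\sup_t D^m_t \to 0$ from step two implies that $y^0 - \beta^{0,\delta_m}$ converges uniformly, hence in $\D([0,T],\R)$, to $y^0 - \sum_{0 \leq s \leq \cdot} h^2(\Delta z^{0,\ov X}_s)$, which is the claimed definition of $y^{0,\mathrm{cont}}$. The only mildly delicate point in the whole argument is the summability assertion in the first step, which would fail for a generic c\`adl\`ag path but holds here precisely because $\ov \P_0 \in \ov \fP^{ac,w}_{sem}$ is already a semimartingale law; once summability is in hand, the rest is a direct dominated-convergence estimate combined with the monotonicity of $D^m_\cdot$ in the time variable.
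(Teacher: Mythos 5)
Your proof is correct, and it takes a genuinely different route from the paper's. The paper treats (i) by checking the hypotheses of the convergence criterion for nondecreasing functions in \cite[Theorem~VI.2.15, p.342]{JacodShiryaev.03} (pointwise convergence of the functions and of their sums of squared jumps, both via monotone convergence since $\psi_{\delta}(x)\to 0$ for $x\neq 0$), and then handles the subtraction in (ii) through a jump-matching argument and \cite[Proposition~VI.2.2(a), p.338]{JacodShiryaev.03}, since addition is not continuous for the $J_1$-topology. You instead observe that the error is exactly the monotone tail sum $D^m_t=\sum_{s\leq t}\psi_{\delta_m}(\Delta z^{0,\ov X}_s)h^2(\Delta z^{0,\ov X}_s)$, whose supremum over $[0,T]$ equals $D^m_T$ and tends to zero by dominated convergence once $\sum_{s\leq T}h^2(\Delta z^{0,\ov X}_s)<\infty$ $\lambda$-a.s.\ is secured; that summability you correctly extract from $\Delta \ov M^d=h(\Delta\ov X)$ and $[\ov M^d]_T<\infty$ under $\ov\P_0$ (Proposition~\ref{prop:X-semim-cont-char}) transported through the coupling of Lemma~\ref{le:quad-var-conv-Md-enlarge}, exactly the kind of translation the paper itself uses when asserting $\Delta y^0=h^2(\Delta z^{0,\ov X})$ $\lambda$-a.s. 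What your approach buys is a stronger conclusion (uniform convergence, which trivially implies $J_1$-convergence), it renders (ii) an immediate subtraction with no appeal to Skorokhod-topology addition lemmas, and it makes explicit the finiteness of the limiting jump sum that the paper's terser argument leaves implicit; what the paper's approach buys is that it stays entirely within the standard Jacod--Shiryaev toolbox already used throughout Section~3, mirroring the structure of Lemma~\ref{le:limit-beta-n-delta-in-n}.
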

\begin{proof}
	To obtain (i), as each  $\beta^{0,\delta_m}$ and $\sum_{0\leq s \leq \cdot} h^2(\Delta z^{0,\ov X}_s)$ are nonnegative, nondecreasing functions starting in the origin, it suffices to verify that the conditions of \cite[Theorem~VI.2.15, p.342]{JacodShiryaev.03} are satisfied. To that end, we want to show that for $t \in [0,T]$
	\begin{equation*}
	\lim_{m \to \infty}\beta^{0,\delta_m}_t = \sum_{s \leq t}h^2(\Delta z^{0,\ov X}_s), \quad \mbox{and} \quad \lim_{m \to \infty} \sum_{s \leq t}|\Delta \beta^{0,\delta_m}_s|^2 = \sum_{s \leq t}|h(\Delta z^{0,\ov X}_s)|^4.
	%(2) \lim_{\delta \rightarrow 0} \sum_{s \leq t}|\Delta \beta^\delta_0(s)|^2 = \sum_{s \leq t}|\Delta h(\Delta z^{0,X}_s)|^2.
	\end{equation*}
	But this follows directly from the definition by applying the monotone convergence theorem, as $\lim_{\delta \rightarrow 0} \psi_\delta(x) = 0$ for all nonzero $x$. 
	
	For (ii), argue as in Lemma~\ref{le:limit-beta-n-delta-in-n} and apply \cite[Proposition~VI.2.2(a), p.338]{JacodShiryaev.03}. %to obtain the result.
\end{proof}
%
%
% % % % % %
%
Let us summarize what we have shown so far. 
%We know that $\lambda\circ (y^0)^{-1}= \ov  \P_0 \circ ([\ov M^d])^{-1}$,
By Lemma~\ref{le:quad-var-conv-Md-enlarge} and as $\Delta [\ov M^d] = h^2(\Delta \ov X) \ \ov \P_0$-a.s., we get $\Delta y^0 =h^2(\Delta z^{0,\ov X}) \ \lambda$-a.s.. Hence, we conclude that $y^{0,cont}$ has continuous paths. Moreover, $\ov M^d$ is a purely discontinuous $\ov \P_0$-$\ov \F$-martingale if and only if $y^{0,cont}=0 \ \lambda$-a.s.  In fact, 
decomposing 
\begin{equation*}
\ov M^d= \ov M^{d,c} + \ov M^{d,d} %\quad\ov \P_0\mbox{-a.s.}
\end{equation*}
into its continuous and purely discontinuous martingale part under $\ov \P_0$, we see that $\ov \P_0 \circ (\langle \ov M^{d,c} \rangle)^{-1} = \lambda \circ (y^{0,cont})^{-1}$. Being a nonnegative, nondecreasing function starting in zero, we obtain $y^{0,cont}=0 \ \lambda$-a.s. if and only if $\E^{\lambda}[y^{0,cont}_T]=0$. Therefore, it remains to show that
\begin{equation}\label{eq:char2-Md-purely-disc-enlarge}
\E^{\lambda}\big[y^{0,cont}_T\big]=0 \ \Longleftrightarrow \ \lim\limits_{\delta\downarrow0}\limsup_{n \to \infty} \E^{\ov \P_n}\Big[\int_0^T\int_{\{|x|\leq \delta\}} |x|^2\, \ov F^{\ov \P_n}_t(dx)\,dt\Big] =0.
\end{equation}
%
%
% % % % % % % % % % % % % % %55
%
%
In the subsequent proofs, $K>0$ will be a constant whose value may change from line to line. Moreover, we recall the constant $\cK<\infty$ defined in the Condition~(B). %We continue with the following lemma.
\begin{lemma}\label{le:limit-gamma-n-delta-in-n}
	For each $m \in \N$, the following hold true.
	\begin{align*}
	\E^{\lambda}\big[y^0_T-\beta^{0,\delta_m}_T\big]&=\lim\limits_{n \to \infty} \E^{\lambda}\big[\gamma^{n,\delta_m}_T\big]\\
	&=\lim\limits_{n \to \infty} \E^{\ov \P_n}\Big[\int_0^T \int_{\R} \psi_{\delta_m}(x)\,h^2(x)\,\ov F^{\ov \P_n}_s(dx)\,ds\Big].
	\end{align*}
\end{lemma}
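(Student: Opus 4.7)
The plan is to handle the two equalities separately. The second one is in fact an identity valid for each fixed $n$, so no genuine limit is involved; the first one is a limit statement whose main difficulty is uniform integrability.

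For the second equality, note that under $\lambda$ the law of $z^{n,\ov X}$ equals the law of $\ov X$ under $\ov \P_n$, hence
$$
\E^\lambda[\gamma^{n,\delta_m}_T] = \E^{\ov \P_n}\Big[\sum_{0\leq s\leq T}\psi_{\delta_m}(\Delta \ov X_s)\,h^2(\Delta \ov X_s)\Big] = \E^{\ov \P_n}\big[(\psi_{\delta_m}h^2)\ast \mu^{\ov X}_T\big].
$$
Since $\psi_{\delta_m}(x)\,h^2(x)$ is bounded and vanishes in a neighbourhood of the origin, Condition~(B) ensures that $(\psi_{\delta_m}h^2)\ast \nu^{\ov \P_n}_T$ is $\ov \P_n$-integrable, and the compensator identity yields
$$
\E^{\ov \P_n}\big[(\psi_{\delta_m}h^2)\ast \mu^{\ov X}_T\big]=\E^{\ov \P_n}\Big[\int_0^T\!\!\int_{\R}\psi_{\delta_m}(x)\,h^2(x)\,\ov F^{\ov \P_n}_s(dx)\,ds\Big].
$$
This identity is valid for every $n$, so it automatically passes to the limit and gives the second equality.

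For the first equality, \eqref{eq:quadratic-var-decomp} gives $\gamma^{n,\delta_m}_T = y^n_T-\beta^{n,\delta_m}_T$ $\lambda$-a.s.\ for each $n\geq 1$, so it suffices to prove
$$
\E^\lambda[y^n_T]\longrightarrow\E^\lambda[y^0_T]\qquad\text{and}\qquad\E^\lambda[\beta^{n,\delta_m}_T]\longrightarrow\E^\lambda[\beta^{0,\delta_m}_T].
$$
Pointwise $\lambda$-a.s.\ convergence at time $T$ follows from Lemma~\ref{le:quad-var-conv-Md-enlarge} and Lemma~\ref{le:limit-beta-n-delta-in-n}(i), since the Skorokhod parametrization on $\D([0,T],\R)$ fixes the right endpoint. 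The main work is uniform integrability: by Condition~(B) one has a deterministic bound $\ov{\widetilde{C}}_T\leq K'$, uniform in $n$, where $K'$ depends only on $\cK$, $T$ and the truncation function $h$. Combined with the $L^2$-estimate $\E^{\ov\P_n}[|\ov{[M]}_T-\ov{\widetilde{C}}_T|^2]\leq K\cK T$ derived in the proof of Lemma~\ref{le:quad-var-second-modified-enlarge}, this yields a uniform $L^2(\ov\P_n)$-bound on $\ov{[M]}_T$. Since $[\ov M^c,\ov M^d]=0$, we have $[\ov M^d]\leq \ov{[M]}$, so $(y^n_T)_{n\in\N}$ is $L^2(\lambda)$-bounded and hence uniformly integrable; the sandwich $0\leq \beta^{n,\delta_m}_T\leq y^n_T$ transfers uniform integrability to $(\beta^{n,\delta_m}_T)_{n\in\N}$. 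An application of Vitali's convergence theorem then produces both limits, and subtracting them yields the first equality.

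The principal obstacle is securing the uniform integrability from Condition~(B); once that is in place, the rest reduces to the standard combination of $\lambda$-a.s.\ convergence at the endpoint, Vitali's theorem, and the compensator identity. The two limiting statements appear only in the final subtraction, while the identification of the jump contribution with the corresponding integral against $\ov F^{\ov\P_n}$ is an $n$-wise fact that survives the limit trivially.
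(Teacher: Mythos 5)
Your proof is correct, and it follows the same overall skeleton as the paper's — reduce the first equality to $\lambda$-a.s.\ convergence at the terminal time plus uniform integrability, and obtain the second equality as an $n$-wise compensator identity — but the execution differs in two genuine ways. First, the paper does not split the limit: it works with $\gamma^{n,\delta_m}_T$ as a single object, gets its $\lambda$-a.s.\ convergence to $y^0_T-\beta^{0,\delta_m}_T$ from Lemma~\ref{le:limit-beta-n-delta-in-n} together with $\lambda[\Delta y^0_t\neq 0]=0$ and \cite[VI.2.3]{JacodShiryaev.03}, and proves its uniform integrability by a bespoke second-moment computation: writing $\ov A^m_T{}^2=2\int_0^T \ov A^m_{s-}\,d\ov A^m_s+[\ov A^m]_T$ and bounding both terms through the $\ov\P_n$-compensator $\ov A^{m,\ov\P_n}$, which is where Condition~(B) enters. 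You instead prove two separate limits for $y^n_T$ and $\beta^{n,\delta_m}_T$, obtaining uniform integrability of $(y^n_T)$ from the deterministic bound on $\ov{\widetilde C}_T$, the estimate $\E^{\ov\P_n}[|\ov{[M]}_T-\ov{\widetilde C}_T|^2]\leq K\cK T$ already derived in Lemma~\ref{le:quad-var-second-modified-enlarge}, and the orthogonality $[\ov M^c,\ov M^d]=0$, and then dominating $\beta^{n,\delta_m}_T$ by $y^n_T$; this recycles existing estimates and avoids the product-rule computation, at the price of needing the (correct) monotonicity $[\ov M^d]\leq \ov{[M]}$ and finiteness of the limiting expectations before subtracting. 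Second, for convergence at the endpoint you invoke the fact that the $J_1$ time changes on the compact interval $[0,T]$ fix $T$, which is valid, whereas the paper argues via the absence of fixed times of discontinuity of the limit — a route it needs anyway and which is the one compatible with the Jacod--Shiryaev citations it uses. Note finally that the paper's direct second-moment computation is reused later (in the proof of Proposition~\ref{prop:purely-disc-enl}, to bound $\E^\lambda[(y^n_T)^2]$), so its seemingly heavier argument is not wasted; your shortcut would serve there equally well.
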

\begin{proof}
	Fix $m \in \N$. We know from Lemma~\ref{le:limit-beta-n-delta-in-n} that $\gamma^{n,\delta_m}$ converges pointwise to $y^0-\beta^{0,\delta_m}$ in $\D([0,T],\R)$. By Proposition~\ref{prop:X-semim-cont-char}, $\ov X$ is a $\ov \P_0$-$\ov \F$-semimartingale with absolutely continuous characteristics, in particular $\ov \P_0[\Delta \ov X_t\neq 0]=0$ for every $t \in [0,T]$. As a consequence, we can conclude that $\lambda[\Delta y^0_t\neq 0]=0$ for all $t \in [0,T]$. Therefore, by \cite[VI.2.3, p.339]{JacodShiryaev.03}, %we obtain that 
	\begin{equation*}
	\gamma^{n,\delta_m}_T \mbox{ converges pointwise to } y^0_T-\beta^{0,\delta_m}_T \  \ \lambda\mbox{-a.s. when } n \to \infty.
	\end{equation*} 
	To obtain the first equality, it remains to show that the sequence $(\gamma^{n,\delta_m}_T)_{n \in \N}$ is uniformly integrable with respect to $\lambda$. By the de la Vall\'ee-Poussin theorem, it suffices to show the boundedness in $L^2(\lambda)$ of that sequence. Define the nondecreasing process
	\begin{equation*}
	\ov A^{m}_t:= \sum_{0\leq s \leq t} \psi_{\delta_m}(\Delta \ov X_s)\,h^2(\Delta \ov X_s), \quad t\in [0,T].
	\end{equation*}
	Then, for each $n\in \N$, we have $\ov \P_n\circ (\ov A^{m}_T)^{-1}= \lambda \circ (\gamma^{n,\delta_m}_T)^{-1}$, hence we need to check that $\sup_{n\in \N} \E^{\ov \P_n}[(\ov A^m_T)^2]<\infty$. To that end, by the product-rule, write
	\begin{equation*}
	(\ov A^m_T)^2 = 2\int_0^T \ov A^m_{s-} \, d\ov A^m_s + [\ov A^m]_T.
	\end{equation*} 
	Moreover, $\ov A^{m}$ has 
	\begin{equation*}
	\ov A^{m,\ov \P_n}:= \int_0^\cdot \int_{\R} \psi_{\delta_m}(x)\,h^2(x)\,\ov F^{\ov \P_n}_s(dx)\,ds
	\end{equation*}
	as its $\ov \P_n$-$\ov \F$-compensator.
	%\vspace*{0.15cm}
	%\noindent
	%(i) $\lim\limits_{n \to \infty \gamma{n,\delta_m}_T}=y^0-\beta{0,\delta}_1 \, \lambda$-a.s.;
	%(ii)
	%
	As $\Theta$ satisfies Condition~(B), we obtain from \eqref{eq:bound-characteristics} that
	\begin{align*}\E^{\ov \P_n}\big[[\ov A^m]_T\big]
	&=\E^{\ov \P_n}\Big[\sum_{0\leq s \leq T} \psi^2_{\delta_m}(\Delta \ov X_s)\,h^4(\Delta \ov X_s)\Big]\\
	&\leq K\, \E^{\ov \P_n}\Big[\sum_{0\leq s \leq T} \psi_{\delta_m}(\Delta \ov X_s)\,h^2(\Delta \ov X_s)\Big]\\
	&=K\E^{\ov \P_n}\big[\ov A^{m,\ov \P_n}_T\big]\\
	&\leq K \cK T, 
	\end{align*}
	where $K$ %\equiv K(h)$ 
	is a constant only depending on $h$.
	Similarly, using \cite[Theorem~I.3.17(iii), p.32]{JacodShiryaev.03}, %we obtain
	\begin{align*}
	\E^{\overline{\P}_n}\Big[\int_0^T \ov A^m_{s-} \, d\ov A^m_s\Big] = \E^{\overline{\P}_n}\Big[\int_0^T \ov A^m_{s-} \, d\ov A^{m,\ov \P_n}_s\Big] 
	&\leq \E^{\overline{\P}_n}[\ov A^m_T \ov A^{m,\ov \P_n}_T] \\
	&\leq K \cK T\,\E^{\overline{\P}_n}[\ov A^m_T] \\
	&\leq (K \cK\,T)^2.
	\end{align*}
	Therefore, we conclude that indeed $\sup_{n\in \N} \E^{\ov \P_n}[(\ov A^m_T)^2]<\infty$, hence the first equality holds. 
	
	The second equality follows simply from the fact that for each $n\in \N$
	\begin{equation*}
	\E^{\lambda}\big[\gamma^{n,\delta_m}_T\big]=\E^{\ov \P_n}[\ov A^m_T]=\E^{\ov \P_n}[\ov A^{m,\ov \P_n}_T] =\E^{\ov \P_n}\Big[\int_0^T \int_{\R} \psi_{\delta_m}(x)\,h^2(x)\,\ov F^{\ov \P_n}_s(dx)\,ds\Big].
	\end{equation*}
\end{proof}
Now we are able to prove Proposition~\ref{prop:purely-disc-enl}.
\begin{proof}[Proof of Proposition~\ref{prop:purely-disc-enl}]
	Recall that part (i) was already proven in Proposition~\ref{prop:X-semim-cont-char}, hence it remains to show part (ii).
	
	To see (ii), assume for the first direction that there is $u>0$ such that
	\begin{equation*}
	\lim\limits_{\delta\downarrow0}\limsup_{n \to \infty} \E^{\ov \P_n}\Big[\int_0^T\int_{\{|x|\leq \delta\}} |x|^2\, \ov F^{\ov \P_n}_t(dx)\,dt\Big]\geq u>0.
	\end{equation*}
	As by definition, $\psi_{\delta_m}\geq \mathbbm{1}_{\{|x|\leq \delta_m/2 \}},$ we get from Lemma~\ref{le:limit-gamma-n-delta-in-n} that for each big enough $m$
	\begin{equation*}
	\E^\lambda\big[y^0_T- \beta^{0,\delta_m}_T\big] \geq \limsup_{n \to \infty }\E^{\ov \P_n}\Big[\int_0^T\int_{\{|x|\leq \delta_m/2\}} |x|^2\, \ov F^{\ov \P_n}_t(dx)\,dt\Big]\geq u.
	\end{equation*}
	Now, a similar argument as in Lemma~\ref{le:limit-gamma-n-delta-in-n} yields
	\begin{equation*}
	\E^{\lambda}\big[(y^n_T)^2\big]
	%=\E^{\ov \P_n}\Big[\Big(\int_0^T h^2(x)\, \ov F^{\ov \P_n}_s(dx)\,ds \Big)^2\Big]
	\leq 2(K \cK T)^2 + K\cK T
	\end{equation*}
	uniformly for each $n$, so the sequence $(y^n_T)_{n \in \N}$ is uniformly integrable with respect to $\lambda$. As $y^n$ converges pointwisely to $y^0$ in $\D([0,T],\R)$ and $\lambda[\Delta y^0_t\neq 0]=0$, we  have $y^n_T$ converging to $y^0_T$ $\lambda$-a.s., and  by uniform integrability also in $L^1(\lambda)$. In particular, we see that $\E^{ \lambda}[y^0_T]<\infty$. Therefore, by dominated convergence, we can conclude that
	\begin{equation}\label{eq:dom-conv-y-0-cont}
	\E^\lambda[y^{0,cont}_T]
	=\lim\limits_{m \to \infty} \E^{\lambda}[y^{0}_T-\beta^{0,\delta_m}_T]\geq u,
	\end{equation}
	hence by the discussion before \eqref{eq:char2-Md-purely-disc-enlarge}, $\ov M^d$ is not a purely discontinuous martingale.
	
	Conversely, suppose that
	\begin{equation}\label{eq:jump-cond-0-enlarge}
	\lim_{\delta\downarrow 0}\limsup_{n \to \infty }\E^{\ov \P_n}\Big[\int_0^T\int_{\{|x|\leq \delta\}} |x|^2\, \ov F^{\ov \P_n}_t(dx)\,dt\Big]=0.
	\end{equation}
	From the fact that $\psi_{\delta_m} \leq \mathbbm{1}_{\{|x| \leq \delta_m\}}$ and by Lemma~\ref{le:limit-gamma-n-delta-in-n}, we have for each big enough $m$ 
	\begin{equation*}
	\E^{\lambda}\big[y^0_T-\beta^{0,\delta_m}_T\big] \leq \limsup_{n \rightarrow \infty} \E^{\overline{\P}_n}\Big[\int_0^T \int_{\{|x| \leq \delta_m \}} |x|^2 \, \ov F^{\ov \P_n}_s(dx) \, ds\Big].
	\end{equation*}
	Therefore, as in \eqref{eq:dom-conv-y-0-cont}, we obtain by dominated convergence and by \eqref{eq:jump-cond-0-enlarge} that
	\begin{align*}
	\E^\lambda\big[y^{0,cont}_T\big] 
	&=\lim\limits_{m\to \infty} \E^{\lambda}\big[y^0_T - \beta^{0,\delta_m}_T\big]\\
	& \leq \lim\limits_{m\to \infty} \limsup_{n \rightarrow \infty} \E^{\overline{\P}_n}\Big[\int_0^T \int_{\{|x| \leq \delta_m \}} |x|^2 \, \ov F^{\ov \P_n}_s(dx) \, ds\Big]\\
	&=0.
	\end{align*}
	Thus, $y^{0, cont}=0 \ \lambda$-a.s., hence by the discussion before \eqref{eq:char2-Md-purely-disc-enlarge}, we have that $\ov M^d$ is indeed a purely discontinuous martingale.
\end{proof}
\begin{remark}\label{rem:1-d-to-Multi-d}
	The proof of Proposition~\ref{prop:purely-disc-enl} was given in dimension $d=1$. However, we see from its proof that it can be easily adapted for the multidimensional case. Indeed, clearly, $ \ov M^d:=(\ov M^{d,1},\dots,\ov M^{d,d})$ is a purely discontinuous martingale if and only if each component $\ov M^{d,i}$ is. Therefore, one can apply the above proof for each of its component $\ov M^{d,i}$. Then, %we see that if 
	\begin{equation*}
	\lim_{\delta\downarrow 0}\limsup_{n \to \infty }\E^{\ov \P_n}\Big[\int_0^T\int_{\{|x|\leq \delta\}} |x|^2\, \ov F^{\ov \P_n}_t(dx)\,dt\Big]=0
	\end{equation*}
	implies that each of its component $\ov M^{d,i}$ is a purely discontinuous martingale, hence so is $\ov M^d$. On the other hand,
	if 
	\begin{equation*}
	\lim_{\delta\downarrow 0}\limsup_{n \to \infty }\E^{\ov \P_n}\Big[\int_0^T\int_{\{|x|\leq \delta\}} |x|^2\, \ov F^{\ov \P_n}_t(dx)\,dt\Big]>0,
	\end{equation*}
	then there exists at least one $j\in \{1,\dots,d\}$ such that  $\ov M^{d,j}$ is not a purely discontinuous martingale, hence $\ov M^d$ is not one either.
\end{remark}
In the rest of this subsection, we provide the proof of Corollary~\ref{co:closedness-P-Theta-enlarge}, which we present directly in the multidimensional case.
\begin{proof}[Proof of Corollary~\ref{co:closedness-P-Theta-enlarge}]
	By Proposition~\ref{prop:X-semim-cont-char}, $\ov \P_0 \in \ov \fP^{ac,w}_{sem}$. In particular, $\ov X$ has canonical representation
	\begin{equation*}
	\ov X = \ov X_0 + \ov B + \ov M^c + \ov M^d + \ov J \quad \ov \P_0\mbox{-a.s.}
	\end{equation*}
	Moreover, as $\Theta \subseteq \R^d\times \S^d_+\times \cL$ is satisfying Condition~(J), we deduce from Proposition~\ref{prop:purely-disc-enl} that $\ov M^d$ is a $\ov \P_0$-$\ov \F$-purely discontinuous martingale, which implies that $\ov C$ is the second characteristic of $\ov X$ under $\ov \P_0$-$\ov \F$ and $\ov \P_0 \in \ov \fP^{ac}_{sem}$.
	
	Now, assume for the rest of the proof that in addition, $\Theta\subseteq \R^d \times\S^d_+ \times \cL$ is closed, convex. Let $(\ov b^{\ov \P_0}, \ov c^{\ov \P_0}, \ov F^{\ov \P_0})$ be the $\ov \P_0$-$\ov\F$-differential characteristics of $\ov X$. It remains to show that $(\ov b^{\ov \P_0}, \ov c^{\ov \P_0}, \ov F^{\ov \P_0}) \in \Theta \ \ov \P_0\times dt$-a.s. To that end, let us recall the additive, positive homogeneous bijection $\ov \varphi: \Theta \to \ov \varphi(\Theta)\subseteq \R^d\times \S^d_+\times\R^\N$ defined in \eqref{eq:def-ov-varphi}.  Due to \eqref{eq:char-in-Theta} it is equivalent to show that $(\ov b, \ov c, \ov v) \in \mbox{cl}(\ov \varphi(\Theta)) \ \ov \P_0\times dt$-a.s.. 
	Clearly, $\mbox{cl}(\ov \varphi(\Theta))$ is convex and closed.
	%Since By assumption, $\Theta$ is closed, convex satisfying Condition~(B), hence it is a compact, convex set, see below \eqref{eq:bound-characteristics}. 
	%Thus, $\ov \varphi (\Theta)\subseteq \R^d\times \S^d_+\times\R^\N$ is compact, convex, too.
	Therefore, 
	as $\ov\eta:=(\ov b, \ov c, \ov v) \in \mbox{cl}(\ov \varphi(\Theta)) \ \, \ov \P_n\times dt$-a.s., we have for each $m$ that $m \int_t^{t+\frac{1}{m}} \ov \eta_s\,ds \in \mbox{cl}(\ov \varphi(\Theta))\ \,\P_n\times dt$-a.s., for all $n \in \N$. In other words, for each $n \in \N$,
	\begin{equation*}
	\E^{\ov \P_n}\Big[\int_0^T \, \mathbbm{1}_{\big\{m\big(\ov B_{t+\frac{1}{m}} - \ov B_t,\, \ov C_{t+\frac{1}{m}} - \ov C_t,\, (\ov V^i_{t + \frac{1}{m}} - \ov V^i_t)_{i \in \N}\big) \in \mbox{cl}(\ov \varphi(\Theta))\big\}}\,dt\Big]=T.
	\end{equation*} 
	In terms of the Skorokhod representation, it means that
	\begin{equation*}
	\E^\lambda\Big[\int_0^T \mathbbm{1}_{\big\{m\big(z^{n,\ov B}_{t+\frac{1}{m}} - z^{n,\ov B}_t,\, z^{n,\ov C}_{t+\frac{1}{m}} - z^{n,\ov C}_t,\, (z^{n,\ov V^i}_{t + \frac{1}{m}} - z^{n,\ov V^i}_t)_{i \in \N}\big) \in \mbox{cl}(\ov \varphi(\Theta))\big\}}\, dt\Big] = T.
	\end{equation*}
	Now, letting $n$ tends to infinity, by applying the dominated convergence theorem, the closedness of  $\mbox{cl}(\ov \varphi(\Theta))$ yields
	\begin{equation*}
	\E^\lambda\Big[\int_0^T \mathbbm{1}_{\big\{m\big(z^{0,\ov B}_{t+\frac{1}{m}} - z^{0,\ov B}_t,\, z^{0,\ov C}_{t+\frac{1}{m}} - z^{0,\ov C}_t,\, (z^{0,\ov V^i}_{t + \frac{1}{m}} - z^{0,\ov V^i}_t)_{i \in \N}\big) \in \mbox{cl}(\ov \varphi(\Theta))\big\}}\, dt\Big] = T,
	\end{equation*}
	which implies that $m(\ov B_{t+\frac{1}{m}} - \ov B_t, \ov C_{t+\frac{1}{m}} - \ov C_t, (\ov V^i_{t + \frac{1}{m}} - \ov V^i_t)_{i \in \N}) \in \mbox{cl}(\ov \varphi(\Theta))$, $\overline{\P}_0 \times dt$-a.e. for all $m \in \N$. Now letting $m$ tend to infinity, we obtain that $\ov \eta_t = (\ov b_t, \ov c_t, \ov v_t) \in \mbox{cl}(\ov \varphi(\Theta))$ holds $\overline{\P}_0 \times dt$-a.e., hence we are done.
\end{proof}
% % % % % % % % % % % % % % % % % % % % % % % % % % % % % 
%
\subsection{Tightness of $\ov \fP^{ac}_{sem}(\Theta)(\Gamma_0)$} \label{subsec:tightness-enlarge}
The goal of this subsection is to prove tightness of $\ov \fP^{ac}_{sem}(\Theta)(\Gamma_0)$.
More precisely:
\begin{proposition}\label{prop:tightness-enlarge}
	Let $\Theta \subseteq \R^d\times \S^d_+\times \cL$ satisfy Condition~(B) and $\Gamma_0\subseteq\fM_1(\R^d)$ be tight. Then the set $\ov \fP^{ac}_{sem}(\Theta)(\Gamma_0)$ defined as % above
	in Subsection~\ref{subsec:enlarg} is tight.
\end{proposition}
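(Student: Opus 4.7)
Because $\ov\Omega$ is a product of Polish spaces equipped with the product topology, the family $\ov\fP^{ac}_{sem}(\Theta)(\Gamma_0)$ is tight if and only if the marginal law of each canonical coordinate is tight uniformly in $\ov\P\in\ov\fP^{ac}_{sem}(\Theta)(\Gamma_0)$. I would organise the proof by splitting the coordinates into three groups and handling each separately.

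\emph{Continuous Lipschitz components.} For $\ov B$, $\ov C$, $\ov{\widetilde C}$ and each $\ov V^i$, the arguments already carried out in Lemma~\ref{le:B-enlarge} and Lemma~\ref{le:M-enlarge}(ii) show, thanks to Condition~(B), that each of these processes is $\ov\P$-a.s.\ Lipschitz continuous on $[0,T]$, starts at the origin, and admits a Lipschitz constant that depends only on $\cK$ (and, for $\ov V^i$, on $g_i$). By Arzel\`a-Ascoli, the set of Lipschitz paths vanishing at $0$ with a given Lipschitz bound is a compact subset of the appropriate $\Omega^c$, so each of these marginal families is concentrated on a common compact set and hence trivially tight.

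\emph{The canonical process $\ov X$.} Here I would invoke a standard semimartingale tightness criterion such as Jacod-Shiryaev~VI.4.18: Condition~(B) yields the uniform bound
\begin{equation*}
\sup_{\ov\P}\E^{\ov\P}\bigl[\Var(\ov B)_T+\tr(\ov C)_T+(|x|^2\wedge 1)\ast\nu^{\ov\P}_T\bigr]\leq 3\cK T,
\end{equation*}
and $\{\ov\P\circ\ov X_0^{-1}\}\subseteq\Gamma_0$ is tight by hypothesis; together these give tightness of $\{\ov\P\circ\ov X^{-1}\}$ on $\Omega$. Tightness of the derived processes $\ov J=I^{x-h(x)}(\ov X)$ and $\ov U^i=I^{g_i}(\ov X)$ then follows immediately from tightness of $\ov X$ and the continuity of the map $I^f$ recalled in \eqref{eq:jumps-away-0-cont}, since $x-h(x)$ and each $g_i$ vanish in a neighbourhood of the origin. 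For the continuous martingale $\ov M^c$, which satisfies $\langle\ov M^c\rangle=\ov C$ uniformly Lipschitz, Burkholder-Davis-Gundy at $p=4$ yields
\begin{equation*}
\E^{\ov\P}\bigl[|\ov M^c_t-\ov M^c_s|^4\bigr]\leq C\cK^{2}(t-s)^{2}
\end{equation*}
uniformly in $\ov\P$, so Kolmogorov's criterion gives tightness on $\Omega^c_d$.

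\emph{Jump martingale components.} For $\ov M^d$ I would apply Rebolledo's theorem: its predictable quadratic variation $\langle\ov M^d\rangle_t=\int_0^t\!\int h(x)h(x)^{\top}\,\ov F^{\ov\P}_s(dx)\,ds$ is uniformly Lipschitz by Condition~(B), hence C-tight, which gives tightness of $\ov M^d$ on $\Omega$. Finally, write $\ov{[M]}=\ov C+\sum_{s\leq\cdot}h(\Delta\ov X_s)h(\Delta\ov X_s)^{\top}$; the first summand already lies in a compact set, while the second is a non-decreasing process whose $\ov\P$-$\ov\F$-compensator coincides with the Lipschitz process $\int_0^\cdot\!\int h(x)h(x)^{\top}\,\ov F^{\ov\P}_s(dx)\,ds$, so a direct application of Aldous's criterion (via Markov's inequality on the compensator) closes the argument.

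\emph{Main obstacle.} The delicate step is the tightness of the purely jump pieces $\ov M^d$ and the jump part of $\ov{[M]}$ on the Skorokhod space: since $h(x)$ and $h(x)h(x)^{\top}$ do not vanish near $x=0$, the continuous-image trick used for $\ov J$ and $\ov U^i$ is unavailable, and one must instead exploit the $\ov\P$-compensators (which are automatically Lipschitz by Condition~(B)) in conjunction with the Rebolledo/Aldous criteria.
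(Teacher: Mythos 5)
Your proposal is correct and follows essentially the paper's architecture: coordinatewise tightness assembled through the $\varepsilon 2^{-i}$/Tychonoff argument, deterministic Lipschitz bounds (from Condition~(B)) for $\ov B,\ov C,\ov{\widetilde{C}},\ov V^i$, the semimartingale criterion \cite[Theorem~VI.4.18]{JacodShiryaev.03} for $\ov X$, and the continuity of $I^f$ for $\ov J,\ov U^i$. The genuine differences are at the lemma level: you replace the paper's Rebolledo argument for $\ov M^c$ by BDG plus Kolmogorov's criterion, its use of \cite[Theorem~VI.4.13]{JacodShiryaev.03} for $\ov M^d$ by Rebolledo's criterion (equivalent here, since $|\Delta \ov M^d|\le\|h\|_\infty$ and $\langle \ov M^d\rangle$ is uniformly Lipschitz), and its use of \cite[Theorem~II.2]{Rebolledo.79} for $\ov{[M]}$ by an Aldous-type argument via the compensator; all of these substitutions work and buy nothing essentially new beyond avoiding one citation or another. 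Two places are more compressed than they should be. First, the terminal expectation bound you display is not by itself the hypothesis of Theorem~VI.4.18: one must also check the big-jump condition and the C-tightness (or strong majorization, \cite[Remark~VI.4.20]{JacodShiryaev.03}) of $\Var(\ov B^i)$, $\ov{\widetilde{C}}^{ii}$ and $g_a\ast\nu^{\ov\P}$; the paper does this via the P-UT property and \cite[Theorem~VI.6.16]{JacodShiryaev.03}, while in your route it suffices to note that under Condition~(B) these processes are deterministically Lipschitz with constant depending only on $\cK$ and that $\nu^{\ov\P}([0,T]\times\{|x|>a\})\le \cK T/a$ pointwise. Second, for $\ov{[M]}$ tightness is needed jointly in $\D([0,T],\R^{d^2})$ and the jump part $\sum_{s\le\cdot}h(\Delta\ov X_s)h(\Delta\ov X_s)^{\top}$ is nondecreasing only on the diagonal; since its increments are positive semidefinite, the modulus can be controlled by the trace (whose compensator is Lipschitz), after which your Aldous argument, combined with C-tightness of $\ov C$ so that the sum remains tight, goes through --- or one simply follows the paper and deduces tightness of $\ov{[M]}$ from that of $\ov{\widetilde{C}}$. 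These are routine repairs, not gaps in the strategy.
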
  
In fact, a careful inspection of the proof will lead to the conclusion that with the same arguments % for the proof of Proposition~\ref{prop:tightness-enlarge},
we  also obtain the tightness of $\fP^{ac}_{sem}(\Theta)(\Gamma_0)$ on the original space $\fM_1(\Omega)$ (when imposing the same conditions as Proposition~\ref{prop:tightness-enlarge}), see Corollary~\ref{co:tightness-P}.

We divide the proof of Proposition~\ref{prop:tightness-enlarge} into several lemmas. We will use the notation introduced in Section~\ref{subsec:enlarg}.
%For standard notation appearing in the theory of weak convergence of processes, we refer to \cite{JacodShiryaev.03}. 
%In the subsequent proofs, $K>0$ will be a constant whose value may change from line to line. Moreover, 
We recall the constant $\cK<\infty$ defined in the Condition~(B). 
%
%
% % % % % % % % % % % % %
%We recall the  modified second characteristic $ \ov{\widetilde{C}}^{\ov \P_n}=(\ov {\widetilde C}^{\ov \P_n,kl})_{1\leq i,j\leq d}$  of $\ov X$ under $\ov \P_n$ defined by
% \begin{equation}\label{eq:def-second-modified-char-enlarge}
%\ov{\widetilde{C}}^{\ov \P_n,ij}:=\ov C^{ij} + \int_0^\cdot \int_{\R^d} h^{i}(x) h^j(x) \, \ov F^{\ov \P_n}_s(dx)\,ds. 
%\end{equation}
% % % % % % % % % % % % % % % % % % % % % % % % %
\begin{lemma}\label{le:tight-B-C}
	Let $\Theta \subseteq \R^d\times \S^d_+\times \cL$ satisfy Condition~(B) and $(\ov \P_n)_{n \in \N}\subseteq \ov \fP^{ac}_{sem}(\Theta)$. Then, the sequences $(\ov \P_n \circ \ov B^{-1})_{n \in \N}$, $(\ov \P_n \circ \ov C^{-1})_{n \in \N}$, $(\ov \P_n \circ \ov{\widetilde{C}}^{-1})_{n \in \N}$  of laws on  $\Omega^c_d$
	%and $(\ov \P_n \circ (\ov{\widetilde{C}}^{\ov \P_n})^{-1})_{n \in \N}$ 
	and $\Omega^c_{d^2}$ are tight.   
\end{lemma}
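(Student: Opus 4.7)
The plan is to exploit Condition~(B) to show that, under each $\ov\P_n$, the processes $\ov B$, $\ov C$, and $\ov{\widetilde C}$ all have uniformly Lipschitz paths starting at $0$, and then to invoke Arzel\`a--Ascoli on $C([0,T];\R^d)$ and $C([0,T];\R^{d^2})$ to extract a \emph{single} compact set carrying full mass for every $n$. This is stronger than tightness and is what makes the statement essentially routine.

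First I would handle $\ov B$. Since $\ov\P_n\in \ov\fP^{ac}_{sem}(\Theta)$, we have $\ov B_t-\ov B_s=\int_s^t \ov b^{\ov\P_n}_r\,dr$, and Condition~(B) gives $|\ov b^{\ov\P_n}_r|\le \cK$ for $\ov\P_n\times dt$-a.e.\ $(\ov\omega,r)$. Thus
\[
\ov\P_n\big[\,|\ov B_t-\ov B_s|\le \cK|t-s|\text{ for all }s,t\in[0,T],\ \ov B_0=0\,\big]=1
\]
for every $n$, so each $\ov\P_n\circ \ov B^{-1}$ is concentrated on the set of Lipschitz functions with constant $\cK$ vanishing at $0$. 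That set is uniformly bounded (by $\cK T$) and equicontinuous, hence relatively compact in the uniform topology by Arzel\`a--Ascoli, yielding tightness of $(\ov\P_n\circ \ov B^{-1})_{n\in\N}$. The identical argument applies to $\ov C$ using $|\ov c^{\ov\P_n}_r|\le \cK$.

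For $\ov{\widetilde C}$, I would use the decomposition $\ov{\widetilde C}^{ij}_t=\ov C^{ij}_t+\int_0^t\!\int_{\R^d}h^i(x)h^j(x)\,\ov F^{\ov\P_n}_r(dx)\,dr$ recalled in the proof of Lemma~\ref{le:B-enlarge}. The first summand is Lipschitz with constant $\cK$. For the second, splitting the integral on $\{|x|\le 1\}\cup\{|x|>1\}$ and using that $h$ is bounded and coincides with the identity in a neighbourhood of $0$, the bound $\int(|x|^2\wedge|x|)\,F(dx)\le \cK$ from Condition~(B) provides a constant $K'=K'(h,\cK)<\infty$ with $\int |h^ih^j|\,dF\le K'$ for every $(b,c,F)\in\Theta$. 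Hence $\ov{\widetilde C}$ is $\ov\P_n$-a.s.\ Lipschitz with constant $\cK+K'$, and the same Arzel\`a--Ascoli argument on $C([0,T];\R^{d^2})$ closes the case.

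There is no real obstacle here: once Condition~(B) is translated into a uniform Lipschitz bound for each of the three processes, Arzel\`a--Ascoli does the rest, and in fact delivers containment in a single $n$-independent compact set rather than mere tightness. The only mild point is the estimate on $\int|h|^2\,dF$, which is standard given that $h$ is bounded and equals the identity near the origin.
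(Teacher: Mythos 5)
Your proposal is correct and follows essentially the same route as the paper: Condition~(B) is converted into a uniform Lipschitz bound (with constant $\cK$, resp.\ $\cK+K'(h,\cK)$ for $\ov{\widetilde C}$ via the decomposition $\ov{\widetilde{C}}^{ij}= \ov C^{ij} + \int_0^\cdot \int_{\R^d} h^i(x)h^j(x)\,\ov F_t^{\ov \P_n}(dx)\,dt$), which forces each law to charge only Lipschitz paths started at $0$. The only difference is cosmetic: the paper concludes via the modulus-of-continuity tightness criterion \cite[Theorem~7.3]{Billingsley.99}, while you observe directly by Arzel\`a--Ascoli that all the laws sit on one fixed compact set, a marginally stronger and equally valid conclusion.
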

\begin{proof}
	By definition, we have $\ov \P_n[\ov B_0=0]=1$ for each $n$. Moreover, due to $\Theta$ satisfying Condition~(B), we have $\ov \P_n$-a.s. that for any $0\leq s\leq t\leq T$
	\begin{equation*}
	|\ov B_t-\ov B_s| \leq \int_s^t |\ov b_r|\,dr \leq \cK |t-s|.
	\end{equation*}
	Consequently, by Markov's inequality, we have for each $\varepsilon>0$ that
	\begin{equation*}
	\lim\limits_{\delta\downarrow0}\sup_{n \in \N}\ov \P_n\Big[\sup_{t-s\leq \delta;0\leq s\leq t\leq T} |\ov B_t-\ov B_s|\geq\varepsilon\Big] =0.
	\end{equation*} 
	By applying \cite[Theorem~7.3, p.82]{Billingsley.99}, we obtain the tightness of $(\ov \P_n \circ \ov B^{-1})_{n \in \N}$. Replacing $\ov B$ by $\ov C$ 
	%and $\ov{\widetilde{C}}^{\ov \P_n}$  BRAUCHE ICH DAS???
	in the above proof, we conclude the tightness also for the sequence %s 
	$(\ov \P_n \circ \ov C^{-1})_{n \in \N}$. Moreover, using \cite[II.2.18, p.79]{JacodShiryaev.03}, the same arguments yields tightness of  $(\ov \P_n \circ \ov{\widetilde{C}}^{-1})_{n \in \N}$.
	%and $(\ov \P_n \circ (\ov{\widetilde{C}}^{\ov \P_n})^{-1})_{n \in \N}$. BRAUCHE ICH DAS???
\end{proof}
% % % % % % % % % % % % % % % % % % % % % % % % % % % % % % %
%
\begin{lemma}\label{le:tight-M}
	Let $\Theta \subseteq \R^d\times \S^d_+\times \cL$ satisfy Condition~(B) and let $(\ov \P_n)_{n \in \N}\subseteq \ov \fP^{ac}_{sem}(\Theta)$. Then, both  sequences $(\ov \P_n \circ (\ov M^c)^{-1})_{n \in \N}$ and $(\ov \P_n \circ (\ov M^d)^{-1})_{n \in \N}$  of laws on $\Omega^c_d$ and $\Omega$ are tight. 
\end{lemma}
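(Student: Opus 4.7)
The strategy is to verify Aldous' tightness criterion (cf.~\cite[Theorem~VI.4.5]{JacodShiryaev.03}) separately for each of the two sequences of laws, using that Condition~(B) yields a uniform bound on the expected quadratic variations of the martingales $\ov M^c$ and $\ov M^d$. For the first sequence we will upgrade tightness on $\Omega$ to tightness on $\Omega^c_d$ using the C-tightness criterion \cite[Proposition~VI.3.26]{JacodShiryaev.03} together with the fact that $\ov M^c$ has continuous paths under every $\ov \P_n$.

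\textbf{Step 1 (compact containment).} Under $\ov \P_n \in \ov\fP^{ac}_{sem}(\Theta)$, the process $\ov M^c$ is a continuous local martingale with quadratic variation $\ov C$, so Condition~(B) yields $\E^{\ov \P_n}[\ov C_T] \leq \cK T$. Likewise, $\ov M^d$ is a purely discontinuous martingale whose quadratic variation has $\ov\P_n$-compensator $\int_0^{\cdot}\!\!\int_{\R^d} |h(x)|^2\, \ov F^{\ov\P_n}_s(dx)\,ds$, whose expectation at $T$ is bounded by $K\cK T$ for some constant $K$ depending only on $h$, as in the proof of Lemma~\ref{le:M-enlarge}. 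Both quantities are genuine $L^2$-martingales (not just local martingales) by the remark after Condition~(B), so Doob's $L^2$-inequality gives
\[
\sup_{n \in \N}\E^{\ov \P_n}\bigl[\sup_{t \leq T}|\ov M^c_t|^2\bigr] \leq 4\cK T, \qquad
\sup_{n \in \N}\E^{\ov \P_n}\bigl[\sup_{t \leq T}|\ov M^d_t|^2\bigr] \leq 4K\cK T,
\]
from which Markov's inequality yields uniform-in-$n$ compact containment on $[0,T]$.

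\textbf{Step 2 (Aldous' modulus estimate).} For any $\ov \F$-stopping time $\tau \leq T$ and any $\delta > 0$ with $\tau + \delta \leq T$, the optional sampling theorem applied to the martingales $(\ov M^c)^2 - \ov C$ and $(\ov M^d)^2 - \ov{[\ov M^d]}$ (which are true martingales under Condition~(B)) combined with the absolute continuity of $\ov C$ and the compensator estimate above yields
\begin{align*}
\E^{\ov \P_n}\bigl[|\ov M^c_{\tau + \delta} - \ov M^c_\tau|^2\bigr]
&= \E^{\ov \P_n}\bigl[\ov C_{\tau + \delta} - \ov C_\tau\bigr] \leq \cK\, \delta, \\
\E^{\ov \P_n}\bigl[|\ov M^d_{\tau + \delta} - \ov M^d_\tau|^2\bigr]
&\leq \E^{\ov \P_n}\Bigl[\int_\tau^{\tau + \delta}\!\!\int_{\R^d} |h(x)|^2\, \ov F^{\ov\P_n}_s(dx)\,ds\Bigr] \leq K\cK\, \delta,
\end{align*}
uniformly in $n$ and in $\tau$. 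Applying Markov's inequality yields the Aldous condition for both sequences.

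\textbf{Step 3 (conclusion).} Combining Steps~1 and~2, Aldous' criterion delivers tightness of $(\ov \P_n \circ (\ov M^c)^{-1})$ and $(\ov \P_n \circ (\ov M^d)^{-1})$ on $\Omega = \D([0,T],\R^d)$. Since $\ov M^c$ has continuous paths $\ov \P_n$-a.s., the sequence is trivially C-tight in the sense of \cite[Proposition~VI.3.26]{JacodShiryaev.03}, which lifts tightness to the stronger topology of $\Omega^c_d$.

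The only non-routine step is making sure the optional sampling identities in Step~2 may indeed be applied, which is why we invoke Condition~(B) to guarantee true (not merely local) martingale status before passing to stopping times; the remaining ingredients are standard.
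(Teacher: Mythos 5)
Your proof is correct, but it takes a genuinely different route from the paper. The paper's proof is citation-driven: for $\ov M^c$ it invokes the tightness of $(\ov \P_n \circ \ov C^{-1})_{n\in\N}$ already established in Lemma~\ref{le:tight-B-C} together with Rebolledo's criterion \cite[Corollary~3, p.29]{Rebolledo.79} (tightness of the predictable brackets implies tightness of the locally square-integrable martingales), and for $\ov M^d$ it introduces the process $\ov G^{d,\ov\P_n}=\sum_j \int_0^\cdot\int_{\R^d}|h^j(x)|^2\,\ov F^{\ov\P_n}_t(dx)\,dt$, shows its $C$-tightness exactly as in Lemma~\ref{le:tight-B-C}, and concludes via \cite[Theorem~VI.4.13, p.358]{JacodShiryaev.03}. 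You instead verify Aldous' criterion by hand, using that Condition~(B) makes the brackets uniformly Lipschitz in time (increment of $\langle \ov M^c\rangle$ resp.\ $\langle \ov M^d\rangle$ over any interval of length $\delta$ is at most $\cK\delta$ resp.\ $K\cK\delta$, uniformly in $n$ and in the stopping time), plus Doob's inequality for compact containment. Both arguments rest on the same quantitative input from Condition~(B); the paper's version outsources the stopping-time estimates to Rebolledo/Jacod--Shiryaev, while yours is more self-contained. Two small points of polish: the Aldous condition is formulated for pairs of stopping times $S\le S'\le S+\delta$, not only for $\tau$ and $\tau+\delta$, though your bracket estimate covers this without change; and the passage from tightness on $\Omega$ to tightness on $\Omega^c_d$ for $\ov M^c$ is not quite ``trivial'' --- the clean justification is that $C([0,T],\R^d)$ is closed in $\D([0,T],\R^d)$ for the $J_1$-topology, so by the Portmanteau theorem every limit point charges only continuous paths, whence $C$-tightness by \cite[Proposition~VI.3.26]{JacodShiryaev.03} (alternatively, a BDG/Kolmogorov moment estimate $\E^{\ov\P_n}[|\ov M^c_t-\ov M^c_s|^4]\le K\cK^2|t-s|^2$ gives this directly).
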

\begin{proof}
	For each $n \in \N$, we have by definition that both $\ov M^c$ and $\ov M^d$ are $\ov \P_n$-$\ov \F$-(locally) square integrable martingales with $\ov \P_n[\ov M^c_0=0]=\ov \P_n[\ov M^d_0=0]=1$.
	By Lemma~\ref{le:tight-B-C}, the sequence $(\ov \P_n \circ \ov C^{-1})_{n \in \N}$ of laws on $\Omega^c_{d^2}$ is tight. As by definition, $\langle \ov M^{c} \rangle = \ov C$ under each $\ov \P_n$, we deduce the tightness of  $(\ov \P_n \circ (\ov M^c)^{-1})_{n \in \N}$ %of laws on $\Omega^c_d$
	directly from \cite[Corollary~3, p.29]{Rebolledo.79}.
	% ALTERNATIVE CITE: \cite[Lemma~1]{Zheng.85} 
	
	% % % % % % % % % % % % % % % % % % % % % % % %
	% BRAUCHE ICH DAS???
	%Now, denote by $\langle \ov M^d \rangle^{\ov \P_n}=(\langle \ov M^{d,i}, \ov M^{d,j}\rangle^{\ov \P_n})_{1\leq i,j\leq d}$ 
	%$\langle \ov (M^{d})^(i)) \rangle^{n;i,j}$
	%$\langle \ov M^d \rangle^{\P_n}:=(\langle \ov M^d \rangle^{\P_n})^{i,j}_{1\leq i,j,\leq d}$ 
	%the $\ov \P_n$-$\ov \F$-compensator of the quadratic variation $[\ov M^d]=([\ov M^{d,i}, \ov M^{d,j}])_{1\leq i,j\leq d}$ and 
	Now, we define the process
	\begin{equation*}
	\ov G^{d,\ov \P_n}
	% :=\sum_{j=1}^d \langle \ov M^{d,j}, \ov M^{d,j}\rangle^{\ov \P_n}
	:= \sum_{j=1}^d \int_0^\cdot \int_{\R^d} |h^j(x)|^2 \, \ov F_t^{\ov \P_n}(dx)\,dt.
	\end{equation*}
	Since $\Theta$ satisfies Condition~(B), we can apply the same arguments as in Lemma~\ref{le:tight-B-C} to obtain the tightness of the sequence $(\ov \P_n \circ (\ov G^{d,\ov \P_n})^{-1})_{n \in \N}$ as laws on $\Omega^c_1$ or equivalently the $C$-tightness, if we think of them as laws on $\D([0,T],\R)$. As a consequence, we get the tightness of the sequence of laws $(\ov \P_n \circ (\ov M^d)^{-1})_{n \in \N}$ directly from \cite[Theorem~VI.4.13, p.358]{JacodShiryaev.03}.
	%\begin{equation*}
	%\lim\limits_{\delta\downarrow0}\sup_{n \to \infty}\ov \P_n\Big[\sup_{t-s\leq \delta;0\leq s\leq t\leq T} |\ov G^{d,\ov \P_n}_t-\ov G^{d,\ov \P_n}_s|\geq\varepsilon\Big] =0.
	%\end{equation*} 
	% % % % % % % % % % %
	% \ov M^c, \ov M^d are TRUE SQUARE INTEGRABLE
	%Observe that both $\ov M^c$ and $\ov M^d$ are $\ov \P_n$-$\ov \F$-square integrable martingales. Indeed, by definition, they are both $\ov \P_n$-$\ov \F$ local martingales. Moreover, due to $\Theta$ satisfying Condition~(B), we have
	%\begin{equation*}
	%\E^{\ov \P_n}\big[\langle \ov M^c \rangle_T\big]
	%\leq  \E^{\ov \P_n}\Big[\int_0^T |\ov b_s|\,ds\Big] \leq \cK T, 
	%\end{equation*}
	%as well as 
	%\begin{align*}
	%\E^{\ov \P_n}\big[ [\ov M^d]_T\big] 
	%&= \E^{\ov \P_n}\Big[\int_0^T\int_{\R^d} |h(x)|^2\, \mu^{\ov X}(dx,ds)\Big]\\
	%&\leq K \E^{\ov \P_n}\Big[\int_0^T\int_{\R^d} |x|^2\wedge 1\, F^{\ov \P_n}(dx)\,ds\Big]\\
	%&\leq K \cK T,
	%\end{align*}
	%which proves our observation.
	% % % % % % % % % % % % % % % % % % % % % % % % %
\end{proof}
For any adapted process $\ov Z$ defined on $(\ov \Omega, \ov \cF,\ov \F)$ and any law $\ov \P$ defined on that filtered measurable space, we write $(\ov Z,\ov \P)$ when considering $(\ov \Omega, \ov \cF,\ov \F, \ov \P)$ as its stochastic basis. Moreover, we refer to \cite[p.377]{JacodShiryaev.03} for the standard notion for a sequence of processes to be Predictably Uniformly Tight (P-UT).
\begin{lemma}\label{le:P-UT}
	Let $\Theta \subseteq \R^d\times \S^d_+\times \cL$ satisfy Condition~(B) and let $(\ov \P_n)_{n \in \N}\subseteq \ov \fP^{ac}_{sem}(\Theta)$. Then, the sequence $(\ov X,\ov \P_n)_{n \in \N}$ is 
	%Predictably Uniformly Tight 
	(P-UT).
\end{lemma}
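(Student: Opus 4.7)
The plan is to verify the classical sufficient criterion for (P-UT) expressed through the semimartingale characteristics. Specifically, by \cite[Theorem~VI.6.15]{JacodShiryaev.03} (and the surrounding discussion in \cite[VI.6.a]{JacodShiryaev.03}), for a sequence of semimartingales $(X^n)$ with characteristics $(B^n, C^n, \nu^n)$ relative to a common truncation function $h$, the sequence is (P-UT) as soon as, for every $t \geq 0$,
\begin{enumerate}
\item[(a)] the sequence $\bigl(\operatorname{Var}(B^n)_t\bigr)_{n}$ of total variations is tight;
\item[(b)] the sequence of modified second characteristics $(\widetilde{C}^n_t)_n$ is tight;
\item[(c)] for every $\eta>0$, $\lim_{a\to\infty}\sup_n \P^n\bigl[(\mathbbm{1}_{\{|x|>a\}})\ast \nu^n_t > \eta\bigr]=0$.
\end{enumerate}

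The plan is to show that all three conditions follow, in fact in a much stronger pathwise form, from Condition~(B). Using the arguments already carried out in Lemma~\ref{le:B-enlarge}, the bound $|\ov b^{\ov\P_n}_t|\leq \cK$ yields $\operatorname{Var}(\ov B)_T\leq \cK T$\, $\ov\P_n$-a.s., giving (a) trivially. For (b), the modified second characteristic decomposes as $\ov{\widetilde C}_T=\ov C_T+\int_0^T\!\!\int_{\R^d} h(x)h(x)^\top\,\ov F^{\ov\P_n}_s(dx)\,ds$, and the bounds $|\ov c^{\ov\P_n}|\leq \cK$ and $\int |x|^2\wedge|x|\,\ov F^{\ov\P_n}_s(dx)\leq \cK$ combined with the boundedness of $h$ together with $h(x)=x$ near zero yield $\ov{\widetilde C}_T\leq K\cK T$ pathwise, for a constant $K$ depending only on $h$. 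For (c), note that for $a\geq 1$ one has $|x|^2\wedge|x|=|x|\geq a\,\mathbbm{1}_{\{|x|>a\}}$, so
\begin{equation*}
(\mathbbm{1}_{\{|x|>a\}})\ast\nu^{\ov\P_n}_T=\int_0^T\!\!\int_{\{|x|>a\}}\ov F^{\ov\P_n}_s(dx)\,ds \leq \frac{\cK T}{a},
\end{equation*}
pathwise, which tends to zero uniformly in $n$ as $a\to\infty$.

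Combining (a)--(c), the sufficient condition is met and we conclude that $(\ov X,\ov\P_n)_{n\in\N}$ is (P-UT). I do not anticipate a genuine obstacle: the lemma is essentially a translation of the uniform boundedness built into Condition~(B) into the language of \cite{JacodShiryaev.03}, and the pathwise nature of our bounds makes all tightness assertions automatic. The only minor care point is to make sure the chosen formulation of the sufficient condition matches the reference, in particular that one works with the modified second characteristic $\widetilde C$ (rather than $C$) so that the possibly-unbounded small-jump mass is absorbed into a quantity that is still pathwise controlled by $\cK T$.
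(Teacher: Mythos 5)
Your items (a) and (b) are fine and are exactly what the paper checks (pathwise bounds via Condition~(B), then tightness), but the criterion you attribute to \cite[Theorem~VI.6.15]{JacodShiryaev.03} is misstated, and this is a genuine gap: tightness of $(\Var(B^n)_t)_n$ and $(\widetilde C^n_t)_n$ together with your condition (c) does \emph{not} imply (P-UT). The condition the theorem (and the paper's proof) actually requires in addition to (a) and (b) is control of the finite-variation part generated by the big jumps, i.e.\ tightness of $(\Var(\ov J)_t)_n$ where $\ov J=(x-h(x))\ast\mu^{\ov X}$, which the paper obtains through its compensator $|x-h(x)|\ast\nu^{\ov\P_n}$; your (c), which only concerns the $\nu^n$-mass of jumps larger than $a$ as $a\to\infty$, is strictly weaker. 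To see the insufficiency concretely, take $d=1$, a continuous truncation function $h$ supported in $\{|x|\le 3/2\}$ with $h(x)=x$ on $\{|x|\le1\}$, and let $X^n$ be compound Poisson with jump measure $F^n=n(\delta_{2}+\delta_{-2})$: then $B^n=0$, $\widetilde C^n=0$, and $\nu^n([0,t]\times\{|x|>a\})=0$ for $a>2$, so (a), (b), (c) all hold; yet with $H\equiv1$ the integral $(H\cdot X^n)_t=X^n_t$ has variance of order $n$, so $(X^n_t)_n$ is not tight and the sequence cannot be (P-UT). (This example of course violates Condition~(B) — which is precisely the point: your cited criterion never uses the part of Condition~(B) that controls $\int|x|^2\wedge|x|\,F(dx)$ on the large jumps.)

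The repair is immediate and brings you back to the paper's argument: since $h$ is bounded and $h(x)=x$ near the origin, there is a constant $C_h$ with $|x-h(x)|\le C_h\,(|x|^2\wedge|x|)$, so Condition~(B) gives $\E^{\ov\P_n}\big[\Var(\ov J)_t\big]=\E^{\ov\P_n}\big[\int_0^t\int_{\R^d}|x-h(x)|\,\ov F^{\ov\P_n}_s(dx)\,ds\big]\le C_h\,\cK\,t$ uniformly in $n$, hence tightness of $(\Var(\ov J)_t)_n$ by Markov's inequality. Combined with your (a) and (b) and the representation $\ov X=\ov X_0+\ov M^c+\ov M^d+\ov B+\ov J$, the criterion of \cite[Theorem~VI.6.15, p.380]{JacodShiryaev.03} then yields (P-UT), exactly as in the paper; so the route is the same, but the third condition you verified is the wrong one and must be replaced by the big-jump variation estimate.
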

\begin{proof}
	Fix any $t \in [0,T]$ and $i=1,\dots,d$.
	Due to Condition~(B), the variation of $B^i$ satisfies
	%we have that
	\begin{equation*}
	\sup_{n}\E^{\ov \P_n}\big[\mbox{Var}(\ov B^i)_t\big]
	\leq\sup_{n}\E^{\ov \P_n}\Big[\int_0^t |\ov b_s|\, ds\Big] \leq \cK t,
	\end{equation*}
	hence tightness of  $(\ov \P_n\circ(\mbox{Var}(\ov B^{i})_t)^ {-1})_{n\in \N}$ follows directly from the Markov inequality.
	
	Now, the  modified second characteristic $\ov{\widetilde{C}}$ satisfies in each component $1\leq k,l\leq d$ %$\overline{\widetilde{C}}^{\ov \P_n}=(\overline{\widetilde{C}}^{\ov \P_n,kl})_{1\leq k,l\leq d}$ %defined in \eqref{eq:def-second-modified-char-enlarge}.
	\begin{equation*}
	\overline{\widetilde{C}}^{kl}=\ov C^{kl} + \int_0^\cdot \int_{\R^d} h^{k}(x) h^l(x) \, \ov F^{\ov \P_n}_s(dx)\,ds \quad \ov \P_n\mbox{-a.s.}, 
	\end{equation*}
	hence we can argue as above to obtain tightness of the sequence $(\P_n\circ(\overline{\widetilde{C}}^{ii}_t)^{-1})_{n\in \N}$.
	
	Next, we see that for the variation of $\ov J$, we have
	\begin{align*}
	\E^{\ov \P_n}\big[\mbox{Var}(\ov J)_t\big]
	&=
	\E^{\ov \P_n}\Big[\int_0^t \int_{\R^d}|x-h(x)|\, \mu^{\ov X}(dx)\,ds\Big]\\
	&=\E^{\ov \P_n}\Big[\int_0^t \int_{\R^d} |x-h(x)|\, \ov F_s^{\ov \P_n}(dx)\,ds\Big],
	\end{align*}
	hence in the same way as above, we get also the tightness of the sequence $(\ov\P_n\circ \mbox{Var}(\ov J)_t^{-1})_{n\in \N}$. As under each $\ov \P_n, \ov X$ is a $\ov \P_n$-$\ov \F$-semimartingale with canonical representation \eqref{eq:can-dec-X-enlarged}, we conclude that the sequence $(\ov X,\ov \P_n)_{n \in \N}$ is (P-UT) directly from \cite[Theorem~6.15, p.380]{JacodShiryaev.03}.
\end{proof}
\begin{lemma}\label{le:tight-X}
	Let $\Theta \subseteq \R^d\times \S^d_+\times \cL$ satisfy Condition~(B), $\Gamma_0\subseteq\fM_1(\R^d)$ be tight and let $(\ov \P_n)_{n \in \N}\subseteq \ov \fP^{ac}_{sem}(\Theta)(\Gamma_0)$.  Then, the  sequence $(\ov \P_n \circ \ov X^{-1})_{n \in \N}$ is tight. 
\end{lemma}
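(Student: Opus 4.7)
The plan is to apply Aldous's tightness criterion to $(\ov\P_n\circ\ov X^{-1})_{n\in\N}$ on $\Omega=\D([0,T],\R^d)$, using the canonical semimartingale representation
\[
\ov X \;=\; \ov X_0 + \ov B + \ov M^c + \ov M^d + \ov J \quad \ov\P_n\text{-a.s.}
\]
and controlling each summand via the uniform constant $\cK$ provided by Condition~(B). The individual tightness of $\ov B$, $\ov M^c$ and $\ov M^d$ established in Lemmas~\ref{le:tight-B-C} and~\ref{le:tight-M} will reappear naturally along the way, but the point of the present lemma is to pass from the joint behaviour of the pieces to tightness of their sum.

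First I would check tightness of the one-dimensional marginals $(\ov X_t,\ov\P_n)$ for each $t\in[0,T]$. Tightness of $\ov X_0$ comes from tightness of $\Gamma_0$ and the inclusion $\ov\P_n\circ\ov X_0^{-1}\in\Gamma_0$. For the four remaining terms, Condition~(B) yields: $|\ov B_t|\le \cK t$ pathwise; $\E^{\ov\P_n}|\ov M^c_t|^2=\E^{\ov\P_n}[\tr\ov C_t]\le K\cK t$; $\E^{\ov\P_n}|\ov M^d_t|^2=\E^{\ov\P_n}\bigl[\int_0^t\!\int |h(x)|^2\,\ov F^{\ov\P_n}_s(dx)\,ds\bigr]\le K\cK t$; and $\E^{\ov\P_n}[\Var(\ov J)_t]\le K\cK t$, the last using the elementary pointwise inequality $|x-h(x)|\le K(|x|^2\wedge|x|)$ valid for any bounded continuous truncation function with $h(x)=x$ near the origin. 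Markov's inequality then gives the claimed tightness.

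Next I would verify Aldous's modulus condition: for any sequence of $\ov\F$-stopping times $\tau_n\le T$ and any $\delta_n\downarrow 0$, the increment $\ov X_{(\tau_n+\delta_n)\wedge T}-\ov X_{\tau_n}$ converges to $0$ in $\ov\P_n$-probability. Decomposing via the canonical representation and applying the same four estimates on the stochastic interval $(\tau_n,\tau_n+\delta_n]$ — using optional stopping for the true square-integrable martingales $\ov M^c$ and $\ov M^d$ (these are genuinely true martingales by Condition~(B), as noted right after its statement) and direct $L^1$-bounds for $\ov B$ and $\ov J$ — each piece is bounded in $L^1$ or $L^2$ by a constant times $\delta_n$ or $\sqrt{\delta_n}$. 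Markov's inequality then drives each piece to $0$ in probability uniformly in $n$; combined with the first step, Aldous's criterion delivers tightness of $(\ov\P_n\circ\ov X^{-1})_{n\in\N}$ on $\Omega$.

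The main subtlety is handling $\ov J$, which is neither predictable nor a martingale, so neither the pathwise drift-type estimate used for $\ov B$ nor a Doob inequality as used for $\ov M^c,\ov M^d$ applies directly. The resolution is the inequality $|x-h(x)|\le K(|x|^2\wedge|x|)$, which converts the uniform bound on $\int(|x|^2\wedge|x|)\,F(dx)$ from Condition~(B) into a uniform $L^1$-bound on the total variation of $\ov J$, and hence on its increments over short stochastic intervals. This same mechanism has already appeared in the proof of the P-UT property in Lemma~\ref{le:P-UT}, and is the reason the quantity $|x|^2\wedge|x|$ (rather than the customary $|x|^2\wedge 1$) shows up in the formulation of Condition~(B).
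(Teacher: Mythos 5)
Your argument is correct, but it follows a genuinely different route from the paper. The paper does not touch the canonical decomposition at this point: it invokes the semimartingale tightness criterion \cite[Theorem~VI.4.18, Remark~VI.4.20]{JacodShiryaev.03}, whose inputs are tightness of the initial laws (from $\Gamma_0$), control of the big jumps obtained from the P-UT property already established in Lemma~\ref{le:P-UT} via \cite[Theorem~VI.6.16]{JacodShiryaev.03}, and C-tightness of the dominating increasing process $\ov D^{\ov \P_n}=\sum_i[\Var(\ov B^i)+\ov C^{ii}]+\int_0^\cdot\int(|x|^2\wedge 1)\,\ov F^{\ov\P_n}_s(dx)\,ds$, which follows by the same Lipschitz argument as in Lemma~\ref{le:tight-B-C}; so the paper's proof is a short characteristics-based verification that recycles Lemma~\ref{le:P-UT}. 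You instead apply Aldous's criterion directly to $\ov X=\ov X_0+\ov B+\ov M^c+\ov M^d+\ov J$, with the pathwise bound $|\ov B_t-\ov B_s|\le\cK|t-s|$, the $L^2$-isometries $\E^{\ov\P_n}|\ov M^c_t-\ov M^c_s|^2=\E^{\ov\P_n}[\tr(\ov C_t-\ov C_s)]$ and $\E^{\ov\P_n}|\ov M^d_t-\ov M^d_s|^2=\E^{\ov\P_n}[\int_s^t\int|h(x)|^2\,\ov F^{\ov\P_n}_r(dx)\,dr]$ (legitimate over stochastic intervals since Condition~(B) makes these true square-integrable martingales), and the bound $|x-h(x)|\le K(|x|^2\wedge|x|)$ turning Condition~(B) into a uniform $L^1$ control of $\Var(\ov J)$ over intervals of length $\delta$; all estimates are linear in $\delta$, so the stopping-time condition is immediate. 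Your approach is more elementary and self-contained (no P-UT, no Theorem~VI.4.18), at the cost of redoing estimates that the paper gets wholesale from Lemma~\ref{le:P-UT}; one small point to make explicit is which form of Aldous's criterion you use: either quote the version requiring only one-dimensional marginal tightness on a dense set containing $T$ (Billingsley, Theorems~13.3/16.10-type), or note that the compact-containment/supremum condition follows at no extra cost from Doob's inequality for $\ov M^c,\ov M^d$ together with $|\ov B_T|\le\cK T$ and $\E^{\ov\P_n}[\Var(\ov J)_T]\le K\cK T$.
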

\begin{proof}
	As $\ov X$ is a $\ov \P_n$-$\ov \F$-semimartingale for each $n$, it suffices to verify that the conditions in \cite[Theorem~VI.4.18, p.359]{JacodShiryaev.03} are satisfied. 
	
	By assumption,  $(\ov \P_n \circ (\ov X_0)^{-1})_{n \in \N}\subseteq \Gamma_0$ is tight. By Lemma~\ref{le:P-UT}, $(\ov X, \ov \P_n)_{n \in \N}$ is P-UT, hence by \cite[Theorem~6.16, p.380]{JacodShiryaev.03}
	\begin{equation*}
	\lim\limits_{a\uparrow \infty} \sup_{n \in \N} \ov \P_n\big[\nu^{\ov \P_n}([0,T]\times\{x:\, |x|>a\})>\varepsilon\big]=0.
	\end{equation*}
	Consider the following increasing process
	\begin{equation*}
	\ov D^{\ov \P_n}:= \sum_{i=1}^d \big[\Var(\ov B^i) + \ov C^{ii}\big] + \int_0^\cdot\int_{\R^d} |x|^2\wedge 1\,\ov F_s^{\ov \P_n}(dx)\,ds  
	\end{equation*}
	Using the same argument as in Lemma~\ref{le:tight-B-C}, we obtain the C-tightness of the sequence $(\ov \P_n\circ (\ov D^{\ov \P_n})^{-1})_{n\in \N}$.
	%By Lemma~\ref{le:tight-B-C},  both  sequences $(\ov \P_n \circ \ov B^{-1})_{n \in \N}$ and $(\ov \P_n \circ (\ov{\widetilde{C}}^{\ov \P_n})^{-1})_{n \in \N}$ are C-tight. 
	Therefore, by \cite[Remark~VI.4.20, pp.359--360]{JacodShiryaev.03}, the conditions of \cite[Theorem~VI.4.18, p.359]{JacodShiryaev.03} are satisfied, hence we get the result.
	%  
	% Finally, fix $p \in \N$, let $g_p(x):=(p|x|-1)^+\wedge 1$
\end{proof}
\begin{lemma}\label{le:tight-J-U-V}
	Let $\Theta \subseteq \R^d\times \S^d_+\times \cL$ satisfy Condition~(B) and let $(\ov \P_n)_{n \in \N}\subseteq \ov \fP^{ac}_{sem}(\Theta)$. 
	%Assume that the sequence $\ov \P_n \circ (\ov X_0)^{-1}_{n \in \N}$ is tight in $\R^d$.  
	Then the following hold true.\\
	(i) The  sequence of laws $(\ov \P_n \circ \ov J^{-1})_{n \in \N}$ on $\Omega$ is tight. \\
	(ii)  The  sequence of laws $(\ov \P_n \circ (\ov U^{i})^{-1})_{n \in \N}$ on $\Omega_1$ is tight for each $i\in \N$.\\
	(iii) The  sequence of laws $(\ov \P_n \circ (\ov V^{i})^{-1})_{n \in \N}$ on $\Omega^c_1$ is tight for each $i\in \N$. 
	\\
	(iv) The  sequence of laws $(\ov \P_n \circ \ov{[M]}^{-1})_{n \in \N}$ on $\Omega_{d^2}$ is tight.
\end{lemma}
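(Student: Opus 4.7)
\emph{Plan.} My approach is to exploit the uniform bound $\cK$ from Condition~(B) to produce compensator estimates, then conclude via Arzel\`a--Ascoli (for continuous limit candidates) or Aldous' tightness criterion (for c\`adl\`ag ones). I start with (iii), which is simplest. For each $i$, the function $g_i\in\cC^+(\R^d)$ vanishes on some neighborhood $\{|x|\le\delta_i\}$ and is bounded by $c_i:=\sup|g_i|$; Markov's inequality against $|x|^2\wedge|x|$ gives $F(\{|x|>\delta_i\})\le \cK/\delta_i^2$ for every $(b,c,F)\in\Theta$ (taking w.l.o.g.\ $\delta_i\le 1$). Hence $|\ov V^i_t-\ov V^i_s|\le c_i\cK\delta_i^{-2}(t-s)$, $\ov\P_n$-a.s., uniformly in $n$, so the paths of $\ov V^i$ lie in a fixed Arzel\`a--Ascoli compact of $\Omega^c_1$ and (iii) follows.

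For (i) and (ii) I would apply Aldous' criterion to a suitable dominating BV process. Using that $h(x)=x$ on a neighborhood of the origin, a direct estimate gives $|x-h(x)|\le C_h(|x|^2\wedge|x|)$, so $\Var(\ov J)_t\le A^n_t:=C_h\sum_{s\le t}(|\Delta\ov X_s|^2\wedge|\Delta\ov X_s|)$, whose $\ov\P_n$-$\ov\F$-compensator is Lipschitz in $t$ with constant $C_h\cK$ uniformly in $n$. For any $\ov\F$-stopping time $\tau\le T$ and $\sigma\in[0,\delta]$,
\begin{equation*}
\E^{\ov\P_n}\big[|\ov J_{\tau+\sigma}-\ov J_\tau|\big]\le \E^{\ov\P_n}[A^n_{\tau+\sigma}-A^n_\tau]\le C_h\cK\delta,
\end{equation*}
and $\E^{\ov\P_n}[\sup_t|\ov J_t|]\le C_h\cK T$; Markov combined with Aldous' criterion (\cite[Theorem~VI.4.5]{JacodShiryaev.03}) yields (i). Part (ii) follows by the same scheme, dominating $\sum_{s\le\cdot}|g_i(\Delta\ov X_s)|$ by $c_i$ times the counting process of jumps of $\ov X$ of size $\ge\delta_i$, whose compensator grows linearly with slope $c_i\cK\delta_i^{-2}$ uniformly in $n$.

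For (iv), I would decompose componentwise $\ov{[M]}^{ij}=\ov{\widetilde C}^{ij}+N^{ij,n}$, where $N^{ij,n}:=\ov{[M]}^{ij}-\ov{\widetilde C}^{ij}$ is a purely discontinuous $\ov\P_n$-$\ov\F$-local martingale with bounded jumps $h^i(\Delta\ov X)h^j(\Delta\ov X)$, hence locally square integrable, and its predictable quadratic variation $\langle N^{ij,n}\rangle_t=\int_0^t\int (h^ih^j)^2(x)\,\ov F^{\ov\P_n}_s(dx)\,ds$ is Lipschitz in $t$ with a uniform constant. Hence $\langle N^{ij,n}\rangle$ is C-tight and Rebolledo's theorem (used as in Lemma~\ref{le:tight-M}) yields tightness of $(N^{ij,n},\ov\P_n)$; combined with the C-tightness of $(\ov{\widetilde C}^{ij},\ov\P_n)$ from Lemma~\ref{le:tight-B-C}, Aldous' criterion is inherited by the sum $\ov{[M]}^{ij}=\ov{\widetilde C}^{ij}+N^{ij,n}$, giving (iv). The main technical point lies here: unlike $\ov J$ and $\ov U^i$, the process $\ov{[M]}$ is not the image of $\ov X$ under any continuous Skorokhod map, because $h^ih^j$ does not vanish on a neighborhood of $0$ and so \cite[Corollary~VI.2.8]{JacodShiryaev.03} does not apply; its tightness must be assembled by combining compensator control for the continuous part with Rebolledo's martingale argument for the jump remainder.
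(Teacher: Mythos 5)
Your (iii) is essentially the paper's own argument: the uniform Lipschitz bound $c_i\cK\delta_i^{-2}$ coming from Condition~(B) plus a compactness criterion on $C([0,T])$, exactly as in Lemma~\ref{le:tight-B-C}. For (i) and (ii) you take a genuinely different route. The paper simply notes that $\ov J$ and $\ov U^i$ are, $\ov\P_n$-a.s., images of $\ov X-\ov X_0$ under maps of the form $I^f$ in \eqref{eq:jumps-away-0-cont} with $f$ continuous and vanishing near the origin, which are continuous by \cite[Corollary~VI.2.8]{JacodShiryaev.03}; tightness is then inherited from the tightness of $(\ov\P_n\circ(\ov X-\ov X_0)^{-1})_{n\in\N}$ established in Lemma~\ref{le:tight-X}. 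You instead verify Aldous' criterion by hand via compensator bounds ($|x-h(x)|\le C_h(|x|^2\wedge|x|)$, resp.\ $|g_i|\le c_i\,\mathbbm{1}_{\{|x|>\delta_i\}}$, combined with Condition~(B) and optional stopping for the compensated increasing process). This is correct, longer, and has the mild advantage of not relying on Lemma~\ref{le:tight-X}. For (iv) the paper quotes \cite[Theorem~II.2]{Rebolledo.79} directly from tightness of $(\ov\P_n\circ\ov{\widetilde{C}}^{-1})_{n\in\N}$; your decomposition $\ov{[M]}^{ij}=\ov{\widetilde{C}}^{ij}+N^{ij,n}$, with $N^{ij,n}$ a compensated-jump local martingale with jumps $h^i(\Delta\ov X)h^j(\Delta\ov X)$ bounded by $\|h\|_\infty^2$ and $\langle N^{ij,n}\rangle$ uniformly Lipschitz, is a legitimate alternative, and your remark that $h^ih^j$ does not vanish near $0$, so the continuous-mapping shortcut of \cite[Corollary~VI.2.8]{JacodShiryaev.03} is unavailable, identifies the right obstruction.

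The step you must tighten is the conclusion of (iv). You obtain tightness of each scalar component $\ov{[M]}^{ij}$ on $\D([0,T],\R)$, but the statement asks for tightness of the laws of the $\R^{d^2}$-valued process on $\Omega_{d^2}$, and in the $J_1$-topology componentwise tightness does not imply tightness of the vector (jump times of different components need not be matched by a single time change). The repair stays inside your own toolbox: apply \cite[Theorem~VI.4.13, p.358]{JacodShiryaev.03} once to the $\R^{d^2}$-valued local martingale $N^n:=(N^{ij,n})_{1\le i,j\le d}$, whose jumps are uniformly bounded and whose brackets are all dominated by the scalar process $\sum_{i,j}\langle N^{ij,n}\rangle$, Lipschitz with a constant independent of $n$ and hence C-tight; this yields joint tightness of $(N^n,\ov\P_n)_{n\in\N}$ on $\Omega_{d^2}$. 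Then use \cite[Corollary~VI.3.33, p.353]{JacodShiryaev.03} (a tight sequence plus a C-tight sequence has tight sum) with the C-tight $\Omega^c_{d^2}$-valued sequence $(\ov\P_n\circ\ov{\widetilde{C}}^{-1})_{n\in\N}$ from Lemma~\ref{le:tight-B-C} to conclude for $\ov{[M]}=\ov{\widetilde{C}}+N^n$. Relatedly, the phrase that Aldous' criterion is ``inherited by the sum'' is not a theorem as stated; the correct reference for adding a C-tight sequence is precisely Corollary~VI.3.33.
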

\begin{proof}
	%First, note that for any continuous function $f:\R^d \to \R^m$ vanishing on a neighborhood of 0, the map
	%\begin{equation*}
	%I^f:\D([0,T];R^d) \to \D([0,T];R^m), \quad \alpha \mapsto I^f(\alpha):=\sum_{0\leq s \leq \cdot} f(\Delta \alpha_s)
	%\end{equation*}
	%is continuous, see \cite[Corollary~VI.2.8, p.340]{JacodShiryaev.03}. 
	Using the notation introduced in  \eqref{eq:jumps-away-0-cont}, we have $\ov J=I^{x-h(x)}(\ov X-\ov X_0)\ \ov \P_n$-a.s. for each $n$, and for each $i \in \N$, $\ov U^i=I^{g_i(x) (|x|^2\wedge 1)}(\ov X-\ov X_0)\ \ov \P_n$-a.s.. Therefore, (i) and (ii) follows directly from the tightness of $(\ov \P_n \circ (\ov X-\ov X_0)^{-1})_{n \in \N}$, see Lemma~\ref{le:tight-X}. To see that (iii) holds, recall that under each $\ov \P_n$, we have for each $i \in \N$ that
	\begin{equation*}
	\ov V^i= \int_0^\cdot \int_{\R^d} g_i(x)\, \ov F_s^{\ov \P_n}(dx)\,ds \quad \ov \P_n\mbox{-a.s.}.
	\end{equation*}
	As $g_i(x)$ is a bounded function vanishing in a neighborhood of the origin, we can argue as in Lemma~\ref{le:tight-B-C} to obtain (iii). 
	Finally, (iv) follows by \cite[Theorem~II.2, p.28]{Rebolledo.79} from tightness of $(\ov \P_n \circ \ov{\widetilde{C}}^{-1})_{n \in \N}$, see Lemma~\ref{le:tight-B-C}.
	%\begin{equation*}
	%\sup_{n} \E^{\ov \P_n}[]
	%\end{equation*}
\end{proof}
%
%
% % % % % % % % % % % % % % % % % % % % % % % %
Now we are able to prove Proposition~\ref{prop:tightness-enlarge}.
\begin{proof}[Proof of Proposition~\ref{prop:tightness-enlarge}] %By Prokhorov's theorem, it suffices to show that any sequence $(\ov \P_n)\subseteq \ov \fP^{ac}_{sem}(\Theta)$ is tight. 
	By an application of Prohorov's theorem, it suffices to show that any sequence $(\ov \P_n)\subseteq \ov \fP^{ac}_{sem}(\Theta)(\Gamma_0)$ is tight. To that end, fix such a sequence.  From all previous lemmas, we have established the tightness of all the following families
	\begin{align*}
	& \ (\overline{\P}_n \circ \ov X^{-1})_{n \in \N}, (\overline{\P}_n \circ \ov  B^{-1})_{n \in \N}, (\overline{\P}_n \circ {(\ov M^c)}^{-1})_{n \in \N}, (\overline{\P}_n \circ {(\ov M^d)}^{-1})_{n \in \N},\\
	& \ (\overline{\P}_n \circ {\ov J}^{-1})_{n \in \N},
	(\overline{\P}_n \circ \ov C^{-1})_{n \in \N},
	(\overline{\P}_n \circ \ov{[M]}^{-1})_{n \in \N},
	(\overline{\P}_n \circ \ov{\widetilde C}^{-1})_{n \in \N},\\
	& \  (\overline{\P}_n \circ {(\ov U^i)}^{-1})_{n \in \N}, (\overline{\P}_n \circ {(\ov V^i)}^{-1})_{n \in \N}, \quad i \in \N.
	\end{align*}
	Fix $\varepsilon>0$, and let $K(\ov X)\subseteq \Omega$ be a compact set such that $\sup_{n \in \N}\ov \P_n[\ov X \notin K(\ov X)]\leq \frac{\varepsilon}{10}$. The same way,  define the compact sets $K(\ov B)\subseteq \Omega^c_d, \dots,  K(\ov{\widetilde C}) \subseteq \Omega^c_{d^2}$. Moreover, choose for each $i \in \N$ compact sets $K(\ov U^i)\subseteq \Omega_1,K(\ov V^i)\subseteq \Omega^c_1$ such that
	\begin{equation*}
	\max\Big\{\sup_{n \in \N}\ov \P_n[\ov U^i \notin K(\ov U^i)],\,\sup_{n \in \N}\ov \P_n[\ov V^i \notin K(\ov V^i)]\Big\}\leq \frac{\varepsilon}{10}\frac{1}{2^i}
	\end{equation*}
	By Tychonoff's theorem, the set 
	$$\ov K:=K(\ov X)\times K(\ov B)\times\dots \times K(\ov{\widetilde{C}})\times\Pi_{i\in \N} [K(\ov U^i)\times K(\ov V^i)] \subseteq \ov \Omega$$
	is compact. Moreover, we have for each $n \in \N$ that
	\begin{align*}
	\ov \P_n[\ov K^c]
	&\leq  \ov \P_n[\ov X \notin K(\ov X)] + \dots +\ov \P_n[\ov{\widetilde{C}} \notin K(\ov{\widetilde{C}})] 
	\\
	& \ \ \ \,
	+ \sum_{i=1}^\infty \big( \ov \P_n[\ov U^i \notin K(\ov U^i)] + \ov \P_n[\ov V^i \notin K(\ov V^i)]\big)\\ &\leq \varepsilon.
	\end{align*}
	% which yields the result.
\end{proof}
In fact, we observe that by the same arguments as above, we also get tightness of $\fP^{ac}_{sem}(\Theta)(\Gamma_0)$ in the original space $\fM_1(\Omega)$.
\begin{corollary}\label{co:tightness-P}
	Let $\Theta \subseteq \R^d\times \S^d_+\times \cL$ satisfy Condition~(B) and $\Gamma_0\subseteq\fM_1(\R^d)$ be tight. Then $ \fP^{ac}_{sem}(\Theta)(\Gamma_0)$ is tight.
\end{corollary}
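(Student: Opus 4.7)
The plan is to obtain this corollary as an immediate consequence of Proposition~\ref{prop:tightness-enlarge} by constructing a canonical lift from $\Omega$ to $\ov\Omega$, or equivalently, by rerunning the arguments of Lemma~\ref{le:tight-X} directly on the original space. The two approaches give the same proof, so I would present the lifting approach, which is conceptually cleaner.

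First, I would define the lift. Given $\P \in \fP^{ac}_{sem}(\Theta)(\Gamma_0)$, the canonical process $X$ on $\Omega$ admits under $\P$-$\F$ a canonical representation $X = X_0 + B^\P + M^{c,\P} + M^{d,\P} + J^\P$, where $J^\P = \sum_{0\le s \le \cdot}[\Delta X_s - h(\Delta X_s)]$. Together with $C^\P$, $[M^\P]$, $\widetilde{C}^\P$, and the processes $U^{i,\P} = g_i(x)\ast \mu^X$, $V^{i,\P} = g_i(x)\ast \nu^\P$ for each $i\in \N$, these are measurable functionals of the path $\omega$. Define $\Phi^\P \colon \Omega \to \ov\Omega$ by $\omega \mapsto (X(\omega), B^\P(\omega), M^{c,\P}(\omega), M^{d,\P}(\omega), J^\P(\omega), C^\P(\omega), [M^\P](\omega), \widetilde{C}^\P(\omega), (U^{i,\P}(\omega), V^{i,\P}(\omega))_{i\in \N})$, and set $\ov\P := \P \circ (\Phi^\P)^{-1}$. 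One checks that $\ov\P \in \ov\fP^{ac}_{sem}(\Theta)(\Gamma_0)$, with the same differential characteristics, and that $\P = \ov\P \circ \ov X^{-1}$.

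Next, fix any sequence $(\P_n)_{n\in \N} \subseteq \fP^{ac}_{sem}(\Theta)(\Gamma_0)$. Lift each $\P_n$ to $\ov\P_n \in \ov\fP^{ac}_{sem}(\Theta)(\Gamma_0)$ as above. By Proposition~\ref{prop:tightness-enlarge}, the sequence $(\ov\P_n)$ is tight on $\ov\Omega$, so (by Prohorov) admits a weakly convergent subsequence $(\ov\P_{n_k})$. Since the projection $\ov X \colon \ov\Omega \to \Omega$ is continuous for the product topology, the continuous mapping theorem yields weak convergence of $\P_{n_k} = \ov\P_{n_k}\circ \ov X^{-1}$ on $\Omega$. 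Hence $(\P_n)$ has a weakly convergent subsequence, which proves tightness of $\fP^{ac}_{sem}(\Theta)(\Gamma_0)$.

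Alternatively, and more directly, one may simply redo the proof of Lemma~\ref{le:tight-X} with the sequence $(\P_n)$ on $\Omega$, applying \cite[Theorem~VI.4.18, p.359]{JacodShiryaev.03}: tightness of the initial marginals holds by hypothesis on $\Gamma_0$; the (P-UT) property of $(X,\P_n)_{n\in \N}$ follows exactly as in Lemma~\ref{le:P-UT} using Condition~(B) to bound the expected total variation of $B^{\P_n}$, the expected $\widetilde{C}^{\P_n,ii}_T$, and the expected variation of $J^{\P_n}$ uniformly in $n$; and the $C$-tightness of the controlling increasing processes $D^{\P_n} := \sum_i[\Var(B^{\P_n,i}) + C^{\P_n,ii}] + (|x|^2\wedge 1)\ast \nu^{\P_n}$ follows as in Lemma~\ref{le:tight-B-C}. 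No step here is essentially new relative to Section~\ref{subsec:tightness-enlarge}, since the arguments of that subsection never used the additional coordinates of $\ov\Omega$ to control $\ov X$ itself. There is no serious obstacle; the main point is simply that the tightness proof on the enlarged space was designed so that its $\ov X$-component is exactly the proof one would give on $\Omega$.
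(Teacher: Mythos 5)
Your proposal is correct, and your ``alternative, more direct'' route is precisely the paper's proof: the paper settles Corollary~\ref{co:tightness-P} in two lines by invoking Prohorov's theorem and rerunning the argument of Lemma~\ref{le:tight-X} (hence Lemmas~\ref{le:P-UT} and~\ref{le:tight-B-C}) for the laws $\P_n$ directly on $\Omega$, exactly because, as you observe, that argument only uses the characteristics of $X$ and never the extra coordinates of $\ov\Omega$. Your preferred lifting route is also valid and not circular (Proposition~\ref{prop:tightness-enlarge} precedes the corollary; the map you call $\Phi^\P$ is the map $\Psi^{\P}$ that the paper only introduces in Section~\ref{sec:prf-thm-semi}), but it is heavier than needed and has two points worth making explicit: first, the final step uses the converse half of Prohorov's theorem on the Polish space $\fM_1(\Omega)$ --- relative sequential compactness of $(\P_n)$, obtained by pushing convergent subsequences forward along the continuous projection $\ov X$ and using $\P_n=\ov\P_n\circ\ov X^{-1}$ as in Remark~\ref{rem:from-P-to-P-enlarge}, implies tightness; second, to get an everywhere-defined measurable lift one should take the canonical representation of $X$ with respect to $\F^\P_+$ and work with the $\P$-completed Borel $\sigma$-field, since the summands are otherwise only determined $\P$-a.s. (this is how the paper defines $\Psi^\P$). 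In short, both routes deliver the same result; the direct one is the paper's and is self-contained, while the lifted one trades a short repetition of estimates for reliance on the enlarged-space machinery already assembled.
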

\begin{proof}
	By an application of Prohorov's theorem, it suffices to show that any sequence $(\P_n)\subseteq \fP^{ac}_{sem}(\Theta)$ is tight. For any such a sequence,  we can apply exactly the same argument as in Lemma~\ref{le:tight-X} to obtain the tightness result.
\end{proof}
%
%
% % % % % % % % % % % % % % % % % % %
% % % % % % % % % % % % % % % % % % %
%
%
% % % % % % % % % % % % % % % % % % % % % % % % % % % % % % % % % % % % % % % % % % % % % % % % % % % % % % %
\section{Proof of Theorem~\ref{thm:purely-disc}, Theorem~\ref{thm:compactness}, and Theorem~\ref{thm:compactness2}}\label{sec:prf-thm-semi}
The goal of this section is to prove Theorem~\ref{thm:purely-disc} and Theorem~\ref{thm:compactness}. The strategy of their proofs is the following. In the last section, we stated and proved corresponding results in an enlarged space. We need to find a way how to go back and forth from the original space $\Omega=\D([0,T],\R^d)$ to the enlarged space $\ov \Omega$ to conclude the results in the original space $\Omega$. On the one hand, to get from $\Omega$ to $\ov \Omega$, we introduce for any  measure $\P \in \fP_{sem}^{ac}(\Theta)$ a corresponding measure $\ov \P \in \ov \fP_{sem}^{ac}(\Theta)$ which is simply the probability measure induced by the canonical representation of $X$ under $\P$. On the other hand, to get back from $\ov \Omega$ to $\Omega$, we consider for any  $\ov \P \in \ov \fP_{sem}^{ac}(\Theta)$ the corresponding push forward measure $\P:=\ov \P \circ \ov X^{-1}$, which will turn out to be in $\fP_{sem}^{ac}(\Theta)$.

Let us start with a fixed probability measure $\P \in \fP^{ac}_{sem}$. %Recall the canonical representation of $X$ under $\P$ introduced in \eqref{eq:can-dec-sem}.
%Denote by $\F^\P_+$ the usual $\P$-augmentation of $\F$.
Consider the canonical representation of $X$ under $\P$-$\F^\P_+$ is given by
\begin{equation*}
X=X_0 + B^{\F^\P_+} + M^{c,\F^\P_+} + M^{d,\F^\P_+} + J,
\end{equation*}
where $J:=\sum_{0\leq s \leq \cdot} [\Delta X_s -h(\Delta X_s)]$, $B^{\F^\P_+}=\int_0^\cdot b^{\F^\P_+}_s\,ds$ and $M^{c,\F^\P_+}, M^{d,\F^\P_+}$ denotes the continuous and purely discontinuous local martingale part. Moreover, denote by $C^{\F^\P_+}$ and $F^{\F^\P_+}(dx)\,ds$ the second and third $\P$-$\F^\P_+$-characteristic of $X$, by $[M^{\F^\P_+}]$ the quadratic variation of $(M^{c,\F^\P_+} + M^{d,\F^\P_+})$, by $\widetilde{C}^{\F^\P_+}$ the  modified second $\P$-$\F^\P_+$-characteristic of $X$, and write
\begin{align*}
U^i&:= \int_0^\cdot \int_{\R^d} g_i(x)\, \mu^{X}(dx,ds),\\ %\quad  
V^{i,\F^\P_+}&:= \int_0^\cdot \int_{\R^d} g_i(x)\, F^{\F^\P_+}(dx)\,ds.
\end{align*}
We can define the  map $\Psi^{\P}:\Omega \to \ov \Omega$ by
\begin{align*}
\omega \mapsto \Big(&X(\omega),B^{\F^\P_+}(\omega),M^{c,\F^\P_+}(\omega),M^{d,\F^\P_+}(\omega),
J(\omega), C^{\F^\P_+}(\omega),[M^{\F^\P_+}],\widetilde{C}^{\F^\P_+},\\& \ \  (U^{i}(\omega),V^{i,\F^\P_+}(\omega))_{i \in \N}\Big),
\end{align*}
which in measurable with respect to the Borel $\sigma$-field, completed by $\P$.
Then,  the measure 
\begin{equation}\label{eq:def:P-to-enlarge-P}
\ov \P:= \P \circ (\Psi^{\P})^{-1}
\end{equation}
is an element of $\ov \fP^{ac}_{sem}$. We used the canonical representation of $X$ with respect to $\F^\P_+$ to guarantee that for every $\omega$, each summand has c\`adl\`ag paths and continuous paths, respectively, and not only $\P$-a.s., so that $\Psi^\P$ is well-defined. However, as the characteristics of $X$ do not depend on the choice of $\F$ or $\F^\P_+$, we conclude that $\P \in \fP^{ac}_{sem}(\Theta)$ implies $\ov \P \in \ov\fP^{ac}_{sem}(\Theta)$, i.e. $\ov \P$ preserves the structure of $\P$.
% as
%\begin{equation*}
%\ov \P \circ \ov X^{-1}= \P \circ (\ov X \circ\Psi^{\P})^{-1}=\P.
% \end{equation*}
%
\begin{remark}\label{rem:from-P-to-P-enlarge}
	By construction, we have
	\begin{equation*}
	\ov \P \circ \ov X^{-1}= \P \circ (\ov X \circ\Psi^{\P})^{-1}=\P=\P\circ X^{-1}.
	\end{equation*}
	This implies e.g. that for each $\delta>0$, we have
	\begin{align*}
	\E^{\ov \P}\Big[\int_0^T\int_{\{|x|\leq \delta\}} |x|^2\, \ov F^{\ov \P}_t(dx)\,dt\Big]
	&=\E^{\ov \P}\Big[\sum_{0 \leq t \leq T} |\Delta \ov X_t|^2 \, \mathbbm{1}_{\{0<\Delta |\ov X_t|\leq  \delta \}}\Big]\\
	&=\E^{ \P}\Big[\sum_{0 \leq t \leq T} |\Delta X_t|^2 \, \mathbbm{1}_{\{0<\Delta | X_t|\leq  \delta\}}\Big]\\
	&=\E^{ \P}\Big[\int_0^T\int_{\{|x|\leq \delta\}} |x|^2\, F^{ \P}_t(dx)\,dt\Big],
	\end{align*}
	which will play a crucial role in the proof of Theorem~\ref{thm:purely-disc}.
\end{remark}

On the other hand, the natural candidate connecting the set $\ov \fP_{sem}^{ac}(\Theta)$ with $\fP_{sem}^{ac}(\Theta)$ seems to be for any  $\ov \P \in \ov \fP_{sem}^{ac}(\Theta)$ its corresponding push forward measure $\P:=\ov \P \circ \ov X^{-1}$. The following positive answer is stated in such a way that it is compatible with any limit law $\ov \P_0$ of sequences $(\ov\P_n)_{n \in \N}\subseteq \ov \fP^{ac}_{sem}(\Theta)$ with $\Theta$ satisfying Condition~(B), see Proposition~\ref{prop:X-semim-cont-char}.
\begin{lemma}\label{le:push-forward}
	Let $\ov \P \in \ov\fP^{ac,w}_{sem}$ satisfying
	\begin{align*}
	& \bullet \ \E^{\ov \P}\Big[\int_0^T \big[|\ov b^{\ov \P}_s|  + |\ov c^{\ov \P}_s| +\int_{\R^d} |h(x)|^2\,\ov F^{\ov \P}_s(dx)\big]\, ds\Big]<\infty,\\
	&\bullet \ \mbox{both $\ov M^c$, $\ov M^d$ are $\ov \P$-$\ov \F$-martingales.}
	\end{align*}
	Then, the corresponding pushforward measure
	\begin{equation}\label{eq:def:from-P-enlarge-to-P}
	\P:= \ov \P \circ (\ov X)^{-1}
	\end{equation} 
	is an element in $\fP^{ac}_{sem}$. Moreover, if the $\ov \P$-$\ov \F$-differential characteristics $(\ov b^{\ov \P}, \ov c^{\ov \P}, \ov F^{\ov \P})$ are taking values in some set $\Theta \subseteq \R^d \times \S^d_+ \times \cL$ which is closed, convex and satisfy Condition~(B), then the corresponding pushforward measure $\P:= \ov \P \circ (\ov X)^{-1}$ is 
	an element in $\fP_{sem}^{ac}(\Theta)$.
\end{lemma}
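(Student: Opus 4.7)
The plan is to combine a filtration-shrinkage argument on the enlarged space with the pushforward identity. Let $\F^{\ov X} \subseteq \ov \F$ denote the raw filtration generated on $\ov \Omega$ by the single coordinate $\ov X$. Since $\ov X$ is a $\ov \P$-$\ov \F$-semimartingale by assumption (with canonical decomposition $\ov X = \ov X_0 + \ov B + \ov M^c + \ov M^d + \ov J$) and is obviously $\F^{\ov X}$-adapted, Stricker's theorem immediately yields that $\ov X$ is also a $\ov \P$-$\F^{\ov X}$-semimartingale. Under the pushforward $\P := \ov \P \circ \ov X^{-1}$, the canonical process $X$ on $\Omega$ has the same law as $\ov X$ on $\ov \Omega$ and $\F$ corresponds via $\ov X$ to $\F^{\ov X}$. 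Hence $X$ is a $\P$-$\F$-semimartingale, and its $\P$-$\F$-characteristics are obtained as the pushforward of the $\ov \P$-$\F^{\ov X}$-characteristics of $\ov X$.

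Next, I would identify those $\F^{\ov X}$-characteristics by applying the standard filtration-shrinkage formulas. The second characteristic is filtration-free, so it equals $\ov C = \int_0^\cdot \ov c^{\ov \P}_s\, ds$ and remains absolutely continuous. The drift and the compensator of the jump measure pick up the $\F^{\ov X}$-predictable projection: the drift density is the projection $\hat b$ of $\ov b^{\ov \P}$, and, testing the jump measure against the convergence-determining family $\cC^+(\R^d) = \{g_i\}$, for each $i$ the scalar process $\int g_i(x)\, \hat F_s(dx)$ equals the $\F^{\ov X}$-predictable projection of $\int g_i(x)\, \ov F^{\ov \P}_s(dx)$. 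The integrability hypothesis on $\ov b^{\ov \P}$, $\ov c^{\ov \P}$ and $\int |h(x)|^2 \, \ov F^{\ov \P}_s(dx)$ makes all these projections well-defined, and the assumed martingale property of $\ov M^c, \ov M^d$ ensures that no local-martingale residue contaminates the drift. Absolute continuity of all three resulting $\P$-$\F$-characteristics then yields $\P \in \fP^{ac}_{sem}$.

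For the second assertion, assume $(\ov b^{\ov \P}, \ov c^{\ov \P}, \ov F^{\ov \P}) \in \Theta$ $\ov \P \times dt$-a.e.\ with $\Theta$ closed, convex and satisfying~(B). Via the bijection $\ov \varphi$ of \eqref{eq:def-ov-varphi}, this is equivalent to
\[
\big(\ov b^{\ov \P}_s,\, \ov c^{\ov \P}_s,\, (\textstyle \int g_i \, d\ov F^{\ov \P}_s)_{i \in \N}\big) \in \mbox{cl}(\ov \varphi(\Theta)) \qquad \ov \P \times dt\text{-a.e.,}
\]
where $\mbox{cl}(\ov \varphi(\Theta)) \subseteq \R^d \times \S^d_+ \times \R^\N$ is convex and closed. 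Since predictable projections are versions of conditional expectations and closed convex sets are preserved by conditional expectations (applied componentwise in $i$), the projected triple $(\hat b_s, \ov c^{\ov \P}_s, (\int g_i\, d\hat F_s)_{i \in \N})$ still lies in $\mbox{cl}(\ov \varphi(\Theta))$ a.e. Invoking \eqref{eq:char-in-Theta}, the $\P$-$\F$-differential characteristics of $X$ lie in $\Theta$, so $\P \in \fP^{ac}_{sem}(\Theta)$.

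The principal obstacle is the filtration-shrinkage step for the third characteristic: one must verify that the countable family of scalar $\F^{\ov X}$-predictable projections, one for each $g_i \in \cC^+(\R^d)$, can be assembled into a single L\'evy kernel $\hat F_s \in \cL$ representing the $\F^{\ov X}$-compensator of $\mu^{\ov X}$. This uses that $\cC^+(\R^d)$ is law-determining for L\'evy measures and Condition~(B) to control the small-jump integrability near the origin; once the kernel is obtained, the convexity/closedness arguments are standard.
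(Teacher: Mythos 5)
Your proposal is correct and takes essentially the same route as the paper: shrink to the filtration $\ov\F^{\ov X}$ generated by $\ov X$ on the enlarged space, identify the characteristics there by projection (this is exactly the paper's Appendix Lemma~\ref{le:semimart-char-smaller-filtr}, which also supplies the true-martingale property of the projected martingale part and the kernel form of the projected jump compensator — the point you flag as the principal obstacle), then transfer to $\Omega$ through the pushforward (the paper makes the law-invariance of the $\F$-characteristics explicit via the map $\Phi(\omega)=(\omega,0,0,\dots)$ and the functional-representation argument from Stroock--Varadhan), and finally obtain the $\Theta$-membership from closedness and convexity of $\mathrm{cl}(\ov\varphi(\Theta))$ under conditional expectation together with \eqref{eq:char-in-Theta}. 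So the argument matches the paper's proof, with the paper filling in the two technical steps you left as assertions.
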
 
\begin{proof}
	%To check that the push forward measure is indeed an element of $\fP_{sem}^{ac}(\Theta)$, 
	By  Lemma~\ref{le:semimart-char-smaller-filtr}, $\ov X$ is a $\ov \P$-$\ov\F^{\ov X}$-semimartingale with absolutely continuous characteristics, and the local martingale part  $\ov M^{\ov X, \ov \P}$ of  the canonical representation  
	\begin{equation*}
	\ov X = \ov X_0 + \ov B^{\ov X, \ov \P} + \ov M^{\ov X, \ov \P} + \sum_{0 \leq s \leq \cdot} [\Delta \ov X_s- h(\Delta \ov X_s)]  \quad \ov \P\mbox{-a.s.}
	\end{equation*}
	of $\ov X$ under $\ov \P$-$\ov\F^{\ov X}$ is a  $\ov \P$-$\ov\F^{\ov X}$-martingale.
	%having absolutely continuous characteristics 
	%with differential characteristics $(\ov b^{\ov X,\P},\ov c^{\ov X,\P}, \ov F^{\ov X,\P})$.  If in addition $\Theta \subseteq \R^d \times \S^d_+ \times \cL$ is closed, convex, then they are taking values in $\Theta \ \P\times dt$-a.s. 
	% % % % % % % % % % % % % % % % % % % % %
	%[BRAUCHE ICH DAS???]and satisfying for any $\delta>0$ that
	%\begin{equation*}
	%\E^{\ov \P}\Big[\int_0^T \int_{\{|x|\leq \delta\}} |x|^2\, \ov F^{\ov,X,\ov \P}_t(dx)\,dt\Big]
	%=\E^{ \ov\P}\Big[\int_0^T \int_{\{|x|\leq \delta\}} |x|^2\, \ov F^{\ov\P}_t(dx)\,dt\Big].
	%\end{equation*}
	% % % % % % % % % % % % % % % % % % % % % % %
	Define the map $\Phi:\Omega \to \ov \Omega$, by $\omega \mapsto (\omega,0,0,0,\dots)$. We claim that
	\begin{equation}\label{eq:can-decomp-pushforward}
	%X= \ov X \,\circ\, \Phi\, = 
	\ov X_0 \,\circ\, \Phi\, + \ov B^{\ov X, \ov \P} \,\circ\, \Phi \, + \ov M^{\ov X, \ov \P} \,\circ \,\Phi \, + \sum_{0 \leq s \leq \cdot} [\Delta \ov X_s- h(\Delta \ov X_s)] \ \circ \,\Phi\, %\quad \P\mbox{-a.s.}
	\end{equation}
	%We claim that \eqref{eq:can-decomp-pushforward} 
	is the canonical representation of $X$ under $\P$-$\F$. To see this, observe that each summand in
	% the right-hand sight of
	\eqref{eq:can-decomp-pushforward} is $\F$-adapted, as $\Phi^{-1}(A) \in \cF_t$ for all $A \in \overline{\cF}^{\ov X}_t$ for each $t \in [0,T]$.
	In view of \cite[Exercise 1.5.6, p.44]{StroockVaradhan.79}, the $\overline{\F}^{\ov X}$-adapted process $\ov B^{\ov X, \ov \P}$ admits a representation 
	\begin{equation*}
	\ov B^{\ov X, \ov \P}_t = \Lambda(t,\ov X_{t_1}, \ov X_{t_2}, ... ), \ \quad t\in [0,T],
	\end{equation*}
	where $\Lambda$ is a measurable function defined on the product space $[0,T] \times (\R^d)^\N$ and $0 \leq t_1 < t_2 <...$ is a sequence in $[0,T]$. This implies that for all $\overline{\omega} \in \overline{\Omega}$ and $t \in [0,T]$, it holds that
	\begin{equation*} %\label{eq:pushforward-invariant}
	\ov B^{\ov X, \ov \P}_t(\ov{\omega}) = \ov B^{\ov X, \ov \P}_t \circ \Phi \circ \ov X(\ov{\omega}).
	\end{equation*}
	In particular, we obtain that
	\begin{equation} \label{eq:law-P-ov-P}
	\mbox{the law of  $B^{\ov X, \ov \P} \circ \Phi$ under $\P$ \ }=  \mbox{ \ the law of  $B^{\ov X, \ov \P}$ under $\ov \P$,} 
	\end{equation}
	hence $B^{\ov X, \ov \P} \circ \Phi$ is absolutely continuous $\P$-a.s.
	Of course, \eqref{eq:law-P-ov-P}  also holds with respect to the other summands 
	%of the right-hand side 
	in \eqref{eq:can-decomp-pushforward}.
	In particular, $\ov M^{\ov X, \ov \P} \,\circ \,\Phi $ is a $\P$-$\F$-martingale  and \eqref{eq:can-decomp-pushforward} is $\P$-a.s. equal to $\ov X \circ \,\Phi=X$, hence \eqref{eq:can-decomp-pushforward} is indeed the canonical representation of $X$ under $\P$-$\F$.
	
	Next, from the canonical representation, we see that
	%denoting 
	%with[WEITER] differential characteristics $(\ov b^{\ov X,\P},\ov c^{\ov X,\P}, \ov F^{\ov X,\P})$.
	$B^{\ov X, \ov \P} \circ \Phi$ is the first characteristic of $X$ under $\P$-$\F$, which we argued above to be $\P$-a.s. absolutely continuous.
	Denote by $\nu^{\P}(dx,dt)$ the 
	% ACHTUNG
	%$\P$-$\F$-compensator of the measure $\mu^X(dx,dt)$ associated to the jumps of $X$. 
	third characteristic of $X$ under $\P$-$\F$. Applying 
	%The same argument as in \eqref{eq:pushforward-invariant} and
	$\eqref{eq:law-P-ov-P}$ to the 
	% ACHTUNG 
	%$\ov \P$-$\ov \F^{\ov X}$-compensator $ \nu^{\ov X, \ov \P}$ of the  measure $\mu^{\ov X}$ associated to the jumps of $\ov X$ 
	third characteristic $\nu^{\ov X, \ov \P}$ of $\ov X$ under $\ov \P$-$\ov \F^{\ov X}$ yields that for any $g_i \in \cC^+(\R^d)$
	\begin{align}\label{eq:law-P-ov-P-compensator}
	& \ \mbox{\,the law of  $g_i(x) \ast  \nu^{\ov X, \ov \P}\, \circ \Phi$ under $\P$ \ }  \\
	=  & \ \mbox{\,the law of  $g_i(x) \ast   \nu^{\ov X, \ov \P}$ under $\ov \P$.}  \nonumber
	\end{align}
	As a consequence, we have for each $g_i \in \cC^+(\R^d)$ that
	\begin{equation*}
	g_i(x) \ast \nu^{\P}(dx,dt)=g_i(x)  \ast  \nu^{\ov X, \ov \P} \circ \Phi,
	\end{equation*}
	which implies that $\nu^{\P}$ satisfies a disintegration $\nu^{\P}(dx,dt)=F^{\P}_t(dx)\,dt \ \P$-a.s. For the second characteristic $C^\P$ of $X$ under $\P$-$\F$, observe that $X= \ov X \circ \Phi$ implies the same for the quadratic variation, namely $[X]= [\ov X] \circ \Phi$. As the second characteristic is the continuous part of the quadratic variation as finite variation process (see e.g. \cite[Proposition~6.6]{NeufeldNutz.13a}), we obtain that $C^{\P}= \ov C^{\ov X,\ov \P} \circ \Phi$,   where $\ov C^{\ov X,\ov \P}$ denotes the second characteristic of $\ov X$ under $\ov \P$-$\ov \F^{\ov X}$ (which coincides with the one with respect to $\ov \P$-$\ov \F$). Therefore, applying $\eqref{eq:law-P-ov-P}$ as above, but with respect to the second characteristic, yields 
	%using Moreover, by applying the same arguments as above, we obtain that for any components $1\leq j,k\leq d$, the process
	%\begin{equation*}
	% \Big(\ov M^{\ov X,\ov \P,j} \ov M^{\ov X,\ov \P,k}- \ov C^{\ov X,\ov \P,jk}-\sum_{0\leq s \leq \cdot} h^j(\Delta \ov X_s) h^k(\Delta \ov X_s)  \Big) \circ \Phi
	%\end{equation*}
	%is a $\P$-$\F$-martingale, which implies that the second $\P$-$\F$ characteristic $C^\P$ coincides with $C^{\ov X,\ov \P}\circ \Phi$, and hence $\P$-a.s., 
	that $C^\P$ is absolutely continuous $\P$-a.s. We conclude that $\P \in \fP^{ac}_{sem}$.
	
	Now, for the rest of the proof, assume that in addition $\ov \P \in \ov\fP^{ac}_{sem}(\Theta)$ for some  $\Theta \subseteq \R^d \times \S^d_+ \times \cL$ which is closed, convex and satisfies Condition~(B). From the above arguments, we see that $X$ is a $\P$-$\F$ semimartingale with differential characteristics
	\begin{equation*}
	(b^\P,c^\P,F^\P)=(\ov b^{\ov X, \ov \P}\circ \Phi, \ov c^{\ov X, \ov \P} \circ \Phi, \ov F^{\ov X, \ov \P}\circ \Phi),
	\end{equation*}
	where $(\ov b^{\ov X,\P},\ov c^{\ov X,\P}, \ov F^{\ov X,\P})$ denotes the  $\ov \P$-$\ov \F^{\ov X}$-differential characteristics of $\ov X$.
	To see that the differential characteristics are taking values in $\Theta \ \P \times dt$-a.s., we recall the map $\ov\varphi:\R^d\times \S^d_+\times \cL \to \R^d\times \S^d_+\times \R^\N$ defined in \eqref{eq:def-ov-varphi} (with corresponding function $\varphi$, see \eqref{eq:map}).  By definition of the map $\ov\varphi$, \eqref{eq:law-P-ov-P-compensator} and Lemma~\ref{le:semimart-char-smaller-filtr}, we obtain that
	\begin{align*}
	&\, \E^\P\Big[\int_0^T \mathbbm{1}_{\{\overline{\varphi}((b^\P_t,c^\P_t, F^\P_t))\in \mbox{cl}(\overline{\varphi}(\Theta))\}} \, dt\Big]\\
	=&\, \E^\P\Big[\int_0^T \mathbbm{1}_{\{(\ov b^{\ov X, \ov \P}_t\circ \Phi, \ov c^{\ov X, \ov \P}_t \circ \Phi, \varphi(\ov F^{\ov X, \ov \P}_t\circ \Phi))\in \mbox{cl}(\overline{\varphi}(\Theta))\}} \, dt\Big] \\
	=&\, \E^{\overline{\P}}\Big[\int_0^T \mathbbm{1}_{\{(\ov b^{\ov X, \ov \P}_t, \ov c^{\ov X, \ov \P}_t, \varphi(\ov F^{\ov X, \ov \P}_t)) \in \mbox{cl}(\overline{\varphi}(\Theta))\}} \, dt\Big] \\
	=&\, \E^{\overline{\P}}\Big[\int_0^T \mathbbm{1}_{\{\ov  \varphi((\ov b^{\ov X, \ov \P}_t, \ov c^{\ov X, \ov \P}_t, \ov F^{\ov X, \ov \P}_t)) \in \mbox{cl}(\overline{\varphi}(\Theta))\}} \, dt\Big] \\
	=&\, T.
	\end{align*}
	Using the property \eqref{eq:char-in-Theta} of the map $\ov \varphi$, we conclude that $\P:=\ov \P \circ (\ov X)^{-1} \in \fP^{ac}_{sem}(\Theta)$.
	%
	%[BRAUCHE ICH DAS???]
	%Finally, we observe from the arguments from above that for any $\delta>0$
	%\begin{align*}
	%\E^{\P}\Big[\int_0^T \int_{\{|x|\leq \delta\}} |x|^2\, F^{\P}_t(dx)\,dt\Big]
	%&= \E^{\ov \P}\Big[\int_0^T \int_{\{|x|\leq \delta\}} |x|^2\, \ov F^{\ov,X,\ov \P}_t(dx)\,dt\Big]\\
	%&=\E^{ \ov\P}\Big[\int_0^T \int_{\{|x|\leq \delta\}} |x|^2\, \ov F^{\ov\P}_t(dx)\,dt\Big].
	%\end{align*}
	%and consequently under the measure $\P$ the process $B(X) \circ \Phi$ has all same properties as $B(X)$ under $\overline{\P}$
\end{proof}
Let us continue with the following Lemma.
\begin{lemma}\label{le:identify-limit-P-0}
	Let $(\P_n)\subseteq \fP^{ac}_{sem}$ and consider the corresponding sequence  $(\ov\P_n)\subseteq \ov \fP^{ac}_{sem}$ on the enlarged space $\ov \Omega$ defined by \eqref{eq:def:P-to-enlarge-P}. Assume that both sequences $(\P_n)$ and $(\ov\P_n)$ converge to some law $\P_0 \in \fM_1(\Omega)$ and $\ov \P_0 \in \fM_1(\ov\Omega)$, respectively. Then, we have
	\begin{equation*}
	\P_0= \ov \P_0 \circ \ov X^{-1}.
	\end{equation*}
\end{lemma}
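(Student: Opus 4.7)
The plan is to exploit the construction of $\ov \P_n$ together with the continuity of the canonical projection onto the first coordinate, and then invoke the continuous mapping theorem plus uniqueness of weak limits.

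First, I would observe that the map $\ov X : \ov \Omega \to \Omega$ is nothing but the projection of $\ov \Omega = \Omega \times \Omega_d^c \times \cdots$ onto its first factor. Since $\ov \Omega$ carries the product topology, this projection is continuous. Hence the pushforward operation $\ov \Q \mapsto \ov \Q \circ \ov X^{-1}$ is continuous from $\fM_1(\ov \Omega)$ (endowed with weak convergence) into $\fM_1(\Omega)$ by the continuous mapping theorem.

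Next, I would recall the key identity already noted in Remark~\ref{rem:from-P-to-P-enlarge}: by the very definition $\ov \P_n := \P_n \circ (\Psi^{\P_n})^{-1}$, and the fact that $\ov X \circ \Psi^{\P_n}$ equals the canonical process $X$ on $\Omega$, one has
\begin{equation*}
\ov \P_n \circ \ov X^{-1} = \P_n \circ (\ov X \circ \Psi^{\P_n})^{-1} = \P_n \circ X^{-1} = \P_n \quad \text{for every } n \in \N.
\end{equation*}

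Combining these two observations: since $\ov \P_n \to \ov \P_0$ weakly in $\fM_1(\ov \Omega)$, continuity of the pushforward gives $\ov \P_n \circ \ov X^{-1} \to \ov \P_0 \circ \ov X^{-1}$ weakly in $\fM_1(\Omega)$. But the left-hand side equals $\P_n$, which by hypothesis converges weakly to $\P_0$. By uniqueness of weak limits in the Polish space $\fM_1(\Omega)$, the identification $\P_0 = \ov \P_0 \circ \ov X^{-1}$ follows.

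There is essentially no obstacle here; the statement is a direct bookkeeping consequence of the construction. The only thing to double-check is that the projection $\ov X : \ov \Omega \to \Omega$ is indeed continuous, which is immediate from the definition of the product topology on $\ov \Omega$ recalled at the start of Subsection~\ref{subsec:enlarg}.
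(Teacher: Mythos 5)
Your proposal is correct and follows essentially the same route as the paper: both rest on the identity $\ov \P_n \circ \ov X^{-1} = \P_n$ from the construction \eqref{eq:def:P-to-enlarge-P}, the continuity of the projection $\ov X:\ov\Omega\to\Omega$, and the resulting weak convergence $\ov\P_n\circ\ov X^{-1}\to\ov\P_0\circ\ov X^{-1}$ (the paper simply spells out the mapping step by testing against $g\in C_b(\Omega)$), concluded by uniqueness of weak limits.
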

\begin{proof}
	To see this, observe first that by definition \eqref{eq:def:P-to-enlarge-P}, we have for any $n$ that
	\begin{equation*}
	\ov \P_n \circ \ov X^{-1}= \P_n \circ (\ov X \circ\Psi^{\P})^{-1}=\P_n.
	\end{equation*}
	Therefore, for any bounded continuous function $g \in C_b(\Omega)$, we obtain that
	\begin{equation*}
	\int_{\Omega} g \, d\P_{n} = \int_{\overline{\Omega}} g \circ \ov X \, d\overline{\P}_{n}, 
	\end{equation*}
	for every $n$. Consequently, as $\overline{\P}_{n}$ converges to $\overline{\P}_0$ weakly and as $\ov X: \overline{\Omega} \rightarrow \Omega$ is  continuous, %it follows that
	\begin{align*}
	\lim_{n \rightarrow \infty} \int_{\Omega}g \, d\P_{n} = \lim_{n \rightarrow \infty}\int_{\overline{\Omega}} g \circ \ov X \, d\overline{\P}_{n} = \int_{\overline{\Omega}} g \circ \ov X \, d\overline{\P}_{0} = \int_{\Omega}g \, d(\overline{\P}_0 \circ \ov X^{-1}).
	\end{align*}
	As $g \in C_b(\Omega)$ was arbitrary, we conclude that $\overline{\P}_0 \circ \ov X^{-1}$ is the weak limit of %the sequence 
	$(\P_{n})_{n \in \N}$.
	%, hence we obtain the desired result.
\end{proof}
Due to Lemma~\ref{le:identify-limit-P-0}, we can identify the structure of limit laws of sequences in $\fP^{ac}_{sem}(\Theta)$.
\begin{proposition}\label{prop:limit-P-P-enlarge}
	Let $\Theta\subseteq \R^d \times \S^d_+ \times \cL$ satisfy Condition (B) and let  $(\P_n)_{n\in \N} \subseteq \fP^{ac}_{sem}(\Theta)$ converging to some law $\P_0 \in \fM_1(\Omega)$.  
	%Moreover, denote by $(\ov \P_n)_{n \in \N}\subseteq \ov \fP^{ac}_{sem}(\Theta)$ the corresponding sequence of laws on the enlarged space defined in \eqref{eq:def:P-to-enlarge-P}.
	Then the following holds true:
	\begin{enumerate}
		\item [1)]  We obtain that $\P_0 \in \fP^{ac}_{sem}$.
	\end{enumerate}
	If in addition, $\Theta$ is closed, convex and satisfies Condition~(J). Then the following holds true:
	\begin{enumerate}
		\item[2)] We obtain that $\P_0 \in \fP^{ac}_{sem}(\Theta)$. In particular, $\fP^{ac}_{sem}(\Theta)$ is closed.
	\end{enumerate}
\end{proposition}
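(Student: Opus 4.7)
My plan is to bootstrap Proposition~\ref{prop:limit-P-P-enlarge} from the corresponding results on the enlarged space $\ov\Omega$ by going \emph{up} from $\Omega$ to $\ov\Omega$ via the lift $\Psi^{\P_n}$ and then back \emph{down} via the pushforward $\ov X_\ast$. More precisely, for each $n$ I would set $\ov\P_n := \P_n \circ (\Psi^{\P_n})^{-1} \in \ov\fP^{ac}_{sem}(\Theta)$ as in \eqref{eq:def:P-to-enlarge-P}. By Lemma~\ref{le:identify-limit-P-0} (or the trivial identity $\ov\P_n\circ \ov X^{-1}=\P_n$), any weak limit point $\ov\P_0$ of $(\ov\P_n)$ satisfies $\ov\P_0\circ\ov X^{-1}=\P_0$, so it suffices to find at least one such limit point and to verify that it has the structural properties required to invoke Lemma~\ref{le:push-forward}.

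The first key step is tightness of $(\ov\P_n)$. Since $\P_n\to\P_0$ weakly and $X_0:\Omega\to\R^d$ is continuous, the family $\Gamma_0:=\{\P_n\circ X_0^{-1}:n\in\N\}\cup\{\P_0\circ X_0^{-1}\}\subseteq \fM_1(\R^d)$ is tight. Because $\ov\P_n\in\ov\fP^{ac}_{sem}(\Theta)(\Gamma_0)$ and $\Theta$ satisfies Condition~(B), Proposition~\ref{prop:tightness-enlarge} gives tightness of $(\ov\P_n)$ on $\ov\Omega$. Extracting a subsequence (still denoted $(\ov\P_n)$) converging weakly to some $\ov\P_0\in\fM_1(\ov\Omega)$, Proposition~\ref{prop:X-semim-cont-char} yields $\ov\P_0\in\ov\fP^{ac,w}_{sem}$ together with the integrability bound on $(|\ov b^{\ov\P_0}|,|\ov c^{\ov\P_0}|,\int|h|^2\,d\ov F^{\ov\P_0})$ and the martingale property of $\ov M^c$ and $\ov M^d$. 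These are exactly the hypotheses of Lemma~\ref{le:push-forward}, so the pushforward $\ov\P_0\circ \ov X^{-1}$ lies in $\fP^{ac}_{sem}$; combined with Lemma~\ref{le:identify-limit-P-0} we conclude $\P_0 \in \fP^{ac}_{sem}$, proving part~1).

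For part~2), the additional closedness, convexity and Condition~(J) of $\Theta$ allow me to strengthen the conclusion at the $\ov\Omega$-level: Corollary~\ref{co:closedness-P-Theta-enlarge} applies to $(\ov\P_n)$ and yields $\ov\P_0\in\ov\fP^{ac}_{sem}(\Theta)$. Then the second assertion of Lemma~\ref{le:push-forward} promotes the pushforward to $\fP^{ac}_{sem}(\Theta)$, and Lemma~\ref{le:identify-limit-P-0} again identifies this pushforward with $\P_0$. Closedness of $\fP^{ac}_{sem}(\Theta)$ then follows by the standard sequential-closedness argument on the Polish space $\fM_1(\Omega)$.

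The main obstacle, beyond careful bookkeeping, is that we extract $\ov\P_0$ only along a subsequence: a priori different subsequences could yield different limits on $\ov\Omega$. This is resolved by the fact that what we ultimately need is a single statement about the fixed limit $\P_0$ on $\Omega$, and Lemma~\ref{le:identify-limit-P-0} forces $\ov\P_0\circ \ov X^{-1}=\P_0$ for \emph{every} such subsequential limit; consequently the conclusions derived for an arbitrary $\ov\P_0$ transfer verbatim to $\P_0$, without requiring uniqueness of the limit on the enlarged space.
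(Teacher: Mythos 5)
Your proposal is correct and follows essentially the same route as the paper's own proof: lift the $\P_n$ to $\ov\P_n$ via \eqref{eq:def:P-to-enlarge-P}, use Proposition~\ref{prop:tightness-enlarge} to extract a subsequential limit $\ov\P_0$, apply Proposition~\ref{prop:X-semim-cont-char} (and, for part~2, Corollary~\ref{co:closedness-P-Theta-enlarge}) to identify its structure, and push back down with Lemma~\ref{le:push-forward} and Lemma~\ref{le:identify-limit-P-0}. Your closing remark on subsequential limits is exactly the (implicit) resolution in the paper, since $\P_0$ is fixed and every subsequential limit on $\ov\Omega$ pushes forward to it.
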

%
%
%\begin{remark}\label{rem:limit-P-P-enlarge}
%By tightness, there exists at least one subsequence $(\ov \P_{n_k})_{k \in \N} \subseteq \ov \fP^{ac}_{sem}(\Theta)$ which converges to some $\ov\P_0 \in\fM_1(\ov \Omega)$, see Proposition~\ref{prop:tightness-enlarge}. Moreover, by Proposition~\ref{prop:X-semim-cont-char}, we directly obtain that $\ov \P_0 \in \ov\fP_{sem}$.  
%\end{remark}

% % % % % % % % % % % %
%\begin{proposition}\label{prop:limit-P}
%Let $\Theta\subseteq \R^d \times \S^d_+ \times \cL$ satisfy Condition (B) and Condition (J). Then, for any sequence $(\P_n)_{n\in \N} \subseteq \fP^{ac}_{sem}(\Theta)$ converging to some law $\P_0 \in \fM_1(\Omega)$, we have $\P_0 \in\fP^{ac}_{sem}$.
%If in addition, $\Theta\subseteq \R^d \times \S^d_+ \times \cL$ is closed and convex, then we even have $\P_0 \in\fP^{ac}_{sem}(\Theta)$.
%\end{proposition}
%
%
\begin{proof} %[Proof of Theorem~\ref{thm:compactness}]
	%Let $(\P_n)_{n \in \N}$ be a sequence of probability measures in $\fP_{sem}^{ac}(\Theta)$ converging to some probability measure $\P_0$. We want to show that $\P_0 \in \fP_{sem}^{ac}$, and if in addition $\Theta\subseteq \R^d \times \S^d_+ \times \cL$ is closed and convex, then even $\P_0 \in \fP_{sem}^{ac}(\Theta)$.
	%
	% % % % % % % % % % % % % % %
	By Prohorov's theorem, $(\P_n)_{n \in \N}$ is tight. Since $X_0:\Omega \to \R^d$ is continuous,  $(\P_n \circ X_0^{-1})_{n \in \N}\subseteq \fM_1(\R^d)$ is tight, too. Consider the corresponding sequence $(\ov \P_n)_{n \in \N} \in \ov \fP_{sem}^{ac}(\Theta)$ on the enlarged space $\ov \Omega$ defined in \eqref{eq:def:P-to-enlarge-P}. By definition, $(\ov \P_n \circ \ov X_0^{-1})_{n \in \N}\subseteq \fM_1(\R^d)$ is tight. Therefore, by Proposition~\ref{prop:tightness-enlarge}, we have tightness of $(\ov \P_n)_{n \in \N}$, hence there exists a  subsequence $(\overline{\P}_{n_k})_{k \in \N}$ which converges to some $\overline{\P}_0 \in \fM_1(\ov\Omega)$.
	% By Corollary~\ref{co:closedness-P-Theta-enlarge}, $\ov \P_0 \in \overline{\fP}_{sem}^{ac}$, and if we impose $\Theta\subseteq \R^d \times \S^d_+ \times \cL$ to be closed and convex, then we have even $\overline{\P}_0 \in \overline{\fP}_{sem}^{ac}(\Theta)$.
	
	To prove 1), we can apply Proposition~\ref{prop:X-semim-cont-char}  to conclude that $\ov \P_0 \in \ov \fP^{ac,w}_{sem}$ satisfying both
	\begin{align*}
	& \bullet \ \E^{\ov \P_0}\Big[\int_0^T \big[|\ov b^{\ov \P_0}_s|  + |\ov c^{\ov \P_0}_s| +\int_{\R^d} |h(x)|^2\,\ov F^{\ov \P_0}_s(dx)\big]\, ds\Big]<\infty,\\
	&\bullet \ \mbox{$\ov M^c$, $\ov M^d$ are $\ov \P_0$-$\ov \F$-martingales.}
	\end{align*}
	
	% $\ov X$ is a $\ov\P_0$-$\ov \F$-semimartingale satisfying the conditions of Lemma~\ref{le:push-forward}.  Then we obtain from
	%also a $\ov\P_0$-$\ov \F^{\ov X}$-semimartingale. We can argue like in
	By Lemma~\ref{le:push-forward}, we obtain that 
	%that the canonical process $X$ on the original space $\Omega$ is a 
	$\ov \P_0\circ \ov X^{-1} \in \fP^{ac}_{sem}$. Moreover, we know from Lemma~\ref{le:identify-limit-P-0} that $\P_0= \ov \P_0 \circ \ov X^{-1}$, hence  $\P_0\in \fP^{ac}_{sem}$.
	
	To show 2), assume from now on that $\Theta$ is closed, convex and satisfies Condition~(J). Then by Corollary~\ref{co:closedness-P-Theta-enlarge}, $\ov \P_0 \in \overline{\fP}_{sem}^{ac}(\Theta)$. Therefore, we obtain the desired results directly from Lemma~\ref{le:push-forward} and Lemma~\ref{le:identify-limit-P-0}.

	% % % % % % % % % % % % % % %
	%OLD VERSION
	% Denote by $(\ov\P_n)_{n \in \N}\subseteq \ov \fP_{sem}^{ac}(\Theta)$ the corresponding sequence of probability measures on the enlarged space $\ov \Omega$ defined in \eqref{eq:def:from-P-enlarge-to-P}. By Proposition~\ref{prop:tightness-enlarge}, we know that $\overline{\fP}_{sem}^{ac}(\Theta)(\mu_0)$ is compact, hence there exists a subsequence $(\overline{\P}_{n_k})_{k \in \N}$ which converges to some $\overline{\P}_0 \in \overline{\fP}_{sem}^{ac}(\Theta)(\mu_0)$.
	% % % % % % % % % % % % % % % % %5

	%
	%Finally, let in addition $\Theta\subseteq \R^d \times \S^d_+ \times \cL$  be closed and convex and consider  a compact set $\Gamma_0\subseteq \fM_1(\R^d)$. Then  we obtain from the above arguments that $\fP^{ac}_{sem}(\Theta)(\Gamma_0)$ is closed.  Moreover, by Corollary~\ref{co:tightness-P}, we know that $\fP^{ac}_{sem}(\Theta)(\Gamma_0)$ is tight, hence its compactness now follows.
\end{proof}
%
%
% % % % % % % % % % % % % % % % % % % % % % % % % %
Next, we present the proof of Theorem~\ref{thm:purely-disc} and Corollary~\ref{co:closedness-M-d-law}, characterizing when a limit law of purely discontinuous martingales remains a purely discontinuous martingale law.
\begin{proof}[Proof of Theorem~\ref{thm:purely-disc}]
	%Let $(\P_n)_{n \in \N} \in \fP^{ac}_{m,d}(\Theta)$ be a sequence of purely discontinuous martingales which converges weakly to some probability measure $\P_0$ satisfying $\E^{\P_0}[|X_0|]<\infty$.
	By Proposition~\ref{prop:limit-P-P-enlarge}, $\P_0 \in \fP^{ac}_{sem}$, hence it remains to prove that $\P_0$ is a martingale law and
	% to prove 
	the characterization %for $\P_0$ 
	to be a purely discontinuous martingale law.
	
	\textbf{Step 1:} We show that  the canonical process $X$ on $\Omega$ is a $\P_0$-$\F$-martingale.
	
	To see this, 
	%observe first that by Prohorov's theorem, the sequence $(\P_n)_{n \in \N}$ is tight. As $X_0:\Omega \to \R^d$ is continuous, the sequence of laws $(\P_n \circ X_0^{-1})_{n \in \N}\subseteq \fM_1(\R^d)$ is tight, too. 
	consider the corresponding sequence $(\ov \P_n)_{n \in \N} \in \ov \fP^{ac}_{m,d}(\Theta)$ on the enlarged space $\ov \Omega$ defined by \eqref{eq:def:P-to-enlarge-P}. By the same argument as in the beginning of the proof of Proposition~\ref{prop:limit-P-P-enlarge}, we obtain the existence
	%By definition, $(\ov \P_n \circ \ov X_0^{-1})_{n \in \N}\subseteq \fM_1(\R^d)$ is tight. Therefore, by Proposition~\ref{prop:tightness-enlarge}, we obtain the tightness of sequence $(\ov \P_n)_{n \in \N}$, By Prohorov's theorem, this implies that there exists
	of a  subsequence $(\ov \P_{n_k})_{k \in \N}$ which converges weakly to some law $\ov \P_0 \in \fM_1(\ov \Omega)$. We deduce from Proposition~\ref{prop:X-semim-cont-char} that the first coordinate $\ov X$ is a $\ov \P_0$-$\ov \F$-semimartingale having absolutely continuous characteristics and canonical representation 
	\begin{equation*}
	\ov X = \ov X_0 + \ov B + \ov M^c + \ov M^d + \ov J \quad \ \ov \P_0\mbox{-a.s.}
	\end{equation*}
	Moreover, by assumption, $\E^{\ov \P_0}[|\ov X_0|]<\infty$.
	%In particular, $\ov X$ has no fixed time of discontinuity under $\ov \P_0$.
	
	%Now, we show that $\ov X$ is sufficiently integrable under $\P_0$ in the sense that
	% \begin{equation*}
	% \E^{\ov \P_0}\big[\sup_{0\leq t \leq T}|\ov X_t|\big]<\infty.
	% \end{equation*}
	%To see this, we know from the Skorokhod representation that $z^{n,\ov X}$ converges pointwise in $\D([0,T])$ to $z^{0,\ov X}$, see Section~\ref{subsec:enlarg}. As c has no fixed jumps under $\P_0$, we deduce from \cite[Proposition~VI.2.4, p.339]{JacodShiryaev.03} that
	% $\sup_{0\leq t \leq T} |z^{n,\ov X}_t|$ converges to $\sup_{0\leq t \leq T} |z^{0,\ov X}_t| \ \lambda$-a.s.. Therefore, Fatou's lemma and \cite[Lemma~5.2]{NeufeldNutz.13b}  yields
	% \begin{align*}
	% \E^{\ov \P_0}\big[\sup_{0\leq t \leq T}|\ov X_t|\big]
	% =\E^{\lambda}\big[\sup_{0\leq t \leq T}|z^{0,\ov X}_t|\big]
	% &\leq \liminf_{n\to \infty}\E^{\lambda}\big[\sup_{0\leq t \leq T}|z^{n,\ov X}_t|\big]\\
	% &= \liminf_{n\to \infty} \E^{\ov \P_n}\big[\sup_{0\leq t \leq T}|\ov X_t|\big]\\ 
	% &\leq C_\cK + \liminf_{n\to \infty} \E^{\ov \P_n}\big[|\ov X_0|\big]<\infty,
	% \end{align*}
	% where $C_\cK<\infty$ is a constant only depending on $\cK$, due to the Condition~(B). 
	
	Now, we claim that $\ov X$ is a $\ov \P_0$-$\ov \F$-martingale. Since we know by Proposition~\ref{prop:X-semim-cont-char} that $\ov X$ is a $\ov \P_0$-$\ov\F$-semimartingale with the above canonical representation, where both $\ov M^c$ and $\ov M^d$ are $\ov \P_0$-$\ov\F$-martingales, it remains to show that 
	\begin{equation*}
	\mbox{$\ov B + \ov J$ is a $\ov \P_0$-$\ov \F$-martingale.}
	\end{equation*}
	%$\ov B + \ov J =0 \ \ov \P_0$-a.s. 
	From the Skorokhod representation, we have that both $(z^{n,\ov B})$ and $(z^{n,\ov J})$ converges pointwise to $z^{0,\ov B}$ and $z^{0,\ov J}$, respectively. As each $z^{n,\ov B}$ and $z^{0,\ov B}$ are continuous, we deduce from \cite[Proposition~VI.2.2 a), p.338]{JacodShiryaev.03} that $z^{n,\ov B}+z^{n,\ov J}$ converges to $z^{0,\ov B}+z^{0,\ov J}$. 
	Moreover, we know from the proof of Lemma~\ref{le:M-enlarge} that for each $t\in [0,T]$, both sequences $(\ov M^c_t\,|\, \ov \P_n)$ and $(\ov M^d_t\,|\, \ov \P_n)$ are uniformly integrable, and the same holds true by \cite[Lemma~5.2]{NeufeldNutz.13b} for the sequence $(\ov X_t-\ov X_0\,|\, \ov \P_n)$. Therefore, due to the canonical representation of $\ov X$ under each $\ov \P_n$, we can also conclude that the sequence $(\ov B_t +\ov J_t\,|\, \ov \P_n)$ is uniformly integrable for each $t\in [0,T]$.
	As a consequence, we can apply the same proof as in Lemma~\ref{le:M-enlarge}(i), but with respect to $\ov B + \ov J$ instead of $\ov M^d$ to derive that $\ov B + \ov J$ is a $\ov \P_0$-$\ov \F$-martingale. 
	
	Consider the smaller filtration $\ov \F^{\ov X}\subseteq \ov \F$ generated by $\ov X$. As $\ov X$ is a $\ov \P_0$-$\ov \F$ martingale being $\ov \F^{\ov X}$-adapted, it is  also a $\ov \P_0$-$\ov \F^{\ov X}$ martingale by the tower property of the conditional expectation. Therefore, by construction, we obtain that under the push forward measure $\ov \P_0 \circ \ov X^{-1}$, the canonical process $X$ on the original space $\Omega$ is a $\ov \P_0 \circ \ov X^{-1}$-$\F$-martingale. Moreover, applying Lemma~\ref{le:identify-limit-P-0} yields
	\begin{equation}\label{eq:proof-thm-purely-disc-limit-meas}
	\P_0 = \ov \P_0  \circ \ov X^{-1},
	\end{equation}
	hence we get the desired result of Step 1.
	
	\textbf{Step 2:} We want to characterize when $X$ is a $\P_0$-$\F$-purely discontinuous martingale, i.e. %namely we want to show that
	\begin{equation*}
	\P_0 \in \fP^{ac}_{m,d} \Longleftrightarrow \ \lim\limits_{\delta\downarrow0}\limsup_{n \to \infty} \E^{\P_n}\Big[\int_0^T\int_{\{|x|\leq \delta\}} |x|^2\, F^{\P_n}_t(dx)\,dt\Big] =0.
	\end{equation*}
	To see this, observe that from the first step, 
	%we know that
	$X$ is a $\P_0$-$\F$-martingale and by \eqref{eq:proof-thm-purely-disc-limit-meas}
	\begin{align}\label{eq:proof-purely-disc-char-0}
	& \ \mbox{ $X$ is a $\P_0$-$\F$-purely discont. martingale}\\
	\Longleftrightarrow & \ \mbox{ $\ov X$ is a $\ov \P_0$-$\ov \F^{\ov X}$--purely discont. martingale.} \nonumber
	\end{align} 
	%In general, a martingale is purely discontinuous if and only if its second characteristic is equal to $0$. 
	We obtained in Step 1 that $\ov X$ is a $\ov \P_0$-$ \ov \F$- and $\ov \P_0$-$\ov \F^{\ov X}$-martingale. 
	%Moreover, as $\ov X$ is $\ov \F^{\ov X}$-adapted, 
	%we derive from the tower property of the conditional expectation that $\ov X$ is also a $\ov \P_0$-$\ov \F^{\ov X}$-martingale.
	Moreover, the quadratic variation $[\ov X]$ is $\ov \F^{\ov X}$-adapted, hence so is its continuous part, which implies that the second characteristic of $\ov X$ under $\ov \P_0$-$ \ov \F$ and $\ov \P_0$-$\ov \F^{\ov X}$ are the same. From the fact that a martingale is a purely discontinuous one if and only if its second characteristic vanishes, we obtain that
	% know that $\ov X$ is also $\ov \P_0$-$\ov \F^{\ov X}$-martingale with the same second $\ov \P_0$-$\ov \F^{\ov X}$-characteristic as under $\ov \P_0$-$ \ov \F$, see Lemma~\ref{le:semimart-char-smaller-filtr}. 
	%Therefore, we conclude that
	\begin{align}\label{eq:proof-purely-disc-char-1}
	& \ \mbox{ $\ov X$ is a $\ov \P_0$-$\ov \F^{\ov X}$-purely discont. martingale}\\
	\Longleftrightarrow  & \ \mbox{ $\ov X$ is a $\ov\P_0$-$\ov \F$-purely discont. martingale.} \nonumber
	\end{align}
	Now, as $\ov X$ is a purely discontinuous $\ov \P_n$-$\ov \F$-martingale, we have that $\ov M^c= 0 \ \ov \P_n$-a.s. for all $n \in \N$. Using the Skorokhod representation, this means that $z^{n,\ov M^c}=0 \ \lambda$-a.s. for each $n$. As $z^{n,\ov M^c}$ converges pointwise to $z^{0,\ov M^c}$, we also have that $z^{0,\ov M^c}=0 \ \lambda$-a.s., which means that $\ov M^c=0 \,\ov \P_0$-a.s. Therefore, as from Step 1 we know that $\ov X = \ov X_0+\ov M^c + \ov M^d + (\ov B + \ov J) \ \ov \P_0$-a.s., with $(\ov B + \ov J)$ being a  $\ov \P_0$-$\ov \F$-purely discontinuous martingale, we conclude that
	\begin{align}\label{eq:proof-purely-disc-char-2}
	& \  \mbox{ $\ov X$ is a $\ov \P_0$-$\ov \F$-purely discont. martingale}\\
	\Longleftrightarrow & \ \mbox{ $\ov M^d$ is a $\ov\P_0$-$\ov \F$-purely discont. martingale.}
	\nonumber
	\end{align}
	In Section~\ref{sec:enlarge}, we proved in Proposition~\ref{prop:purely-disc-enl} that
	\begin{align}\label{eq:proof-purely-disc-char-3}
	& \ \ov M^d \mbox{is a $\ov \P_0$-$\ov \F$-purely discontinuous martingale} \nonumber\\
	\Longleftrightarrow & \  \lim\limits_{\delta\downarrow0}\limsup_{n \to \infty} \E^{\ov \P_n}\Big[\int_0^T\int_{\{|x|\leq \delta\}} |x|^2\, \ov F^{\ov \P_n}_t(dx)\,dt\Big] =0.
	\end{align}
	Finally, by definition of the corresponding measures $(\ov \P_n)_{n \in \N}$ on the enlarged space, we deduce from Remark~\ref{rem:from-P-to-P-enlarge} that
	\begin{align}\label{eq:proof-purely-disc-char-4}
	& \ \lim\limits_{\delta\downarrow0}\limsup_{n \to \infty} \E^{\ov \P_n}\Big[\int_0^T\int_{\{|x|\leq \delta\}} |x|^2\, \ov F^{\ov \P_n}_t(dx)\,dt\Big] =0 \nonumber\\ \ \Longleftrightarrow & \  \lim\limits_{\delta\downarrow0}\limsup_{n \to \infty} \E^{\P_n}\Big[\int_0^T\int_{\{|x|\leq \delta\}} |x|^2\,  F^{\P_n}_t(dx)\,dt\Big] =0.
	\end{align}
	The desired characterization now follows from the equivalence of the statement \eqref{eq:proof-purely-disc-char-0}--\eqref{eq:proof-purely-disc-char-4}.
\end{proof}
%\ref{se
%
\begin{proof}[Proof of Corollary~\ref{co:closedness-M-d-law}]
	For each $\P_n$, the purely discontinuous martingale part $M^{d,\P_n}$ has $\P_n$-$\F$-differential characteristics $(0,0,F^{\F,\P_n,M})$ with $F^{\F,\P_n,M}=F^{\P_n}\circ h^{-1}$, where $F^{\P_n}$ denotes  the third $\P_n$-$\F$-differential characteristic of the canonical process $X$. In particular, $F^{\F,\P_n,M}=F^{\P_n}$ on $\{|x|\leq \delta\}$ for any small enough $\delta>0$. Denote by $\F^{M}\subseteq \F^X$ the filtration generated by $M^{d,\P_n}$. We deduce from Lemma~\ref{le:semimart-char-smaller-filtr} and Remark~\ref{rem:semimart-char-smaller-filtr}  that $M^{d,\P_n}$ is also a $\P_n$-$\F^{M}$-purely discontinuous martingale with corresponding differential characteristics $(0,0,F^{\F^{M},\P_n,M})$ satisfying for any small enough $\delta>0$ 
	\begin{align*}
	& \ \E^{\P_n}\Big[\int_0^T \int_{\{|x|\leq \delta\}} |x|^2\, F^{\F^{M},\P_n,M}_t(dx)\,dt\Big]\\
	= & \ \E^{\P_n}\Big[\int_0^T \int_{\{|x|\leq \delta\}} |x|^2\, F^{\F,\P_n,M}_t(dx)\,dt\Big]\\
	= & \ \E^{\P_n}\Big[\int_0^T \int_{\{|x|\leq \delta\}} |x|^2\, F^{\P_n}_t(dx)\,dt\Big].
	\end{align*}
	Therefore, the result follows directly from Theorem~\ref{thm:purely-disc}.
\end{proof}
We continue with the proof of Theorem~\ref{thm:compactness}, which provides a necessary and sufficient condition for $\fP^{ac}_{sem}(\Theta)(\Gamma_0)$ to be compact.
\begin{proof}[Proof of Theorem~\ref{thm:compactness}]
	We already have proved in Proposition~\ref{prop:limit-P-P-enlarge} that Condition~(J) implies closedness of the set $\fP^{ac}_{sem}(\Theta)$.
	
	For the other direction, i.e. to see that closedness of $\fP^{ac}_{sem}(\Theta)$ implies that Condition~(J) holds whenever $\Theta$ additionally satisfies the condition in Definition~\ref{def:box}, we assume  that Condition~(J) fails and want to conclude that then  also closedness of $\fP^{ac}_{sem}(\Theta)$ fails. %Denote by 
	%\begin{equation*}
	%\proj_F(\Theta):=\big\{F \in \cL \ \big|\, \mbox{ there exists } (b,c) \in \R^d \times \S^d_+ \mbox{ such that } (b,c,F) \in \Theta \big\}
	%\end{equation*}
	%the projection of $\Theta$ onto $\cL$.
	
	By definition,  if Condition~(J) fails, then there exists $\varepsilon>0$ and a sequence $(F^m)_{m \in \N}\subseteq \proj_F(\Theta)$ such that for each $m \in \N$,
	\begin{equation*}
	\int_{\{|x|\leq \frac{1}{m}\}} |x|^2 \, F^m(dx)\geq \varepsilon.
	\end{equation*}
	Denote by $\widehat{c}:=\max\{|c| \in \proj_c(\Theta)\}$. Due to $\Theta$ satisfying the condition in Definition~\ref{def:box}, there exists for each $m$ an element $b^m \in \R^d$ such that $(b^m, \widehat c, F^{m}) \in \Theta$. Denote by $\P^m$ the law where the canonical process $X$ is a  L\'evy process with corresponding L\'evy triplet $(b^m, \widehat c, F^{m})$. By definition, each $\P^m \in \fP^{ac}_{sem}(\Theta)$ with $X_0=0 \ \P^m$-a.s.. We derive from Corollary~\ref{co:tightness-P} that the sequence $(\P^m)$ is tight, hence there exists a subsequence $(\P^{m_k})_{k \in \N}$ which by Proposition~\ref{prop:limit-P-P-enlarge} converges weakly to some law $\P_0 \in \fP^{ac}_{sem}$. Our goal is to show that $\P_0 \notin \fP^{ac}_{sem}(\Theta)$ which would mean that the closedness of $\fP^{ac}_{sem}(\Theta)$ fails.
	
	To that end, consider the corresponding sequence $(\ov \P^k) \subseteq \ov\fP^{ac}_{sem}(\Theta)$ of $(\P^{m_k})$ on the enlarged space $\ov \Omega$ defined by \eqref{eq:def:P-to-enlarge-P}. Due to Proposition~\ref{prop:tightness-enlarge}, $(\ov \P^k)$ is tight, hence there exists a subsequence $(\ov \P^{k_u})_{u \in \N}$ which by Proposition~\ref{prop:X-semim-cont-char} converges weakly to some law $\ov \P_0 \in \ov\fP^{ac,w}_{sem}$.
	% We know from Proposition~\ref{prop:X-semim-cont-char} that $\ov X$ is a $\ov\P_0$-$\ov \F$-semimartingale which has absolutely continuous characteristics if and only if the second characteristic $\ov C^{\ov\P_0}$ is absolutely continuous. Moreover, we know from Lemma~\ref{le:identify-limit-P-0} that 
	%\begin{equation*}
	%\P_0= \ov \P_0 \circ \ov X^{-1}.
	%\end{equation*}
	%By \cite[Theorem~3.1]{Stricker.77},  $\ov X$ is a $\ov\P_0$-$\ov \F^{\ov X}$-semimartingale. Moreover, we have already argued in Step 2 of the proof of Theorem~\ref{thm:purely-disc} that the second characteristic $\ov C^{\ov X, \ov \P_0}$ with respect to $\ov \P_0$-$\ov \F^{\ov X}$ satisfies $\ov C^{\ov X, \ov \P_0}=\ov C^{\ov \P_0}$. Therefore, if $\ov C^{\ov \P_0}$ is not absolutely continuous, then  neither is $\ov C^{\ov X, \ov \P_0}$. As  $\P_0= \ov \P_0 \circ \ov X^{-1}$, we can argue as in Lemma~\ref{le:push-forward} to see that the canonical process $X$ on the original space $\Omega$ is a $\P_0$-$\F$-semimartingale, but its second characteristic $C^{\P_0}$ is not absolutely continuous, i.e. $\P_0 \notin \fP^{ac}_{sem}$.
	
	%It remains to analyze the case when $\ov C^{\ov \P_0}$ is absolutely continuous. 
	Recall $\ov c$ being the differential of $\ov C$, which is the quadratic variation of $\ov M^c$ under $\ov \P_0$. Morevover, denote by $\ov c^{\ov \P_0,\ov M^d}$ the differential of the quadratic variation of the continuous martingale part of $\ov M^d$ under $\ov \P_0$-$\ov \F$ (note that under $\ov \P_0$-$\ov \F$, $\ov M^d$ is not necessarily a purely discontinuous martingale). Then, the differential $\ov c^{\ov \P_0}$ of the second characteristic of $\ov X$ under $\ov \P_0$-$\ov \F$ satisfies
	\begin{equation*}
	\ov c^{\ov \P_0}=\ov c + \ov c^{\ov \P_0,\ov M^d} \quad \ov \P_0\times dt\mbox{-a.s.}.
	\end{equation*}
	%Denote by $\ov c^{\ov \P_0}$ the differential of $\ov C^{\ov \P_0}$. Decompose the $\ov \P_0$-$\ov \F$ martingale 
	%\begin{equation*}
	%\ov M^d = \ov M^{d,d, \ov\P_0} + \ov M^{d,c,\ov\P_0}
	%\end{equation*}
	%into its continuous and purely discontinuous martingale part under $\ov \P_0$, and denote by $\ov c^{\ov \P_0,\ov M^d}$ the differential of the quadratic variation of $\ov M^{d,c,\ov\P_0}$. We obtain that $\ov c^{\ov \P_0}=\ov c + \ov c^{\ov \P_0,\ov M^d}$, where $\ov c$ is the differential of $\ov C$ introduced in Section~\ref{subsec:enlarg}.
	Observe that by construction of the  sequence $(\ov \P^{k_u})$,we have
	\begin{align*}
	& \ \lim\limits_{\delta\downarrow0}\limsup_{u \to \infty} \E^{\ov \P_{k_u}}\Big[\int_0^T\int_{\{|x|\leq \delta\}} |x|^2\, \ov F^{\ov \P_{k_u}}_t(dx)\,dt\Big] \\
	= & \ 
	\lim\limits_{\delta\downarrow0}\limsup_{u \to \infty} \int_0^T\int_{\{|x|\leq \delta\}} |x|^2\, \ov F^{m_{k_{u}}}(dx)\,dt\\
	\geq & \
	\varepsilon.
	\end{align*} 
	Therefore, we deduce from Proposition~\ref{prop:purely-disc-enl} that $\ov M^d$ is not a $\ov\P_0$-$\ov \F$-purely discontinuous martingale, which implies that $(\ov \P_0\times dt)[\ov c^{\ov \P_0,\ov M^d}>0]>0$.
	
	Now, we claim that $\ov c = \hat{c} \ \, \ov\P_0\times dt$-a.s. To see this, we know from the Skorokhod representation that the sequence $(z^{k_u,\ov C})_{u \in \N}$ converges uniformly to $z^{0,\ov C}$. But by definition of the sequence $(\ov \P^{k_u})$, we have $\lambda$-a.s. that $z^{k_u,\ov C}_t = \widehat c t, \ t\in [0,T],$ for all $u$. Therefore, we also have $\lambda$-a.s. that $z^{0,\ov C}_t = \widehat c t, \ t\in [0,T],$ which implies that indeed $\ov c = \hat{c} \ \ov\P_0\times dt$-a.s.
	%Summarazing, we have shown that in the case where $\ov C^{\ov \P_0}$ is absolutely continuous, its density satisfies $\ov c^{\ov \P_0}= \widehat c + \ov c^{\ov P_0,\ov M^d} \ \ov \P_0\times dt$-a.s., with $(\ov \P_0\times dt)[\ov c^{\ov P_0,\ov M^d}>0]>0$. 
	
	As the second characteristic of $\ov X$  under $\ov\P$-$\ov \F^{\ov X}$ coincide with the one under $\ov\P$-$\ov \F$, we obtain that the second differential characteristic $\ov c^{\ov X,\ov \P_0}$ under $\ov\P$-$\ov \F^{\ov X}$ satisfies
	% the ovWe argued above that $\ov X$ is a $\ov\P$- $\ov \F^{\ov X}$- semimartingale with second characteristic $\ov C^{\ov X, \ov \P_0}=\ov C^{\ov \P_0}$. In particular,   $C^{\ov X, \ov \P_0}$ is absolutely continuous with differential 
	\begin{equation*}
	\ov c^{\ov X,\ov \P_0}= \ov c^{\ov \P_0}=\widehat c + \ov c^{\ov \P_0,\ov M^d} \ \ov \P_0\times dt\mbox{-a.s.}
	\end{equation*}
	Recall that $\widehat{c}:=\max\{|c| \in \proj_c(\Theta)\}$. Due to the positive definiteness, we conclude that 
	\begin{equation*}
	(\ov \P_0\times dt)\big[\ov c^{\ov X,\ov \P_0} \notin \proj_c(\Theta)\big]\geq (\ov \P_0\times dt)\big[\ov c^{\ov \P_0,\ov M^d}>0\big]>0.
	\end{equation*}
	As $\P_0= \ov \P_0 \circ \ov X^{-1}$, we can argue as in Lemma~\ref{le:push-forward} to see that $\P_0 \in \fP^{ac}_{sem}$, but $\P_0 \notin \fP^{ac}_{sem}(\Theta)$, since
	%the canonical process $X$ on the original space $\Omega$ is a $\P_0$-$\F$-semimartingale with absolutely continuous characteristics, 
	the second differential characteristics $c^{\P_0}$  satisfies
	\begin{equation*}
	(\P_0\times dt)\big[ c^{\P_0} \notin \proj_c(\Theta)\big]>0.
	\end{equation*}
	%which means that $\P_0 \notin \fP^{ac}_{sem}(\Theta)$.
	
	Summarizing, we have shown that if Condition~(J) fails, we can find a sequence of probability measures in $\fP^{ac}_{sem}(\Theta)$ converging to some element $\P_0$ which is not an element in $\fP^{ac}_{sem}(\Theta)$. Hence closedness of $\fP^{ac}_{sem}(\Theta)$ fails as desired. 
	
	Finally, for the compactness criterion, observe that the compactness assumption on $\Gamma_0 \subseteq \fM_1(\R^d)$ implies tightness of $\fP^{ac}_{sem}(\Theta)(\Gamma_0)$, see Corollary \ref{co:tightness-P}. Moreover, $\Gamma_0$ being closed implies that any sequence $(\P_n)\subseteq \fP^{ac}_{sem} (\Theta)(\Gamma_0)$ converging to some $\P_0 \in \fM_1(\Omega)$ satisfies $\P_0\circ \ov X_0^{-1}\in \Gamma_0$. Hence, the compactness characterization follows directly from the closedness one proved above. 
\end{proof}
%
%
%%%%%%%%%%%%%%%%%%%%%%%%%%%%%%%
Next, we provide the proof of Theorem~\ref{thm:compactness2} which is our second compactness criterion for the set $\fP_{sem}^{ac}(\Theta)(\Gamma_0)$ of semimartingale laws.
\begin{proof}[Proof of Theorem~\ref{thm:compactness2}]
	$1)\Longrightarrow 2)\colon$ Let $\Gamma_0 \subseteq \fM_1(\R^d)$ be a compact subset of distributions on $\R^d$. As by assumption, $\Theta$ satisfies Condition~(B), we obtain from Corollary~\ref{co:tightness-P} that $\fP_{sem}^{ac}(\Theta)(\Gamma_0)$ is tight. Hence, it remains to show that $\fP_{sem}^{ac}(\Theta)(\Gamma_0)$ is closed. To that end, let $(\P_n)_{n \in \N} \subseteq \fP_{sem}^{ac}(\Theta)(\Gamma_0)$  converge to some $\P_0 \in \fM_1(\Omega)$. We need to show that $\P_0 \in \fP^{ac}_{sem}(\Theta)(\Gamma_0)$. Consider the corresponding sequence $(\ov\P_n)_{n \in \N}\subseteq \ov \fP^{ac}_{sem}(\Theta)(\Gamma_0)$ on the enlarged space $\ov \Omega$ defined by \eqref{eq:def:P-to-enlarge-P}. %  let $\ov \fP_{sem}^{ac}(\Theta)(\Gamma_0)$ be the corresponding set of of semimartingale laws on the enlarged space introduced in Subsection~\ref{subsec:enlarg}. 
	By Proposition~\ref{prop:tightness-enlarge}, $\ov \fP_{sem}^{ac}(\Theta)(\Gamma_0)$ is tight, hence there exists a subsequence $(\ov\P_{n_k})_{k \in \N}$ which converges weakly to some law $\ov \P_0 \in \fM_1(\ov \Omega)$. Observe that closedness of $\Gamma_0$ assures that $\ov \P_0 \circ \ov X_0^{-1} \in \Gamma_0$, whereas Proposition~\ref{prop:X-semim-cont-char} implies that $\ov \P_0 \in \ov\fP^{ac,w}_{sem}$. In particular, $\overline{\widetilde{C}}$ is the modified second characteristic of $\ov X$ under each $\ov \P_n$-$\ov \F$ and also under $\ov \P_0$-$\ov \F$. 
	%Moreover, in terms of the Skorokhod representation theorem, we have that $(z^{n,\overline{\widetilde{C}}})$ converges to  $(z^{0,\overline{\widetilde{C}}})$ in $\Omega^c_{d^2} \ \lambda$-a.s.
	Following the arguments of the proof of Corollary~\ref{co:closedness-P-Theta-enlarge}, we see that
	\begin{equation*}
	\E^{\ov \P_{n_k}}\Big[\int_0^T \, \mathbbm{1}_{\big\{m\big(\ov B_{t+\frac{1}{m}} - \ov B_t,\, \overline{\widetilde{C}}_{t+\frac{1}{m}} - \overline{\widetilde{C}}_t,\, (\ov V^i_{t + \frac{1}{m}} - \ov V^i_t)_{i \in \N}\big) \in \mbox{cl}(\ov \varphi(u(\Theta)))\big\}}\,dt\Big]=T.
	\end{equation*} 
	In terms of the Skorokhod representation, it means that
	\begin{equation*}
	\E^\lambda\Big[\int_0^T \mathbbm{1}_{\big\{m\big(z^{{n_k},\ov B}_{t+\frac{1}{m}} - z^{{n_k},\ov B}_t,\, z^{{n_k},\overline{\widetilde{C}}}_{t+\frac{1}{m}} - z^{{n_k},\overline{\widetilde{C}}}_t,\, (z^{{n_k},\ov V^i}_{t + \frac{1}{m}} - z^{{n_k},\ov V^i}_t)_{i \in \N}\big) \in \mbox{cl}(\ov \varphi(u(\Theta)))\big\}}\, dt\Big] = T.
	\end{equation*}
	Observe that $(z^{{n_k},\overline{\widetilde{C}}})$ converges to  $(z^{0,\overline{\widetilde{C}}})$ in $\Omega^c_{d^2} \ \lambda$-a.s. Therefore, when letting $k$ tend to infinity, 
	%the dominated convergence theorem and closedness of  $\mbox{cl}(\ov \varphi(u(\Theta)))$ assures 
	we get that
	\begin{equation*}
	\E^\lambda\Big[\int_0^T \mathbbm{1}_{\big\{m\big(z^{0,\ov B}_{t+\frac{1}{m}} - z^{0,\ov B}_t,\, z^{0,\overline{\widetilde{C}}}_{t+\frac{1}{m}} - z^{0,\overline{\widetilde{C}}}_t,\, (z^{0,\ov V^i}_{t + \frac{1}{m}} - z^{0,\ov V^i}_t)_{i \in \N}\big) \in \mbox{cl}(\ov \varphi(u(\Theta)))\big\}}\, dt\Big] = T.
	\end{equation*}
	This demonstrates that $m(\ov B_{t+\frac{1}{m}} - \ov B_t, \overline{\widetilde{C}}_{t+\frac{1}{m}} - \overline{\widetilde{C}}_t, (\ov V^i_{t + \frac{1}{m}} - \ov V^i_t)_{i \in \N}) \in \mbox{cl}(\ov \varphi(u(\Theta)))$, $\overline{\P}_0 \times dt$-a.e. for all $m \in \N$. Now letting $m$ tend to infinity, we obtain that $(\ov b_t, \overline{\widetilde{c}}_t, \ov v_t) \in \mbox{cl}(\ov \varphi(u(\Theta)))$ holds $\overline{\P}_0 \times dt$-a.e., where $\overline{\widetilde{c}}_t:=\limsup_{n\to \infty} n( \overline{\widetilde{C}}_t-\overline{\widetilde{C}}_{(t-\frac{1}{n})\vee 0}), \,  t\in[0,T]$. Moreover, due to the assumption 1) that $u(\Theta)$ is  closed, we can argue as in \eqref{eq:char-in-Theta} to obtain the relation
	\begin{equation*}
	(\ov b^{\ov \P_0}, \overline{\widetilde{c}}^{\ov \P_0}, \ov F^{\ov \P_0}) \in u(\Theta) \ \ \ov \P_0\times dt\mbox{-a.s.} \ \ \Longleftrightarrow \ \ (\ov b, \overline{\widetilde{c}}, \ov v) \in \mbox{cl}(\ov \varphi(u(\Theta)))  \ \ \ov \P_0\times dt\mbox{-a.s.,}
	\end{equation*}
	This and the definition of the function $u$ ensure that $(\ov b^{\ov \P_0}, \overline{c}^{\ov \P_0}, \ov F^{\ov \P_0}) \in \Theta \ \ov \P_0\times dt\mbox{-a.s.}$. Therefore, Lemma~\ref{le:push-forward} and Lemma~\ref{le:identify-limit-P-0} imply that $\P_0 =\ov \P_0 \circ \ov X^{-1} \in \fP^{ac}_{sem}(\Theta)(\Gamma_0)$.
	
	$2) \Longrightarrow 3)\colon$ Since $\fP_L(\Theta)\subseteq \fP_{sem}^{ac}(\Theta)(\{\delta_0\})$, tightness of $\fP_L(\Theta)$ follows from the assumption $2)$ that $\fP_{sem}^{ac}(\Theta)(\{\delta_0\})$ is compact. Therefore, it remains to show that $\fP_L(\Theta)$ is closed. To that end, let $(\P_n)_{n \in \N} \subseteq \fP_{sem}^{ac}(\Theta)(\Gamma_0)$  converge to some $\P_0 \in \fM_1(\Omega)$. Due to Assumption 2), $\fP_{sem}^{ac}(\Theta)(\{\delta_0\})$ is compact, hence $\P_0 \in \fP_{sem}^{ac}(\Theta)(\{\delta_0\})$. Thus, it remains to show that $\P_0 \in \fP_L$. Now, since each $\P_n \in \fP_L$ is a L\'evy law, \cite[VII.2.6, p.395]{JacodShiryaev.03}  assures that  indeed $\P_0 \in \fP_L$.
	
	$3) \Longrightarrow 4)\colon$
	Let $((b^n,c^n,F^n))_{n \in \in \N} \subseteq \Theta$. We need to show that there exists a subsequence $((b^{n_k},c^{n_k},F^{n_k}))_{k\in  \N}$ and  $(b^0,c^0,F^0) \in \Theta$ such that for all $f \in C^2_b(\R^d)$ the subsequence of functions $((\fL^{(b^{n_k},c^{n_k},F^{n_k})}f))_{k \in \N}$ converges pointwise to the function $(\fL^{(b^0,c^0,F^0)}f)$. 
	To that end, fix any $f \in C^2_b(\R^d)$. 
	%Let $(\fL^{(b^n,c^n,F^n)}f)_{n \in  \N}$ be a sequence in $\{\fL^{(b,c,F)}f: (b,c,F) \in \Theta\}$ which converges pointwise to some function $g\colon \R^d \to \R$. We need to show that there exists $(b^0,c^0,F^0) \in \Theta$ such that $g=\fL^{(b^0,c^0,F^0)}f$. To that end, 
	For each $n \in \N$  let $\P_n \in \fP_{L}(\Theta)$ be the L\'evy law with corresponding L\'evy triplet  $(b^n,c^n,F^n)$. Due to assumption 3), $\fP_{L}(\Theta)$ is compact, hence  there exists a subsequence $(\P_{n_k})_{k \in \N}$ which converges to some L\'evy law $\P_0$ with  L\'evy triplet $(b^0,c^0,F^0)\in \Theta$. We claim that the corresponding subsequence of functions $((\fL^{(b^{n_k},c^{n_k},F^{n_k})}f))_{k \in \N}$ converges pointwise to the corresponding function $(\fL^{(b^0,c^0,F^0)}f)$. To see this, denote by $\widetilde{c}^n$ the modified second L\'evy triplet for each $n \in \N_0$. Then, observe that for each $n \in \N_0$, we have for all $x \in \R^d$ that 
	\begin{align*}
	& \ (\fL^{(b^n,c^n,F^n)}f)(x)\\
	= & \ \textstyle\sum\limits_{i = 1}^d b^{n,i} \frac{\partial f}{\partial x^i}(x) + \frac{1}{2}\textstyle\sum\limits_{i,j=1}^d c^{n,ij}\frac{\partial^2 f}{\partial x^i \partial x^j}(x) \\
	& \ + \int_{\R^d} \Big(f(x + y) -f(x) - \textstyle\sum\limits_{i =1}^d\frac{\partial f}{\partial x^i}(x)h^i(y)\Big) \, F^n(dy) \\
	=& \  \textstyle\sum\limits_{i = 1}^d b^{n,i} \frac{\partial f}{\partial x^i}(x) + \frac{1}{2}\textstyle\sum\limits_{i,j=1}^d \Big(c^{n,ij} + \int_{\R^d}h^i(y)h^j(y) \, F^n(dy)\Big)\frac{\partial^2 f}{\partial x^i \partial x^j}(x) \\
	& \ + \int_{\R^d} \Big(f(x + y) -f(x) - \textstyle\sum\limits_{i =1}^d\frac{\partial f}{\partial x^i}(x)h^i(y) - \frac{1}{2}\textstyle\sum\limits_{i,j=1}^d\frac{\partial^2 f}{\partial x^i \partial x^j}(x) h^i(y)h^j(y)\Big) \, F^n(dy) \\
	=& \  \textstyle\sum\limits_{i = 1}^d b^{n,i} \frac{\partial f}{\partial x^i}(x) + \frac{1}{2}\textstyle\sum\limits_{i,j=1}^d \widetilde{c}^{n,ij}\frac{\partial^2 f}{\partial x^i \partial x^j}(x) +\int_{\R^d} R_x(y) \, F^n(dy), 
	\end{align*}
	where $R_x(y) = f(x + y) -f(x) - \sum_{i =1}^d\frac{\partial f}{\partial x^i}(x)h^i(y) - \frac{1}{2}\sum_{i,j=1}^d\frac{\partial^2 f}{\partial x^i \partial x^j}(x) h^i(y)h^j(y)$. Since $h$ is a bounded continuous function on $\R^d$ with $h(y)=y$ in a neighborhood of the origin and $f \in C^2_b(\R^d)$, the basic property of Taylor expansion guarantees that $R_x$ is a bounded continuous function on $\R^d$ satisfying $R_x(y) = o(|y|^2)$ as $y \rightarrow 0$. %due to  and therefore is an element in $C_3(\R^d)$.
	Thus, since the subsequence of L\'evy laws $(\P_{n_k})_{k \in \N}$ converges weakly to the L\'evy law $\P_0$, \cite[Theorem~VII.2.9, p.396]{JacodShiryaev.03} guarantees that the sequence $((b^{n_k}, \widetilde c^{n_k},F^{n_k}))_{k \in \N}$ converges to $(b^{0}, \widetilde{c}^{0},F^{0})$ and the sequence $(\int_{\R^d}
	R_x(y)\, F^{n_k}(dy))_{k \in \N}$ converges to $\int_{\R^d}
	R_x(y)\, F^{0}(dy)$. This, together with the above representation for  $(\fL^{(b^n,c^n,F^n)}f)(x)$, $n \in \N_0$, demonstrates that the subsequence $(\fL^{(b^{n_k},c^{n_k},F^{n_k})}f)_{k \in \N}$ indeed converges pointwise to $(\fL^{(b^{0},c^{0},F^{0})}f)$. %Therefore, the uniqueness of the limit implies that indeed, $g=(\fL^{(b^{0},c^{0},F^{0})}f)$.
	
	$4) \Longrightarrow 5)\colon$ Fix any $x \in \R^d$ and define the function $f_x\colon \R^d \to \mathbb C$ by  $f_x(z):=e^{ix\cdot z}$. Observe that for each L\'evy triplet $(b,c,F)$, we have
	\begin{equation*}
	\psi^{(b,c,F)}(x)= \mathrm{i}x\cdot b - \frac{1}{2}x \cdot c \cdot x + \int_{\R^d}(\mathrm{e}^{\mathrm{i}x \cdot y} - 1 - \mathrm{i}x\cdot h(y) )\, F(dy)= (L^{(b,c,F)}f_x)(0).
	\end{equation*}
	Therefore, we see that 5) follows from assumption 4) applied to both the real and imaginary part of $f_x$.
	
	$5) \Longrightarrow 1)\colon$ 
	Let $((b^n,\widetilde{c}^n,F^n))_{n \in \N}\subseteq u(\Theta)$  be a sequence which converges to some $(\mathfrak{b},\widetilde{\mathfrak{c}},\mathfrak{F})\in \R^d \times \S^d_+ \times \cL$. we need to show that there exists $(b^0, c^0, F^0)\in \Theta$ such that $u(b^0,c^0,F^0)=(\mathfrak{b},\widetilde{\mathfrak{c}},\mathfrak{F})$. To that end, for each $n\in \N$, let $(b^n,c^n,F^n)\in \Theta$ such that $u(b^n,c^n,F^n)= (b^n,\widetilde{c}^n,F^n)$. Moreover, for each $n \in \N$, let $\mu_n$ be the infinitely divisible distribution with L\'evy triplet $(b^n,c^n,F^n)$ and characteristic function $\varphi_{\mu_n}(x):=e^{\psi^{(b^n,c^n,F^n)}(x)}$. Consider the sequence of functions $(\psi^{(b^n,c^n,F^n)})_{n\in \N}$. Due to assumption 5), $\{\psi^{(b,c,F)}: (b,c,F) \in \Theta\}$ is sequentially compact  for the topology of pointwise convergence. Therefore, there exists a subsequence of functions $(\psi^{(b^{n_k},c^{n_k},F^{n_k})})_{k \in \N}$ and a L\'evy triplet $(b^0,c^0,F^0) \in \Theta$ such that the subsequence of functions $(\psi^{(b^{n_k},c^{n_k},F^{n_k})})_{k \in \N}$ converges pointwise to the function $\psi^{(b^0,c^0,F^0)}$. This implies that the subsequence of characteristic functions $(e^{\psi^{(b^{n_k},c^{n_k},F^{n_k})}})_{k \in \N}$ converges pointwise to the characteristic function  $e^{\psi^{(b^{0},c^{0},F^{0})}}$ of the infinitely divisible distribution $\mu_0$ with L\'evy triplet $(b^0,c^0,F^0)$. By L\'evy's continuity theorem, this implies that the subsequence $(\mu_{n_k})_{k \in \N}$ converges weakly to $\mu_0$. Therefore, \cite[Theorem~VII.2.9, p.396]{JacodShiryaev.03} and the definition of the function $u$ in \eqref{eq:Def-func-u} assure that $(u(b^{n_k},c^{n_k},F^{n_k}))_{k \in \N}$ also converges to $u(b^{0},c^{0},F^{0})$. Since by assumption $(u(b^{n_k},c^{n_k},F^{n_k}))_{k \in \N}$ also converges to $(\mathfrak{b},\widetilde{\mathfrak{c}},\mathfrak{F})$, we obtain that indeed, $u(b^0,c^0,F^0)=(\mathfrak{b},\widetilde{\mathfrak{c}},\mathfrak{F})$.
	%let $\mu$ be the infinitely divisible distribution with L\'evy triplet $(\mathfrak{b},\widetilde{\mathfrak{c}},\mathfrak{F}):=u^{-1}(\mathfrak{b},\widetilde{\mathfrak{c}},\mathfrak{F}) $
\end{proof}
%%%%%%%%%%%%%%%%%%%%%%%%%%%%
% % % % % % % % % % % % % % %
We finish this section with the proof of Example~\ref{ex:cond-J} showing that in general, Condition~(J) is not necessary for $\fP^{ac}_{sem}(\Theta)$ to be closed.
\begin{proof}[Proof of Example~\ref{ex:cond-J}]
	Let $d=1$, and consider the set $\Theta\subseteq \R\times [0,\infty)\times \cL$ defined by
	\begin{equation*}
	\Theta:=\Big\{(b,c,F) \Big| \, \supp(F)\subseteq\{|x|\leq 1\},\ b=0, \ c+ \int_{\R} |x|^2\, F(dx)=1\Big\}.
	\end{equation*}
	Clearly, $\Theta$ is closed, convex and satisfies Condition~(B). We claim that $\fP^{ac}_{sem}(\Theta)(\delta_0)$ is compact. By 1) in Theorem~\ref{thm:compactness2}, it suffices to show that $u(\Theta)\subseteq \R^d \times \S^d_+ \times \cL$ is closed. To that end, observe that
	\begin{equation}
	u(\Theta)=\{0\}\times \{1\}\times \big\{F \in \cL \, \big| \, \supp(F)\subseteq\{|x|\leq 1\}\big\}.
	\end{equation}
	Therefore, it remains to show that  $\big\{F \in \cL \, \big| \, \supp(F)\subseteq\{|x|\leq 1\}\big\}\subseteq \cL$ is closed.
	Let $(F_n)_{n\in \N}\subseteq  \big\{F \in \cL \, \big| \, \supp(F)\subseteq\{|x|\leq 1\}\big\}$ converge to some $F_0 \in \cL$. We need to show that $\supp(F_0)\subseteq\{|x|\leq 1\}$. Consider a sequence of functions $(f_k)\subseteq C_b(\R)$ with values in $[0,1]$ such that $f_k=0$ on $\{|x| \leq 1\}$ for each $k$ and $(f_k)$ increasingly converges pointwise to $\mathbbm{1}_{\{|x|>1\}}$. Then, by the monotone convergence theorem
	\begin{equation*}
	\int_{\R^d} \mathbbm{1}_{\{|x|>1\}} \,F_0(dx)= \lim_{k \to \infty} \int_{\R^d} f_k(x) \,F_0(dx).
	\end{equation*}
	Since each $f_k$ is a bounded continuous function vanishing in a neighborhood of the origin, we obtain  for each $k \in \N$ that
	\begin{equation*}
	\int_{\R^d} f_k(x) \,F_0(dx) 
	=
	\lim_{n \to \infty} \int_{\R^d} f_k(x) \,F_n(dx).
	\end{equation*}
	Therefore, as each $F_n$ satisfies $\supp(F_n)\subseteq\{|x|\leq 1\}$ and each $f_k$ satisfies $f_k=0$ on $\{|x| \leq 1\}$, we obtain that
	\begin{equation*}
	\int_{\R^d} \mathbbm{1}_{\{|x|>1\}} \,F_0(dx)=\lim_{k \to \infty}  \lim_{n \to \infty} \int_{\R^d} f_k(x) \,F_n(dx)=0.
	\end{equation*}
	This implies that $\supp(F_0)\subseteq\{|x|\leq 1\}$.
\end{proof}
% % % % % % % % % % % % % % % % % % % % % % % % % % %
%We deduce from the arguments at the beginning of Step 2 of the proof of Theorem~\ref{thm:purely-disc} that  
% % % % % % % % % % % % % % % % % % % % % % % % % % % 
\section{Semimartingale Optimal Transport}\label{sec:optimal-transport} 
The goal of this section is to prove Theorem~\ref{thm:main}, which provides the existence of a minimizer of the primal optimal transport problem introduced in \eqref{eq:primal}, and  also present a corresponding duality result. We follow Tan and Touzi \cite{TanTouzi.11}, and Mikami and Thieullen \cite{MikamiThieullen.06}.

Recall the optimal transport problem \eqref{eq:primal} defined by 
\begin{equation*}
V(\mu_0,\mu_1):= \inf_{\P \in \fP_\Theta(\mu_0,\mu_1)} \fJ(\P):= \inf_{\P \in \fP_\Theta(\mu_0,\mu_1)}\E^\P \Big[\int_0^1 L(t,X,b^\P_t,c^\P_t,F^\P_t)\,dt\Big],
\end{equation*}
where we write $\fP_\Theta\equiv\fP^{ac}_{sem}(\Theta)$ to shorten the notation.
\begin{remark}\label{rem:compact-mu-0-mu-1}
	Under the assumption on $\Theta$ stated in Theorem~\ref{thm:main}, we have as a consequence of Theorem~\ref{thm:compactness} that $\fP_\Theta(\mu_0,\mu_1)$ is compact. Indeed, tightness is clear as $\fP_\Theta(\mu_0,\mu_1)\subseteq \fP_\Theta$. Closedness follows from the observation that for any sequence $(\P_n)\subseteq \fP_{\Theta}$ converging to some $\P_0 \in \fP_\Theta$, we have the convergence of $(\P_n \circ X_0^{-1})_{n \in \N}$ to $\P_0 \circ X_0^{-1}$, and as $\P_0$ has no fixed time of discontinuity, we also have  the convergence of $(\P_n \circ X_1^{-1})_{n \in \N}$ to $\P_0 \circ X_1^{-1}$.
\end{remark} 
%By compactness of the set $\fP_\Theta(\mu_0,\mu_1)$, the existence of a minimizer  $\widehat\P \in \fP_\Theta(\mu_0,\mu_1)$ of the above optimal transport problem follows directly once we have derived the lower semicontinuity of the map
%\begin{equation}\label{eq:lsc-value}
%\fM_1(\R^d)\times \fM_1(\R^d) \to [0,\infty], \quad \quad (\mu_0,\mu_1)\mapsto V(\mu_0,\mu_1).
%\end{equation}

We start with a useful lemma which allows us to consider the optimal transport problem introduced in \eqref{eq:primal}, but  on the enlarged space introduced in Section~\ref{sec:enlarge}, and give its relation to the original one. To that end, recall the function $\ov \varphi:\R^d \times \S^d_+ \times \cL \to \R^d \times \S^d_+ \times \R^\N$ introduced in \eqref{eq:def-ov-varphi} (with corresponding function $\varphi$). We have seen that $\ov \varphi$ is an additive, positive homogeneous map being a  bijection onto its image, see Section~\ref{subsec:enlarg}. 

Define the corresponding function $\ov L: [0,1] \times \Omega \times \ov \varphi(\Theta) \to [0,\infty)$ by setting
\begin{equation*}
\ov L(t,\omega,\ov \varphi(b,c,F)):= L(t,\omega, b,c,F),
\end{equation*}
and define for any $\ov \P \in \fP_{\Theta}(\mu_0,\mu_1)$ the associated transportation cost
\begin{equation*}
\ov \fJ(\ov \P):=\E^{\ov \P} \Big[\int_0^1 \ov L\big(t,\ov X,\ov \varphi (b^{\ov \P}_t,c^{\ov \P}_t,F^{\ov \P}_t)\big)\,dt\Big]= \E^{\ov \P} \Big[\int_0^1 \ov L\big(t,\ov X,\ov b_t, \ov c_t, \ov v_t\big)\,dt\Big],
\end{equation*}
where $\ov b, \ov c, \ov v$ are defined as in Section~\ref{subsec:enlarg}. 
\begin{lemma}\label{le:from-P-to-P-enlarge-and-viceversa-J} Under the conditions of Theorem~\ref{thm:main}, the following hold true:
	
	\vspace*{0.15cm}
	\noindent
	(i) For any $\P \in \fP_{\Theta}(\mu_0,\mu_1)$, let $\ov \P \in \ov \fP_{\Theta}(\mu_0,\mu_1)$ be the corresponding  measure on the enlarged space $\ov \Omega$ defined in \eqref{eq:def:P-to-enlarge-P}. Then, we have $\fJ(\P)=\ov \fJ(\ov\P)$.
	
	\vspace*{0.1cm}
	\noindent
	(ii) Conversely, for any $\ov \P \in \ov \fP_{\Theta}(\mu_0,\mu_1)$, let $\P:=\ov \P \circ \ov X^{-1} \in \fP_{\Theta}(\mu_0,\mu_1)$ be the push forward measure defined in \eqref{eq:def:from-P-enlarge-to-P}. Then, we have $\ov \fJ(\ov \P)\geq \fJ(\P)$.
\end{lemma}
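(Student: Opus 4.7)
The plan is to handle the two parts separately: (i) is a direct pushforward identification, while (ii) rests on convexity of $L$ combined with conditional Jensen when passing from the filtration $\ov\F$ on the enlarged space to the smaller filtration $\ov\F^{\ov X}$ generated by $\ov X$ alone.

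For (i), the map $\Psi^\P$ is engineered precisely so that under $\ov\P := \P \circ (\Psi^\P)^{-1}$ the joint law of $(\ov X, \ov B, \ov C, (\ov V^i)_{i \in \N})$ coincides with the joint law under $\P$ of $(X, B^{\F^\P_+}, C^{\F^\P_+}, (V^{i,\F^\P_+})_{i \in \N})$. Applying the $\limsup$-differentiation recorded in \eqref{eq:char-in-Theta-help} componentwise on both sides, the triple $(\ov b, \ov c, \ov v)$ under $\ov\P$ agrees $\ov\P \times dt$-a.e.\ with the $\Psi^\P$-pullback of $(b^\P, c^\P, \varphi(F^\P))$. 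Since by the very definition of $\ov L$ one has $\ov L(t, \omega, \ov\varphi(b, c, F)) = L(t, \omega, b, c, F)$, the change-of-variables formula immediately yields $\ov\fJ(\ov\P) = \fJ(\P)$.

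For (ii), Lemma~\ref{le:push-forward} gives $(b^\P, c^\P, F^\P) = (\ov b^{\ov X, \ov\P}, \ov c^{\ov X, \ov\P}, \ov F^{\ov X, \ov\P}) \circ \Phi$, where the right-hand side denotes the $\ov\P$-$\ov\F^{\ov X}$-differential characteristics of $\ov X$ under the smaller filtration. The passage from $\ov\F$- to $\ov\F^{\ov X}$-characteristics is governed by dual $\ov\F^{\ov X}$-predictable projection of the integrated characteristics, which at the density level (along the same lines as in the proof of Lemma~\ref{le:push-forward}) gives, $\ov\P \times dt$-a.e.,
\begin{align*}
\ov b^{\ov X, \ov\P}_t &= \E^{\ov\P}\bigl[\ov b^{\ov\P}_t \,\bigm|\, \ov\cF^{\ov X}_t\bigr], \qquad \ov c^{\ov X, \ov\P}_t = \E^{\ov\P}\bigl[\ov c^{\ov\P}_t \,\bigm|\, \ov\cF^{\ov X}_t\bigr], \\
\int_{\R^d} g_i(x)\, \ov F^{\ov X, \ov\P}_t(dx) &= \E^{\ov\P}\Bigl[\int_{\R^d} g_i(x)\, \ov F^{\ov\P}_t(dx) \,\Bigm|\, \ov\cF^{\ov X}_t\Bigr] \quad \text{for every } g_i \in \cC^+(\R^d).
\end{align*}

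The convexity of $L(t, \omega, \cdot)$ on $\Theta$ combined with conditional Jensen then delivers, $\ov\P \times dt$-a.e.,
\[
L\bigl(t, \ov X, \ov b^{\ov X, \ov\P}_t, \ov c^{\ov X, \ov\P}_t, \ov F^{\ov X, \ov\P}_t\bigr) \leq \E^{\ov\P}\bigl[L\bigl(t, \ov X, \ov b^{\ov\P}_t, \ov c^{\ov\P}_t, \ov F^{\ov\P}_t\bigr) \,\bigm|\, \ov\cF^{\ov X}_t\bigr];
\]
integrating in $t$, taking $\ov\P$-expectation, and using part (i) applied to the canonical lift of $\P$ to identify the left-hand side with $\fJ(\P)$, the conclusion $\fJ(\P) \leq \ov\fJ(\ov\P)$ follows. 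The principal obstacle is the measure-valued Jensen step: since $L$ is convex in the infinite-dimensional argument $F \in \cL$, the cleanest route is to represent $L$ via Fenchel-Moreau as a supremum of functionals that are affine in $(b, c, F)$ through the continuous linear evaluations $F \mapsto \int g_i\, dF$, so that Jensen can be applied coordinatewise in $\R^d \times \S^d_+ \times \R^\N$, with the convergence-determining property of $\cC^+(\R^d)$ ensuring that the passage to the supremum recovers $L$ itself.
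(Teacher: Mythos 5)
Your argument is correct and follows essentially the same route as the paper: part (i) is the same pushforward/change-of-variables computation, and part (ii) is the paper's combination of Lemma~\ref{le:semimart-char-smaller-filtr} (the $\ov \F^{\ov X}$-differential characteristics as optional projections, i.e.\ conditional expectations with respect to the augmented filtration $\ov \cF^{\ov X,\ov \P}_{t+}$ rather than $\ov\cF^{\ov X}_t$), conditional Jensen for the convex map $\ov L(t,\omega,\cdot)$ on $\ov\varphi(\Theta)\subseteq\R^d\times\S^d_+\times\R^\N$, and the law-transfer identity $(b^\P,c^\P,F^\P)=(\ov b^{\ov X,\ov\P},\ov c^{\ov X,\ov\P},\ov F^{\ov X,\ov\P})\circ\Phi$ from Lemma~\ref{le:push-forward}, which is exactly how the paper identifies the resulting expectation with $\fJ(\P)$ (your closing Fenchel--Moreau remark is an optional justification of the Jensen step that the paper simply invokes directly).
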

\begin{proof}
	To see part (i), let $\P \in \fP_{\Theta}(\mu_0,\mu_1)$ and let $\ov \P:=\P\circ (\Psi^\P)^{-1} \in \ov \fP_{\Theta}(\mu_0,\mu_1)$, where $\Psi^{\P}: \Omega \to \ov \Omega$ is defined just above \eqref{eq:def:P-to-enlarge-P}. Moreover, recall the set $\cC^+(\R^d):=\{g_i\,|\,i \in \N\}$ and the processes $\ov b, \ov c$ and $\ov v:=(\ov v^1,\ov v^2,\dots)$ defined %just above \eqref{eq:char-in-Theta}.
	in Section~\ref{subsec:enlarg}. Then,
	% using that $(\ov b_t, \ov c_t, \ov v_t)=\big((b^\P,c^\P,(\int_{\R^d} g_i(x)\,F^\P_t(dx))_{i \in \N}\big)$, where $\cC^+(\R^d):=\{g_i\,|\,i \in \N\}$, and 
	%the definition of $\ov L$ yields
	\begin{align*}
	\fJ(\P)
	&=\E^\P \Big[\int_0^1 L(t,X,b^\P_t,c^\P_t,F^\P_t)\,dt\Big]\\
	&=\E^{\P} \Big[\int_0^1 \ov L(t,\ov X,\ov b^{\ov \P}_t,\ov c^{\ov \P}_t,(\int_{\R^d} g_i(x)\,\ov F^{\ov\P}_t(dx))_{i \in \N})\circ \Psi^{\P}\,dt\Big]\\
	&=\E^{\P} \Big[\int_0^1 \ov L(t,\ov X,\ov b_t,\ov c_t,\ov v_t)\circ \Psi^{\P}\,dt\Big]\\
	&=\E^{\ov \P} \Big[\int_0^1 \ov L(t,\ov X,\ov b_t,\ov c_t,\ov v_t)\,dt\Big]\\
	&= \ov \fJ(\ov\P).
	\end{align*}
	
	For part (ii), let $\ov \P  \in \ov \fP_{\Theta}(\mu_0,\mu_1)$ and $\P:=\ov \P \circ \ov X^{-1}\in\fP_{\Theta}(\mu_0,\mu_1)$ be the push forward measure defined in  Lemma~\ref{le:push-forward}. Denote by $(\ov b^{\ov\P},\ov c^{\ov\P},\ov F^{\ov\P})$ the $\ov \P$-$\ov \F$-differential characteristics of $\ov X$. Due to Assumption~\ref{ass:cost-function-L} and its definition, the function $\ov L(t,\omega,\cdot,\cdot,\cdot)$ is convex on $\ov \varphi(\Theta)$ for each $t,\omega$. Denote by 
	$\ov\F^{\ov X, \ov \P}_+$  
	the usual $\ov \P$-augmentation of $\ov\F^{\ov X}$. Using Fubini's theorem, Jensen inequality and the definition of $\ov \P$ yields
	\begin{align*}
	& \ \ov \fJ(\ov \P)\\
	= & \ \int_0^1 \E^{\ov \P} \Big[ \ov L(t,\ov X,\ov b_t,\ov c_t,\ov v_t)\,\Big]\,dt\\
	\geq & \ \int_0^1 \E^{\ov \P} \Big[ \ov L\big(t,\ov X,\E^{\ov\P}\big[(\ov b_t,\ov c_t,\ov v_t)\,\big| \, \ov\cF^{\ov X, \ov \P}_{t+}\big] \big)\Big]\,dt\\
	= & \ \int_0^1 \E^{\ov \P} \Big[ \ov L\big(t,\ov X,\E^{\ov\P}\big[\, \big(\ov b^{\ov\P}_t,\ov c^{\ov\P}_t,(\int_{\R^d}g_i(x)\,\ov F^{\ov \P}_t(dx)\,)_{i \in \N} \big)\,\big| \, \ov\cF^{\ov X, \ov \P}_{t+}\big] \big)\Big]\,dt.
	\end{align*} 
	By Lemma~\ref{le:semimart-char-smaller-filtr},  the differential characteristics $(\ov b^{\ov X, \ov \P}, \ov c^{\ov X, \ov \P}, \ov F^{\ov X, \ov \P})$ of $\ov X$ under $\ov \P$-$\ov \F^{\ov X}$ (which are the same under $\ov \P$-$\ov \F^{\ov X, \ov \P}_+$) are  optional projections of the differential characteristics of $\ov X$ under $\ov \P$-$\ov \F$. Therefore, we obtain from the definition of  $\ov \varphi$ defined in \eqref{eq:def-ov-varphi} that
	%, we conclude that
	\begin{align*}
	& \ \int_0^1 \E^{\ov \P} \Big[ \ov L\big(t,\ov X,\E^{\ov\P}\big[\, \big(\ov b^{\ov\P}_t,\ov c^{\ov\P}_t,(\int_{\R^d}g_i(x)\,\ov F^{\ov \P}_t(dx) )_{i \in \N}\,\big)\,\big| \, \ov\cF^{\ov X, \ov \P}_{t+}\big] \big)\Big]\,dt \\
	= & \  \int_0^1 \E^{\ov \P} \Big[ \ov L\big(t,\ov X,\ov b^{\ov X, \ov \P}_t,\ov c^{\ov X, \ov \P}_t,(\int_{\R^d}g_i(x)\,\ov F^{\ov X, \ov \P}_t(dx) )_{i \in \N}\,\big)\,\Big]\,dt \\
	= & \ \int_0^1 \E^{\ov \P} \Big[ \ov L\big(t,\ov X, \ov\varphi( \ov b^{\ov X, \ov \P}_t,\ov c^{\ov X, \ov \P}_t,\ov F^{\ov X, \ov \P}_t) \big)\,\Big]\,dt. 
	\end{align*}
	We know from the proof of Lemma~\ref{le:push-forward} that
	\begin{align*}
	& \ \mbox{ the law of  $\ov\varphi(\ov b^{\ov X, \ov \P},\ov c^{\ov X, \ov \P},\ov F^{\ov X, \ov \P})$ under $\ov \P$}\\
	= & \  \mbox{ the law of  $\ov\varphi(\ov b^{\ov X, \ov \P},\ov c^{\ov X, \ov \P},\ov F^{\ov X, \ov \P}) \circ \Phi$ under $\P$ \ }
	\end{align*}
	and that the differential characteristics of $X$ under $\P$-$\F$ satisfy
	\begin{equation*}
	(b^\P, c^\P, F^\P) = (\ov b^{\ov X, \ov \P}  \circ \Phi,\ov c^{\ov X, \ov \P}  \circ \Phi, \ov F^{\ov X, \ov \P}  \circ \Phi).
	\end{equation*}
	Therefore, we conclude that
	\begin{align*}
	\int_0^1 \E^{\ov \P} \Big[ \ov L\big(t,\ov X, \ov\varphi( \ov b^{\ov X, \ov \P}_t,\ov c^{\ov X, \ov \P}_t,\ov F^{\ov X, \ov \P}_t)\,\big)\,\Big]\,dt
	&= \int_0^1 \E^{\P} \Big[  L(t,X, b^{\P}_t, c^{\P}_t, F^{\P}_t)\,\Big]\,dt\\
	& = \fJ(\P).
	\end{align*}
\end{proof}
Now we are able to prove the existence of a minimizer $\widehat{\P} \in \fP_{\Theta}(\mu_0,\mu_1)$ for
% the primal optimal transport problem 
\eqref{eq:primal}.
\begin{lemma}\label{le:lsc-value-fct}
	The function
	\begin{equation*}
	\fM_1(\R^d)\times \fM_1(\R^d) \to [0,\infty], \quad \quad (\mu_0,\mu_1)\mapsto V(\mu_0,\mu_1)
	\end{equation*}
	is lower semicontinuous. As a consequence, there exists $\widehat{\P} \in \fP_{\Theta}(\mu_0,\mu_1)$ satisfying
	\begin{equation*}
	\fJ(\widehat \P)= \inf_{\P \in \fP_\Theta(\mu_0,\mu_1)} \fJ(\P).
	\end{equation*} 
\end{lemma}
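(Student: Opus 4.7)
The plan is to first establish the lower semicontinuity of $V$, from which the existence of a minimizer will follow by a standard argument: for a fixed pair $(\mu_0,\mu_1)$ with $V(\mu_0,\mu_1)<\infty$ I pick a minimising sequence $(\P_n)\subseteq \fP_{\Theta}(\mu_0,\mu_1)$, extract a weak accumulation point $\widehat{\P}\in \fP_{\Theta}(\mu_0,\mu_1)$ using the compactness noted in Remark~\ref{rem:compact-mu-0-mu-1}, and apply lower semicontinuity with the constant marginal pair to conclude $\fJ(\widehat{\P})\leq \liminf_n \fJ(\P_n)=V(\mu_0,\mu_1)$.

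For the lower semicontinuity, I take $(\mu_0^n,\mu_1^n)\to (\mu_0,\mu_1)$ in $\fM_1(\R^d)\times \fM_1(\R^d)$ and, after passing to a subsequence, may assume that $V(\mu_0^n,\mu_1^n)$ converges to a finite value (otherwise there is nothing to prove). Pick $\P_n\in \fP_{\Theta}(\mu_0^n,\mu_1^n)$ with $\fJ(\P_n)\leq V(\mu_0^n,\mu_1^n)+1/n$, and lift each $\P_n$ to $\ov\P_n\in \ov\fP^{ac}_{sem}(\Theta)$ via \eqref{eq:def:P-to-enlarge-P}; Lemma~\ref{le:from-P-to-P-enlarge-and-viceversa-J}(i) then yields $\ov\fJ(\ov\P_n)=\fJ(\P_n)$. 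Tightness of $(\mu_0^n)$ and Proposition~\ref{prop:tightness-enlarge} produce a further subsequence weakly convergent to some $\ov\P_0\in \fM_1(\ov\Omega)$, and Corollary~\ref{co:closedness-P-Theta-enlarge} together with Conditions~(B) and (J) gives $\ov\P_0\in \ov\fP^{ac}_{sem}(\Theta)$. Set $\P_0:=\ov\P_0\circ \ov X^{-1}$: by Lemma~\ref{le:push-forward} we have $\P_0\in \fP_{\Theta}$, and by Lemma~\ref{le:identify-limit-P-0}, $\P_n\to \P_0$ weakly on $\Omega$. Proposition~\ref{prop:X-semim-cont-char} ensures that $\ov X$ has no fixed time of discontinuity under $\ov\P_0$, hence the evaluations $X_0,X_1$ are $\P_0$-a.s.\ continuous, so $\mu_0^n=\P_n\circ X_0^{-1}\to \P_0\circ X_0^{-1}$ and $\mu_1^n=\P_n\circ X_1^{-1}\to \P_0\circ X_1^{-1}$, which forces $\P_0\in \fP_{\Theta}(\mu_0,\mu_1)$. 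Combining this with Lemma~\ref{le:from-P-to-P-enlarge-and-viceversa-J}(ii) reduces the entire argument to the inequality
\[
\liminf_{n\to\infty} \ov\fJ(\ov\P_n)\ \geq\ \ov\fJ(\ov\P_0),
\]
after which
\[
\liminf_n V(\mu_0^n,\mu_1^n)=\liminf_n \ov\fJ(\ov\P_n)\geq \ov\fJ(\ov\P_0)\geq \fJ(\P_0)\geq V(\mu_0,\mu_1).
\]

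The main obstacle is therefore the lower semicontinuity of $\ov\fJ$ under weak convergence on $\ov\Omega$, for which I envisage a discretise-and-Jensen scheme in the spirit of the arguments in Corollary~\ref{co:closedness-P-Theta-enlarge} and Lemma~\ref{le:from-P-to-P-enlarge-and-viceversa-J}(ii). For every $m\in \N$ define the approximation $\ov\fJ_m$ obtained by replacing the differential $(\ov b_t,\ov c_t,\ov v_t)$ inside $\ov\fJ$ by the averaged increments of $\ov B,\ov C,(\ov V^i)_{i\in\N}$ over the cell of length $1/m$ containing $t$; the averaged triplets still lie in $\ov\varphi(\Theta)$ by convexity of $\Theta$ and the additive positive homogeneity of $\ov\varphi$. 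Convexity of $L$ in its last argument, Jensen's inequality, and the uniform-in-$t$ continuity of $L$ from Assumption~\ref{ass:cost-function-L}(2) give a Jensen-type bound $\ov\fJ_m(\ov\P)\leq (1+\Delta_t L(1/m))\,\ov\fJ(\ov\P)+\Delta_t L(1/m)$ for every $\ov\P\in \ov\fP^{ac}_{sem}(\Theta)$, while Lebesgue's differentiation theorem applied path-by-path to $\ov B,\ov C,\ov V^i$, together with Fatou's lemma and the nonnegativity of $L$, yield $\lim_{m\to\infty}\ov\fJ_m(\ov\P)=\ov\fJ(\ov\P)$. Finally, since $\ov\fJ_m$ depends on the canonical process only through its values at finitely many deterministic times, all of which are continuity times of $\ov\P_0$, standard weak convergence together with the Skorokhod representation theorem and Fatou's lemma gives $\liminf_n\ov\fJ_m(\ov\P_n)\geq \ov\fJ_m(\ov\P_0)$ for each fixed $m$. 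Chaining these three bounds and letting $m\to\infty$ closes the argument.
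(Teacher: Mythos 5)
Your proof follows the paper's argument essentially step for step: lift near-optimal plans to the enlarged space, use Proposition~\ref{prop:tightness-enlarge}, Corollary~\ref{co:closedness-P-Theta-enlarge}, Lemma~\ref{le:push-forward}, Lemma~\ref{le:identify-limit-P-0} and the absence of fixed times of discontinuity to produce a limit $\P_0\in\fP_\Theta(\mu_0,\mu_1)$, compare costs through Lemma~\ref{le:from-P-to-P-enlarge-and-viceversa-J}, and reduce everything to the lower semicontinuity of $\ov\fJ$, which the paper obtains by invoking the discretise-and-Jensen argument of \cite{TanTouzi.11} and \cite{Mikami.02} that you instead sketch explicitly (the existence of $\widehat\P$ then follows, as in the paper, by rerunning the argument with constant marginals rather than by literally "applying lower semicontinuity of $V$"). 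The one point to tighten in your sketch is the assertion that the time-averaged triplets lie in $\ov\varphi(\Theta)$ "by convexity": in this infinite-dimensional lift, convexity a priori only places the averages in $\mathrm{cl}(\ov\varphi(\Theta))$ (which is all that Corollary~\ref{co:closedness-P-Theta-enlarge} ever uses), so before evaluating $\ov L$ and applying Jensen you should justify that these averages are of the form $\ov\varphi(b,c,F)$ with $(b,c,F)\in\Theta$, e.g.\ by showing that Condition~(B) together with closedness of $\Theta$ makes $\ov\varphi(\Theta)$ closed, or by averaging directly at the level of $\Theta$ and using a separation argument.
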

\begin{proof}
	We follow  \cite[Lemma~3.13]{TanTouzi.11}, which goes back to the arguments in \cite[Lemma~3.1]{MikamiThieullen.06}.
	
	Let $(\mu^n_0)_{n \in \N}$ and $(\mu^n_1)_{n \in \N}$ be two sequences in $\fM_1(\R^d)$ converging weakly to $\mu_0$ and $\mu_1$, respectively. We need to show that
	\begin{equation*}
	\liminf_{n \rightarrow \infty} V(\mu^n_0,\mu^n_1) \geq V(\mu_0,\mu_1).
	\end{equation*}
	Without loss of generality, assume that $\liminf_{n \rightarrow \infty} V(\mu^n_0,\mu^n_1) < \infty$. Then, after choosing a subsequence if necessary, we can assume that the sequence $(V(\mu^n_0,\mu^n_1))_{n \in \N}$ is %uniformly 
	bounded, and for each $n \in \N$, there exists a probability measure $\P_n \in \fP_\Theta(\mu^n_0,\mu^n_1)$ such that
	\begin{equation}\label{eq:minimizer-procedure}
	0 \leq \fJ(\P_n) - V(\mu^n_0,\mu^n_1) \leq \frac{1}{n}.
	\end{equation}
	Thanks to Lemma~\ref{le:from-P-to-P-enlarge-and-viceversa-J}, we find for each $n \in \N$  a corresponding measure $\overline{\P}_n \in \ov \fP_\Theta(\mu^n_0,\mu^n_1)$ on the enlarged space satisfying $\overline{\fJ}(\overline{\P}_n) = \fJ(\P_n)$.
	%, so the inequalities above also means $0 \leq \overline{J}(\overline{\P}_n) - V(\mu^n_0,\mu^n_1) \leq \frac{1}{n}$.
	\

	Note that the sequences $(\mu^n_0)_{n \in \N}$ and $(\mu^n_1)_{n \in \N}$ are tight as they converge weakly to $\mu_0$ and $\mu_1$, respectively. Consequently, Proposition~\ref{prop:tightness-enlarge} implies tightness of the sequence $(\overline{\P}_n)_{n \in \N}$. Moreover, we deduce from Corollary~\ref{co:closedness-P-Theta-enlarge} and the arguments in Remark~\ref{rem:compact-mu-0-mu-1} that any limit law $\overline{\P}_0$  of a converging subsequence $(\overline{\P}_{n_k})_{k\in \N}\subseteq(\overline{\P}_n)_{n \in \N}$ is an element of $\ov \fP_\Theta(\mu_0,\mu_1)$.
	% As $\ov \P_{n_k}$ converging to $\P_0$ implies the convergence of $\ov \P_{n_k}\circ \ov X_0^{-1}$ to $\ov \P_{0}\circ \ov X_0^{-1}$, and as  
	%$\overline{\P}_0$ has no fixed time of discontinuities, we also have $\ov \P_{n_k}\circ \ov X_1^{-1}$ converging weakly to $\ov \P_{0}\circ \ov X_1^{-1}$, we can conclude that 
	%$\overline{\P}_0 \in \ov \fP_\Theta(\mu_0,\mu_1)$.
	\
	
	Now, as the cost function satisfy Assumption~\ref{ass:cost-function-L}, we can follow exactly the arguments of \cite[Lemma~3.9]{TanTouzi.11}, which go  back to \cite{Mikami.02}, to derive the lower semicontinuity of the map
	\begin{equation*}
	\ov\fP_\Theta \to [0,\infty], \quad \ov \P \mapsto \ov \fJ(\ov  \P).
	\end{equation*}
	Moreover,  by Lemma~\ref{le:from-P-to-P-enlarge-and-viceversa-J}, we can find a probability measure $\P_0 \in \fP_\Theta(\mu_0,\mu_1)$ such that $\fJ(\P_0) \leq \overline{\fJ}(\overline{\P}_0)$. Hence, we  obtained the lower semicontinuity due to the following inequalities
	\begin{align*}
	\liminf_{n \rightarrow \infty} V(\mu^n_0,\mu^n_1) = \liminf_{n \rightarrow \infty} \fJ(\P_n) = \liminf_{n \rightarrow \infty} \overline{\fJ}(\overline{\P}_n) \geq \overline{\fJ}(\overline{\P}_0) 
	&\geq \fJ(\P_0)\\
	&\geq V(\mu_0,\mu_1).
	\end{align*}
	%which proves the lower semicontinuity.
	
	To obtain the existence of a minimizer $\widehat{\P} \in \fP_{\Theta}(\mu_0,\mu_1)$, choose $(\mu^n_0,\mu^n_1)=(\mu_0,\mu_1)$ and follow the arguments used above from equation \eqref{eq:minimizer-procedure} on to derive the result.
	%, using the compactness of the set $\fP_{\Theta}(\mu_0,\mu_1)$, see Remark~\ref{rem:compact-mu-0-mu-1}. 
\end{proof}
In the rest of this section, we prove the duality result for the value function $V(\mu_0,\mu_1)$ stated in Theorem~\ref{thm:main}. We will use classical convex duality arguments, which require that $V(\mu_0,\mu_1)$ is lower semicontinuous and convex. Whereas the lower semicontinuity of $V(\mu_0,\mu_1)$ was already shown in Lemma~\ref{le:lsc-value-fct}, we can argue exactly the same way as in \cite[Lemma~3.15]{TanTouzi.11} to obtain the convexity of the map
\begin{equation*} %\label{eq:convex-value-fct}
\fM_1(\R^d)\times \fM_1(\R^d) \to [0,\infty], \quad (\mu_0,\mu_1) \mapsto V(\mu_0,\mu_1).
\end{equation*} 
Before starting the proof of the duality result, recall the dual function  
\begin{equation*} 
\mathcal{V}(\mu_0,\mu_1) := \sup_{\lambda_1 \in C_b(\R^d)}\Big\{ \int_{\R^d} \lambda^{\lambda_1}_0(x)\,\mu_0(dx) - \int_{\R^d} \lambda_1(x)\,\mu_1(dx)\Big\},
\end{equation*}
%of \eqref{eq:primal} introduced, 
where
\begin{equation*} 
\lambda_0^{\lambda_1}(x) := \inf_{\P \in \fP_\Theta(\delta_{x})} \E^\P\Big[\int_0^1 L(t,X,b^\P_t,c^\P_t,F^\P_t)\,dt + \lambda_1(X_1)\Big].
\end{equation*}
By arguing exactly as in the proof of \cite[Lemma~3.5]{TanTouzi.11} (using \cite[Theorem~2.1]{NeufeldNutz.13b} for the conditioning and pasting of probability measures), we get immediately the following result.
\begin{lemma}\label{le:meas.selec}
	Let the cost function $L$ satisfy Assumption~\ref{ass:cost-function-L}. Then for any $\lambda_1 \in C_b(\R^d)$, the function $\lambda^{\lambda_1}_0$  is measurable with respect to the Borel $\sigma$-field on $\R^d$ completed by $\mu_0$, and
	\begin{equation*}
	\int_{\R^d} \lambda^{\lambda_1}_0(x)\,\mu_0(dx) = \inf_{\P \in \fP_\Theta(\mu_0)} \E^\P\Big[\int_0^1 L(t,X,b^\P_t,c^\P_t,F^\P_t)\,dt + \lambda_1(X_1)\Big].
	\end{equation*}
	In particular, the integral $\int_{\R^d} \lambda^{\lambda_1}_0(x)\,\mu_0(dx)$ is well-defined.
	%as the function $\lambda^{\lambda_1}_0$ %takes value in $\R \cup \{\infty\}$,
	%is bounded from below (might taking value $\infty$) and is measurable due to  the following lemma.
\end{lemma}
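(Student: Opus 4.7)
The plan is to follow the strategy of \cite[Lemma~3.5]{TanTouzi.11}, which combines a measurable selection argument with the conditioning and pasting of semimartingale laws developed in \cite[Theorem~2.1]{NeufeldNutz.13b}.

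First I would establish measurability of $\lambda_0^{\lambda_1}$. The graph $\{(x,\P) \in \R^d \times \fM_1(\Omega) : \P \in \fP_\Theta(\delta_x)\}$ is the intersection of $\R^d \times \fP_\Theta$ with the Borel set $\{(x,\P) : \P \circ X_0^{-1} = \delta_x\}$; since $\fP_\Theta$ is closed by Proposition~\ref{prop:limit-P-P-enlarge}, this graph is Borel. Moreover, the functional $\P \mapsto \E^\P[\int_0^1 L(t,X,b^\P_t,c^\P_t,F^\P_t)\,dt + \lambda_1(X_1)]$ is lower semicontinuous on $\fP_\Theta$, by the same arguments used in the proof of Lemma~\ref{le:lsc-value-fct}. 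By the Jankov--von Neumann measurable selection theorem (applied to the value function of a lower semianalytic cost over a Borel-graph correspondence), $\lambda_0^{\lambda_1}$ is lower semianalytic, hence universally measurable, and in particular measurable with respect to the $\mu_0$-completion of $\cB(\R^d)$.

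To prove the identity, I would handle both inequalities separately. For the bound $\int \lambda_0^{\lambda_1}\, d\mu_0 \leq \inf_{\P \in \fP_\Theta(\mu_0)} \E^\P[\,\cdot\,]$, fix $\P \in \fP_\Theta(\mu_0)$ and apply \cite[Theorem~2.1]{NeufeldNutz.13b} to disintegrate $\P$ along $X_0$, obtaining a family $(\P_x)_{x \in \R^d}$ with $\P = \int \P_x\, \mu_0(dx)$ and $\P_x \in \fP_\Theta(\delta_x)$ for $\mu_0$-a.e.\ $x$; Fubini and the definition of $\lambda_0^{\lambda_1}$ yield the inequality. For the reverse inequality, given $\varepsilon > 0$, measurable selection yields a universally measurable map $x \mapsto \P_x^\varepsilon \in \fP_\Theta(\delta_x)$ with
\begin{equation*}
\E^{\P_x^\varepsilon}\Big[\int_0^1 L(t,X,b^{\P_x^\varepsilon}_t,c^{\P_x^\varepsilon}_t,F^{\P_x^\varepsilon}_t)\,dt + \lambda_1(X_1)\Big] \leq \lambda_0^{\lambda_1}(x) + \varepsilon,
\end{equation*}
and pasting via \cite[Theorem~2.1]{NeufeldNutz.13b} produces $\P^\varepsilon := \int \P_x^\varepsilon\, \mu_0(dx) \in \fP_\Theta(\mu_0)$; integrating against $\mu_0$ and letting $\varepsilon \downarrow 0$ concludes the other inequality.

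The main obstacle is verifying that both the conditioning and pasting operations preserve membership in $\fP^{ac}_{sem}(\Theta)$, i.e., the property of being a semimartingale law whose absolutely continuous differential characteristics take values in $\Theta$. This is precisely the content of \cite[Theorem~2.1]{NeufeldNutz.13b}: the conditional measures inherit the differential characteristics of the original law (so the $\Theta$-constraint passes to each $\P_x$), and conversely, a universally measurable family $(\P_x)$ of laws with characteristics in $\Theta$ can be assembled into a single probability measure whose characteristics still lie in $\Theta$. Finally, well-definedness of $\int \lambda_0^{\lambda_1}\, d\mu_0$ is immediate from $L \geq 0$ and $\lambda_1 \in C_b(\R^d)$, which give the uniform lower bound $\lambda_0^{\lambda_1} \geq -\|\lambda_1\|_\infty$, so the integral is well-defined in $(-\infty,\infty]$ using the universal measurability established above.
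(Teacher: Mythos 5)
Your proposal is correct and follows essentially the same route as the paper: the paper's proof is simply a reference to the argument of \cite[Lemma~3.5]{TanTouzi.11}, using \cite[Theorem~2.1]{NeufeldNutz.13b} for the conditioning and pasting of semimartingale laws, and your write-up (lower semianalyticity of $\lambda_0^{\lambda_1}$ via measurable selection over the Borel graph, disintegration of $\P\in\fP_\Theta(\mu_0)$ into $\P_x\in\fP_\Theta(\delta_x)$ for one inequality, and pasting an $\varepsilon$-optimal universally measurable kernel for the other) is exactly what that citation unpacks to.
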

To keep the notation short, denote $\mu(\phi) := \int_{\R^d} \phi(x)\, \mu(dx)$ for all $\mu \in \fM_1(\R^d)$, $\phi \in L^1(\mu).$
%\begin{equation*}
%\mu(\phi) := \int_{\R^d} \phi(x)\, \mu(dx), \text{ } \text{ for all } \mu \in \fM_1(\R^d) \text{ and } \phi \in L^1(\mu).
%\end{equation*}
%
%
\begin{proof}[Proof of Theorem~\ref{thm:main}] In Lemma~\ref{le:lsc-value-fct}, we already have proved the existence of a minimizer $\widehat \P \in \fP_{\Theta}(\mu_0,\mu_1)$ of the primal optimal transport problem \eqref{eq:primal}, whenever $V(\mu_0,\mu_1)<\infty$.
	
	To obtain the duality result, we follow the argument of \cite[p.9]{TanTouzi.11} and \cite[Theorem~2.1]{MikamiThieullen.06}. For any fixed initial distribution $\mu_0$, observe that if $V(\mu_0,\mu_1)=\infty$ for every $\mu_1 \in \fM_1(\R^d)$, then $\fJ(\P)=\infty$ for all $\P \in \fP_{\Theta}(\mu_0)$, which by definition of the dual function and Lemma~\ref{le:meas.selec} implies that also $\cV(\mu_0,\mu_1)=\infty$. In this case, the duality result holds true trivially. 
	
	Now, consider the case where the function $\mu_1 \mapsto V(\mu_0,\mu_1)$ is not always equal to infinity. Denote by $\fM_{f,s}(\R^d)$ the space of all finite signed measures on $\R^d$ equipped with the coarsest topology making the maps $\mu \mapsto \mu(\phi)$ continuous for every $\phi \in C_b(\R^d)$. Then, 
	%It is straightforward to check that $\cM_f(\R^d)$ is a locally convex Hausdorff topological vector space. Moreover, 
	the subspace topology on $\fM_1(\R^d)$ coincides with the usual weak topology on it, see \cite[Chapter~8]{Bogachev.07volII}. By extending $V(\mu_0,\cdot)$ from $\fM_1(\R^d)$ to $\fM_{f,s}(\R^d)$ setting $V(\mu_0,\mu_1) = \infty$ for all $\mu_1 \in \fM_{f,s}(\R^d)\setminus \fM_1(\R^d)$, we retain its lower semicontinuity and convexity also on the bigger space $\fM_{f,s}(\R^d)$.
	
	Now, recall that the dual space $\fM_{f,s}(\R^d)^*$ of $\fM_{f,s}(\R^d)$ is defined by 
	\begin{equation*}
	\fM_{f,s}(\R^d)^* = \big\{\mu \mapsto  \int_{\R^d} \phi(x) \, \mu(dx) \mid \phi \in C_b(\R^d)\big\},
	\end{equation*}
	see e.g. \cite[Lemma 3.2.3, p.65]{DeuschelStroock.89}, and the  Legendre transform
	$g:C_b(\R^d) \to (-\infty,\infty]$ of $V(\mu_0,\cdot)$, which is  defined by
	\begin{equation*}
	g(\lambda_1) := \sup_{\mu_1 \in \fM_{f,s}(\R^d)} \big\{\mu_1(\lambda_1) - V(\mu_0,\mu_1)\big\}.
	\end{equation*}
	We can apply the classical convex duality result \cite[Theorem 2.2.15, p.55]{DeuschelStroock.89} to get
	\begin{equation}\label{eq:convex duality}
	V(\mu_0,\mu_1)= \sup_{\lambda_1 \in C_b(\R^d)} \big\{\mu_1(\lambda_1) - g(\lambda_1) \big\}.
	% = \sup_{-\lambda_1 \in C_b(\R^d)} \big\{\mu_1(-\lambda_1) - g(-\lambda_1) \big\}.
	\end{equation}
	Moreover, using the definition of $V(\mu_0,\mu_1)$ and Lemma~\ref{le:meas.selec} leads to the following characterization of the Legendre transform
	\begin{align*}
	g(-\lambda_1) &:= \sup_{\mu_1 \in \fM_{f,s}(\R^d)} \big\{\mu_1(-\lambda_1) - V(\mu_0,\mu_1)\big\} \\
	& =\sup_{\mu_1 \in \fM_1(\R^d)} \big\{\mu_1(-\lambda_1) - V(\mu_0,\mu_1)\big\} \\
	%& = - \inf_{\mu_1 \in \fM_1(\R^d)} \big\{\mu_1(\lambda_1) + V(\mu_0,\mu_1)\big\} \\
	& = - \inf_{\mu_1 \in \fM_1(\R^d)}\inf_{\P \in \fP_{\Theta}(\mu_0,\mu_1)} \big\{\E^\P[\lambda_1(X_1)] + \fJ(\P) \big\} \\
	& = - \inf_{\P \in \fP_{\Theta}(\mu_0)} \big\{\E^\P[\lambda_1(X_1)] + \fJ(\P)\big\} \\
	& = - \mu_0(\lambda^{\lambda_1}_0).
	\end{align*}
	Therefore, the classical convex duality result  \eqref{eq:convex duality} becomes as desired
	\begin{align*}
	V(\mu_0,\mu_1) = \sup_{-\lambda_1 \in C_b(\R^d)} \{\mu_1(-\lambda_1) + \mu_0(\lambda^{\lambda_1}_0)\} 
	&=\sup_{\lambda_1 \in C_b(\R^d)} \{\mu_0(\lambda^{\lambda_1}_0) - \mu_1(\lambda_1) \} \\
	&=\mathcal{V}(\mu_0,\mu_1).
	\end{align*}
\end{proof}
% % % % % % % % % % % % % % % % % % % % % % % % % % %
%
%

% % % % % % % % % % % % % % % % % % % % % % % % % % 
\appendix
\section{Appendix}\label{sec:appendix}
\setcounter{theorem}{0}
\setcounter{equation}{0}
For the appendix, let $(\Omega, \cF,\F,\P)$ be any filtered probability space. Consider a subfiltration $\G\subseteq \F$ % and by $\F^\P_+$ the usual $\P$-augmentation of $\F$. Moreover, 
and denote by $\cP(\G)$ the corresponding $\G$-predictable $\sigma$-field. For any  process $Y$, denote by $^oY$ the optional projection of $Y$ with respect to $\G^\P_+$, i.e. the usual augmentation of $\G$. The reason why we consider the usual augmentation is that the optional projection with respect to $\G$ might not exist, if $\G$ does not satisfy the usual conditions. We recall that by \cite[Proposition~2.2]{NeufeldNutz.13a}, an $\F$-adapted process $X$ having c\`adl\`ag paths is a $\F$-semimartingale if and only if it is a $\F^\P_+$-semimartingale, and the characteristics associated with these filtrations are the same. If in addition $X$ is $\G$-adapted, the same holds true  with respect to $\G$ (but of course, the characteristics may vary between $\F$ and $\G$). 

We present the following useful lemma which identifies the characteristics of a semimartingale $X$ when considering a smaller filtration. The lemma is not stated in full generality, but in such a way that it fits the framework needed for this paper.
\begin{lemma}\label{le:semimart-char-smaller-filtr}
	Let $(\Omega, \cF,\F,\P)$ be a filtered probability space, let $X$ be a stochastic process with c\`adl\`ag paths which is a $\P$-$\F$-semimartingale having absolutely continuous characteristics $(b^{\F}_t \,dt,c^{\F}_t \,d t,F^{\F}_t \,dt)$, and let $\G\subseteq\F$ be a subfiltration such that $X$ is $\G$-adapted. Moreover, assume that the  canonical representation of $X$ under $\F$,
	\begin{equation*}
	X= X_0 + \int_0^\cdot b^{\F}_s \,ds + M^{\F} + \sum_{0 \leq s \leq \cdot} [\Delta X_s -  h(\Delta X_s)], 
	\end{equation*}
	satisfies $\E^\P\Big[\int_0^T |b^\F_s|\, ds\Big]<\infty$ and the local martingale part $M^{\F}$ is  a true $\P$-$\F$-martingale.
	%Moreover, let $\G\subseteq\F$ be a subfiltration such that $X$ is $\G$-adapted. Then the following hold:
	%
	%\vspace*{0.15cm}
	%\noindent
	%1) If the predictable finite variation term satisfies 
	Then the following hold:
	
	\vspace*{0.15cm}
	\noindent
	%absolutely continuous characteristics, where the differential characteristics 
	%$(b^{\F},c^{\F},F^{\F})$ are taking values $\P\times dt$-a.s in some set $\Theta \subseteq \R^d\times \S^d_+ \times \cL$ satisfying Condition (B). Moreover, 
	%Let $\G\subseteq\F$ be a subfiltration such that $X$ is $\G$-adapted. Then 
	1) $X$ is a $\P$-$\G$-semimartingale having absolutely continuous characteristics with differential characteristics of the form
	\begin{equation*}
	(b^{\G},c^{\G},F^{\G}):=(^ob^{\F},c^{\F}, {^o}F^{\F}),
	\end{equation*}
	where
	% $^ob^{\F}$ denotes the optional projection of $b^{\F}$ with respect to $\G^\P_+$, and  
	${^o}F^{\G}$ is defined by setting for any $\cF\otimes \cB([0,T])\otimes \cB(\R^d)$-measurable function $W$
	\begin{equation*}
	\int_{\R^d} W(t,x)\,{^o}F^{\F}_t(dx):= {^o}\Big(\int_{\R^d} W(\cdot,x)\,F_{\cdot}^{\F}(dx)\Big)_t, \quad \ t\in[0,T].
	\end{equation*}
	Moreover, the local martingale part $M^{\G}$ in the canonical representation of $X$ under $\P$-$\G$ is a true $\P$-$\G$-martingale.
	
	\vspace*{0.15cm}
	\noindent
	2) Furthermore, if $\P\times dt$-a.s., $(b^{\F},c^{\F},F^{\F})$ are taking values in some  $\Theta\subseteq \R^d \times \S^d_+ \times \cL$ which is closed, convex
	and satisfies Condition~(B), % WICHTIG FUER COMPACTNESS THETA FUER VARPHI MAP
	then also $(b^{\G},c^{\G},F^{\G}) \in \Theta \ \P\times dt$-a.s.
\end{lemma}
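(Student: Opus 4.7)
\textbf{Proof proposal for Lemma~\ref{le:semimart-char-smaller-filtr}.} The plan is to start from the given $\F$-canonical representation
\[
X= X_0 + \int_0^\cdot b^{\F}_s \,ds + M^{\F} + \sum_{0 \leq s \leq \cdot} [\Delta X_s -  h(\Delta X_s)]
\]
and take $\G^\P_+$-optional projections termwise. The sum-of-big-jumps term is $\G$-adapted (as $X$ is), and its $\P$-a.s.\ identification with $\sum[\Delta X_s-h(\Delta X_s)]$ persists. For the drift, Fubini combined with the finiteness of $\E^\P\big[\int_0^T|b^\F_s|ds\big]$ gives that the $\G^\P_+$-optional projection of $\int_0^\cdot b^{\F}_s \,ds$ coincides with $\int_0^\cdot {}^ob^{\F}_s \,ds$, which is $\G^\P_+$-predictable by continuity. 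Since $M^\F$ is a true $\P$-$\F$-martingale, the process $M^\G_t:=\E^\P[M^\F_t\mid \G^\P_{t+}]$ is (a c\`adl\`ag version of) a true $\P$-$\G^\P_+$-martingale. Since the decomposed $\F$-identity holds $\P$-a.s.\ and each projected term is $\G^\P_+$-adapted, we get
\[
X= X_0 + \int_0^\cdot {}^ob^\F_s\,ds + M^\G + \sum_{0\leq s\leq \cdot}[\Delta X_s - h(\Delta X_s)]\quad \P\mbox{-a.s.},
\]
which is the $\P$-$\G^\P_+$-canonical decomposition. Applying \cite[Proposition~2.2]{NeufeldNutz.13a} transfers this to $\G$, and the first characteristic under $\G$ is then $b^\G={}^ob^\F$.

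For the second characteristic, I would invoke the fact that the continuous local martingale part $X^c$ of a semimartingale is intrinsic to the process $X$ (it does not depend on the filtration, as it can be recovered pathwise from $X$ via the continuous part of its quadratic variation); thus $C^\G=[X^c]=C^\F$, giving $c^\G=c^\F$. For the third characteristic, the jump measure $\mu^X$ is intrinsic to $X$, so it suffices to identify the $\G^\P_+$-predictable compensator. Given any bounded $\G^\P_+$-predictable $W(t,x)$ vanishing near the origin, $W$ is also $\F$-predictable, so by the $\F$-compensation property and Fubini,
\[
\E^\P\Big[\iint W\,d\mu^X\Big]=\E^\P\Big[\int_0^T\!\!\int W(t,x)F^\F_t(dx)\,dt\Big]=\int_0^T \E^\P\big[{}^o\!\big(W\cdot F^\F_\cdot\big)_t\big]dt,
\]
and by the defining property of optional projection at the fixed time $t$ applied to $W(t,\cdot)$ (written as the limit of step processes to commute ${^o}$ with the $dx$-integration, giving precisely the definition of ${}^oF^\F$ in the statement), this equals $\E^\P\big[\int_0^T\!\int W(t,x)\,{}^oF^\F_t(dx)dt\big]$, establishing that ${}^oF^\F_t(dx)dt$ is the $\G^\P_+$-predictable compensator of $\mu^X$.

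For part 2), the plan is to use the embedding $\ov\varphi$ from \eqref{eq:def-ov-varphi} which converts a convex closed constraint in $\R^d\times\S^d_+\times \cL$ into a convex closed constraint in $\R^d\times\S^d_+\times\R^\N$ compatible with linear operations. By \eqref{eq:char-in-Theta}, the hypothesis $(b^\F,c^\F,F^\F)\in \Theta$ $\P\times dt$-a.s.\ is equivalent to $(b^\F_t,c^\F_t,(\int g_i\,dF^\F_t)_{i\in\N})\in \mathrm{cl}(\ov\varphi(\Theta))$ $\P\times dt$-a.s. Since $\mathrm{cl}(\ov\varphi(\Theta))$ is a closed convex subset of a locally convex space, it is the intersection of the closed half-spaces containing it, and each such half-space is preserved under conditional expectation/optional projection (one applies $\E[\,\cdot\mid \G^\P_{t+}]$ to a scalar functional of the vector-valued random variable). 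Applying this componentwise — noting $c^\F$ is unchanged and the ${}^o(\int g_i\,dF^\F)$ coordinate equals $\int g_i\,d\,{}^oF^\F$ by the definition of ${}^oF^\F$ — yields $({}^ob^\F_t,c^\F_t,(\int g_i\,d\,{}^oF^\F_t)_{i\in\N})\in \mathrm{cl}(\ov\varphi(\Theta))$ $\P\times dt$-a.s., and a second appeal to \eqref{eq:char-in-Theta} gives $(b^\G,c^\G,F^\G)\in\Theta$ $\P\times dt$-a.s.

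The main obstacle will be the rigorous identification of the $\G^\P_+$-predictable compensator of $\mu^X$ with the random measure ${}^oF^\F_t(dx)dt$, in particular justifying the exchange of optional projection with the $dx$-integration against variable $W(t,x)$; this requires a monotone-class-type argument reducing to $W$ of product form, combined with care that the optional projection of an integrable, absolutely continuous finite-variation process is again given by integration of the optional projection of its density. The remaining steps — Fubini for the drift, invariance of $X^c$ under filtration shrinkage, and the closed-convex/conditional-expectation argument for $\Theta$ — are standard once the compensator statement is settled.
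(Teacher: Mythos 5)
Your route is essentially the paper's (termwise projection onto $\G^\P_+$, identification of the second characteristic through the continuous part of $[X]$, identification of the $\G$-compensator of $\mu^X$ via $\E^\P[W\ast\mu^X_T]=\E^\P[W\ast({}^oF^\F_t(dx)\,dt)_T]$ and \cite[Theorem~II.1.8]{JacodShiryaev.03}, and the conditional-expectation-into-a-closed-convex-set argument for part 2)), but one step is wrong as stated: it is \emph{not} true that the $\G^\P_+$-optional projection of $\int_0^\cdot b^{\F}_s\,ds$ coincides with $\int_0^\cdot {}^ob^{\F}_s\,ds$. Fubini only gives equality of expectations (equivalently, the dual projection statement); pathwise, ${}^o\big(\int_0^\cdot b^{\F}_s\,ds\big)_t=\E^\P\big[\int_0^t b^{\F}_s\,ds\,\big|\,\G^\P_{t+}\big]$ generally differs from $\int_0^t \E^\P[b^{\F}_s\,|\,\G^\P_{s+}]\,ds$. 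For instance, for $X_t=W_t+tY$ with $Y$ integrable and $\F_0$-measurable and $\G=\F^X$, one has ${}^o\big(\int_0^\cdot b^\F\big)_t=t\,\E^\P[Y|\G^\P_{t+}]\neq\int_0^t\E^\P[Y|\G^\P_{s+}]\,ds$ in general; the two differ by a nontrivial $\G^\P_+$-martingale. Consequently your displayed identity $X=X_0+\int_0^\cdot{}^ob^{\F}_s\,ds+M^\G+\sum_{s\le\cdot}[\Delta X_s-h(\Delta X_s)]$ with $M^\G_t:=\E^\P[M^{\F}_t\,|\,\G^\P_{t+}]$ is false in general: ${}^oM^{\F}$ alone is not the $\G$-martingale part.

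The conclusion $b^{\G}={}^ob^{\F}$ is nevertheless correct, and the repair is exactly the paper's device: set $Z:={}^oB^{\F}-\int_0^\cdot{}^ob^{\F}_s\,ds$, verify (using $\E^\P[\int_0^T|b^{\F}_s|\,ds]<\infty$ and the tower property) that $Z$ is a true $\P$-$\G^\P_+$-martingale, and write
\begin{equation*}
X=X_0+\int_0^\cdot{}^ob^{\F}_s\,ds+\big({}^oM^{\F}+Z\big)+\sum_{0\le s\le\cdot}[\Delta X_s-h(\Delta X_s)]\quad\P\mbox{-a.s.},
\end{equation*}
so that the $\G$-martingale part is $M^{\G}={}^oM^{\F}+Z$, still a true martingale; since $\int_0^\cdot{}^ob^{\F}_s\,ds$ is continuous, hence predictable, this is the canonical representation and yields $b^\G={}^ob^\F$. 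A minor related slip: the continuous local martingale part $X^c$ is not "recovered pathwise from $X$"; what is intrinsic is $[X]$ and hence its continuous part $C=\langle X^c\rangle$, which is exactly what the paper uses to get $C^\G=C^\F$ (your conclusion is the right one, only the justification should be phrased this way). With these corrections the remainder of your argument — the compensator identification (a monotone-class reduction as you indicate) and the half-space/Jensen argument showing $(b^\G,c^\G,(\int g_i\,dF^\G)_{i})\in\mathrm{cl}(\ov\varphi(\Theta))$ $\P\times dt$-a.e. — coincides with the paper's proof.
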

\begin{proof}
	W.l.o.g.,  assume that $(\Omega,\cF)=(\D([0,T],\R^d), \cB(\D([0,T],\R^d)))$ and let $X$ be the canonical process. 
	%First, we recall that by \cite[Proposition~2.2]{NeufeldNutz.13a}, $X$ is a $\F$-semimartingale if and only if it is a $\F^\P_+$-semimartingale, and the characteristics associated with these filtrations are the same. As $X$ is $\G$-adapted, the same holds true also with respect to $\G$ (But of course, the characteristics may vary between $\F$ and $\G$). 
	%The reason why we need to consider the usual augmentation of the corresponding filtrations is that the optional projection with respect to $\G$ might not exist, if $\G$ does not satisfy the usual conditions.
	
	Let $(B^\F, C^\F, \nu^\F)$ be the $\F$-charactertistics of $X$.  The second characteristic can be defined as the continuous part of the finite variation process $[X]$, see e.g. \cite[Proposition~6.6]{NeufeldNutz.13a}. As by assumption, $X$ is $\G$-adapted, so is the quadratic variation process $[X]$ and hence also its continuous part. Therefore, $C^{\G}=C^{\F}$, in particular $C^{\G}$ is absolutely continuous $\P$-a.s.,  and $C^\F$ can be chosen to be $\G$-predictable. 
	%hence also $c^{\G}=c^\F$ (and $c^\F$ can be chosen to be $\G$-predictable).
	%
	%To identify $b^\G$, 
	Consider the canonical representation
	\begin{equation*}
	X=X_0+\int_0^\cdot b^{\F^\P_+}_s\,ds + M^{\F^\P_+} +\sum_{0\leq s \leq \cdot}[\Delta X_s -h(\Delta X_s)] %\quad \P\mbox{-a.s.}
	\end{equation*}
	of the $\P$-$\F^\P_+$-semimartingale $X$, where $M^{\F^\P_+}$ is a true $\P$-$\F^{\P}_+$-martingale. Therefore, by the tower property, its optional projection ${^o}M^{\F^\P_+}$ (with respect to $\G^\P_+$) is a  $\P$-$\G^{\P}_+$-martingale. Moreover, by the integrability condition imposed on $B^\F$, it is straightforward to verify that the process $Z:= {^o}B^{\F^\P_+}-\int_0^\cdot {^o}b^{\F^\P_+}_s\,ds$ is a $\P$-$\G_+^\P$-martingale, too. By assumption, $X$ is $\G$-adapted, so $X$ is a $\G$- (and hence also $\G_+^\P$)-semimartingale with $\G_+^\P$-canonical representation
	\begin{equation*}
	X =X_0 + \int_0^\cdot {^o}b^{\F^\P_+}_s\,ds + \big({^o}M^{\F^\P_+} + Z\big) + \sum_{0\leq s \leq \cdot}[\Delta X_s -h(\Delta X_s)]\quad \P\mbox{-a.s.}.
	\end{equation*}
	This implies that $B^{\G^\P_+}=\int_0^\cdot {^o}b^{\F^\P_+}_s\,ds$, hence
	% it 
	%satisfies the desired integrability condition and
	by \cite[Proposition~2.2]{NeufeldNutz.13a}, we obtain $b^\G= {^o}b^\F$. Moreover, $M^\G= \big({^o}M^{\F^\P_+} + Z\big) \, \P$-a.s., so it is a $\P$-$\G$-martingale.
	
	For the third characteristic, we have by definition that $^oF^\F(dx)\,dt$ is a predictable random measure with respect to $\G^\P_+$. Moreover, 
	%denoting by $\mu^S(dx,dt)$ the jump measure of $S$,
	we obtain for any nonnegative $\cP(\G^\P_+)\otimes \cB(\R^d)$-measurable function $W$ by Fubini's theorem that
	\begin{equation*}
	\E^\P\Big[\int_0^T \int_{\R^d}W(t,x)^oF^\F(dx)\,dt\Big]=\E^\P\Big[\int_0^T \int_{\R^d}W(t,x) \mu^X(dx,dt)\Big],
	\end{equation*} 
	which implies that the $^oF^\F(dx)\,dt$ is the $\P$-$\G^\P_+$-compensator of 
	%the measure 
	$\mu^X(dx,dt)$,
	%associated to the jumps of $X$,
	see \cite[Theorem~II.1.8, p.66]{JacodShiryaev.03}. Therefore, we conclude from \cite[Proposition~2.2]{NeufeldNutz.13a} that $F^\G(dx)={^o}F^\F(dx)$.
	
	Finally, for the second part, assume from now on that  $(b^{\F},c^{\F},F^{\F})$ are taking values in some $\Theta\subseteq \R^d \times \S^d_+ \times \cL$ which satisfies Condition~(B) and is closed, convex. Recall the function $\ov \varphi$ defined in \eqref{eq:def-ov-varphi}. By the characterization \eqref{eq:char-in-Theta}, we know that 
	\begin{align*}
	& \ \ (b^{\G},c^{\G},F^{\G}) \in \Theta \ \ \P\times dt\mbox{-a.s.}\\
	\Longleftrightarrow  & \ \ \big(b^{\G},c^{\G},(\int_{\R^d} g_i(x)\,F^{\G}(dx))_{i \in \N}\big) \in \mbox{cl}(\ov\varphi(\Theta))\ \ \P\times dt\mbox{-a.s.}
	\end{align*}
	where $\cC^+(\R^d)=\{g_i\,| \, i \in \N\}$, see Section~\ref{subsec:enlarg}. % is the set of function introduced at the beginning of Section~\ref{sec:enlarge}. 
	Now by assumption, we know that
	\begin{equation*}
	\big(b^{\F},c^{\F},(\int_{\R^d} g_i(x)\,F^{\F}(dx))_{i \in \N}\big) \in \mbox{cl}(\ov\varphi(\Theta)) \ \ \P\times dt\mbox{-a.s.}
	\end{equation*}
	Moreover, due to the first part, we have
	\begin{align*}
	&\ (b^{\G},c^{\G},(\int_{\R^d} g_i(x)\,F^{\G}(dx))_{i \in \N}\big)\\
	=& \
	\big(\E^\P[b^{\F}\,|\,\G^\P_{+}],\E^\P[c^{\F}\,|\,\G^\P_{+}],(\E^\P[\int_{\R^d} g_i(x)\,F^{\F}(dx)\,|\,\G^\P_{+}])_{i \in \N}\big)\\
	=& \ \E^\P \big[ \big(b^{\F},c^{\F},(\int_{\R^d} g_i(x)\,F^{\F}(dx))_{i \in \N}\big)\,\big|\,\G^\P_{+}\big].
	\end{align*}
	%As $\Theta$ is closed, convex and satisfies Condition~(B), it is compact, see below \eqref{eq:bound-characteristics}. 
	Thus, as $\mbox{cl}(\ov\varphi(\Theta))$ is convex and closed, 
	\begin{equation*}
	(b^{\G},c^{\G},(\int_{\R^d} g_i(x)\,F^{\G}(dx))_{i \in \N}\big) \in \mbox{cl}(\ov\varphi(\Theta)) \ \ \ \P\times dt\mbox{-a.s..}
	\end{equation*}
	%$(b^{\G},c^{\G},(\int_{\R^d} g_i(x)(|x|^2 \wedge 1)\,F^{\G}(dx))_{i \in \N}\big) \in \ov \varphi(\Theta) \ \P\times dt$-a.s..
	% which proves the desired result.
	%where for the last equality, we used the property of the Gelfand-Pettis integral  
\end{proof}
%
%
%[BRAUCHE ICH DAS???]
\begin{remark}\label{rem:semimart-char-smaller-filtr}
	%[BRAUCHE ICH DAS???]
	Observe that in the setting of Lemma~\ref{le:semimart-char-smaller-filtr}, we have for any $\delta>0$ that
	\begin{equation*}
	\E^\P\Big[\int_0^T \int_{\{|x|\leq \delta\}} |x|^2\, F^{\F}_t(dx)\,dt\Big]=\E^\P\Big[\int_0^T \int_{\{|x|\leq \delta\}} |x|^2\, F^{\G}_t(dx)\,dt\Big].
	\end{equation*}
\end{remark}

%
% % % % % % % % % % % % % % % % % %
%
% % % % % % % % % % % % % % %
%
%
% % % % % % % % % % % % % % % % % % % % % % % % % % % % %
%%%%%%%%%%%%%%%%%%%%%%%%%%%%%%%%%%%%%%%%%%%%%%%%%%%%%5
%
% % % % % % % % % % % % % % % % % %
%
% % % % % % % % % % % % % % %
%
%
% % % % % % % % % % % % % % % % % % % % % % % % % % % % % % %

%%%%%%%%%%%%%%%%%%%%%%%%%%%%%%%%%%%%%%%%%%%%%%%%%%%%%%%%%%%%%%%%%
\newcommand{\dummy}[1]{}

%%%%%%%%%%%%%%%%%%%%%%%%%%%%%%%%%%%%%%%%%%%%%%%%%%%%%%%%%%%%%%%%% 

\end{document}